\title{Jordan permutation groups and limits of $D$-relations}
\author{ Asma Ibrahim Almazaydeh  and Dugald Macpherson}
\date{Septemper, 2020}
\newtheorem{mydef}{Definition}[section]
\newtheorem{thm}[mydef]{Theorem}
\newtheorem{lem}[mydef]{Lemma}
\newtheorem{prop}[mydef]{Proposition}
\newtheorem{cor}[mydef]{Corollary}
\newtheorem{rem}[mydef]{Remark}
\newtheorem{problem}[mydef]{Problem}
\def\Aut{\mathop{\rm Aut}\nolimits}
\begin{document}

\maketitle
\pagenumbering{arabic}
\begin{abstract}
We construct via Fra\"iss\'e amalgamation an $\omega$-categorical structure whose automorphism group is an infinite oligomorphic Jordan primitive permutation  group preserving a `limit of $D$-relations'. The construction is based on  a semilinear order whose elements are labelled by sets carrying a $D$-relation, with strong coherence conditions governing how these $D$-sets are inter-related.\\

\end{abstract}

\section{Introduction}

A transitive permutation group $G$ on a set $X$ is a {\em Jordan group} if there is $Y\subset X$ with $|Y|>1$ such that the pointwise stabiliser $G_{(X\setminus Y)}$ is transitive on $Y$, together with a non-degeneracy condition which essentially says that this transitivity does not arise just from the degree of transitivity of $G$ on $X$. It follows already from work of Jordan in 1871 that {\em finite} primitive Jordan groups are 2-transitive, and this led to a full classification of such permutation groups by Neumann in \cite{neumann1985some},
with related work around the same time by Kantor in \cite{kantor1985homogeneous},  and by Cherlin, Harrington and Lachlan in \cite{cherlin1985aleph}. For infinite permutation groups, the supply of examples of Jordan groups is much richer -- for example, Aut$({\mathbb Q},<)$ is a primitive but not 2-transitive Jordan group (any proper non-empty open interval is a Jordan set). Other examples include the supergroups of Aut$({\mathbb Q},<)$ in ${\rm Sym}({\mathbb Q})$ -- the ones which are closed in the topology of pointwise convergence on Sym$({\mathbb Q})$ were classified by Cameron in \cite{cameron1976transitivity}. Many other examples arise as automorphism groups of the `treelike' structures explored in \cite{adeleke1998relations}. In addition, there are examples, suggested by finite permutation group theory, consisting of projective and affine groups in their natural actions. Other examples of this type include automorphism groups of  saturated strongly minimal sets (or more generally regular types) arising in model theory. There are also Jordan groups which are {\em highly transitive} (that is, $k$-transitive for all $k$), but little work has been done on these -- the focus has been on Jordan groups which arise as automorphism groups of non-trivial first-order structures.

A  structure theory for infinite primitive Jordan groups has emerged. In 1985, Neumann \cite{neumann1985some} classified the primitive Jordan permutation  groups with cofinite Jordan sets. Primitive Jordan groups with a proper {\em  primitive} Jordan set (that is, the pointwise stabiliser of the complement acts {\em primitively} on the set) were classified by Adeleke and Neumann in \cite{adeleke1996primitive} -- here `classified' means that it was shown that any such group preserves a relational structure of one of a list of types. Finally, in \cite{adeleke1996classification}, the following theorem was proved -- see Definition~\ref{limits} and other definitions in Section 2.
\begin{thm} \label{am-class}
Let $G$ be a primitive but not highly transitive Jordan group on an infinite set $X$. Then $G$ preserves on $X$ a structure of one of the following kinds.
\begin{enumerate}[(i)]
\item A Steiner system.
\item A linear order, circular order, linear betweenness relation, or separation relation.
\item A semilinear order, general betweenness relation, $C$-relation, or $D$-relation.
\item A limit of Steiner systems, general betweenness relations, or $D$-relations.
\end{enumerate}
\end{thm}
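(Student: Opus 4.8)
The overall plan is to reconstruct from the family of Jordan sets of $G$ a $G$-invariant relational structure on $X$, and then to identify it by a long case analysis. Write $k$ for the largest integer with $G$ being $k$-transitive (finite, since $G$ is not highly transitive), and discard the \emph{improper} Jordan sets — those with small finite complement relative to $k$, which carry no information beyond the degree of transitivity — keeping the family $\mathcal{J}$ of proper Jordan sets. \emph{First}, I would develop the calculus of Jordan sets: if $\Gamma_1, \Gamma_2 \in \mathcal{J}$ with $\Gamma_1 \cap \Gamma_2 \neq \emptyset$ and $\Gamma_1 \cup \Gamma_2 \neq X$, then $\Gamma_1 \cup \Gamma_2 \in \mathcal{J}$; under a suitable connectedness hypothesis, $\Gamma_1 \cap \Gamma_2$ and set differences lie in $\mathcal{J}$; and unions of chains in $\mathcal{J}$ are controlled. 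Each of these is proved by manipulating pointwise stabilisers, using transitivity of $G_{(X \setminus \Gamma_i)}$ on $\Gamma_i$ to move one Jordan set across another. This makes $\mathcal{J}$ into a rich $G$-invariant combinatorial object.

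\emph{Second}, I would split according to whether $G$ has a \emph{primitive} Jordan set, i.e.\ some $\Gamma \in \mathcal{J}$ on which $G_{(X \setminus \Gamma)}$ acts primitively. If so, the Adeleke--Neumann classification \cite{adeleke1996primitive} applies and already produces a structure of one of the types (i)--(iii): a Steiner system; a linear or circular order, betweenness or separation relation; or a semilinear order, general betweenness relation, $C$-relation or $D$-relation. So from now on one may assume that every proper Jordan set is imprimitive, hence carries a nontrivial $G_{(X \setminus \Gamma)}$-invariant congruence.

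\emph{Third}, using imprimitivity one splits each maximal proper Jordan set into smaller Jordan sets (the blocks of such a congruence) and iterates, obtaining a $G$-invariant treelike poset of nested Jordan sets. Now there is a dichotomy. If this refinement process stabilises, the resulting invariant poset is genuinely one of the treelike relations or a Steiner system of (i)--(iii) on $X$. If it does not stabilise, one extracts a strictly descending (or ascending) $G$-invariant sequence of structures of a fixed type whose intersection (respectively union) is no longer of that type, and assembling the invariant data attached to this sequence yields a \emph{limit} in the sense of Definition~\ref{limits}. One then has to show that, once primitivity of $G$ on $X$ and the non-degeneracy condition are imposed, the only limits that can survive are limits of Steiner systems, of general betweenness relations, and of $D$-relations; in particular a limit of linear orders collapses, and a would-be limit of $C$-relations reduces to a $D$-relation.

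The \emph{main obstacle} is this last step. Classifying the configurations in which a non-stabilising tower of Jordan sets can sit inside $X$, and proving that each produces exactly one of the three limit types in (iv) and nothing else, is where essentially all of the length and difficulty of the theorem lies. It is precisely for this reason that the present paper is needed to confirm that one of these cases — the limit of $D$-relations — is actually realised, here by an $\omega$-categorical, hence oligomorphic, primitive Jordan group.
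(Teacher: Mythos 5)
This theorem is not proved in the paper at all: it is quoted verbatim from \cite{adeleke1996classification} (the paper explicitly says ``in \cite{adeleke1996classification}, the following theorem was proved''), so there is no internal proof to measure your proposal against. What I can say is that your outline is consistent with the architecture of the actual proof in the literature. The ``calculus of Jordan sets'' you describe is real and is even partially quoted in the paper as Lemma~\ref{typicalpair} (unions of typical pairs and of connected systems of Jordan sets are Jordan sets, from Lemmas 3.1 and 3.2 of \cite{adeleke1996primitive}), and the dichotomy on whether $G$ has a proper \emph{primitive} Jordan set, with the primitive case delegated to the Adeleke--Neumann classification \cite{adeleke1996primitive} and the cofinite case to \cite{neumann1985some}, is exactly how the subject is organised.

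However, as a proof your proposal has a genuine gap, and it is the one you yourself flag: the entire content of case (iv) is asserted rather than argued. Saying that a non-stabilising tower of imprimitive Jordan sets ``yields a limit in the sense of Definition~\ref{limits}'' skips all of the work. Definition~\ref{limits} is not merely a descending sequence of structures of a fixed type whose intersection fails to be of that type; it requires a chain of subsets $Y_i$ with pointwise stabilisers $H_i$, a \emph{unique} maximal congruence $\sigma_i$ on each $Y_i$ whose quotient is a 2-transitive-not-3-transitive Jordan group preserving a $D$-relation, the coherence conditions (v)--(vi) on the congruences, the approximate $G$-invariance condition (vii) on the chain, and the condition (viii) that point stabilisers preserve a $C$-relation. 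None of these is extracted by your sketch, and verifying them (together with ruling out would-be limits of linear orders, circular orders, $C$-relations, semilinear orders, etc.) is where essentially all of the difficulty of \cite{adeleke1996classification} sits. A second, smaller imprecision: your ``Third'' step assumes every maximal proper Jordan set carries a nontrivial invariant congruence whose blocks are again Jordan sets and that these nest into a treelike poset; block systems of imprimitive Jordan sets need not themselves consist of Jordan sets without further argument, so even the construction of the invariant tree requires the connectedness lemmas in a more delicate way than ``iterate''. So: the roadmap is right, but it is a roadmap, not a proof, and the destination of the paper you are reading is only to show that one branch of case (iv) (the limit of $D$-relations) is actually realised by an oligomorphic group --- not to reprove the classification.
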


The examples of types (i) and (iv) do not have a proper primitive Jordan set so did not arise in \cite{adeleke1996primitive}, and those of type (i) include projective and affine groups, and some constructions arising from strongly minimal sets and regular types.
The structures of type (ii) are essentially those classified by Cameron in \cite{cameron1976transitivity}, and those of type (iii) are described by Adeleke and Neumann in \cite{adeleke1998relations} -- in particular, the relational structures are axiomatised and well-understood.

 The examples of type (iv) are more mysterious, and are the focus of this paper. We do not give the definition of a limit of Steiner systems, but for a limit of $D$-relations (or of general betweenness relations) see Definition \ref{limits} below and the remark following it.
There is an example of an infinite Jordan group preserving a limit of Steiner systems given by Adeleke in \cite{adeleke1995semilinear}. This is developed further by Johnson in \cite{johnson2002constructions}, where for every $k\geq 2$ there is a construction of a $k$-transitive but not $(k+1)$-transitive example. An example of an infinite primitive Jordan group preserving a limit of betweenness relations is given by Bhattacharjee and Macpherson in  \cite{bhattmacph2006jordan}.  The group acts on an $\omega$-categorical structure which is built by a Fra\"iss\'e construction. Another example of an infinite Jordan permutation group preserving a limit of betweenness relations is given by Adeleke in his work \cite{adeleke2013irregular} (work which, despite its later publication date,  was done much earlier than \cite{bhattmacph2006jordan}, in the early 1990s, and which inspired \cite{bhattmacph2006jordan} and the present work).  Adeleke in \cite{adeleke2013irregular} also gives an example of an infinite primitive Jordan permutation group preserving a limit of $D$-relations.

Recall that a countably infinite first order structure is {\em $\omega$-categorical} if it is determined up to isomorphism by its cardinality and its first order theory. By the Ryll-Nardzewski Theorem, this is equivalent to its automorphism group being {\em oligomorphic}, that is, having finitely many orbits on $k$-tuples for all $k$. The group preserving a limit of betweenness relations constructed by Adeleke in  \cite{adeleke2013irregular} is not oligomorphic, but that constructed in   \cite{bhattmacph2006jordan} is. It is expected, but not verified, that the group preserving a limit of $D$-relations constructed  in Adeleke  \cite{adeleke2013irregular} is not oligomorphic. Adeleke and Macpherson, in the end of their paper \cite{adeleke1996classification}, posed the problem of explicitly  classifying  {\em oligomorphic} primitive Jordan permutation  groups, and also asked whether it is possible for an infinite primitive oligomorphic Jordan permutation group to preserve a limit of betweenness relations or $D$-relations. With Theorem~\ref{mainthm} below, together with that in \cite{bhattmacph2006jordan}, a  positive answer  has now been found in both cases.  Furthermore, in the Adeleke paper \cite{adeleke2013irregular}, the Jordan group is built as a direct limit of an increasing chain of permutation groups, but no invariant relational structure is made explicit. In our construction here, the Jordan group is the automorphism group of a relational structure which can reasonably be claimed to be a `new' treelike structure, essentially distinct from those occurring Theorem~\ref{am-class}(iii) or described in \cite{adeleke1998relations}. 

 Our main theorem is the following. The overall strategy of the proof of Theorem~\ref{mainthm} is analogous to that in \cite{bhattmacph2006jordan}, but there are significant differences.

\begin{thm}\label{mainthm} 
There is an $\omega$-categorical structure $M$ whose automorphism group is a primitive Jordan group which preserves a limit of $D$-relations
but does not preserve a structure of types (i), (ii), or (iii) of Theorem~\ref{am-class}.
\end{thm}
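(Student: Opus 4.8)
The plan is to build $M$ as a Fraïssé limit of a carefully chosen class $\mathcal{C}$ of finite structures, following the blueprint of \cite{bhattmacph2006jordan} but with $D$-relations in place of betweenness relations. The underlying skeleton is a semilinear order (a meet-tree) whose maximal chains, or whose set of ``directions'' at the top, carry $D$-relations; the coherence conditions prescribe how the $D$-relation associated with one node restricts to, or is induced by, the $D$-relations at nodes above or below it. So the first step is to write down precisely the language (a semilinear order relation, plus relations coding the local $D$-sets, plus whatever auxiliary predicates are needed to make the amalgamation work) and the axioms defining $\mathcal{C}$; then verify that $\mathcal{C}$ is closed under substructures, has only countably many isomorphism types, and — the technical heart of the first half — has the amalgamation property and the joint embedding property. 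The amalgamation step is where the ``strong coherence conditions'' must be engineered so that two extensions of a common finite configuration can always be glued: one must show how to merge two semilinear orders over a common subtree and simultaneously reconcile the $D$-relations, possibly by introducing new meet-points. I expect this to be the main obstacle, exactly as in the betweenness case, and it will require a delicate case analysis on the relative positions of the new points.

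Granting amalgamation, Fraïssé's theorem produces a countable homogeneous $M$; $\omega$-categoricity then follows provided the language is finite and $\mathcal{C}$ is uniformly locally finite, or more directly by checking that $\Aut(M)$ has finitely many orbits on $n$-tuples for each $n$ (Ryll--Nardzewski), which reduces to bounding the number of isomorphism types of $n$-element structures in $\mathcal{C}$. Primitivity of $G := \Aut(M)$ must be checked by hand: one shows $M$ carries no proper non-trivial $\emptyset$-definable equivalence relation, typically by exhibiting, for any two distinct points, an automorphism moving one but fixing the other in a way incompatible with any congruence, using the homogeneity of $M$.

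Next I would identify the Jordan sets. The natural candidates are the ``cones'' of the semilinear order — the sets of points lying above a given node in a given direction — together with unions of such cones that are convex in the appropriate sense; the coherence conditions should guarantee that the pointwise stabiliser of the complement of such a set acts transitively (indeed highly homogeneously) on it, using homogeneity of $M$ to realise the required partial automorphisms. One then checks the non-degeneracy clause in the definition of a Jordan group, i.e.\ that $G$ is not so highly transitive that this transitivity is automatic; here one points to the semilinear order, which is visibly $G$-invariant and prevents $2$-transitivity. Finally, to show $G$ preserves a \emph{limit} of $D$-relations in the sense of Definition~\ref{limits}, I would verify the defining properties of such a limit: the relevant chain of $D$-relations associated to the nodes along a branch, the fact that no single $D$-relation on $X$ is $G$-invariant (so we genuinely have a limit and not a $D$-relation of type (iii)), and that the Jordan sets are precisely of the form dictated by that definition.

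To complete the theorem I must rule out types (i), (ii), (iii) of Theorem~\ref{am-class}. This is done by invariant-theoretic arguments: $M$ cannot carry a $G$-invariant Steiner system, linear or circular order, betweenness or separation relation, because it has a proper $G$-invariant semilinear order (which kills the linear/circular options and, via the branching, the Steiner option), and it cannot carry a $G$-invariant semilinear order, general betweenness relation, $C$-relation or $D$-relation of type (iii) because — by the axioms of $\mathcal{C}$ — the local $D$-relations genuinely vary along branches and cannot be amalgamated into a single global relation of those kinds; equivalently, the Jordan sets one obtains are not the Jordan sets associated with any structure from \cite{adeleke1998relations}. Concretely, one exhibits two Jordan sets (cones at different levels) whose intersection pattern is forbidden in any type-(iii) structure, or one computes the orbit of $\Aut(M)$ on a small configuration and checks it is incompatible with each listed type. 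I expect this elimination step to be largely bookkeeping once the structure of the Jordan sets is pinned down, with the one subtlety being the separation from a type-(iii) $D$-relation, which is precisely why the coherence conditions were built to be ``limit-like'' rather than honestly coherent.
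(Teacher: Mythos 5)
Your overall blueprint (Fra\"iss\'e class of finite ``trees of $D$-sets'', amalgamation, $\omega$-categoricity via Ryll--Nardzewski, Jordan sets from the tree structure, then elimination of types (i)--(iii)) matches the paper's architecture, but there is a genuine conceptual gap in where you place the semilinear order, and it infects both your primitivity/non-degeneracy argument and your elimination step. In the actual construction the universe of each finite structure, and hence of $M$, is only the set of leaves (directions) of the \emph{root} $D$-set; the semilinear order is not a relation on $M$ but is \emph{interpreted} on equivalence classes of triples (the structure tree $(K^*/R,\leq)$ of Section 4). Consequently $G=\Aut(M)$ is $2$-transitive, indeed $2$-primitive, on $M$, and $M$ carries no $G$-invariant semilinear order, betweenness relation, $C$-relation or $D$-relation. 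Your plan to ``point to the semilinear order, which is visibly $G$-invariant'' to block $2$-transitivity, and again to kill the linear/circular/Steiner options, would --- if it worked --- place $G$ in type (iii) of Theorem~\ref{am-class} and refute the very theorem you are proving; moreover Definition~\ref{limits}(iv) explicitly requires $2$-primitivity on $X$. The correct eliminations run the other way: one exhibits an automorphism with a cycle type forbidden by a separation relation (hence no linear or circular order), uses a Jordan-set argument against Steiner systems, gives a bespoke argument against an invariant $D$-relation (hence no $C$-relation), and uses $2$-primitivity against semilinear orders and betweenness relations. Two smaller but real issues: the class $\mathscr D$ is \emph{not} closed under substructures (the paper gives an explicit counterexample), so ordinary Fra\"iss\'e theory does not apply and one needs the Evans/Hodges variant with a distinguished class of embeddings and only ``semi-homogeneity''; and $\omega$-categoricity does not follow from uniform local finiteness but needs a quantitative lemma bounding the size of a member of $\mathscr D$ containing a given finite subset of $M$.

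The second missing idea is in the Jordan-set verification. Homogeneity lets you extend finite partial isomorphisms, but it cannot by itself produce automorphisms that fix the complement of an infinite set \emph{pointwise}, which is what transitivity of $G_{(M\setminus Z)}$ on $Z$ requires. The paper's proof that a pre-direction $[n]$ is a Jordan set is the technical heart of the second half: one parametrises $[n]$ by finitely supported functions on the chain of $D$-sets below the relevant vertex, builds an explicit transitive group $K$ as a generalized (Hall-type iterated) wreath product of groups acting on branches, and then verifies --- via a separate criterion characterising which permutations supported on $[n]$ are automorphisms --- that $K\leq G$. Larger Jordan sets (pre-branches, pre-$D$-sets) are then obtained from Lemma~\ref{typicalpair} by taking unions of connected families. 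Without some substitute for this wreath-product construction your proposal has no mechanism for producing even one proper Jordan set.
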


Some introductory background is given in Section \ref{Definitions}. 
 In Section \ref{trees of D-sets}, we build a class of finite structures, each of which is essentially a finite lower semilinear order with vertices labelled by finite graph-theoretic unrooted trees, with coherence conditions. These are viewed as structures in a relational language with relations $L,L',S,S',Q,R$. We describe possible one-point extensions of such structures,  prove an amalgamation theorem, and thereby obtain by  Fra{\" {\i}ss\'{e}}'s Theorem a  countably infinite structure  $M$ (the `Fra{\" {\i}ss\'{e}} limit').    
In Section \ref{Fraisse}, we describe in detail the structure $M$ and its automorphism group. We show that there is an associated dense lower semilinear order (a meet semilattice), again with vertices labelled by (dense) $D$-sets, again with coherence conditions.
Adapting an iterated wreath product construction described by Cameron in \cite{cameron1987some} which is based on Hall's wreath power, we show in Section \ref{Jordan} that $\text{Aut}(M)$  is a Jordan group with a `pre-direction' as a Jordan set.  Then we find, by properties of Jordan sets, that a `pre-$D$-set' is also a Jordan set for Aut$(M)$. Finally we prove that the Jordan group $G = \text{Aut}(M)$ preserves a limit of $D$-relations, the main result of this paper.

We believe that our construction, and its companion in \cite{bhattmacph2006jordan}, opens the possibility to give a much more enlightening  description of type (iv) in Theorem~\ref{am-class} by requiring that there is an invariant combinatorial structure satisfying certain explicit axioms. The constructions may also have interest for other test questions on homogeneous and $\omega$-categorical structures, and may be open for further generalisation. This is explored briefly in Section 6.

   

We conclude with some remarks concerning the wider context and motivation. Structural results on Jordan groups have had a number of applications. First, Cherlin, Harrington and Lachlan in \cite{cherlin1985aleph} used structural results on finite Jordan groups in model theory to classify $\omega$-categorical {\em strictly minimal sets}, and thereby to develop a powerful structure theory for $\omega$-categorical $\omega$-stable structures -- this paper was fundamental to the development of geometric stability theory in model theory. Neumann  \cite{neumann1985some} used essentially the same result to describe primitive permutation groups on a countably infinite set which have no countable orbits on the set of infinite co-infinite subsets. The paper \cite{adeleke1996infinite} uses results on primitive Jordan groups with primitive proper Jordan sets to obtain structural results on primitive groups on an uncountable set which contain a non-identity element of `small' support. This is analogous to the result of Wielandt that an infinite primitive permutation group with a non-identity element of finite support contains the finite alternating group, and Macpherson and Praeger in \cite{macpherson-praeger-cycle} used the full structure theory for primitive Jordan groups to show that a primitive permutation group realising a certain cycle type (a  single infinite cycle, finitely many and at least one non-trivial finite cycles, and infinitely many fixed points) must be highly transitive. Several authors have used Jordan groups to show that certain automorphism groups are `maximal-closed' in the symmetric group: Kaplan and Simon \cite{kaplan2016affine} showed that ${\rm AGL}_n({\mathbb Q})$ (for $n\geq 2$) and ${\rm PGL}_n({\mathbb Q})$ (for $n\geq 3$) are maximal closed; Bradley-Williams in \cite{bradleywilliams} described the closed supergroups of the automorphism group of certain semilinear orders, and Bodirsky and Macpherson \cite{bodirsky-macpherson} exhibited an uncountable {\em non-oligomorphic} maximal-closed permutation group acting on a countable set.

Semilinear orders, $C$-relations, general  betweenness relations, and $D$-relations can naturally be viewed as `treelike'. The classification in \cite{adeleke1996primitive} of infinite primitive Jordan groups with primitive proper Jordan sets suggests that these are the only treelike structures. However, we would argue that the highly symmetric structure $M$ constructed in Theorem~\ref{mainthm} (and its cousin in \cite{bhattmacph2006jordan}) involves all the above structures, but its automorphism group does not preserve any of the above structures, and thus it can claim to be a new treelike structure. This makes it potentially interesting in other ways -- see for example Problem~\ref{prob7} below. 

The methods in the paper are mainly combinatorial and permutation group-theoretic. We assume familiarity with some basic concepts from model theory such as relational structures, amalgamation and Fra\"iss\'e limits, $\omega$-categoricity, but give some explanation -- see e.g. Theorem~\ref{ryll}, Definition~\ref{defnice} and Theorem~\ref{general}.

The research in this paper was the main part of the PhD thesis \cite{asmathesis} at the University of Leeds by the first author. This thesis was funded by Tafila Technical University in Jordan. The authors thank Meenaxi Bhattacharjee for very helpful initial  conversations around 2000. 

\section{Definitions}\label{Definitions}

Throughout the paper, we shall denote by $(G,X)$ a permutation group $G$ acting on a set $X$, and we say $X$ is a {\em $G$-space}. For a natural number $k$, a $G$-space $X$ is said to be {\em $k$-transitive} if $G$ is transitive on the set of ordered $k$-subsets of $X$. If $G$ is transitive on the set of {\em unordered} $k$-subsets of $X$, then it is called $k$-{\em{homogeneous}}. If $G$ is $k$-transitive (respectively, $k$-homogeneous ) on $X$ for every $k\in \mathbb N$, then $G$ is said to be  {\em{highly transitive}} (respectively, {\em{highly homogeneous}}).

   For $Y \subset X$, the {\em {setwise stabiliser}} of $Y$ in $G$ is denoted by  $G_{\{ Y \} }$, and the {\em pointwise stabiliser} of $Y $ in $G$ is $G_{(Y)}$. The stabiliser of a point $x \in X$ is denoted by $G_x$.
  
  A group $G$ acting on a set $X$ is said to be {\em oligomorphic} in its action on $X$ if $G$ has finitely many orbits on $X^k$, the set of all $k$-tuples of $X$, for every natural number $k$.
 For more about oligomorphic groups,  see \cite{cameron1990oligomorphic}. A structure $M$ is $\omega$-{\em categorical} if $M$ is countably infinite  and any countable structure $N$ which satisfies the same first order theory as $M$ is isomorphic to $M$. The connection between these two notions lies in the following theorem.
\begin{thm}\label{ryll}[Ryll- Nardzewski 1959, Engeler 1959, Svenonius 1959].
Let $ M$ be a countably infinite first order structure. Then $M$ is $\omega$-categorical if and only if  $\text{Aut}( M)$ is oligomorphic on $ M$.
 \end{thm}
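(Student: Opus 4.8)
The plan is to route both directions through a single intermediate condition on $T:=\text{Th}(M)$: that the Stone space $S_n(T)$ of complete $n$-types over $\emptyset$ is finite for every $n$. I would show that each of the two properties in the statement is equivalent to this condition, and hence to each other. Throughout I would use that $T$ is complete (as the theory of a fixed structure), the compactness and downward L\"owenheim--Skolem theorems, and -- for the hardest step -- the omitting types theorem. The only topological input needed is that, $S_n(T)$ being compact and Hausdorff, it is finite iff every point is isolated (an infinite discrete space is not compact); and an isolated point of $S_n(T)$ is precisely a \emph{principal} type, one generated modulo $T$ by a single formula.

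The equivalence of oligomorphicity with finiteness of the $S_n(T)$ is the combinatorial half. If $\text{Aut}(M)$ has $r_n<\infty$ orbits on $M^n$, then every subset of $M^n$ defined by a formula $\varphi(\bar x)$ is $\text{Aut}(M)$-invariant, hence a union of orbits, so there are at most $2^{r_n}$ such subsets; since $M\models T$ and $T$ is complete, two formulas define the same subset iff they are $T$-equivalent, so the Lindenbaum--Tarski algebra $B_n$ is finite and therefore $S_n(T)$ is finite. Conversely, if $S_n(T)$ is finite then every $n$-type is isolated, and I would invoke the extension lemma: given $\bar a,\bar b$ of equal type in models $M_1,M_2\models T$ and $a\in M_1$, the type $p:=\text{tp}(\bar a a)$ is isolated by some $\varphi(\bar x,y)$, and since $\exists y\,\varphi(\bar x,y)\in\text{tp}(\bar a)=\text{tp}(\bar b)$ there is $b\in M_2$ with $M_2\models\varphi(\bar b,b)$, whence $\text{tp}(\bar b b)=p$. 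Applied with $M_1=M_2=M$ this shows $M$ is $\aleph_0$-homogeneous, so two $n$-tuples share an $\text{Aut}(M)$-orbit exactly when they share a type; the number of orbits on $M^n$ is then the number of types realized in $M$, which is at most $|S_n(T)|<\infty$.

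For the connection with $\omega$-categoricity I would use the same extension lemma as a back-and-forth engine: if all $n$-types are isolated, running the lemma between two arbitrary countable models of $T$ builds an isomorphism, so $T$ is $\omega$-categorical. For the reverse I argue contrapositively: if some $S_n(T)$ is infinite it contains a non-isolated type $p$; the omitting types theorem yields a countable model of $T$ omitting $p$, while compactness together with L\"owenheim--Skolem yields a countable model realizing $p$, and these two are non-isomorphic, contradicting $\omega$-categoricity. Combining the two halves gives $\omega$-categorical $\iff$ $S_n(T)$ finite for all $n$ $\iff$ oligomorphic.

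I expect the main obstacle to be the direction $\omega$-categorical $\Rightarrow$ oligomorphic, which cannot be made purely combinatorial: it leans on the omitting types theorem to manufacture a model omitting a prescribed non-isolated type, and then on the non-automatic passage from finitely many types to finitely many orbits, which requires the $\aleph_0$-homogeneity of $M$ supplied by the back-and-forth lemma. The persistent subtlety is the gap between the semantic invariant (orbits of $\text{Aut}(M)$) and the syntactic one (types, or formulas up to $T$-equivalence); isolation of types, and the homogeneity it forces, is exactly what closes that gap.
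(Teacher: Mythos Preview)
Your proof is correct and is essentially the standard textbook argument for the Ryll--Nardzewski theorem. However, the paper does not actually prove this statement: Theorem~\ref{ryll} is cited as a classical 1959 result of Ryll--Nardzewski, Engeler, and Svenonius, stated without proof as background for the notion of $\omega$-categoricity used later (see Corollary~\ref{oligcoro}). So there is no ``paper's own proof'' to compare against; your argument via finiteness of the type spaces $S_n(T)$, the omitting types theorem, and $\aleph_0$-homogeneity is exactly the classical route one would find in a model theory text.
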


\begin{mydef} \em Let $Y \cup Z  $ form a partition of a transitive $G$-space $X$ with $\mid Z \mid > 1$. If the pointwise stabiliser $G_{(Y)}$ of $Y$ in  $G$  is transitive on $Z$, then $Z$ is called a {\em Jordan set} for $(G,X)$ and $Y$ is called a {\em Jordan complement}. The Jordan set $Z$ is {\em improper} if, for some $k\in {\mathbb N}$,  $(G,X)$ is $(k+1)$-transitive and $|Y|=k$; it is {\em proper} otherwise. We say that  $Z$ is a {\em primitive Jordan set} if $G_{(Y)}$ is primitive on $Z$, and an {\em imprimitive} Jordan set otherwise. A {\em Jordan group} is a transitive permutation group with a proper Jordan set. 
\end{mydef}

The following definition is taken from  \cite{adeleke1996primitive}. The  subsequent lemma is  heavily used in the classification results in \cite{adeleke1996primitive} and \cite{adeleke1996classification}, since many arguments apply properties of the family of all Jordan sets, or of an orbit on Jordan sets.
\begin{mydef} {\label{connected} \em{
\begin{enumerate}[(a)]
    \item A {\em{typical pair}} is a pair of subsets $Y_1, \ Y_2$ of $X$ such that $Y_1 \not \subseteq Y_2, \ Y_2 \not \subseteq Y_1$, and $Y_1 \cap Y_2 \neq \emptyset$.
    \item  A family of sets $\{ Y_i: \ i \in I \}$ will be said to be {\em{connected}} if for any $i, i' \in I$ there exists $j_0, \dots, j_l \in I$ such that $j_0=i, j_l= i'$ and $Y_{j_{r-1}} \cap Y_{j_r} \neq \emptyset$ for all $1 \leq r \leq l$.
\end{enumerate}
     }}
\end{mydef}

\begin{lem} \label{typicalpair}
\begin{enumerate}[(i)]
\item \cite[Lemma 3.2]{adeleke1996primitive}  \label{connected Jordan}
Suppose that $(G,X)$ is a transitive  $G$-space and that $\{ Z_i: \ i\in I\}$ is   a connected system of Jordan sets. Then $\bigcup_{i\in I} Z_i$ is a Jordan set for $(G,X)$.
\item \label{typical}(\cite{adeleke1996primitive}, Lemma 3.1) The union of any typical pair of Jordan sets is a Jordan set.
\end{enumerate}
\end{lem}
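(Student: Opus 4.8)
The plan is to prove both parts directly from the definition of a Jordan set, using transitivity of the relevant pointwise stabilisers and the fact that a union of overlapping transitive orbits is transitive provided the overlaps are nonempty.

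For part (ii), suppose $Y_1, Y_2$ form a typical pair of Jordan sets, with Jordan complements $Z_1 = X \setminus Y_1$ and $Z_2 = X \setminus Y_2$. I want to show $Y_1 \cup Y_2$ is a Jordan set, i.e.\ that $G_{(Z)}$ is transitive on $Y_1 \cup Y_2$, where $Z = X \setminus (Y_1 \cup Y_2) = Z_1 \cap Z_2$. First I would observe that since $Z \supseteq Z_1$, the group $G_{(Z)} \supseteq G_{(Z_1)}$ contains a subgroup transitive on $Y_1$; similarly it contains a subgroup transitive on $Y_2$. The typicality condition $Y_1 \cap Y_2 \neq \emptyset$ means these two orbits overlap, so pick $a \in Y_1 \cap Y_2$; for any $b \in Y_1 \cup Y_2$ one can move $a$ to $b$ by an element of $G_{(Z)}$ (using the $Y_1$-transitive subgroup if $b \in Y_1$, the $Y_2$-transitive one if $b \in Y_2$), and hence $G_{(Z)}$ is transitive on $Y_1 \cup Y_2$. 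The conditions $Y_1 \not\subseteq Y_2$ and $Y_2 \not\subseteq Y_1$ together with $|Z_i| > 1$ ensure $|Y_1 \cup Y_2| > 1$ and that $Z \neq \emptyset$ is a genuine complement (or, if $Z = \emptyset$, that $Y_1 \cup Y_2 = X$ is vacuously a Jordan set in the degenerate sense), so the defining conditions of a Jordan set are met.

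For part (i), I would argue by induction on the length $l$ of a connecting chain, bootstrapping from part (ii). Given the connected system $\{Z_i : i \in I\}$, set $Z := \bigcup_{i \in I} Z_i$ and let $W := X \setminus Z$. For any two points $a, b \in Z$, choose $i, i'$ with $a \in Z_i$, $b \in Z_{i'}$ and a chain $Z_{j_0}, \dots, Z_{j_l}$ with $j_0 = i$, $j_l = i'$ and consecutive members meeting. By part (ii) applied repeatedly (or rather, by the observation that each $G_{(X \setminus Z_{j_r})} \supseteq G_{(W)}$ acts transitively on $Z_{j_r}$, so the union of the chain is covered by a single connected family of $G_{(W)}$-orbits), one can move $a$ along the chain to $b$ using elements fixing $W$ pointwise: pick points $c_r \in Z_{j_{r-1}} \cap Z_{j_r}$ for $1 \le r \le l$, move $a$ to $c_1$ within $Z_{j_0}$, then $c_1$ to $c_2$ within $Z_{j_1}$, and so on up to moving $c_l$ to $b$ within $Z_{j_l}$, all by elements of $G_{(W)}$. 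Hence $G_{(W)}$ is transitive on $Z$, and since $|Z| \ge |Z_i| > 1$, $Z$ is a Jordan set.

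The only real subtlety — and the step I would be most careful about — is the bookkeeping around \emph{degenerate} cases: when $Z = X$ (so the complement is empty) one must check this is still admissible under the paper's conventions, and one must make sure that the ``$|Z| > 1$'' requirement is inherited. Since each $Z_i$ already has $|Z_i| > 1$ and $Z \supseteq Z_i$, this is immediate; and the definition of Jordan set as stated only requires $|Z| > 1$ plus transitivity of the pointwise stabiliser of the complement, both of which are verified above. The non-degeneracy (properness) is not part of what must be checked here — the lemma only asserts that the union is \emph{a} Jordan set, not a proper one. So the argument is essentially a short chase through transitivity, with part (ii) being the base case of the induction in part (i).
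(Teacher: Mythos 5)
The paper does not prove this lemma at all --- it simply cites Lemmas 3.1 and 3.2 of \cite{adeleke1996primitive} --- so there is no internal proof to compare against; your argument is the standard one from that source and is correct in substance: for (ii) the pointwise stabiliser of $Z_1\cap Z_2$ contains both $G_{(Z_1)}$ and $G_{(Z_2)}$, whose orbits $Y_1$ and $Y_2$ overlap, and for (i) one chains single orbits of $G_{(W)}$ along the intersection points $c_r$. Two inclusion symbols are written backwards and should be fixed before this could stand as a proof: in (ii) you mean $Z=Z_1\cap Z_2\subseteq Z_1$ (which is what gives $G_{(Z)}\supseteq G_{(Z_1)}$), and in (i) you mean $G_{(X\setminus Z_{j_r})}\subseteq G_{(W)}$ (since $W\subseteq X\setminus Z_{j_r}$); the latter is the containment actually needed so that the elements moving $c_{r}$ to $c_{r+1}$ inside $Z_{j_r}$ lie in $G_{(W)}$, and your subsequent concrete chaining argument uses it in the correct direction. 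With those symbols corrected the proof is fine; the remarks about degenerate empty complements and about properness not being asserted are sensible and consistent with the paper's conventions.
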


We now introduce, very briefly, some of the relational structures that arise in this paper. For further details see \cite{adeleke1998relations}.

First, recall  (adopting the conventions of \cite{adeleke1996classification}) that an {\em $n$-Steiner system} on $X$ is a family ${\mathcal B}$ of subsets of $X$ called {\em blocks}, all of the same size (possibly infinite), such that any $n$ distinct elements of $X$ lie in a unique block. We shall assume $n$-Steiner systems to be {\em non-trivial} in the sense that there is more than one block, and blocks have size greater than $n$. Jordan groups arising from projective and affine groups in their natural actions preserve Steiner systems. 

Recall also that a {\em separation relation} (see Cameron \cite{cameron1976transitivity}) is the natural arity 4 relation induced on a circularly ordered set indicating that two elements lie in distinct segments with respect to two other elements. 

  Let $(X, \leq)$ be a partially ordered set. Then $X$ is said to be a {\em (lower) semilinearly ordered} set if for any $a$ in $X$ the set 
$\{x\in X:x\leq a\}$ is totally ordered by $\leq$,
 any two elements have a common lower bound, but the set $X$ itself is not totally ordered. Given a lower semilinear order $(X,\leq)$, let $p\in X$ and put
 $Y_p:=\{x \in X: x>p\} $. Define an equivalence relation $E_p$ on $Y_p$, putting $ x E_p y \Leftrightarrow \exists z (p <z\leq x \wedge p<z\leq y).$ 
Then $E_p$ is preserved by $({\text{Aut}(X, \leq))}_p$. The equivalence classes of the equivalence relation $E_p$ at the point $p$ are called the {\em cones} at $p$.

From now on, by {\em semilinear order} we always   mean a {\em lower} semilinear order.

\begin{mydef} \label{Drelation} \em
A quaternary relation $D(x, y; z, w)$ on $X$ is a {\em D-relation} if for all $x,y,z,w \in X$ (D1)-(D4) hold:
\begin{enumerate}[(D1)]
    \item $D(x, y; z, w)\Rightarrow D(y, x; z, w) \wedge D(x, y; w, z) \wedge D(z,w; x, y)$;
    \item  $D(x, y; z, w)\Rightarrow \neg D(x, z; y, w);$
    \item $D(x, y; z, w) \Rightarrow (\forall a \in X ) (D(a, y; z, w) \vee D(x, y; z, a))$;
    \item $(x\neq z \wedge y\neq z) \Rightarrow D(x, y; z, z )$;
\end{enumerate}
We say it is a {\em proper} $D$-set if in addition (D5) holds.\\
\noindent
 (D5) $(x, y, z$ distinct) $\Rightarrow(\exists t) (t \neq z \wedge D(x, y; z, t))$.\\
\noindent
The $D$-set is said to be {\em dense} if \\
\noindent
(D6) $ D(x, y;z, w) \Rightarrow (\exists a \in X)D(a, y; z, w)\wedge D(x, a; z, w) \wedge D(x, y; a, w) \wedge D(x, y; z, a)$.

\end{mydef}

There are further tree-like structures that we mention without detail, as they play a more peripheral role here; for example, a {\em{general betweenness relation}} is, informally, a ternary relation $B(x;y,z)$ on a set $X$ which expresses that $x$ lies on the path between $y$ and $z$ (we usually omit the word `general'). If $(X,\leq)$ is a lower semilinear order, then one can define a general betweenness relation $B$ on $X$, putting $B(x;y,z)$ for any $x,y,z \in X$ if one of the following holds:
\begin{enumerate}[(i)]
    \item $y \geq x \wedge \neg(z \geq x)$.
    \item $ z \geq x \wedge \neg (y\geq x)$.
    \item $x=\text{glb}\{y,z\}$, where glb denotes the {\em{greatest lower bound}} (if it exists). 
\end{enumerate} 

If $T$ is an unrooted graph-theoretic tree, then there is a natural general betweenness relation
on its vertices -- $B(x;y,z)$ holds if and only if $x$ lies on the $yz$-geodesic. It is easy to imagine an analogous relational structure with a notion of betweenness where edges are replaced, for example, by the real interval [0,1]; and indeed, an ${\mathbb R}$-tree carries a natural general betweenness relation defined via geodesics as above -- but a set with a general betweenness relation does not in general have any automorphism-invariant metric.

A $C$-{\em{relation}} is a ternary relation which can be viewed as  describing the behaviour of the maximal chains of a semilinear order $(X,\leq)$: if $x,y,z$ are maximal chains of $X$ then $C(x;y,z)$ holds if $x \cap y = x \cap z \subset y \cap z$. Much more detail, including axioms,  can be found in \cite{adeleke1998relations}, and there is an overview also in \cite{adeleke1996classification}. Note that if $(Y,C)$ satisfies the axioms of a $C$-relation, then there is a lower semilinear order $(X,\leq)$ such that $Y$ can be identified with a `dense' set of maximal chains of $X$ with $C$ interpreted as above -- here the density means that every $a\in X$ lies in some maximal chain of $Y$.

We remark that if $T$ is a finite graph-theoretic tree then there is a $D$-relation on the set of leaves of $T$: put $D(x,y;z,w)$ if
$x=y\not\in \{z,w\}$ or $z=w\not\in \{x,y\}$ or $x,y,z,w$ are distinct and the path from $x$ to $y$ is disjoint from the path from $z$ to $w$. If $T$ is an infinite tree then there is a similar definition of a $D$-relation on the set of {\em ends} of $T$. If $B$ is a general betweenness relation on $X$ then there is a concept of {\em direction} of $(X,B)$, analogous to an end,  and corresponding $D$-relation on the set of directions (see Section 16 and Theorem 23.2 of \cite{adeleke1998relations}). Conversely, if $D$ is a $D$-relation on $X$ then it is possible to interpret a general betweenness relation in the structure $(X,D)$ - see \cite[Theorem 25.3]{adeleke1998relations}. If $(X,C)$ is a $C$-relation then there is a natural $D$-relation on the set of elements of $X$:  for example, $$ (\forall x, y, z, w \in X)D(x, y; z, w)\Leftrightarrow (C(x; z, w) \wedge C(y; z, w)) \vee (C(z; x, y) \wedge C(w;x, y))$$   See \cite{adeleke1998relations}, Theorem 23.4, and Theorem 23.5.

We now introduce the definition of a {\em  limit of $D$-relations}, to give meaning to Theorems~\ref{am-class}(iv) and \ref{mainthm}. We have reversed the ordering on $J$ compared to presentations given previously -- this seems to fit more naturally with our construction. 
\begin{mydef}(\cite{adeleke1996classification}, Definition 2.1.9) \label{limits} \em If $(G, X)$ is an infinite Jordan group we say that $G$ preserves a {\em limit of $D$-relations} if
there are: a linearly ordered set $(J, \leq)$ with no least element, a  chain $(Y _i :i\in J)$
of subsets of $X$ and  chain $(H_i: i\in J)$ of subgroups of $G$ with $Y_i\supset Y_j$ and $H_i>H_j$ whenever $i<j$, such that the following hold:
\begin{enumerate}[(i)]
    \item for each $i, H_i =G_{(X \backslash Y_i) }$, and $H_i$ is transitive on $Y_i$ and has a unique non-trivial
maximal congruence $\sigma_i$ on $Y_i$;
\item for each $i$, $(H_i, Y_i / \sigma_i)$ is a 2-transitive but not 3-transitive Jordan group preserving a $D$-relation;
\item $\bigcup(Y_i: i \in J)= X$;
\item  $(\bigcup(H_i: i \in J), X)$ is a 2-primitive but not 3-transitive Jordan group;
\item $\sigma_j\supseteq \sigma_i |_{Y_j}$ if $i< j$;
\item $\bigcap(\sigma_i: i\in J)$ is equality in $X$;
\item $(\forall g\in G) (\exists i_0 \in J) (\forall i<i_0)(\exists j\in J) (Y_i ^g=Y_j \wedge g^{-1} H_{i} g= H_j)$;
\item for any $x \in X, G_x$ preserves a $C$-relation on $X\setminus \{ x \}$.
\end{enumerate}

\end{mydef}

The notion of preserving a limit of general betweenness relations is essentially the same, but with a general betweenness relation replacing the $D$-relation in (ii).  
Note that we do not define limits of Steiner systems, since the concept is not used here.

 \section{Trees of D-sets}\label{trees of D-sets}

In this section we construct the $\omega$-categorical structure $M$ whose automorphism group preserves a limit of $D$-relations. The structure $M$ is  a Fra\"iss\'e limit of a class of finite structures (`trees of $D$-sets'), which, informally, may be viewed as rooted lower semilinear orders with each vertex labelled by a finite $D$-relation (so essentially by a finite graph-theoretic tree) with additional coherence conditions. We first introduce the key concept of a finite {\em tree of $D$-sets}.

{\bf{Notation.}} 
  Let $(T,\leq)$ be a finite lower semilinear order with a root $\rho$. Label each vertex $\nu$ of $ T$ by a finite $D$-set $D(\nu)$ with a $D$-relation $D_\nu$ defined on $D(\nu)$. We view $D(\nu)$ as the set of leaves of a finite unrooted tree $\overline{D(\nu)}$ (in the graph-theoretic sense) without dyadic vertices (vertices of degree $2$), and with $D_\nu$ defined in the natural way described in Section 2. (The correspondence between finite $D$-sets and such trees follows from Proposition 3.1 and Section 9 in \cite{cameron1987some}, noting that the author uses different notation for the relation $D$.) We refer to vertices  of $\overline {D(\nu)}$ as {\em nodes}, and those of degree at least three are called {\em ramification points}, with the set of these denoted by ${\rm Ram}(\overline{D(\nu)})$; if the ramification point $r$ lies on the geodesic between any two of the distinct nodes $x,y,z$, we write $r={\rm ram}(x,y,z)$, and any three distinct leaves of $D(\nu)$ determine a unique such ramification point, such that the $xy$-path, the $xz$-path, and the $yz$-path all pass through ram$(x,y,z)$. 
By a {\em{successor}} of a vertex $\nu \in T$ we mean a vertex $\mu\in T$ such that $\nu<\mu \wedge \neg \exists \lambda(\nu<\lambda<\mu)$;  we write succ$(\nu)$ for the set of successors of $\nu$.
For each ramification point $r$ of $\overline{D(\nu)}$ there is an equivalence relation $E_r$ on $D(\nu)$ such that two leaves $w_1, w_2$ of $D(\nu)$ are $E_r$-equivalent if the unique paths from $r$ to $w_1$ and from $r$ to $w_2$ have at least two common nodes (or equivalently, if the unique $w_1w_2$-path of $\overline{D(\nu)}$ does not pass through $r$). The $E_r$-classes will be called {\em{branches}} at $r$. For each $r \in \overline{D(\nu)}$, one of the branches at $r$ will be distinguished, and called the {\em{special branch at $r$}}.
  
  We shall use the Roman letters $x,y,z,w,u,v, \dots$ for leaves of a $D$-set, and the letters $r,r',r''$ or $ r_1, r_2, \dots$ for the ramification points. The Greek letters $\alpha, \nu, \mu, \dots$ refers to the vertices of the tree while we retain the letter $\rho$ for the root. This notation will persist in Section 4, where everything is infinite and the labelling graph-theoretic trees are replaced by general betweenness relations.

 For each $\nu\in T$, we assume there is a fixed  bijection $f_\nu :\text{succ}(\nu) \rightarrow \text{Ram} (D(\nu))$ from the set of successors of 
the vertex $\nu$ in $T$ to the set of ramification points of the $D$-set $D(\nu)$. For $r\in \text{Ram}(D(\nu))$, if $ \omega= f_\nu ^{-1} (r)$ then there is a bijection
$g_{\omega \nu} $ from the $D$-set  $D(\omega)$ to the set of {\em non-special} branches at $r$ (in the $D$-set $D(\nu)$).

Let $\nu_0, \dots, \nu_m$ be vertices of the semilinear order  $T$ such that $\nu_0< \dots < \nu_m$. Then $(\nu_0, \dots, \nu_m)$ is a {\em{chain of successors}} if  $\nu_{i+1} \in \text{succ}(\nu_i)$ for each $i \in \{0, \dots, m-1\}$.
Given the chain $(\nu_0, \dots, \nu_m)$, there is a map $g_{\nu_m \nu_0}$ which we define by induction such that it maps each leaf of the $D$-set $D(\nu_m)$ to a union of branches at a fixed ramification point of $D(\nu_0)$. 
Let $a\in D(\nu_m)$, define $$g_{\nu_m \nu_0}(a):= \{ x \in D(\nu_0): \exists y \in g_{\nu_m \nu_{m-1}}(a) (x\in g_{\nu_{m-1} \nu_0}(y)) \}.$$

The structure above, consisting of the labelled semilinear order and the maps $f_\nu$ and $g_{\mu \nu}$, will be called a (finite) {\em tree of $D$-sets}, and we use symbols $\tau, \tau'$ to denote such structures, and refer to $T$ as its {\em structure tree}. We have not yet described how to parse  a tree of $D$-sets as a first order structure. 

 Let $\tau, \tau '$ be two trees of $D$-sets. An {\em{isomorphism between trees of $D$-sets} } is an isomorphism between the corresponding two lower semilinear orders $\phi:(T,\leq) \rightarrow (T',\leq)$ together with, for any vertex $\nu\in T$, a graph isomorphism $\psi_\nu$ from $\overline{D(\nu)}$ to $\overline{D(\phi(\nu))}$. The maps $\psi_\nu$ are required to map the  special branch at any ramification point $r$ to the special branch at $\psi_\nu(r)$, and to commute with the maps $f_\nu$ and $g_{ \omega \nu}$.
 
If $\tau$ is a tree of $D$-sets with vertices $\mu<\nu$, then we say that the $D$-set $D(\nu)$ {\em omits} the element $u\in D(\mu)$ if there is no $x\in D(\nu)$ such that $u\in g_{\nu\mu}(x)$. If $\nu$ is an immediate successor of $\mu$ this means that $u$ lies in the special branch of the ramification point $f_\mu(\nu)$ of $D(\mu)$.

We shall view a finite tree of $D$-sets $\tau$ as a first order structure in a  language $\mathscr L$ which has a ternary relation $L$, two quaternary relations $L'$ and $S$, a 5-ary relation $S'$, a 6-ary relation $R$ and a 7-ary relation  $Q$. The universe of the structure will be 
on the domain of the root $D$-set of $\tau$  (i.e. the set of {\em leaves} of $\overline{D(\rho)}$, where $\rho$ as usual denotes the root of the structure tree), and the relations are interpreted on $D(\rho)$ as follows. 
 \begin{enumerate}[(i)]\label{enu}
\item $L(x;y,z)$ holds in $\tau$ if either
\begin{enumerate}
    \item \label{L1a} $ x , y, z$ lie in distinct branches at node $r$ of the root $D$-set $D(\rho)$, and the branch containing $x$ is special at $r$ (see Figure \ref{fig:L 1(x;y,z)}),  or
    \tikzset{middlearrow/.style={
        decoration={markings,
            mark= at position 0.75 with {\arrow{#1}} ,
        },
        postaction={decorate}
    }
}
\begin{figure}[H]
    \centering
   \begin{tikzpicture}

[scale=1]
\node at (0,0)[below]{$r$};
\node (x) at (1,1){$x$};
\node (y) at (1,-1) {$y$};
\node (z) at (-1,0){$z$};

\draw [middlearrow={stealth}](x)--(0,0);
\draw (y)--(0,0);
\draw (z)--(0,0);
    
    \end{tikzpicture}
    \caption{$L(x;y,z)$}
    \label{fig:L 1(x;y,z)}
\end{figure}

\item \label{L1b} there is a $D$-set $D(\nu)$ with a ramification point $r$, and leaves $\bar x, \bar y, \bar z$ lying in distinct branches at $r$ with  $\bar x $ lying in the special branch at $r$, such that $x \in g_{\nu\rho}(\bar x), y \in g_{\nu\rho}(\bar y), z \in g_{\nu \rho}(\bar z)$ .
\end{enumerate}

We say in (\ref{L1a}) that $D(\rho)$ {\em{witnesses}} $L(x;y,z)$, and in (\ref{L1b}) that $D(\nu)$ {\em{witnesses}} $L(x;y,z)$. We use the semi-colon to distinguish the special branch in the first argument, while there is symmetry between the other two arguments. Note the diagrammatic convention introduced in Figure \ref{fig:L 1(x;y,z)}: the arrow on the path between the leaf $x$ and the ramification point $r$ indicates that $x$ lies in the special branch at $r$.

\item \label{Sdef}Let $x,y,z,w \in D(\rho)$ be distinct. Then $S(x,y;z,w)$ holds, written $\tau \models S(x,y;z,w)$, if one of the following holds

\begin{enumerate}
    \item \label{a} In the root $D$-set, with universe denoted  $D(\rho)$, and a $D$-relation denoted $D_\rho$ we have $D_\rho(x,y;z,w)$.
    
    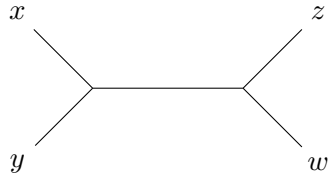
\begin{figure}[H]
        \centering
        \begin{tikzpicture}[scale=2]
\node(x) at (0,1){$x$};
\node(z) at (2,1){$z$};
\node(y) at (0,0){$y$};
\node(w) at (2,0){$w$};

\draw (x)--(0.5, 0.5)--(1.5,0.5)--(z);
\draw (y)--(0.5, 0.5);
\draw(1.5,0.5)--(w);
\end{tikzpicture}
        \caption{$S(x,y;z,w)$}
        \label{fig:S1(x,y;z,w)}
    \end{figure}

    \item \label{b} $x,y,z,w$ lie in distinct non-special branches at node $r$ of $D(\rho)$, and there is some vertex $\nu \geq f_\rho ^{-1}(r)$ such that $D(\nu)$ contains distinct $\bar x , \bar y, \bar z, \bar w$ such that $D_\nu( \bar x, \bar y; \bar z, \bar w)$ holds in $D(\nu)$, and $x \in g_{\nu\rho}(\bar x), y \in g_{\nu\rho}(\bar y), z \in g_{\nu\rho}(\bar z), w \in g_{\nu\rho}(\bar w) $.

\end{enumerate}

We say in (\ref{a}) that $D(\rho)$ {\em{witnesses}} $S(x,y;z,w)$, and in (\ref{b}) that $D(\nu)$ {\em{witnesses}} $S(x,y;z,w)$.


{\bf{Note.}}
1. It is easy to see that given a tree of $D$-sets $\tau$ and $x,y,z\in D(\rho)$, the  relation $L(x;y,z)$ can be witnessed in at most one $D$-set of $\tau$; to see this, observe that if it is witnessed in $D$-sets $D(\mu)$ and $D(\nu)$ then it cannot happen that $\mu<\nu$ or that $\mu$ and $\nu$ are incomparable. Likewise, if $x,y,z,w\in D(\rho)$ then $S(x,y;z,w)$ is witnessed in at most one $D$-set of $\tau$. We omit the details.

2.  The relation $S$ captures the behaviour of $D$-relations except that in $S$ we do not allow equality among its parameters, i.e. axiom $(D4)$ of Definition \ref{Drelation} does not hold for $S$. We use the semi-colon to reflect the symmetry between the first two arguments and the last two.


\item  $Q(x,y;z,w:p;q,s)$ holds in $\tau$  if there is some $D$-set in which the relations $S(x,y;z,w)$ and $L(p;q,s)$ are both witnessed. We interpret this as $S(x,y;z,w)$ and $L(p;q,s)${\em{ happen in the same $D$-set}}. 

\item $R(x;y,z:p;q,s)$ holds in $\tau$  if there is some $D$-set in which the relations $L(x;y,z)$ and $L(p;q,s)$ are both witnessed. Again, we interpret this as saying that the two $L$-relations {\em{happen in the same $D$-set}}.

\item $L' (x;y,z;u)$ holds in $\tau$  if in the $D$-set $D(\nu)$ witnessing $L(x;y,z)$, the element $u$ is omitted, that is, there is no $\bar{u}\in D(\nu)$ with $u\in g_{\nu\rho}(\bar{u})$. We then say that the $D$-set $D(\nu)$ {\em{witnesses}} the relation $L' (x;y,z;u)$.
We use the first semi-colon as in $L$ above, and the second one to distinguish the omitted element.
 \item $S'(x,y;z,w;t)$ holds in $\tau$  if in the $D$-set $D(\nu)$ witnessing $S(x,y;z,w)$, the element  $t$ is omitted (in the same sense as in (v)). We then say that the $D$-set $D(\nu)$ witnesses the relation $S'(x,y;z,w;t)$. 
 The second semi-colon indicates that the last argument is distinguished.
\end{enumerate}

Note that, by the definition, the relations $L'$, $S'$ cannot be witnessed in the root $D$-set.

\medskip

{\bf{Note}}: When we say that one of the above relations holds in the structure $A$ we mean it is witnessed in some $D$-set of $A$. We may thus view a finite tree of $D$-sets as an $\mathscr L$-structure whose universe is the set of directions of the root $D$-set. We use symbols like $A,B,C,E, \dots$ (rather than $\tau, \tau'$) for such finite $\mathscr L$-structures, and write $\tau_A$ for the corresponding structure tree (which will be seen in  Lemma~\ref{isotreeDset} to be determined up to isomorphism by $A$). Also we write $A<B$ if $A$ is an $\mathscr L$-substructure of $B$ in the sense of model theory. Occasionally, we write $L\{x,y,z\}$, as an abbreviation for $L(x;y,z)\vee L(y;x,z)\vee L(z;x,y)$, and we may say that $L\{x,y,z\}$ is {\em witnessed} in a specific $D$-set.\\

Let $\mathscr D$ be the collection of all finite $\mathscr L$-structures arising from finite trees of $D$-sets as described above. If $A\in \mathscr D$, we write $D_A$ (or just $D_\rho$) for the root $D$-set of $A$, sometimes not distinguishing between  the universe of this $D$-set and the $D$-relation.

\begin{lem}\label{3set}
Let $A\in \mathscr D$. and let $x,y,z$ be distinct elements of $A$. Then 
\begin{enumerate}[(i)]
\item If $x,y,z$ are distinct elements of $A$ then $L\{x,y,z\}$ holds in $A$, and
\item any substructure $A'$ of $A$ of size at most 3 lies in $\mathscr D$.
\end{enumerate}
\end{lem}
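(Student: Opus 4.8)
The plan is to unpack the definitions of the six relations and verify the two claims by a direct case analysis on how a triple of leaves of the root $D$-set $D(\rho)$ can be situated in the tree of $D$-sets.

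For part (i), I would start from the observation that any three distinct leaves $x,y,z$ of $D(\rho)$ determine a unique ramification point $r = \mathrm{ram}(x,y,z)$ in some $D$-set of the tree: either already in $D(\rho)$ itself, if the three geodesics in $\overline{D(\rho)}$ meet at a common ramification point of $D(\rho)$, or — if two of them, say $y,z$, lie in a common non-special branch $B$ at some ramification point of $D(\rho)$ through which $x$'s geodesic does not pass in the separating way required by $L$ — by pushing down into the successor $D$-set $D(\nu)$ attached to that branch via the map $g_{\nu\rho}$, and iterating. Since the structure tree $T$ is finite and each step moves strictly up in $T$, this descent terminates in a $D$-set $D(\nu)$ in which lifts $\bar x, \bar y, \bar z$ lie in three distinct branches at a ramification point $r$ of $D(\nu)$. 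Now exactly one of $\bar x, \bar y, \bar z$ lies in the special branch at $r$, or none of them does. In the first case, clause (\ref{L1b}) of the definition of $L$ (or clause (\ref{L1a}) if $\nu = \rho$) gives $L(\cdot;\cdot,\cdot)$ with the special one in the semicolon slot, so $L\{x,y,z\}$ holds; in the second case the three lie in distinct non-special branches, which is precisely the situation that will let me invoke part of clause (\ref{b}) of the definition of $S$ — but here I only need $L$, so I should instead note that in this sub-case I can descend once more into the successor attached to $r$ and relabel, or else observe that $L\{x,y,z\}$ is witnessed at $r$ after all because the definition of a $D$-set forces a special branch at every ramification point and one of the three branches under consideration may be replaced. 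The cleanest formulation is: perform the descent until reaching the $D$-set and ramification point $r$ where the three geodesics first separate; by minimality of this $D$-set in $T$, at least one of the three lifted points lies in the special branch at $r$ (otherwise they would already have separated lower down, contradicting minimality), and then the relevant clause of the definition of $L$ applies. I expect the bookkeeping of this descent, and the precise justification that the first separating $D$-set puts one of the three points in the special branch, to be the main obstacle; everything else is routine.

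For part (ii), let $A' \le A$ be a substructure with $|A'| \le 3$. If $|A'| \le 1$ the claim is trivial (a single leaf is the root $D$-set of a one-leaf tree). If $|A'| = 2$, take the two-leaf tree of $D$-sets (a single edge) as structure tree; none of $L, L', S, S', Q, R$ can hold on fewer than three points, so $A'$ with its induced (empty) structure is exactly this tree of $D$-sets. If $|A'| = 3$, say $A' = \{x,y,z\}$, then by part (i) there is a unique $D$-set $D(\nu)$ of $A$ witnessing $L\{x,y,z\}$ (uniqueness by Note 1 following the definition of $S$); using the ramification point $r = f_\nu^{-1}$-preimage structure I build the structure tree for $A'$ to be a single edge whose root node carries the three-leaf $D$-set (a star with centre one ramification point) obtained by contracting $\overline{D(\nu)}$ onto the geodesics among $\bar x, \bar y, \bar z$, with the special branch inherited, and whose other node carries whatever one-leaf or two-leaf $D$-set records the omissions needed for $L'$. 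I then check that each relation of $\mathscr{L}$ restricted to $\{x,y,z\}$ matches what this small tree of $D$-sets induces: $L$ and $R$ are recorded by the star itself; $L'(x;y,z;u)$ only fires when the omitted slot is filled, and $u$ would have to be one of $x,y,z$, so $L'$ holds on $A'$ iff the omission data agrees, which I arrange by the choice of the successor $D$-set; $S, S', Q$ need four or more points (or, for $Q$ and $R$ with repeated points, reduce to the $L$ case) so impose no further constraint on a $3$-set. Hence $A' \in \mathscr{D}$. The only delicate point here is making sure that the induced $L'$-relation is faithfully reproduced, i.e.\ that the two-node structure tree I build has the right omission pattern; since at most three elements are involved, there are only finitely many patterns and each is realisable, so this is a finite check rather than a genuine difficulty.
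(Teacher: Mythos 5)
Part (ii) of your proposal is correct and matches the paper's construction (single-vertex tree for $|A'|\leq 2$; for $|A'|=3$ a two-vertex tree whose root $D$-set is a star with one leaf special and whose successor $D$-set is an edge on the other two). Your worry about reproducing $L'$ faithfully is moot: $L'(x;y,z;u)$ requires $u$ to be omitted from the $D$-set witnessing $L(x;y,z)$, and none of $x,y,z$ is omitted from that $D$-set, so no $L'$-relation holds on a $3$-element set.

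Part (i) follows the same overall route as the paper (iterate into successor $D$-sets until one of the three representatives is special), but the justification you settle on for the key step is wrong in two places. First, the recursion condition: in any $D$-set where $x,y,z$ have distinct representatives $\bar x,\bar y,\bar z$, these always lie in three \emph{distinct} branches at $r={\rm ram}(\bar x,\bar y,\bar z)$; the case forcing you upward is not ``two of them lie in a common non-special branch'' but rather ``all three lie in distinct \emph{non-special} branches at $r$,'' in which case they have distinct representatives in $D(f^{-1}(r))$ and you recurse there. Second, and more seriously, your ``cleanest formulation'' appeals to the \emph{first} $D$-set where the geodesics separate and to \emph{minimality}: this is vacuous (they separate at ${\rm ram}$ in every $D$-set where they are distinct, starting with the root) and the parenthetical ``otherwise they would already have separated lower down'' is backwards. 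The correct argument runs via \emph{maximality}: take $\mu$ maximal in the structure tree such that $x,y,z$ have distinct representatives $\bar x,\bar y,\bar z$ in $D(\mu)$ (this exists since the tree is finite). If none of $\bar x,\bar y,\bar z$ were special at ${\rm ram}(\bar x,\bar y,\bar z)$, they would lie in three distinct non-special branches there, hence correspond to three distinct leaves of the successor $D$-set, contradicting maximality. So one of them is special, and $D(\mu)$ witnesses $L\{x,y,z\}$. This is exactly the paper's one-line proof; your descent terminates at the same vertex, but you must cite the termination condition (no further ascent possible while keeping the three distinct), not a ``first separation,'' to conclude that a special branch is hit.
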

\begin{proof}
\begin{enumerate}[(i)]
\item Let $\mu$ be a vertex of the structure tree of $A$ maximal  such that there are distinct $\bar{x},\bar{y},\bar{z} \in D(\mu)$ with $x\in g_{\mu\rho}(\bar{x})$, $y\in g_{\mu\rho}(\bar{y})$ and $z\in g_{\mu\rho}(\bar{z})$. Then, by maximality, one of $\bar{x}, \bar{y},\bar{z}$ lies in the special branch at the ramification point ${\rm ram}(\bar{x},\bar{y},\bar{z})$,  and hence $D(\mu)$ withnesses $L\{x,y,z\}$. 
\item Suppose first $|A'|\leq 2$. Then the elements of $A'$ do not determine any ramification point of $D(\rho)$ (the root $D$-set of $A$). It follows that the structure on $A'$ corresponds to a tree of $D$-sets with just one vertex (the root $\rho$ of that of $A$) with the corresponding $D$-set consisting just of the vertices of $A'$ (with an edge between them if $|A'|=2$). If $|A'|=3$ with $A'=\{x,y,z\}$ then as in (i) we may suppose $A\models L(x;y,z)$. Now $A'$ is a structure arising from a tree of $D$-sets  with two vertices $\rho$ (the root) and its successor $\nu$. Here $D(\rho)$ has a ramification point $r$ joined to just the three leaves $x,y,z$ with $x$ special, and $D(\nu)$  consists of  just an edge joining the two vertices $g_{\nu\rho}^{-1}(y)$ and $g_{\nu\rho}^{-1}(z)$. 
\end{enumerate}
\end{proof}

\begin{lem}\label{D_rho}
Let $A\in \mathscr D$ have a root $\rho$. Then the relation $D_\rho$ on $D(\rho)$ satisfies the following:
\noindent
for all $x,y,z,w \in D(\rho), \   
 D_\rho (x,y;z,w)\Leftrightarrow [(((x=y) \vee (z=w)) \wedge \{x,y\} \cap \{z,w \}=\emptyset) \vee (x,y,\\ z,w $ are all distinct $\wedge S(x,y;z,w)\wedge(\forall t) (\neg S'(x,y;z,w;t)))].$
 \end{lem}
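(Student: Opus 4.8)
The plan is to unwind the definitions of $S$, $S'$ and $D_\rho$ directly, using the fact (Note 1 after the definition of $S$) that $S(x,y;z,w)$ is witnessed in at most one $D$-set of $A$, together with Lemma~\ref{3set}(i). The statement is a biconditional, so I would prove the two directions separately, and within each I would split according to whether $\{x,y\}\cap\{z,w\}=\emptyset$ or not.

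First, the easy cases with a coincidence among the parameters. Suppose $D_\rho(x,y;z,w)$ holds. By axiom (D2) applied to $D_\rho$ we cannot have three of $x,y,z,w$ equal in a way that contradicts (D1), and the only way (D4) forces $D_\rho(x,y;z,w)$ for a non-generic tuple is when $x=y\notin\{z,w\}$ or $z=w\notin\{x,y\}$; conversely (D4) gives that these coincidence patterns always satisfy $D_\rho$. So the first disjunct on the right-hand side is exactly the set of tuples in $D_\rho$ for which $\{x,y\}\cap\{z,w\}=\emptyset$ but not all of $x,y,z,w$ are distinct. That handles the ``degenerate'' half of the equivalence on both sides. (One should also note that if, say, $x=z$, then neither side holds: $D_\rho(x,y;z,w)$ fails by (D2)/(D4) conventions, and $S$ is only defined for distinct parameters, so the right-hand side fails too.)

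Now assume $x,y,z,w$ are all distinct. Here I must show $D_\rho(x,y;z,w) \Leftrightarrow S(x,y;z,w)\wedge \forall t\,\neg S'(x,y;z,w;t)$. For the forward direction: if $D_\rho(x,y;z,w)$ holds then the root $D$-set witnesses $S(x,y;z,w)$ via clause (\ref{a}) of the definition of $S$; and since $L',S'$ can never be witnessed in the root $D$-set (the remark immediately before the final Note), and $S(x,y;z,w)$ is witnessed in \emph{only} the root $D$-set here (uniqueness of witness, Note 1), there is no $t$ with $S'(x,y;z,w;t)$. For the converse: suppose $S(x,y;z,w)$ holds and no $t$ has $S'(x,y;z,w;t)$. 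The relation $S$ is witnessed in a unique $D$-set $D(\nu)$. If $D(\nu)=D(\rho)$ we are in clause (\ref{a}) and get $D_\rho(x,y;z,w)$ directly. The remaining task is to rule out $\nu>\rho$, i.e.\ clause (\ref{b}). The key point is that when $S(x,y;z,w)$ is witnessed strictly above the root by some $D(\nu)$ with $\nu\geq f_\rho^{-1}(r)$, the four points $x,y,z,w$ lie in distinct non-special branches at the node $r=$ some ramification point, and the predecessor structure forces an omitted element: concretely, the $D$-set $D(\nu)$ sits above a ramification point $r$ of $D(\rho)$ whose \emph{special} branch is non-empty in $A$ (or can be shown to contain a leaf, using that $A$ is a genuine tree of $D$-sets with $g$-maps onto the non-special branches), and any such leaf $t$ in that special branch is omitted by $D(\nu)$, giving $S'(x,y;z,w;t)$ --- contradiction. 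So $\nu=\rho$, and we are done.

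\textbf{Main obstacle.} The delicate step is the converse in the distinct-parameters case, specifically showing that $\forall t\,\neg S'(x,y;z,w;t)$ genuinely forces the witness of $S$ to be the root. This requires knowing that whenever a ramification point $r$ of $D(\rho)$ is ``used'' (i.e.\ $f_\rho^{-1}(r)$ exists and carries a non-trivial $D$-set above it witnessing some $S$-relation among points of $D(\rho)$), the special branch at $r$ contains at least one leaf of $D(\rho)$, so that an omitted element is available to serve as the witness $t$ for $S'$. I expect this to follow from the structural conventions on trees of $D$-sets --- trees $\overline{D(\nu)}$ have no degree-$2$ vertices and every ramification point has a distinguished special branch, and the $g_{\omega\nu}$ maps are bijections onto the \emph{non-special} branches --- but pinning down exactly why the special branch is non-empty (rather than, say, being a single edge ending at a leaf that might or might not be in the substructure) is the point that needs care, and may implicitly use that we are working inside a fixed $A\in\mathscr D$ rather than an arbitrary partial configuration. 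If in fact the special branch can be a single leaf, that leaf \emph{is} in $D(\rho)$ and the argument goes through; the only real worry would be a special branch that is ``empty'' in a degenerate sense, which the no-dyadic-vertex condition should preclude.
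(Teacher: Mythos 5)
Your proof is correct and follows essentially the same route as the paper's: the forward direction uses uniqueness of the witnessing $D$-set plus the fact that $S'$ cannot be witnessed at the root, and the converse derives a contradiction by producing a leaf $t$ in the special branch at the relevant ramification point of $D(\rho)$. The obstacle you flag is not an issue: branches at a ramification point are by definition $E_r$-classes of leaves of $\overline{D(\rho)}$, so every branch (in particular the special one) contains at least one leaf of $D(\rho)$, i.e.\ an element of $A$, which is exactly the one-line justification the paper gives.
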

 \begin{proof} We may suppose that $x,y,z,w$ are distinct.

$\Rightarrow$. Suppose $D_\rho (x,y;z,w)$. Then $S(x,y;z,w)$ holds, witnessed in the root $D$-set. As $D(\rho)$ contains all elements of $A$ and is the only $D$-set witnessing $S(x,y;z,w)$, we have $(\forall t) \neg S'(x,y;z,w;t)$.
  
 $\Leftarrow$. Suppose $S(x,y;z,w)\wedge(\forall t) (\neg S'(x,y;z,w;t))$ holds. For a contradiction, we will assume that the $D$-set witnessing $S(x,y;z,w)$ is not the root. Then there is a lower $D$-set in which $x,y,z,w$ lie in distinct branches $\bar x, \bar y, \bar z, \bar w$ respectively at a ramification point $r$. As $S(x,y,z,w)$ is witnessed higher up, none of $x,y,z,w$ lies in a special branch at $r$. Since each ramification point has a special branch, some $t\in A$ lies in the special branch at $r$. Then $ S'(x,y;z,w;t)$ holds, which is a contradiction. 
\end{proof}
Consider an $\mathscr L$-structure $A\in \mathscr D$ with tree of $D$-sets $\tau$ with root $\rho$ and let $\nu$ be a successor of $\rho$. We define an $\mathscr L$-structure $A_\nu$ whose domain is the set of leaves of the $D$-set $D(\nu)$, that is, the set of non-special branches  in $D(\rho)$ at $f_\rho(\nu)$. To define the relations to $A_\nu$, suppose first that $\bar{a},\bar{b},\bar{c}\in A_\nu$ are distinct, and let $a\in g_{\nu\rho}(\bar{a})$, $b\in g_{\nu\rho}(\bar{b})$ and $c\in g_{\nu\rho}(\bar{c})$. Then 
$A_\nu\models L(\bar{a};\bar{b},\bar{c})$ if and only if $A\models L(a;b,c)$. It is easily checked that this is well--defined, i.e. independent of the choice of $a,b,c$. The relations $S,L',S', Q,R$ are defined similarly on $A_\nu$. If $r=f_\rho(\nu)$, we also sometimes write $A_\nu$ as $A_r$.

Given a structure tree $\tau$, we define the height $h(\tau)$  of $\tau$ to be the number of vertices in the longest path from a leaf of $\tau$ to the root $\rho$. If $\mu$ is a vertex of $\tau$, then $\tau_\mu$ is the subtree of $\tau$ induced on $\{\sigma \in \tau: \sigma \geq \mu\}$. If $A\in \mathscr D$ with structure tree $\tau$, we put $h(A):=h(\tau)$. 

\begin{lem}\label{indtreeDset}
Let $A\in \mathscr D$ with tree of $D$-sets $\tau$ having  root $\rho$ with a successor $\nu$. Then the following hold.
\begin{enumerate}[(i)]
\item $A_\nu$ is isomorphic to a substructure of $A$.
\item $A_\nu \in \mathscr D$, with structure tree $\tau_\nu$.
\item $h(\tau_\nu)<h(\tau)$.
\item $|A_\nu|<|A|$.
\end{enumerate}
\end{lem}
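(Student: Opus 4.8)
The plan is to prove all four parts together by a careful unpacking of the definitions, the key tool being the substructure $A_\nu$ constructed just before the lemma. For part (i), I would argue directly that $A_\nu$ embeds into $A$ by choosing, for each leaf $\bar a \in D(\nu)$, a representative $a \in g_{\nu\rho}(\bar a) \subseteq D(\rho)$; distinct leaves give distinct representatives since the sets $g_{\nu\rho}(\bar a)$ are pairwise disjoint (they are distinct non-special branches at $r = f_\rho(\nu)$). Writing $A'$ for the induced $\mathscr L$-substructure of $A$ on this set of chosen representatives, the defining clause $A_\nu \models L(\bar a;\bar b,\bar c) \Leftrightarrow A \models L(a;b,c)$ (and the analogous clauses for $S,L',S',Q,R$, which the text asserts are defined "similarly") is precisely the statement that $\bar a \mapsto a$ is an $\mathscr L$-isomorphism $A_\nu \to A'$. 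The one thing needing a remark is well-definedness/independence of the choice of representatives, which the text already grants is "easily checked"; I would note it follows because whether $a,b,c$ lie in distinct branches at a given ramification point of some $D(\mu)$ with $\mu \geq \nu$ depends only on which branches $\bar a,\bar b,\bar c$ determine, and similarly for omission.

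For part (ii), I would exhibit the tree of $D$-sets whose associated $\mathscr L$-structure is $A_\nu$: it is literally the subtree $\tau_\nu = \tau_{|\{\sigma : \sigma \geq \nu\}}$, with root $\nu$, each vertex still labelled by its $D$-set $D(\sigma)$, and the maps $f_\sigma, g_{\omega\sigma}$ for $\sigma \geq \nu$ inherited unchanged from $\tau$. This is a finite lower semilinear order with root (a down-set-style restriction of a semilinear order to an up-cone is again lower semilinear — $\{x \leq a\}$ for $a \geq \nu$ is unchanged above $\nu$ and $\nu$ is the new root), the $f_\sigma$ are still bijections, and the coherence conditions defining a tree of $D$-sets are local to vertices and pairs $\mu < \sigma$ with $\mu \geq \nu$, hence survive restriction. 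Then I would check that parsing this tree of $D$-sets as an $\mathscr L$-structure on the leaves of its root $D$-set $D(\nu)$ yields exactly $A_\nu$: the witnessing clauses (i)--(vi) in the definition of the $\mathscr L$-structure, applied to $\tau_\nu$, mention only $D$-sets $D(\sigma)$ with $\sigma \geq \nu$, and matching these against the defining conditions for the relations on $A_\nu$ is a direct comparison using part (i) and the definition of $A_\nu$. Hence $A_\nu \in \mathscr D$ with structure tree $\tau_\nu$.

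Parts (iii) and (iv) are then immediate bookkeeping. Since $\nu$ is a \emph{successor} of the root $\rho$, the vertex $\rho$ lies on some leaf-to-root path of $\tau$ that is strictly longer than any leaf-to-root path of $\tau_\nu$ (namely, extend a longest path in $\tau_\nu$ by the edge from $\nu$ down to $\rho$), so $h(\tau_\nu) < h(\tau)$, giving (iii); and for (iv), $|A_\nu| = |D(\nu)| = $ the number of non-special branches at $r = f_\rho(\nu)$, which is strictly less than the number of leaves of $\overline{D(\rho)}$, i.e. $|A|$, since $\overline{D(\rho)}$ has at least one leaf outside the cone of $r$ (indeed the special branch at $r$ is nonempty, and $\overline{D(\rho)}$, having no dyadic vertices and a genuine ramification point $r$, has leaves not in the non-special branches at $r$) — more simply, $A_\nu \cong A'$ is a proper substructure of $A$ by part (i) because the special branch at $r$ is nonempty, so at least one element of $D(\rho)$ is not a chosen representative. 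I do not expect a serious obstacle here; the only delicate point is making the "defined similarly" clauses for $S,L',S',Q,R$ on $A_\nu$ fully explicit and confirming well-definedness, which is where I would spend the most care, and confirming that $\tau_\nu$ genuinely satisfies every clause in the (somewhat long) definition of a tree of $D$-sets.
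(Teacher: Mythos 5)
Your proof is correct; the paper itself gives no argument here (it states only that ``all parts are elementary, and we omit the details''), and your write-up supplies exactly the natural details: the embedding via a choice of representatives from the pairwise disjoint branches $g_{\nu\rho}(\bar a)$, the identification of $\tau_\nu$ as the structure tree of $A_\nu$ by matching the witnessing clauses, and the height/cardinality counts using the nonempty special branch at $f_\rho(\nu)$.
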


\begin{proof} All parts are elementary, and we omit the details. 
\end{proof}

\begin{rem} \rm It follows from the last lemma that the above construction can be iterated for a successor of $\nu$. Thus, inductively, for any vertex $\mu$ of the structure tree $\tau$ of $A$, there is a corresponding $\mathscr L$-structure $A_\mu$, and all parts of Lemma~\ref{indtreeDset} hold with $\mu$ replacing $\nu$. This is a convenient tool for inductive arguments.
\end{rem}

\begin{prop}\label{isotreeDset}
Suppose that $\tau_1,\tau_2$ are trees of $D$-sets with corresponding $\mathscr L$-structures $A_1,A_2$ and let $\chi:A_1\to A_2$ be an isomorphism. Then $\chi$ induces an isomorphism $\phi:\tau_1\to \tau_2$ of trees of $D$-sets. 
\end{prop}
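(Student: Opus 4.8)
The plan is to argue by induction on the height $h(A_1)=h(A_2)$ of the structure trees, the content being that the structure tree, its labelling $D$-sets, their special branches, and the maps $f_\nu,g_{\omega\nu}$ are all recoverable from the $\mathscr L$-structure. For the root $D$-set, Lemma~\ref{D_rho} shows that $D_\rho$ is definable in $A$ from $S$, $S'$ and equality; since $\chi$ is an $\mathscr L$-isomorphism it therefore carries $D_{\rho_1}$ onto $D_{\rho_2}$, i.e.\ it is an isomorphism of the root $D$-sets, and by the correspondence between finite $D$-sets and finite unrooted trees without dyadic vertices (\cite{cameron1987some}) it extends uniquely to a graph isomorphism $\psi_\rho\colon\overline{D(\rho_1)}\to\overline{D(\rho_2)}$ with $\psi_\rho(\mathrm{ram}(x,y,z))=\mathrm{ram}(\chi x,\chi y,\chi z)$. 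In particular $\psi_\rho$ maps ramification points to ramification points and, at each ramification point $r$, maps the branches at $r$ bijectively onto the branches at $\psi_\rho(r)$.

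First I would check that $\psi_\rho$ also respects the special branches. The key supporting fact is: for distinct $x,y,z\in A$, the relation $L(x;y,z)$ holds and is witnessed in the root $D$-set if and only if $L(x;y,z)\wedge(\forall u)\neg L'(x;y,z;u)$. Indeed, if $L(x;y,z)$ is witnessed at $D(\rho)$ then no element of $A$ is omitted there, since the universe of $A$ is exactly $D(\rho)$, so no $L'(x;y,z;u)$ holds; conversely, if $L(x;y,z)$ holds but is witnessed at a proper descendant $D(\nu)$, take $\mu\in\mathrm{succ}(\rho)$ with $\mu\le\nu$ and put $r_0=f_\rho(\mu)$, so that $\bigcup_{a\in D(\nu)}g_{\nu\rho}(a)$ avoids the (nonempty) special branch of $r_0$; then every $u$ in that branch is omitted by $D(\nu)$, so $L'(x;y,z;u)$ holds. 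Granting this, a branch $B$ at a ramification point $r$ of $D(\rho)$ is the special branch if and only if there exist $x\in B$ and $y,z$ in two other branches at $r$ with $L(x;y,z)\wedge(\forall u)\neg L'(x;y,z;u)$: for such a triple one has $\mathrm{ram}(x,y,z)=r$, and by the definition of $L$ the relation $L(x;y,z)$ is then witnessed at $D(\rho)$ exactly when the branch of $x$ at $r$ is the special one, in which case the condition holds for every such triple. Since $\chi$ preserves $L$ and $L'$, it follows that $\psi_\rho$ carries the special branch at $r$ onto the special branch at $\psi_\rho(r)$.

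For the induction step I would define $\phi$ on $\mathrm{succ}(\rho_1)$ by $f_{\rho_2}(\phi(\nu))=\psi_\rho(f_{\rho_1}(\nu))$; this is a bijection onto $\mathrm{succ}(\rho_2)$, since $f_{\rho_1},f_{\rho_2}$ are bijections onto the sets of ramification points and $\psi_\rho$ is a bijection between them. Fix $\nu\in\mathrm{succ}(\rho_1)$. Restricting $\psi_\rho$ to the non-special branches at $f_{\rho_1}(\nu)$, and identifying these (via $g_{\nu\rho_1}$) with the leaves of $D(\nu)$ and the non-special branches at $f_{\rho_2}(\phi(\nu))$ (via $g_{\phi(\nu)\rho_2}$) with the leaves of $D(\phi(\nu))$, we obtain a bijection $\chi_\nu\colon A_{1,\nu}\to A_{2,\phi(\nu)}$; since the relations of $A_\nu$ are transported from those of $A$ along these identifications (the construction preceding Lemma~\ref{indtreeDset}) and $\chi$ preserves all of $\mathscr L$, $\chi_\nu$ is an $\mathscr L$-isomorphism. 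By Lemma~\ref{indtreeDset}, $A_{1,\nu},A_{2,\phi(\nu)}\in\mathscr D$ with structure trees $\tau_{1,\nu},\tau_{2,\phi(\nu)}$ and $h(\tau_{1,\nu})<h(\tau_1)$, so the induction hypothesis gives an isomorphism of trees of $D$-sets $\phi_\nu\colon\tau_{1,\nu}\to\tau_{2,\phi(\nu)}$ induced by $\chi_\nu$, with $\phi_\nu(\nu)=\phi(\nu)$ and with root graph-isomorphism restricting to $\chi_\nu$ on leaves. I would then assemble $\phi$ from $\phi(\rho_1)=\rho_2$, the bijection just defined on $\mathrm{succ}(\rho_1)$, and the maps $\phi_\nu$ on the pairwise disjoint subtrees $\tau_{1,\nu}$, and assemble $(\psi_\mu)_\mu$ from $\psi_{\rho_1}=\psi_\rho$ and the root graph-isomorphisms of the $\phi_\nu$. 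It remains to check, routinely, that $\phi$ is an isomorphism of the underlying semilinear orders (each non-root vertex lies above a unique successor of $\rho_1$, and the covering relation is preserved and reflected), that each $\psi_\mu$ sends special branches to special branches (at $\rho_1$ by the previous paragraph, elsewhere by induction), and that the $\psi_\mu$ commute with the maps $f$ and $g$ (at $\rho_1$ this is the definition of $\phi$ on successors together with the identity $\psi_\nu=\chi_\nu$, elsewhere by induction). The base case $|A|\le2$ is immediate, the structure tree then consisting of a single vertex whose $D$-set has no ramification point.

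I expect the crux to be the root step, specifically the characterisation that $L(x;y,z)$ is witnessed at the root precisely when $L(x;y,z)\wedge(\forall u)\neg L'(x;y,z;u)$, and the resulting recovery of the special branches of the root $D$-set from the $\mathscr L$-structure; the remainder is careful but routine bookkeeping to confirm that the inductively produced data glue to an isomorphism of trees of $D$-sets.
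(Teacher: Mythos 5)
Your proof is correct and follows essentially the same route as the paper's: induction on the height of the structure tree, recovery of the root $D$-relation via Lemma~\ref{D_rho} and its unique extension to a graph isomorphism of $\overline{D(\rho_1)}\to\overline{D(\rho_2)}$, and passage to the induced $\mathscr L$-structures $A_\nu$ on the successors. Your explicit verification that special branches are preserved, via the characterisation of root-witnessed $L$-relations by $L(x;y,z)\wedge(\forall u)\neg L'(x;y,z;u)$, is a point the paper's proof leaves implicit and is a worthwhile addition.
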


\begin{proof} We apply induction on $h(\tau_1)$. Let $\rho_1,\rho_2$ be the roots of $\tau_1,\tau_2$ respectively, and put $\phi(\rho_1)=\rho_2$. 

 For the base case, suppose $h(\tau_1)=1$. Then $\tau_1$ has just the root $\rho_1$ and $D(\rho_1)$ has no ramification points, so at most two leaves. Thus $A_1$ consists of a set of size at most 2 with none of the $\mathscr L$-relations holding, and as $\chi$ is an isomorphism the same holds for $A_2$. Hence $\tau_2$ has one vertex $\rho_2$, and $\chi$ induces a unique isomorphism $\tau_1\to \tau_2$ and $D(\rho_1)\to D(\rho_2)$.

For the inductive step, suppose $m:=h(\tau_1)\geq 2$. By Lemma~\ref{D_rho}, $\chi$ determines an isomorphism of $D$-structures $D(\rho_1)\to D(\rho_2)$. This extends to a unique graph isomorphism (which we denote by $\bar{\chi}$) $\overline{D(\rho_1)}\to \overline{D(\rho_2)}$ taking the  ramification points of $\overline{D(\rho_1)}$ to the  ramification points  of $\overline{D(\rho_2)}$.

For each $r\in {\rm Ram}(D(\rho_1))$ put $\phi(f^{-1}_{\rho_1}(r))=f^{-1}_{\rho_2}(\bar{\chi}(r))$ to obtain a bijection ${\rm succ}(\rho_1)\to {\rm succ}(\rho_2)$. Let $\nu_1$ be the successor of $\rho_1$ corresponding to $r$ and $\nu_2$ the successor of $\rho_2$ corresponding to $\bar{\chi}(r)$. We claim that $\chi$ induces an isomorphism $\phi_\nu$ from the $\mathscr L$-structure $A_{\nu_1}$ to the $\mathscr L$-structure $A_{\nu_2}$. Indeed, $\bar{\chi}$ gives a bijection $D(\nu_1)\to D(\nu_2)$, and the fact that it is an isomorphism of $\mathscr L$-structures  follows from the definition of the $A_{\nu_i}$.

Since $h(A_{\nu_1})<h(A)$ (by Lemma~\ref{indtreeDset}(iii)), it follows by induction that $\chi$ induces  a unique  isomorphism from the tree of $D$-sets corresponding to $A_{\nu_1}$ to that corresponding to $A_{\nu_2}$. This holds for all successors of $\rho_1$, and the result follows. 
\end{proof}

\subsection {One-point extensions}\label{one point extension} 

Fix an $\mathscr L$-structure $A\in \mathscr D$. We want to specify the possible forms of a one-point extension $E=A\cup \{e\}$
of $A$ such that $E \in \mathscr D$ and $A$ is an $\mathscr L$-substructure of $E$. We  first describe some one-point extensions.\\ 
{\bf Type \Romannum{1} }(Star-like): To obtain $\tau_E$, which is the structure tree on the $\mathscr L$-structure $E$, from $\tau_A$, we add a new root $\rho _E$ under the root $\rho_A$ of the structure tree $\tau_A$, such that $D(\rho_E)$ looks like a star with one ramification point (the centre) and non-special branches each containing a single leaf and corresponding to the leaves in the root $D$-set $D(\rho_A)$ of $A$, and a special branch $e$. We shall use  the word {\em star} to describe a tree of this form (a node connected to a finite collection of leaves).  See Figure 3 for an example.
\tikzset{middlearrow/.style={
        decoration={markings,
            mark= at position 0.75 with {\arrow{#1}} ,
        },
        postaction={decorate}
    }
}
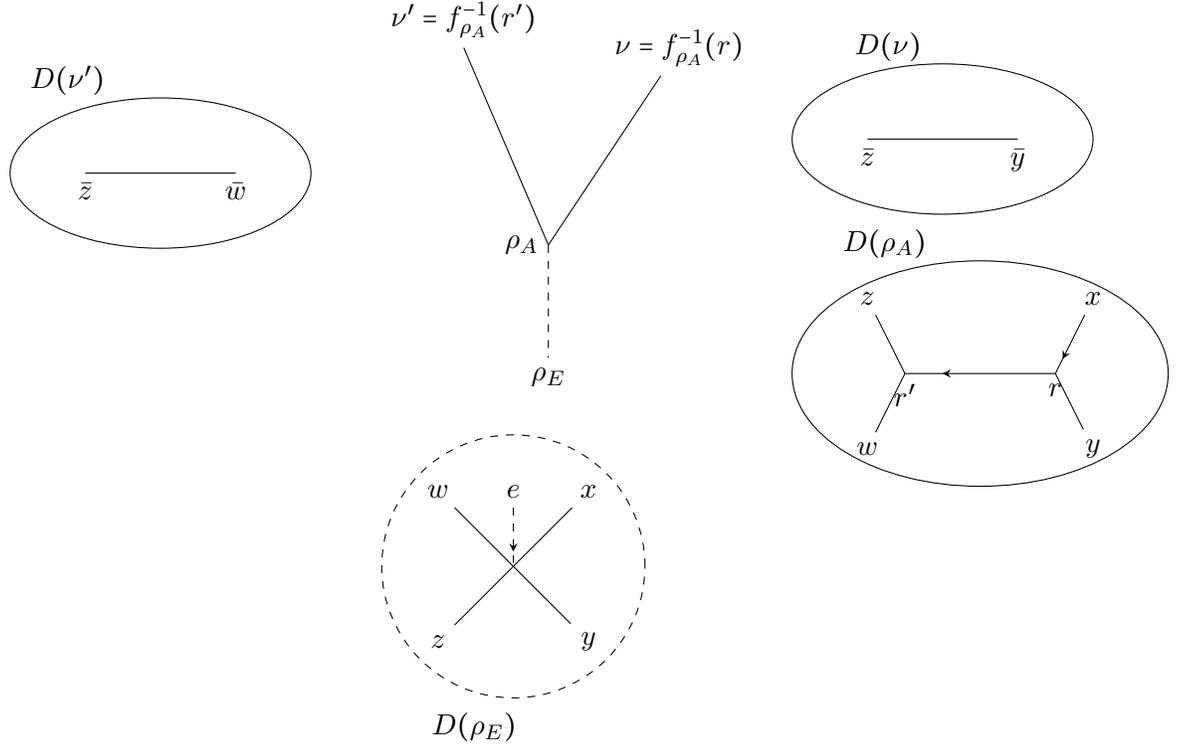
\begin{figure}[H]
     \centering
     \begin{subfigure}[b]{0.3\textwidth}
         \centering
        
     \begin{subfigure}[b]{0.33\textwidth}
         \centering
         $D(\nu ')$
         \begin{tikzpicture}[]
     \draw (0,0) ellipse (2cm and 1cm);
    \node (w) at (1,0)[below]{$\bar w$};
    \node (z) at (-1,0)[below]{$\bar z$};
    
    \draw (1,0)--(-1,0);

    \end{tikzpicture}
    \qquad
     \end{subfigure}

           \end{subfigure}
     \hfill
     \begin{subfigure}{0.3\textwidth}
         \centering
         \begin{subfigure}[b]{0.55\textwidth}
         \centering
         
         \begin{tikzpicture}
         [scale=0.75]
\node (x) at (0,0)[left]{$\rho_A$};
\node at (0,-2) [below] {$\rho_E$};
\node (v) [right] at (1,3.5){$\nu=f^{-1}_{\rho_A}(r)$};
\node at (3.5,7){};

\node at (-0.5,5.25){};
\node at (-0.5,7){};

\node at (2,3.5){};
\node at (1.75, 6.125){};
\node at (1,8){};
\node at (-1.5,3.5)[above]{$\nu '= f_{\rho_A}^{-1}(r')$};

\draw [dashed] (0,0)--(0,-2);
\draw (v)--(0,0);
\draw (-1.5,3.5)--(0,0);
    
    \end{tikzpicture}
            
        \qquad
     \end{subfigure}
          \begin{subfigure}[b]{0.55\textwidth}
 \centering
        \begin{tikzpicture}[]
    \draw (0,0) [dashed] circle (1.75cm);
 \node (x) at (1,1){$x$};
 \node (y) at (1,-1){$y$};
 \node (z) at (-1,-1){$z$};
 \node (w) at (-1,1){$w$};
 \node (e) at (0,1){$e$};
   
 \draw  [middlearrow={stealth}][dashed] (e)--(0,0);
  \draw (x)--(0,0);
 \draw (y)--(0,0);
 \draw (z)--(0,0);
 \draw (w)--(0,0);
    
 \end{tikzpicture}
   $D(\rho_E)$
 \qquad
 \end{subfigure}

         \begin{tikzpicture}
\end{tikzpicture}
        
     \end{subfigure}
     \hfill
     \begin{subfigure}{0.3\textwidth}
         \centering
         
         \begin{subfigure}[b]{0.55\textwidth}
         \centering
         $D(\nu)$
         \begin{tikzpicture}
         
     \draw (0,0) ellipse (2cm and 1cm);
    \node (y) at (1,0)[below]{$\bar y$};
    \node (z) at (-1,0)[below]{$\bar z$};
    
    \draw (1,0)--(-1,0);

    \end{tikzpicture}
            $D(\rho_A)$
        \qquad
     \end{subfigure}
         
          \begin{subfigure}[b]{0.55\textwidth}
         \centering
         \begin{tikzpicture}
      \draw (0,0) ellipse (2.5cm and 1.5cm);
    \node (r) at (1,0)[below]{$r$};
    \node (r') at (-1,0)[below]{$r'$};
    \node (x) at (1.5,1){$x$};
    \node (y) at (1.5,-1){$y$};
    \node (z) at (-1.5,1){$z$};
    \node (w) at (-1.5,-1){$w$};
   
    \draw  [middlearrow={stealth}] (1,0)--(-1,0);
    \draw  [middlearrow={stealth}] (x)--(1,0);
    \draw (y)--(1,0);
    \draw (z)--(-1,0);
    \draw (w)--(-1,0);
    
    \end{tikzpicture}
   
    \qquad
     \end{subfigure}
     
     \end{subfigure}
        \caption{One-point extension:Type \Romannum{1}}
        \label{Type 1}
\end{figure}

Since there is only one ramification point in $D(\rho_E)$, it will have the form $f_{\rho _E}(\rho_A)$, where $\rho_A$ is the immediate successor of $\rho_E$. The $D$-set $D_E = D(\rho_E)$ is a star whose centre is $f_{\rho_{E}} (\rho_A)$ where the branches (here identified with leaves) are of the form $g_{\rho_A\rho_E}(x),\ x$ a leaf in $D(\rho_A)$. The relations on $A$ will also hold in $E:=A \cup \{e \}$. Thus, if $a,b,c \in A$ and $A \models L(a;b,c)$ then $E \models L(a;b,c)$; however this is not witnessed in the root $D$-set of $E$, and indeed, $E\models L'(a;b,c;e)$. Likewise if $a,b,c,d, \in A$ and $A\models S(a,b;c,d)$, then $E\models S'(a,b;c,d;e)$.\\ 

{\bf Type \Romannum{2} }: In this type, we assume that the two structure tree  roots for the two structures $A$ and $E$ are the same, denoted by $\rho$, and we add the new leaf $e$ to the existing $D$-set $D_A$ of the root $\rho$ of $\tau_A$ to obtain the root $D$-set $D_E$ of $E$. We can do this in two ways:
\begin{enumerate}[(a)]
    \item Add a new leaf $e$ adjacent to an existing ramification point $r$ in $\overline{D(\rho)}$, with $e$ non-special at $r$. So the $D$-set $D(\nu)$ corresponding to that ramification point  (i.e. with $\nu=f_\rho^{-1}(r)$) gains a new 
leaf, namely $g_{\nu\rho}^{-1}(e)$. This process iterates through the structure tree, so the definition is inductive on $|A|$. 
\item Create a new ramification point by adding a  node on an existing edge in $\overline{D(\rho)}$, then add a leaf $e$ at this node.
Here we consider two cases:\\
 (\romannum{1}) $e$ is the special branch at this new ramification point.\\
 (\romannum{2}) $e$ is not the special branch at this ramification point.\\
In  both cases a new successor is added to the structure tree, but the $D$-set labelling the new successor has just two endpoints,
and hence there are no modifications higher in the structure tree.
\end{enumerate}

The following lemma is almost immediate and we omit the proof.
\begin{lem} \label{in D}
If $A\in \mathscr D$ and $E$ is a one-point extension of $A$ of Type I or Type II, then $E\in \mathscr D$.
\end{lem}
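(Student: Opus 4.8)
The plan is to verify that each of the proposed one-point extensions produces a structure that is genuinely a tree of $D$-sets, i.e. that it satisfies all the conditions implicit in the Notation section (a finite lower semilinear order rooted structure tree, each vertex labelled by a finite $D$-set corresponding to a tree without dyadic vertices, together with coherent bijections $f_\nu$ and $g_{\omega\nu}$ and a designated special branch at each ramification point). Since $\mathscr D$ was \emph{defined} as the collection of $\mathscr L$-structures arising from such trees, it suffices to exhibit, for a Type I or Type II extension $E$ of $A$, a valid tree of $D$-sets $\tau_E$ whose associated $\mathscr L$-structure is $E$, and to check that the $\mathscr L$-relations read off from $\tau_E$ restrict on $A$ to those of $\tau_A$ (so that $A$ is indeed an $\mathscr L$-substructure of $E$).

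For Type I, I would take $\tau_E$ to be $\tau_A$ with a new root $\rho_E$ adjoined below $\rho_A$, where $D(\rho_E)$ is the star with a single ramification point $c$, one special leaf $e$, and non-special leaves in bijection with the leaves of $D(\rho_A)$; set $f_{\rho_E}(\rho_A)=c$ and let $g_{\rho_A\rho_E}$ be the identification of leaves of $D(\rho_A)$ with the non-special branches at $c$. One checks that this is a legitimate finite $D$-set (a star on $\geq 2$ leaves has exactly one ramification point provided $|A|\geq 2$, and no dyadic vertices; the degenerate cases $|A|\leq 1$ are handled separately, giving a two-leaf or one-leaf star), that the semilinear order condition is preserved (adding a new bottom element keeps all down-sets chains and preserves the existence of lower bounds), and that the $f$- and $g$-maps at all vertices of $\tau_A$ are untouched. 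Then the computation of $\mathscr L$-relations: for $x,y,z\in A$, $L\{x,y,z\}$ is witnessed in exactly the same $D$-set as before (it cannot be witnessed in $D(\rho_E)$ since there all of $x,y,z$ lie in distinct \emph{non-special} branches), and similarly for $S$; moreover the new relations $L'(\cdot;\cdot,\cdot;e)$, $S'(\cdot;\cdot;\cdot;e)$ hold exactly because $e$ is the special branch at $c$ and hence is omitted by $D(\rho_A)$ — this matches the description in the text. So $E$ is the $\mathscr L$-structure of $\tau_E\in$ (trees of $D$-sets), hence $E\in\mathscr D$, and $A\leq E$.

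For Type II, in case (a) I would define $\tau_E$ inductively on $|A|$: at the root, $\overline{D(\rho)}$ gains one new leaf $g_{\rho}^{-1}$-preimage attached to the existing ramification point $r$, non-special; this is still a finite $D$-set with the same ramification points and special branches. The bijection $f_\rho$ is unchanged, but $g_{\nu\rho}$ for $\nu=f_\rho^{-1}(r)$ must be extended to send a new leaf of $D(\nu)$ to the new branch at $r$ — so $D(\nu)$ itself undergoes a Type II(a) one-point extension, and one invokes the induction hypothesis (legitimate since $|A_\nu|<|A|$ by Lemma~\ref{indtreeDset}(iv)) to conclude $A_\nu$ extends to a tree of $D$-sets; reassembling gives $\tau_E$. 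In case (b) one subdivides an edge of $\overline{D(\rho)}$ to create a new ramification point $r'$ with $e$ attached there, sets $f_\rho^{-1}(r')$ to be a brand-new successor $\mu$ of $\rho$ with $D(\mu)$ a single edge (two leaves, no ramification points), and chooses the special branch at $r'$ according to subcase (i) or (ii); since $D(\mu)$ has no ramification points there is nothing to propagate further up, so no induction is needed there. In all subcases one verifies the structure tree stays a rooted lower semilinear order, the no-dyadic-vertex condition holds for every labelling tree (subdividing an edge and immediately attaching a leaf at the new node creates a degree-$3$ vertex, not a degree-$2$ one), and the $\mathscr L$-relations of $A$ are unchanged in $E$ (the new leaf $e$ only adds relations involving $e$).

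The only mild obstacle is bookkeeping in Type II(a): one must be careful that the inductive extension of $D(\nu)$ is compatible with all the higher $g$-maps $g_{\mu\nu}$ and $g_{\mu\rho}$ and does not accidentally create or destroy a ramification point (it does not, since attaching a leaf to an existing ramification point $r$ leaves $r$ a ramification point and introduces no new one), and that the newly created leaf of $D(\nu)$ is itself non-special so that the recursion stays within case (a). Likewise one should confirm the claimed new relations $E\models L'(a;b,c;e)$ and $E\models S'(a,b;c,d;e)$ in Type I, which is immediate from the definitions since $D(\rho_A)$ omits $e$. Beyond this, everything is a routine unwinding of the definitions, which is why the authors omit the proof; I would simply record these checks and conclude $E\in\mathscr D$ with $A\leq E$ in both types.
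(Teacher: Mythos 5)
Your proposal is correct and is exactly the routine verification the paper omits (the authors state the lemma is ``almost immediate''): in each case one writes down the candidate structure tree $\tau_E$, checks the semilinear-order, no-dyadic-vertex, special-branch and $f$-/$g$-map conditions, and confirms the $\mathscr L$-relations restrict on $A$ to those of $\tau_A$. One tiny imprecision: in Type II(a) the induced one-point extension of $A_\nu$ need not again be of subtype (a) --- it may be of Type I or either subtype of Type II --- but since your induction hypothesis covers all types this does not affect the argument.
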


\begin{lem} \label{lema} If $A, E \in \mathscr D$ with $A < E$, and $a,b,c,d \in A$ are all distinct elements, then $D_E(a,b;c,d)\rightarrow D_A(a,b;c,d)$.
\end{lem}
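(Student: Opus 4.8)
The plan is to exploit the uniqueness of witnessing $D$-sets and the structural relationship between $A$ and $E$ as an $\mathscr L$-substructure. Recall from Lemma~\ref{D_rho} that for distinct $a,b,c,d$, the relation $D_E(a,b;c,d)$ holds precisely when $E\models S(a,b;c,d)$ and $E\models (\forall t)\neg S'(a,b;c,d;t)$; that is, $S(a,b;c,d)$ is witnessed in the root $D$-set $D(\rho_E)$ of $E$. So suppose $D_E(a,b;c,d)$ holds. I want to conclude $D_A(a,b;c,d)$, i.e. that $S(a,b;c,d)$ is witnessed in the root $D$-set $D(\rho_A)$ of $A$, with no element of $A$ omitted in the relevant sense.

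First I would analyse how the structure tree $\tau_A$ sits inside $\tau_E$. The key point is that $A<E$ forces a comparison between $\rho_A$ and $\rho_E$: either $\rho_E=\rho_A$ (roughly, Type II-style extensions and their iterates) or $\rho_E<\rho_A$, so that $\rho_A$ is a vertex of $\tau_E$ lying strictly above the root, with $D(\rho_A)$ recoverable from $E$ as $E_{\rho_A}$ (or an iterated version) in the sense of the construction before Lemma~\ref{indtreeDset}. I would make this precise by noting that restriction of $\mathscr L$-structures, combined with Proposition~\ref{isotreeDset}, shows $\tau_A$ embeds into $\tau_E$ compatibly with the $g$-maps, so that $D(\rho_A)$ is obtained from some $D(\mu)$ of $\tau_E$ (with $\mu\geq\rho_E$) by the passage $D(\mu)\mapsto D(\mu)/(\text{branches at a ramification point})$ described in Lemma~\ref{indtreeDset}, iterated finitely often.

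Now, given $D_E(a,b;c,d)$, the relation $S(a,b;c,d)$ is witnessed in $D(\rho_E)$, i.e. $D_{\rho_E}(a,b;c,d)$ holds with $a,b,c,d$ in distinct branches pairing $\{a,b\}$ against $\{c,d\}$ at the relevant ramification point; and $a,b,c,d$ all lie (via the $g$-maps) in the domain of $D(\rho_A)=E_{\mu}$. The $D$-relation on $D(\rho_A)$ is, by the construction of $A_\nu$ (before Lemma~\ref{indtreeDset}) and the definition of the $\mathscr L$-relations on it, the quotient/restriction of $D_{\rho_E}$; since the $D$-relation on a finite tree is preserved under passing to the subtree induced by identifying branches at a ramification point, $D_{\rho_E}(a,b;c,d)$ transports to $D_{\rho_A}(\bar a,\bar b;\bar c,\bar d)$ for the images $\bar a,\bar b,\bar c,\bar d$ — and these images are the elements $a,b,c,d$ themselves under the identification of the domain of $A$ with that of $E_\mu$. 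Hence $D_A(a,b;c,d)$ holds, which is what we want. I would also verify the degenerate alternative in Lemma~\ref{D_rho} cannot interfere: since we assumed $a,b,c,d$ distinct, only the ``all distinct'' clause of Lemma~\ref{D_rho} is in play on both sides.

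The main obstacle I expect is the bookkeeping in the case $\rho_E<\rho_A$: one must check that when $S(a,b;c,d)$ is witnessed at the root of $E$, this information genuinely propagates upward to place $a,b,c,d$ in distinct \emph{non-special} branches at the ramification point $f_{\rho_E}(\cdot)$ on the path toward $\rho_A$, so that the $g$-maps are injective enough to carry the $D$-relation faithfully into $D(\rho_A)$ — in other words, that no pair among $a,b,c,d$ gets merged or sent into a special branch on the way up. This is exactly the content encoded by the clause ``$(\forall t)\neg S'(a,b;c,d;t)$'' in Lemma~\ref{D_rho}: if witnessing were forced higher up than $\rho_A$ there would be an omitted special element, contradicting $D_E(a,b;c,d)$. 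So the argument should close once this propagation is spelled out, most cleanly by induction on $h(\tau_E)-h(\tau_A)$ (equivalently, on the number of one-point extension steps of Type I/II needed to build $E$ from $A$), using Lemma~\ref{indtreeDset} and Lemma~\ref{in D} at each step and reducing to the single-step case where the computation is immediate from the explicit descriptions of Types I and II.
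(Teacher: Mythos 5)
Your proposal contains the right key observation, but it is buried at the end and the route you take to reach it has a real problem. The structural claims in your second paragraph --- that $\tau_A$ embeds into $\tau_E$ compatibly with the $g$-maps, and that $D(\rho_A)$ is recovered from some $D(\mu)$ of $\tau_E$ by finitely many of the quotient steps of Lemma~\ref{indtreeDset} --- are not justified by anything available at this point in the paper. Proposition~\ref{isotreeDset} concerns \emph{isomorphisms} between members of $\mathscr D$, not substructures, and $\mathscr D$ is not closed under substructure; the precise relationship between $\tau_A$ and $\tau_E$ when $A<E$ (in particular, whether the root $D$-relation of $A$ agrees with the restriction of that of $E$) is exactly what Lemmas~\ref{lemb} and~\ref{lemtype} establish, and those lemmas are proved \emph{after} this one and \emph{use} it. So if you make your structural picture rigorous via the paper's own machinery, the argument is circular, and making it rigorous independently would amount to re-proving those later lemmas from scratch.

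None of that machinery is needed. Lemma~\ref{D_rho} characterises $D_\rho$ on distinct quadruples by the formula $S(a,b;c,d)\wedge(\forall t)\,\neg S'(a,b;c,d;t)$, and both $S$ and $S'$ are relation symbols of $\mathscr L$, so their truth on tuples from $A$ is the same in $A$ and in $E$ because $A$ is an $\mathscr L$-substructure of $E$. Hence $D_E(a,b;c,d)$ gives $S(a,b;c,d)\wedge(\forall t\in E)\,\neg S'(a,b;c,d;t)$, which trivially implies $S(a,b;c,d)\wedge(\forall t\in A)\,\neg S'(a,b;c,d;t)$ since $A\subseteq E$, and applying Lemma~\ref{D_rho} in $A$ yields $D_A(a,b;c,d)$. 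This is the paper's entire proof; your closing paragraph (``this is exactly the content encoded by the clause $(\forall t)\neg S'$'') is that proof in disguise, and you should promote it from an afterthought to the whole argument, discarding the analysis of structure trees, the case split on $\rho_E$ versus $\rho_A$, and the proposed induction on height.
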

\begin{proof}
As $a,b,c,d$ are distinct, Lemma~\ref{D_rho} yields  $D_E(a,b;c,d)\Rightarrow S(a,b;c,d) \wedge(\forall t\in E)\neg S'(a,b;c,d;t)\Rightarrow $\\
              $S(a,b;c,d) \wedge(\forall t\in A)\neg S'(a,b;c,d;t)\Rightarrow D_A(a,b;c,d)$.
              
\end{proof} 



\begin{lem} \label{lemb} Suppose $A, E \in \mathscr D$ with $A<E$, and there is no $e \in E$ such that $A\cup \{ e \}$ is a Type $\Romannum{1}$ extension of $A$. Then the root $D$-relation $D_A$ of $A$, and the relation $D_E(A)$ induced on $A$ by the root $D$-relation $D_E$ of $E$, are the same.  
 \end{lem}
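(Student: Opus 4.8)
The plan is to show that under the stated hypothesis, the two $D$-relations on $A$ agree by exploiting the characterization of $D_E(A)$ from Lemma~\ref{D_rho} and comparing it with $D_A$. By Lemma~\ref{lema} we already have one implication: $D_E(a,b;c,d) \Rightarrow D_A(a,b;c,d)$ for distinct $a,b,c,d \in A$, and the cases with repeated parameters are handled immediately by (D4)-type considerations (i.e. the first disjunct in Lemma~\ref{D_rho}, which refers only to equalities among the arguments and not to the ambient structure). So the real content is the reverse implication: assuming $D_A(a,b;c,d)$ holds, we must derive $D_E(a,b;c,d)$, and this is where the hypothesis that no $e\in E$ gives a Type~I extension of $A$ must be used.

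**Key steps.** First I would reduce to distinct parameters $a,b,c,d\in A$ and invoke Lemma~\ref{D_rho} applied to $E$: it suffices to show $E\models S(a,b;c,d)$ and $E\models (\forall t)\neg S'(a,b;c,d;t)$. Since $A<E$ and $A\models D_A(a,b;c,d)$, certainly $A\models S(a,b;c,d)$ (witnessed in the root $D$-set $D_A$), and as $S$ is a relation of the $\mathscr{L}$-structure preserved under the substructure relation, $E\models S(a,b;c,d)$ as well. The crux is then to show that no $t\in E$ witnesses $S'(a,b;c,d;t)$ in $E$. Suppose for contradiction some $t\in E$ does. Then the $D$-set of $\tau_E$ witnessing $S(a,b;c,d)$ is not the root $D_E$; equivalently, $a,b,c,d$ all lie in \emph{non-special} branches at some ramification point $r$ of $\overline{D_E}$, and $t$ lies in the special branch at $r$ (this is exactly the mechanism in the proof of Lemma~\ref{D_rho}). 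Now I would analyze how $\overline{D_E}$ restricts to $\overline{D_A}$: the inclusion $A<E$ means $\overline{D_A}$ is the tree induced on the leaves of $A$ inside $\overline{D_E}$ with dyadic vertices suppressed. Since $D_A(a,b;c,d)$ holds and is witnessed in $D_A$ itself (as $a,b,c,d$ are distinct and lie in $A$), in $\overline{D_A}$ the path from $a$ to $b$ is disjoint from the path from $c$ to $d$; but in $\overline{D_E}$ these four leaves all meet a common ramification point $r$ from non-special branches — the only way both can happen is that $r$ is a ramification point of $\overline{D_E}$ that does \emph{not} survive into $\overline{D_A}$, which forces the branch structure at $r$ within $A$ to be degenerate in a way that makes the special branch at $r$ contain no element of $A$. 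That is precisely the configuration that allows a new root to be adjoined below $\rho_A$ with a fresh special leaf, i.e.\ a Type~I one-point extension of $A$ — contradicting the hypothesis. I would make this last implication precise by identifying the element $e$: it should be (up to the embedding) the element $t$, or more carefully the leaf of $\overline{D_E}$ lying in the special branch at $r$ whose adjunction to $A$ produces exactly the star configuration of Type~I; one checks the $\mathscr{L}$-structure on $A\cup\{e\}$ matches the Type~I description.

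**Main obstacle.** The delicate point is translating "$a,b,c,d$ lie in non-special branches at a common ramification point $r$ of $\overline{D_E}$ that is invisible in $\overline{D_A}$" into the existence of a genuine Type~I extension $A\cup\{e\}$ of $A$ \emph{with the correct $\mathscr{L}$-structure}. One has to rule out, via the hypothesis, the possibility that $r$ carries non-special branches that partition the elements of $A$ non-trivially (which would make the extension not star-like but of some other, non-Type-I form); showing that the disjointness of the $ab$- and $cd$-paths in $\overline{D_A}$ together with their collapse at $r$ in $\overline{D_E}$ forces every element of $A$ into its own non-special branch at $r$ is the technical heart. Once that is established, the element in the special branch at $r$ closest to $r$ plays the role of $e$, the rest of $\tau_A$ sits above $\rho_A$ unchanged, and $D(\rho_E)$ is the required star, so $A\cup\{e\}$ is a Type~I extension — the desired contradiction. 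Hence no such $t$ exists, $D_E(a,b;c,d)$ holds, and combined with Lemma~\ref{lema} the two relations coincide.
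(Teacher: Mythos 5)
Your skeleton matches the paper's: one direction is Lemma~\ref{lema}; for the converse you assume $D_A(a,b;c,d)$ holds but is not witnessed in the root $D$-set of $E$, extract via Lemma~\ref{D_rho} an element $e\in E\setminus A$ with $E\models S'(a,b;c,d;e)$, so that $a,b,c,d$ lie in distinct non-special branches at a ramification point $s$ of $\overline{D_E}$ with $e$ special, and you aim to contradict the hypothesis by showing $A\cup\{e\}$ is a Type~I extension. The gap is precisely the step you defer as ``the technical heart'': showing that \emph{every} element of $A$ sits in its own non-special branch at $s$. The mechanism you propose for it --- the disjointness of the $ab$- and $cd$-paths in $\overline{D_A}$ versus their collapse at $s$ in $\overline{D_E}$ --- says nothing about elements $x\in A\setminus\{a,b,c,d\}$, and it is exactly those elements that can block the star configuration. (You also assert in passing that $\overline{D_A}$ is the induced subtree of $\overline{D_E}$ on the leaves of $A$; that is essentially the conclusion of the lemma, not a consequence of $A<E$ --- a Type~I extension is the standard counterexample --- so it cannot be used as an intermediate fact.)

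The paper closes this gap by a case analysis on the position in $D_E$ of a witness $x\in A$ to the failure of $A\cup\{e\}$ being Type~I. If $x$ lies in the special branch at $s$ (with $e$), then $E\models S'(a,b;c,d;x)$ with $x\in A$, so $A\models S'(a,b;c,d;x)$, directly contradicting $D_A(a,b;c,d)$ via Lemma~\ref{D_rho}; this case your sketch could absorb. But if $x$ lies in the same branch at $s$ as one of $a,b,c,d$, or if two elements $x,x'\in A$ share a new non-special branch at $s$, the contradiction is obtained only by invoking the relations $Q$ of the language: one locates an auxiliary element ($x'$ special at ${\rm ram}(x,c,d)$ in $D_A$, or $x''$ special at ${\rm ram}(x,x',a)$) and uses the fact that suitable instances of $Q(\cdot,\cdot;\cdot,\cdot:\cdot;\cdot,\cdot)$ holding in $A$ must also hold in $E$ to force $S(a,b;c,d)$ to be witnessed in the root $D$-set of $E$ after all. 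Nothing in your proposal supplies these arguments or a substitute for them, so as written the proof does not go through; you would need to add the case analysis (or an equivalent use of $Q$) to establish that no element of $A$ can share a branch at $s$ with $a,b,c,d$, with $e$, or with another element of $A$.
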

 {\bf{Note.}} We do not here assume that $\lvert E \setminus A \rvert= 1$.
 
 \begin{proof} Let $a,b,c,d \in A$ and assume $D_E(A)(a,b;c,d)$. Then $D_E(a,b;c,d)$. We may suppose that $a,b,c,d$ are distinct. By Lemma \ref{lema} $D_A(a,b;c,d)$.
 
Conversely, let $a,b,c,d \in A$ are distinct, and suppose that $D_A(a,b;c,d)$ but $\neg D_E(A)(a,b;c,d)$.  Let $r,r'$ be as in Figure~\ref{fig:D_A(a,b;c,d)}  in $D_A$. 
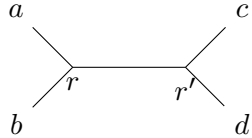
\begin{figure}[H]
\centering
\begin{tikzpicture}[scale=1.5]
\usetikzlibrary{calc}
\node [below](alpha) at (0.5,0.5){$r$};
\node [below]at (1.5, 0.5) {$r'$};
\node(a) at (0,1){$a$};
\node(c) at (2,1){$c$};
\node(b) at (0,0){$b$};
\node(d) at (2,0){$d$};

\draw (a)--(0.5, 0.5)--(1.5,0.5)--(c);
\draw (b)--(0.5, 0.5);
\draw(1.5,0.5)--(d);
\end{tikzpicture}
\caption{$D_A(a,b;c,d)$} \label{fig:D_A(a,b;c,d)}
\end{figure}

 As $A \models S(a,b;c,d)$ and this is not witnessed in the root $D$-set of $E$, it follows from Lemma~\ref{D_rho} that there is $e \in E \setminus A$ such that $E \models S'(a,b;c,d;e)$. 
We have the following  picture in $D_E$, with $e$  special at the shown ramification point $s$ in Figure \ref{fig:$D_E$}.
\tikzset{middlearrow/.style={
        decoration={markings,
            mark= at position 0.75 with {\arrow{#1}} ,
        },
        postaction={decorate}
    }
}
\begin{figure}[H]
\centering
\begin{tikzpicture}
[scale=1]
\node at (0,0)[below]{$s$};
\node(a) at (1,1){$a$};
\node(c) at (1,-1){$c$};
\node(b) at (-1,1){$b$};
\node(d) at (-1,-1){$d$};
\node(w) at (1,0){$e$};

\draw (a)--(0,0);
\draw (b)--(0,0);
\draw (c)--(0,0);
\draw (d)--(0,0);
\draw[middlearrow={stealth}] (w)--(0,0);
\end{tikzpicture}
\caption{$D_E$} \label{fig:$D_E$}
\end{figure}

This picture is a star, and we assume that $A \cup \{e \}$ is not a Type \Romannum{1} extension of $A$. This means that there must be some $x \in A$ (hence in $E$)  witnessing that $A \cup \{ e \}$ is not a Type \Romannum{1} extension of $A$. We consider the various possible positions of $x$ with respect to $a,b,c,d,e$ in $D_E$.\\

{\em{Case ({1})}}. Suppose $x$ lies in the same branch at $s$ as $c$ (with the argument similar if $x$ lies in the same branch as $a,b$ or $d$).  Since $S(a,d;c,x) \wedge (\forall w \in E) \neg S'(a,d;c,x;w)$ holds in $E$ and hence in $A$, $x$ must lie in the same branch as $c$ at $r'$ in $D_A$. Put $r'':=\text{ram}(x,c,d)$. Let $x'\in A$ be in the special branch at $r''$. We assume for convenience that $A\models L(x';c,d)$ (other  cases being similar). Now $Q(a,b;c,d:x';c,d)$ and $Q(a,d;c,x: x';c,d)$ both hold in $A$ and hence both hold in $E$, 
so $S(a,d;c,x)$ and $L(x';c,d)$ are witnessed in the same $D$-set of $E$, which must be the root $D$-set (as $S(a,d;c,x)$ holds there). Likewise, $S(a,b;c,d)$ and $L(x';c,d)$ are witnessd in the same $D$-set of $E$, so $S(a,b;c,d)$ is witnessed in the root $D$-set of $E$, a contradiction.

 {\em{Case {(2)}}}. If $x$ is in the same branch at $s$  as the special branch $e$ in $E$, then we will see $S'(a,b;c,d;x)$ holds in $E$ and hence in $A$. This is impossible, since we have $D_A(a,b;c,d)$, so $A\models S(a,b;c,d) \wedge (\forall t)\neg S'(a,b;c,d;t)$.
 
 {\em{Case {(3)}}}. Suppose neither of (1), (2) holds, but that (to ensure $A\cup\{e\}$ is not a Type I extension of $A$) there exists $x'\in A$ in the same branch as $x$ at $s$ in $D_E$ (distinct from the branches containing $a,b,c,d,e$). Since $S(x,x';u,v)$ (for any distinct $u,v\in \{a,b,c,d\}$) holds in the root $D$-set of $E$, the same holds in $A$. Let $t$ be the unique ramification point of $A$ of form ${\rm ram}(x,x',u)$ for all $u \in \{a,b,c,d\}$. Also let $x''\in A$ lie in the special branch at $t$. We shall suppose 
$L(x'';x,a)$ (there are other similar cases, if say $x''=x$). Now $Q(a,d;c,d:x'';x,a)$ holds in $A$ and hence in $E$,
as does $Q(u,v;x,x':x'';x,a)$ for any distinct $u,v\in \{a,b,c,d\}$. Since such relations $S(u,v;x,x')$ are witnessed in the root $D$-set of $E$, so is $L(x'';x,a)$ and hence so also is $S(a,b;c,d)$, a contradiction. 

\end{proof}
Similarly, it is readily seen that if $A,E\in \mathscr D$ with $A<E$, and there is no $e \in E\setminus A$ such that $A<A \cup \{e \}$ is of Type \Romannum{1}, and $a,b,c \in A$ are distinct, then $L\{a,b,c\}$ is witnessed in $D_A$ if and only if it is witnessed in $D_E$ (recall that we mean by $L\{a,b,c\}$ the  disjunction $L(a;b,c) \vee L(b;a,c) \vee L(c;a,b)$). 

  
 \begin{lem}\label{lemtype} If $A, E \in \mathscr D$ and $E$ is a one-point extension of $A$ with $E=A\cup \{ e\}$, then $(A,E)$ is of Type {\Romannum {1}} or of Type {\Romannum {2}}.
\end{lem}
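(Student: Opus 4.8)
The plan is to argue by induction on $|A|$, which matches the inductive character of the definition of a Type II extension; the base cases ($|A|$ small) are routine and I omit them from the sketch. For the inductive step, I first ask whether $(A,E)$ is of Type I, in which case there is nothing to prove, so I may assume it is not. Since $E\setminus A=\{e\}$ is a singleton, the hypothesis of Lemma~\ref{lemb} is met, and hence the root $D$-relation $D_A$ of $A$ agrees with the relation $D_E(A)$ induced on $A$ by the root $D$-relation $D_E$ of $E$; moreover, by the remark following Lemma~\ref{lemb}, for all distinct $a,b,c\in A$ the formula $L\{a,b,c\}$ is witnessed in the root $D$-set $\overline{D(\rho_A)}$ if and only if it is witnessed in $\overline{D(\rho_E)}$.

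The key step is to convert this relational data into a statement about the trees $\overline{D(\rho_A)}$ and $\overline{D(\rho_E)}$. Using the bijective correspondence between finite $D$-relations and finite trees with no vertices of degree $2$ (as recalled from \cite{cameron1987some}), the equality $D_A=D_E(A)$ says that the convex hull in $\overline{D(\rho_E)}$ of the leaf set $A$, with degree-$2$ vertices suppressed, is isomorphic as a graph-theoretic tree to $\overline{D(\rho_A)}$ via the identity on $A$; and since $L$ records exactly which branch at a ramification point is special, the agreement of the $L$-relations forces this identification to carry the special branch of $\overline{D(\rho_A)}$ at each ramification point to the special branch inherited from $\overline{D(\rho_E)}$. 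Thus $\overline{D(\rho_E)}$ is obtained from $\overline{D(\rho_A)}$ by attaching the single new leaf $e$; since $\overline{D(\rho_A)}$ has no degree-$2$ vertices, the neighbour of $e$ in $\overline{D(\rho_E)}$ is either a newly inserted node of degree $3$ placed on an existing edge of $\overline{D(\rho_A)}$ (possibly the edge from a leaf to its neighbour), or an already-existing ramification point $r$ of $\overline{D(\rho_A)}$. In the first case $(A,E)$ is a Type II extension of subtype (b), namely (b)(i) or (b)(ii) according as $e$ is or is not the special branch at the new ramification point, and no $D$-set above the root is affected, as the definition requires. In the second case $e$ must be \emph{non-special} at $r$: were $e$ special at $r$, then the branch at $r$ that is special in $\overline{D(\rho_A)}$ would become non-special in $\overline{D(\rho_E)}$, so taking $a\in A$ in that branch and $y,z$ in two further branches at $r$, the formula $L\{a,y,z\}$ would be witnessed in $\overline{D(\rho_A)}$ but not in $\overline{D(\rho_E)}$, contradicting the remark above.

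In the remaining case — $e$ a new non-special branch at an existing ramification point $r$ — it remains to verify that everything above the root of $\tau_E$ relates to $\tau_A$ as prescribed by the (inductive) definition of a Type II(a) extension. Writing $\nu=f_{\rho_E}^{-1}(r)$, the $D$-set $D(\nu)$ of $\tau_E$ is that of $\tau_A$ together with the single extra leaf $g_{\nu\rho_E}^{-1}(e)$, so $E_r$ is a one-point extension of $A_r$ with $|A_r|<|A|$ by Lemma~\ref{indtreeDset}(iv); the induction hypothesis therefore gives that $(A_r,E_r)$ is of Type I or of Type II, which is exactly the recursive clause in the definition of a Type II(a) extension. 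For every other ramification point $r'$ of $\overline{D(\rho_A)}$, adjoining $e$ at $r$ leaves the branches at $r'$ (and their $A$-leaves) untouched, so $E_{r'}=A_{r'}$ as $\mathscr L$-structures, whence by Proposition~\ref{isotreeDset} the subtrees of $\tau_E$ and $\tau_A$ above the corresponding successors coincide. Assembling these facts shows $(A,E)$ is a Type II(a) extension, completing the induction.

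The step I expect to be the main obstacle is the structural translation in the second paragraph: passing from the purely relational hypotheses ($D_A=D_E(A)$ together with the agreement of the $L$-relations) to the precise geometric conclusion that $\overline{D(\rho_E)}$ is $\overline{D(\rho_A)}$ with one leaf attached and with the special-branch labelling preserved. Getting the special branches right is exactly where the remark following Lemma~\ref{lemb} is indispensable — it is what rules out the configuration in which $e$ would be the special branch at an old ramification point — and some care is also needed in checking that the upward propagation of $e$ conforms to the inductive clause of Type II(a), as well as in the degenerate small cases.
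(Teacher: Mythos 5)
Your proof is correct and follows essentially the same route as the paper: assume the extension is not Type I, invoke Lemma~\ref{lemb} (and the remark on $L\{a,b,c\}$ following it) to identify $\overline{D(\rho_A)}$ inside $\overline{D(\rho_E)}$ with special branches preserved, and then split into the two cases of $e$ attached at an existing ramification point (handled by induction on $|A_r|<|A|$) or at a new one (Type II(b)). Your explicit justification that $e$ must be non-special at an existing ramification point is a detail the paper leaves implicit, but it is the intended argument.
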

 \begin{proof}
Assume that the extension is not of Type {\Romannum {1}}, so the structure tree of $E$ has no new root under $\rho_A$ with a star $D$-set. By Lemma \ref{lemb}, $D_E(A)=D(A)$, and  the root $D$-set $D_A$ of $A$ is a substructure of $D_E$, and hence we can identify $\overline {D_A}$ with a subset of $\overline{D_E}$. Furthermore, for $a,b,c \in A$, $L(a;b,c)$ is witnessed in $D_A$ if and only if it is witnessed in $D_E$ by the above paragraph. Thus, either $e$ is added (in $D_E$) as a new non-special leaf to an existing ramification point $r$ of $D_A$, or $e$ is added on a new ramification point $r'$ of an edge of $D_A$. To prove it is of Type
 {\Romannum {2}}, we consider the following cases:
 
 {\em Case (i)}. Suppose that $e$ is added as a new non-special leaf to an existing ramification point $r$ of $D_A$. By induction, as $\lvert A_r \rvert < \lvert A \rvert$, $A_r< E_r$ is an extension of Type I or Type II.  It follows that $A<E$ is a Type \Romannum{2} extension.
 
 {\em Case (ii)}. Suppose that $e$ is added on a new ramification point $r'$ of an edge of $D_A$. In this case, for $E$, $\rho_A$ obtains a new successor $\rho_{r'}$ whose $D$-set has size 2. The structure is otherwise unchanged, and $E$ is an extension of $A$ of Type \Romannum{2}(b). 
 

\end{proof} 

\begin{lem} \label{lem1} Let $A<E$ with $A,E \in \mathscr D$. Then there is an element $e \in E\setminus A $ such that $A \cup \{ e \} \in \mathscr D$.
\end {lem}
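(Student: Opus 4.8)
The plan is to produce an element $e\in E\setminus A$ for which the $\mathscr L$-structure induced on $A\cup\{e\}$ is a one-point extension of $A$ of Type I or Type II; by Lemma~\ref{in D} this immediately gives $A\cup\{e\}\in\mathscr D$. We may assume $|A|\geq 3$, since if $|A|\leq 2$ then $A\cup\{e\}\in\mathscr D$ for every $e\in E\setminus A$ by Lemma~\ref{3set}(ii).

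The first case to dispose of is when some $e\in E\setminus A$ already makes $A\cup\{e\}$ a Type I extension of $A$; then we are done. Otherwise no such $e$ exists, and we are in the situation of Lemma~\ref{lemb} and the remark following it (which, as noted there, do not require $|E\setminus A|=1$): we have $D_E|_A=D_A$, and for distinct $a,b,c\in A$ the disjunction $L\{a,b,c\}$ is witnessed in $D_A$ if and only if in $D_E$. I would use this to identify $\overline{D_A}$ with the convex hull of $A$ inside $\overline{D_E}$, with vertices of degree $2$ suppressed, noting that under this identification ramification points correspond to ramification points and, crucially, special branches correspond to special branches: the special branch at $r\in{\rm Ram}(\overline{D_A})$ is the one containing those $a$ with $L(a;b,c)$ for $b,c$ in the other two branches, and this description is computed identically in $\overline{D_E}$, so the special branch at $r$ in $\overline{D_E}$ is forced to be the branch meeting $A$ in exactly those elements.

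Next I would choose $e\in E\setminus A$ arbitrarily and let $v$ be the unique point of the convex hull $C$ of $A$ in $\overline{D_E}$ at which the $e$-to-$C$ geodesic meets $C$. Then the convex hull of $A\cup\{e\}$ is $C$ together with a path from $e$ to $v$ whose interior vertices all have degree $2$ in it; after suppression, $\overline{D_{A\cup\{e\}}}$ is $\overline{D_A}$ with $e$ added as a single pendant leaf, either at a ramification point of $\overline{D_A}$ (if $v\in{\rm Ram}(\overline{D_A})$) or at a fresh ramification point in the interior of an edge of $\overline{D_A}$. In the first case the branch at $v$ towards $e$ contains no element of $A$, so by the correspondence above it is not the special branch at $v$; hence this is a Type II(a) extension. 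In the second case only two branches at $v$ meet $A$, so no triple from $A$ ramifies at $v$, nothing constrains which branch at $v$ is special, and the extension is of Type II(b), in subcase (i) or (ii) according to whether $e$'s branch is special. It then remains to check that the relations $S,L',S',Q,R$ induced on $A\cup\{e\}$ from $E$ coincide with those prescribed by this Type II construction starting from $A$. I would do this by induction on $|A|$, passing via Lemma~\ref{indtreeDset} to $(A\cup\{e\})_r=A_r\cup\{e'\}<E_r$ when $v=f_\rho^{-1}(r)$, exactly along the lines of the proof of Lemma~\ref{lemtype}: relations among elements of $A$ are unchanged (Lemma~\ref{lemb} and the paragraph after it), and each relation involving $e$ is witnessed precisely in the $D$-set in which $e$'s branch first appears, which is the same $D$-set in $E$ and in the constructed extension.

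I expect the last step — verifying that the induced $\mathscr L$-structure on $A\cup\{e\}$ is exactly the prescribed Type II extension — to be the main obstacle. The delicate part is tracking, for triples and quadruples involving $e$ together with elements of $A$, the unique $D$-set of $\overline{D_E}$ that witnesses the corresponding $L$- or $S$-relation, and confirming via Lemma~\ref{D_rho} (and the behaviour of $S'$) that this witness survives, unchanged, in $A\cup\{e\}$; here the no-Type-I hypothesis is precisely what guarantees that $e$ meets $C$ only as a pendant at the gate $v$, which keeps this bookkeeping under control.
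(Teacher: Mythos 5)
Your overall architecture matches the paper's: dispose of the Type I case first, invoke Lemma~\ref{lemb} (and the remark after it) to identify $\overline{D_A}$ inside $\overline{D_E}$ with matching special branches, and then classify the extension by where $e$ attaches. But there is a genuine gap at the pivotal step: you choose $e\in E\setminus A$ \emph{arbitrarily}. The lemma is an existential statement precisely because $\mathscr D$ is not closed under taking substructures (Remark~\ref{semi homo def}), so for a fixed $A<E$ most choices of $e$ may fail. The paper's own example in Remark~\ref{semi homo def} gives a concrete counterexample to your claim: take $E=C=\{x,y,z,w,p\}$ there and $A=\{y,z,w\}$ (which lies in $\mathscr D$ by Lemma~\ref{3set}(ii)). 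No Type I extension is available, and choosing $e=p$ attaches $p$ at a new ramification point $r$ in the interior of an edge of $\overline{D_A}$ --- exactly your second case --- yet $A\cup\{p\}=\{y,z,w,p\}\notin\mathscr D$. Your justification there, ``only two branches at $v$ meet $A$, so no triple from $A$ ramifies at $v$, nothing constrains which branch at $v$ is special,'' is where the argument breaks: triples \emph{involving $e$} do ramify at $v$, and the $L$-, $Q$- and $R$-relations induced from $E$ on such triples (which in $E$ may be witnessed in a $D$-set far from $v$, as $L(p;y,z)$ is witnessed in $D(\nu)$ in that example) constrain which branch at $v$ can be special --- and for a bad $e$ every choice is inconsistent, because the special branch of $E$ at $v$ contains no point of $A\cup\{e\}$.

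The fix is the content of the paper's proof: $e$ must be chosen with care in both sub-cases. When $e$ creates a new ramification point $r$ on an edge of $D_A$, one must pick $e$ so that one of the (at least three) branches at $r$ meeting $A\cup\{e\}$ is the special branch of $E$ at $r$; such an $e$ exists because the special branch at $r$ in $E$ is nonempty, so contains some element of $E$, and one argues that a suitable representative can be taken as $e$. When $e$ lies in a new non-special branch at an existing ramification point $r$ of $D_A$, the recursion you sketch via Lemma~\ref{indtreeDset} is the right idea, but it again requires \emph{choosing} $e$ (by induction on $|A_r|<|A|$, among the elements of $E$ in new branches at $r$) so that $A_r<A_r\cup\{e\}$ is itself a Type I or Type II extension; an arbitrary element of a new branch need not work, for the same reason as above one level up the structure tree. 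With those two selections made explicit, the rest of your verification (that the induced $\mathscr L$-structure is exactly the prescribed Type II extension) goes through along the lines of Lemma~\ref{lemtype}.
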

\begin{proof} 
We just showed, by Lemma \ref{lemtype},  that extending a substructure $A$ of $E$ by one element,  so that the result lies in $\mathscr D$, can be done by only two ways: Type \Romannum{1} or Type \Romannum{2}.

Firstly, adding $e$ from $E \setminus A$ to $A$ to get $A \cup \{e \} \in\mathscr D$ by a Type \Romannum{1} extension will give the result. Thus, we may suppose there is no such $e$, so $D_E(A)=D_A$ by Lemma~\ref{lemb}.

Suppose there is an edge of $D_A$ such that $E$ has a ramification point $r$ on the edge and there is $e \in E \setminus A$ and $a,b \in A$ such that $a,b,e$ lie in distinct branches at $r$. We may suppose (by careful choice of $e$) that one of $a,b,e$ lies in the special branch at $r$ in $E$. In this case $A \cup \{e \}\in \mathscr D$, a one-point extension of $A$ of Type \Romannum{2}(b).

Suppose the configuration of the last paragraph does not occur. Since $D_E(A)=D_A$, there is a ramification point $r$ of $D_A$ and some $e \in E\setminus A$ lying in a new non-special branch at $r$ of $E$, with $e$ adjacent to $r$ in $\overline{D_E}$. Arguing inductively on $|A|$, we may choose $e$ here (among elements of $E$ lying in a new branch at $r$), so that if $E':=A \cup \{e\}$ then $E'_r\in \mathscr D$ and $A_r<E'_r$ is an extension of Type I or II.  Then $E'\in \mathscr D$ and is a one-point extension of $A$ of Type \Romannum{2}(a).

\end{proof}

The next lemma enables us to reduce proving amalgamation to the special case of amalgamating one-point extensions. 
\begin{lem}{\label{thm1}} Assume $A< E$ with $ A, E \in \mathscr D$. Then we may enumerate $E\setminus A$ as $\{ e_1, e_2, \dots , e_n \}$ such that for each $i=1, \dots ,n$, if $E_i$ is the $\mathscr L$-substructure of $E$ on $A\cup \{ e_1, \dots, e_i\}$ then $E_i \in \mathscr D$. 
\end{lem}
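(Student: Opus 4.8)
The plan is to prove this by induction on $n = |E \setminus A|$, repeatedly peeling off a single well-chosen point. The case $n = 0$ is trivial and $n = 1$ is Lemma~\ref{lemtype} (together with Lemma~\ref{in D}, which guarantees the one-point extension stays in $\mathscr D$). So suppose $n \geq 2$. The key is that we do not want to remove an arbitrary point of $E \setminus A$; instead we want a point $e_n$ such that $E' := E \setminus \{e_n\}$ is itself an $\mathscr L$-substructure of $E$ lying in $\mathscr D$, with $A < E'$, and with $E$ a one-point extension of $E'$. Then by induction we enumerate $E' \setminus A = \{e_1, \dots, e_{n-1}\}$ with all intermediate structures in $\mathscr D$, and appending $e_n$ finishes the job.

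First I would run the argument of Lemma~\ref{lem1} but applied to the pair $A < E$ to locate a candidate point; more precisely, I would argue that there is $e_n \in E \setminus A$ such that $E' := E \setminus \{e_n\}$ (as an $\mathscr L$-substructure of $E$) lies in $\mathscr D$ and $E$ is a one-point extension of $E'$ of Type I or Type II. The natural way to find such an $e_n$ is via the structure tree $\tau_E$: pick a leaf of $\overline{D_E}$ (in an appropriate sense, working down the structure tree) that is ``removable'', i.e. deleting it either removes a leaf of some labelling $D$-set without destroying any ramification point, or collapses a degree-3 ramification point, or removes the special branch of the root $D$-set — these are precisely the reverses of the Type I and Type II one-point extensions described in Section~\ref{one point extension}. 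Crucially I must choose $e_n$ so that it also does \emph{not} lie in $A$; since $|E \setminus A| = n \geq 1$, and the set of removable leaves of $\tau_E$ is large (every leaf of $\overline{D_E}$ at maximal depth in the structure tree, and the special leaf of the root, is removable in the relevant sense), a counting/extremal argument shows we can always find a removable leaf outside $A$. One clean way: take $\mu$ a vertex of $\tau_E$ of maximal height, with $D(\mu)$ the labelling $D$-set; if $\mu$ is not the root then some non-special leaf of $D(\mu)$ corresponds (via the $g$-maps) to a branch of $D(\rho_E)$, and removing an element $e_n$ of $E \setminus A$ realised in such a branch, chosen minimally, yields a Type~II extension; if there is no room to do this outside $A$ we instead use a Type~I reduction at the root. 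This parallels the case analysis already carried out in the proof of Lemma~\ref{lem1}.

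Having found $e_n$, I note that $E'$ is an $\mathscr L$-substructure of $E$ containing $A$, so $A < E'$, and $E' \in \mathscr D$ by the construction (or directly by Lemma~\ref{in D} applied in reverse — more honestly, by checking $E'$ is the $\mathscr L$-structure of the tree of $D$-sets obtained by the corresponding deletion, which lies in $\mathscr D$). Since $|E' \setminus A| = n - 1 < n$, the inductive hypothesis applied to $A < E'$ gives an enumeration $E' \setminus A = \{e_1, \dots, e_{n-1}\}$ with each $E_i := A \cup \{e_1, \dots, e_i\} \in \mathscr D$ for $i = 1, \dots, n-1$. Setting $E_n := E$, which equals $E_{n-1} \cup \{e_n\} = E' \cup \{e_n\}$, we have that $E_n$ is a one-point extension of $E_{n-1} = E'$, and since $E \in \mathscr D$ by hypothesis, $E_n \in \mathscr D$. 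Thus the full enumeration $\{e_1, \dots, e_n\}$ has the desired property.

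The main obstacle is the first step: showing that a removable leaf of the structure tree $\tau_E$ can always be chosen \emph{outside} $A$, i.e. that $E \setminus A$ contains a point whose deletion is a legitimate reverse Type~I or Type~II move. The subtlety is that the points of $E \setminus A$ might a priori all sit in ``structurally deep'' positions that cannot be removed one at a time without temporarily leaving $\mathscr D$; one must verify, using the explicit description of one-point extensions and the fact (implicit in Lemmas~\ref{lemtype}, \ref{lem1}) that \emph{every} element of $E \setminus A$ is, at some stage, removable, that a good ordering exists. In practice the cleanest route is probably to prove the slightly stronger statement that for $A < E$ in $\mathscr D$ there is always \emph{some} $e \in E \setminus A$ with $E \setminus \{e\} \in \mathscr D$ and $A < E \setminus \{e\}$, and then iterate; this stronger statement follows by the same structure-tree analysis as Lemma~\ref{lem1}, taking care that the removed point lies outside $A$ rather than the issue of it lying inside $E$. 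I expect this to be routine given the machinery already developed, but it is where all the actual work lies.
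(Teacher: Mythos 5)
Your proposal runs the induction in the opposite direction from the paper, and this creates a gap at its central step. The paper's proof is a bottom-up induction: having built $E_m\in\mathscr D$ with $A\leq E_m\leq E$, it applies Lemma~\ref{lem1} to the pair $E_m<E$ to obtain the \emph{next} point $e_{m+1}$ with $E_m\cup\{e_{m+1}\}\in\mathscr D$; that is the whole argument. You instead peel points off the top of $E$, which requires a different lemma: for $A<E$ in $\mathscr D$ with $E\setminus A\neq\emptyset$ there is $e\in E\setminus A$ with $E\setminus\{e\}\in\mathscr D$. You assert this follows by "the argument of Lemma~\ref{lem1}", but it does not: Lemma~\ref{lem1} and its proof locate a point that can be \emph{added to $A$} (a candidate for $e_1$), not a point that can be \emph{deleted from $E$} (a candidate for $e_n$), and because $\mathscr D$ is not closed under substructure these are genuinely different. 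Remark~\ref{semi homo def} makes this concrete: with $A=\{y\}$ and $E=C$ there, the element $x$ is a perfectly good output of Lemma~\ref{lem1} (every two-element structure lies in $\mathscr D$), yet $C\setminus\{x\}\notin\mathscr D$.

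The explicit removability criteria you offer are also unreliable. The "special leaf of the root" need not be removable: in Remark~\ref{semi homo def} the offending element $x$ is exactly the sole element of the special branch at a ramification point of the root $D$-set, and deleting it leaves $\mathscr D$ because relations that were witnessed strictly above the root (there, $L(p;y,z)$) are forced down into the root, contradicting the induced $Q$-relations. Likewise, a leaf of a maximal-height $D$-set corresponds under the $g$-maps to a whole branch of $D(\rho_E)$, i.e.\ a possibly large subset of $E$, so deleting one of its elements is not automatically a reverse Type~I/II move. The top-down removability statement is in fact true, but the only cheap proof of it is to take $e_n$ from the bottom-up enumeration — i.e.\ from the statement you are trying to prove — so your route is either circular or requires a new and more delicate deletion analysis than anything in Section~\ref{one point extension}. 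The fix is simply to orient the induction as the paper does: iterate Lemma~\ref{lem1} starting from $A$.
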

\begin{proof}
Fix $n$. We prove by induction on $m<n$ that there are distinct $e_1, \dots, e_m \in E \setminus A$ such that for each $i=0, \dots,m$ the $\mathscr L$-structure $E_i$ induced on $A \cup \{e_1 ,\dots, e_i\}$ lies in $\mathscr D$ (where $E_0=A$).

The base case $m=0$ is trivial. Assume the result holds for $m$. Then, by Lemma \ref{lem1}, there is some $e\in E \setminus E_m$ such that $E_m \cup \{e \} \in \mathscr D$. Put $e_{m+1}:=e$. 
\end{proof}

\subsection{ Amalgamation Property}\label{amalgamation}
 Fra{\" {\i}ss\'{e}}'s method is based on building a countable structure $M$ as a union of a sequence of finite structures, each itself an amalgam of substructures.  The following is a general lemma that holds for any class of finite structures.
We say that a class $\mathscr C$ has the {\em amalgamation property} if, whenever $A,E_1,E_2 \in \mathscr C$ and $f_i:A\to E_i$ are embeddings (for $i=1,2$) there is $D\in \mathscr C$ and embeddinsg $g_i:E_i\to D$ such that $g_1\circ f_1=g_2 \circ f_2$. We say that $\mathscr C$ has the {\em amalgamation property for one-point extensions} if the above holds whenever $|E_1\setminus f_1(A)|=|E_2\setminus f_2(A)|=1$. 

\begin{lem} \label{lem3}Let $\mathscr C$ be a class of finite structures, and suppose that the following hold:
\begin{enumerate}[(i)]
    \item the class $\mathscr C$ has the amalgamation property for one-point extensions. 
    \item for any $A, E \in \mathscr C$ with $A<E$, we may write $E \setminus A =\{e_1, \dots, e_n \}$ so that if $E_i$ is the induced substructure of $E$ on $A \cup \{ e_1, \dots, e_i \}$ ( for each $i=1, \dots, n$), then $E_i \in \mathscr C$.
\end{enumerate}
Then the class $\mathscr C$ has the amalgamation property.

\end{lem}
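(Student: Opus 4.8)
The plan is to deduce the full amalgamation property from the one-point case by induction, peeling off one vertex at a time from one of the two extensions using hypothesis (ii). First I would set up the induction: given $A, E_1, E_2 \in \mathscr C$ with embeddings $f_i : A \to E_i$, I induct on $|E_1 \setminus f_1(A)|$. Identifying $A$ with $f_1(A) \subseteq E_1$ (and similarly inside $E_2$ once we amalgamate), the base case $|E_1 \setminus A| = 0$ is trivial since then $E_1 \cong A$ and we may take $D = E_2$. The base case $|E_1 \setminus A| = 1$ is not literally the hypothesis unless $|E_2 \setminus A| = 1$ as well, so I would actually want a two-layer induction, or better: induct on $|E_1 \setminus A| + |E_2 \setminus A|$, handling the case where both differences have size $1$ by hypothesis (i), and otherwise reducing.

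For the inductive step, suppose without loss of generality $|E_1 \setminus A| \geq 2$ (if both are $\leq 1$ we are in the base case). By hypothesis (ii) applied to $A < E_1$, write $E_1 \setminus A = \{e_1, \dots, e_n\}$ with each $E_1^{(i)}$ (the substructure on $A \cup \{e_1, \dots, e_i\}$) lying in $\mathscr C$; in particular $B := E_1^{(n-1)} \in \mathscr C$ satisfies $A < B < E_1$ with $|B \setminus A| = n - 1$ and $|E_1 \setminus B| = 1$. Now apply the inductive hypothesis to the span $B \hookleftarrow A \hookrightarrow E_2$: since $|B \setminus A| + |E_2 \setminus A| < |E_1 \setminus A| + |E_2 \setminus A|$, we obtain $D' \in \mathscr C$ with embeddings $g : B \to D'$ and $h : E_2 \to D'$ agreeing on $A$. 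Then $g(B) < D'$ and $E_1$ is a one-point extension of $B$, so — after relabelling — we have $D'$ and $E_1$ both extending $B$ by (in the case of $E_1$) one point; applying the inductive hypothesis once more to the span $D' \hookleftarrow g(B) \hookrightarrow E_1$ (here $|E_1 \setminus g(B)| = 1$, and the total $|D' \setminus g(B)| + 1$ — one must check this is genuinely smaller or else invoke a separate one-point-over-$B$ argument) yields the desired amalgam $D \in \mathscr C$ of $E_1$ and $E_2$ over $A$.

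The delicate bookkeeping, and the step I expect to be the main obstacle, is making sure the recursion is well-founded: the second application of the inductive hypothesis is to amalgamate $D'$ with $E_1$ over $B$, and $|D' \setminus B|$ need not be smaller than $|E_1 \setminus A| + |E_2 \setminus A|$. The clean fix is to phrase the induction as: \emph{for all $A \in \mathscr C$ and all $n$, any span $E_1 \hookleftarrow A \hookrightarrow E_2$ with $|E_1 \setminus A| \leq n$ can be amalgamated}, inducting on $n$, and within the step inducting on $|E_2 \setminus A|$ in an inner loop so that one only ever reduces the size of $E_1 \setminus A$ or feeds a one-point extension into hypothesis (i). Concretely: pick $e \in E_1 \setminus A$ with $A \cup \{e\} \in \mathscr C$ (possible by (ii), or by Lemma~\ref{lem1} in our concrete class); set $A' := A \cup \{e\}$; amalgamate $A'$ with $E_2$ over $A$ — this is covered either by (i) if $|E_2 \setminus A| = 1$ or by the inner induction on $|E_2 \setminus A|$, peeling $E_2$ down — to get $D'' \supseteq A'$, $D'' \supseteq E_2$; now $A' < E_1$ with $|E_1 \setminus A'| = n - 1$ and $A' < D''$, so by the outer inductive hypothesis amalgamate $E_1$ and $D''$ over $A'$ to obtain $D \in \mathscr C$, which contains (compatible copies of) $E_1$ and $E_2$ and restricts correctly to $A$. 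Verifying the maps commute on $A$ at each stage is routine diagram-chasing. I would present the argument in this ``reduce $E_1$ by one point, reduce $E_2$ all the way, then recurse'' form, since it keeps both measures manifestly decreasing.
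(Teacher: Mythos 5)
Your overall strategy---reduce to the one-point case by peeling off single points via hypothesis (ii) and inducting---is the right one, and you have correctly put your finger on the real difficulty: after one amalgamation step the intermediate amalgam sits over the new base with uncontrolled size. But the double induction you offer as the ``clean fix'' does not actually close. Your outer induction on $n=|E_1\setminus A|$ reduces everything to its base case $n=1$, i.e.\ to amalgamating a one-point extension $A<A'$ with an arbitrary $A<E_2$; and your inner induction on $m=|E_2\setminus A|$ for that case has exactly the defect you flagged in your first attempt. Peeling $E_2$ down to $E_2'$ and amalgamating $A'$ with $E_2'$ over $A$ produces some $D_0\in\mathscr C$, and you must then amalgamate $D_0$ with $E_2$ over $E_2'$; this is again ``one-point versus arbitrary'', but the arbitrary side is now $D_0$ over $E_2'$, whose size is not bounded by any quantity that has decreased. (Peeling $E_2$ from the bottom instead leaves you with ``arbitrary versus $(m-1)$-point'' over the new base $A\cup\{e_1\}$, which is not an instance of the inner statement either, and at outer level $n=1$ you have no outer inductive hypothesis to fall back on.) So the crucial case $n=1$ is never established, and the recursion is not well founded.

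The repair---which is what the argument cited from \cite{bhattmacph2006jordan} relies on, and what the explicit one-point amalgamation for the class $\mathscr D$ actually delivers, since there the amalgam is constructed on the set $E_1\cup E_2$ itself---is to strengthen the inductive claim: show that $E_1$ and $E_2$ amalgamate over $A$ inside some $D$ with \emph{no new points}, i.e.\ $|D|=|E_1|+|E_2|-|A|$. With that strengthening the induction on $|E_1\setminus A|+|E_2\setminus A|$ closes: taking $E_2'=A\cup\{e_1\}\in\mathscr C$ from (ii), the amalgam $D'$ of $E_1$ and $E_2'$ over $A$ satisfies $|D'\setminus E_2'|=|E_1\setminus A|$, so the second amalgamation, of $D'$ and $E_2$ over $E_2'$, has strictly smaller total and the recursion terminates (equivalently, one can fill in a $k\times m$ grid in which every cell amalgamates two one-point extensions). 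So you should either build the ``no new points'' clause into hypothesis (i) and into the conclusion, or verify it separately for the class at hand; as written, your induction has a hole precisely at its base case.
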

\begin{proof} 
See the last three paragraphs of the proof of Lemma 3.7 of  \cite{bhattmacph2006jordan}.


\end{proof}

\begin{lem}
The class $ \mathscr D$ has the amalgamation property.
\end{lem}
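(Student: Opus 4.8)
The plan is to apply Lemma~\ref{lem3} with $\mathscr C=\mathscr D$. Hypothesis (ii) of that lemma is exactly the content of Lemma~\ref{thm1}, so it suffices to establish hypothesis (i): that $\mathscr D$ has the amalgamation property for one-point extensions. Thus I would fix $A\in\mathscr D$ and one-point extensions $E_1=A\cup\{e_1\}$ and $E_2=A\cup\{e_2\}$ in $\mathscr D$, and seek $D\in\mathscr D$ with embeddings $g_i:E_i\to D$ agreeing on $A$. By Lemma~\ref{lemtype}, each $E_i$ is of Type \Romannum{1} or Type \Romannum{2} over $A$, so the argument splits into three cases according to this dichotomy.

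Suppose first that at least one of the two extensions, say $E_1$, is of Type \Romannum{1}. Then I would take $D$ to be the Type \Romannum{1} extension of $E_2$ by a new element $e_1$, which lies in $\mathscr D$ by Lemma~\ref{in D}: its root $D$-set is a star whose non-special branches are singletons indexed by the elements of $E_2$, with special branch $\{e_1\}$, and with $\tau_{E_2}$ placed above the centre. I would then check that deleting $e_2$ from $D$ collapses this star to the star of the Type \Romannum{1} extension of $A$ by $e_1$ and replaces $\tau_{E_2}$ above the centre by $\tau_{E_2}|_A=\tau_A$ (using $A<E_2$), so that $D|_{A\cup\{e_1\}}\cong E_1$; while deleting $e_1$ makes the new star vanish entirely — its special branch disappears, and a Type \Romannum{1} star leaves no trace once its special element is removed — returning $E_2$. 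Since both embeddings are the identity on $A$, this disposes of every case in which some extension is of Type \Romannum{1} (in particular Type \Romannum{1} versus Type \Romannum{1}, where $D$ is a stack of two star-roots below $\rho_A$, one carrying $e_1$ above the other carrying $e_2$).

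The remaining, and main, case is Type \Romannum{2} versus Type \Romannum{2}, which I would handle by induction on $|A|$. The base cases ($|A|\le 2$, where the root $D$-set $D_A$ has no ramification point, so each $E_i$ is obtained by a Type \Romannum{2}(b) extension) are direct: if $e_1,e_2$ land on the same edge one places the two new ramification points on it in succession. For the inductive step, let $r_i$ denote the ramification point of $D_A$ at which $e_i$ is attached (in the II(a) case) or incident to the edge on which a new ramification point is created (in the II(b) case). If the attachment sites lie in disjoint parts of the structure tree $\tau_A$, the two modifications are carried out simultaneously and $D$ is immediate; verifying $D|_{A\cup\{e_i\}}\cong E_i$ is then a routine inspection of how the relations are witnessed. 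If $e_1$ and $e_2$ are attached at a common ramification point $r$ of $D_A$, I would pass to $A_r\in\mathscr D$ (with $|A_r|<|A|$ by Lemma~\ref{indtreeDset}(iv)), observe that $E_1,E_2$ induce one-point extensions of $A_r$, amalgamate those inside some $D_r\in\mathscr D$ by the inductive hypothesis, and lift $D_r$ back up: replace the child $D$-set at $r$ by the root $D$-set of $D_r$, enlarge $D_A$ by the corresponding new non-special leaves at $r$, and put $\tau_{D_r}$ above $r$. The mixed configurations (a II(a) extension at $r$ together with a II(b) extension on an edge incident to $r$, and so on) would be settled by combining these two ideas.

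I expect the genuine difficulty to lie entirely in this last case: organising the subcase analysis by the positions of $e_1$ and $e_2$ in $\overline{D_A}$ and in $\tau_A$, and then verifying that in the amalgam all six relations $L,L',S,S',Q,R$ restrict correctly to each $E_i$. The coherence relations $Q$ and $R$, which assert that two relations are witnessed in the \emph{same} $D$-set, are the delicate ones to track, since the lifting step and the simultaneous-modification step can a priori alter which $D$-set witnesses a given relation; checking that they do not is where most of the work goes. This part of the argument is closely analogous to the corresponding amalgamation in \cite{bhattmacph2006jordan}.
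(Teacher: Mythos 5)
Your proposal is correct and follows essentially the same route as the paper: reduce to one-point amalgamation via Lemmas~\ref{thm1} and \ref{lem3}, dispose of any case involving a Type I extension by inserting a new star root, and handle Type II versus Type II by splitting on whether $e_1,e_2$ attach at the same or different sites, inducting on $|A_r|$ in the former case and performing the modifications simultaneously in the latter. The only (cosmetic) difference is that you treat the two Type I cases uniformly with $e_1$'s star at the bottom, where the paper orders the stacked star roots the other way in its Case (i).
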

\begin{proof} We will prove the amalgamation property for one-point extensions, and the result then follows from Lemmas \ref{thm1} and  \ref{lem3}. Assume $A< E_1 \ \text {and} \ A< E_2 $ with
$A, E_1, E_2 \in \mathscr D \ \text{ such that } \ E_1 \setminus A = \{ e_1 \} \ \text {and} \ E_2 \setminus A= \{e_2 \}$.
We may assume that $e_1 \ \text {and} \ e_2 $ are distinct, and we want to define a structure $E$ on $E_1\cup E_2$, inducing each $E_i$, in such a way that  $E \in \mathscr D$. Let $\tau_i$ be the structure tree corresponding to $E_i$ with root $\rho_i$ where $i=1,2$. We will consider three cases.\\ 
{\em{ Case \romannum{1}}}. Suppose that $E_1 \ \text {and} \ E_2$ are Type \Romannum{1} extensions of $A$. Then, in the structure tree of $E$,  place the root $\rho_2$ beneath the root $\rho_1$ such that $e_2$ is special in $D(\rho_2)$ with $e_1$  non-special, and in $D(\rho_1)$ the element $e_1$ is special. Here $E$ is a Type 1 extension of $E_1$.\\ 
{\em{ Case \romannum{2}}}. Suppose that one of the $E_i$, say $E_1$ is of Type  \Romannum{1}, and $E_2$ is of Type \Romannum{2}. Then define the structure tree of $E$ by placing the root $\rho_1$ under $\rho_{2}$ such that $D(\rho_1)$ is a star in which $e_1$ is special and $e_2$ is not.\\  
{\em {Case \romannum{3}}}. Suppose that  $E_1 \ \text {and} \ E_2$ are of Type \Romannum{2} over $A$. Then we will consider the following four sub-cases.
\begin{enumerate}[(1)]
    \item Asssume that $e_1, e_2$ are added to the same  ramification point $r$ of $D(\rho_A)$ to get $E_1, E_2$ respectively. Keep them distinct in $E$. Then neither of $e_1, e_2$ is special in the root $D$-sets $D(\rho_1)$ and $D(\rho_2)$. In the root $D$-set of $E$,  $e_1, e_2$ will lie in  non-special branches at $r$.  Then higher up two new end-points are added to the same $D$-set $D(f_{\rho_E}^{-1}(r))$, and we finish inductively since $\lvert A_r\rvert < \lvert A\rvert$, so $(E_1)_r$ and $(E_2)_2$ can be amalgamated over $A_r$.
    \item Suppose that $e_1\ \text{and} \ e_2$ are added to distinct  ramification points $r_1$ and $r_2$ of $D(\rho_A)$. Then again, when building $E$,  a leaf 
will be added to each of the $D$-sets corresponding to these ramification points. The structures $E_{r_1}$ and $(E_1)_{r_1}$ will be isomorphic, and $E_{r_2}$ will be isomorphic to $(E_2)_{r_2}$.
\item Suppose that the branch $e_1$ is added to an old ramification  point $r$ of $D(\rho_A)$, and $e_2$ creates a new ramification point $s$,  i.e. $A<E_1$ is of Type \Romannum{2} (a), and $A<E_2$ has Type \Romannum{2}(b). Then a new successor $f_{\rho_E}^{-1}(s)$ has trivial $D$-set in $E$ (i.e. with only 2 elements joined by an edge), and $D(f_{\rho_E}^{-1}(r))$ 
is isomorphic to $D(f_{\rho_{E_1}}^{-1}(r))$.

\item Assume that both $e_1\ \text{and} \ e_2 $ create new ramification points, that is, both give Type \Romannum{2}(b) extensions. Then keep these ramification points  distinct in $E$. Hence $D(\rho_E)$ will have two new ramification points (compared to $D_A$)  and $\rho_E$ has  two new successors with labelling $D$-sets of just two elements.
\end{enumerate}
   
\end{proof} 
\begin{lem} The class $\mathscr D$ has the joint embedding property.
\end{lem}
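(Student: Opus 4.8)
The plan is to deduce the joint embedding property (JEP) from the amalgamation property (AP) just established, using the fact that $\mathscr D$ contains a structure that embeds into every member of $\mathscr D$ — namely, the empty structure, or if one prefers to avoid the empty structure, the one-point structure. First I would observe that a one-point (indeed zero-point) tree of $D$-sets lies in $\mathscr D$: by Lemma~\ref{3set}(ii) any substructure of size at most $3$ of a member of $\mathscr D$ lies in $\mathscr D$, and in particular the structure $A_0$ on a single point (with none of the $\mathscr L$-relations holding, since $L,S,Q,R$ need at least three distinct arguments and $L',S'$ at least four) corresponds to the trivial tree of $D$-sets with one vertex whose $D$-set is a single leaf. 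So $A_0 \in \mathscr D$.

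Next, given arbitrary $B_1, B_2 \in \mathscr D$, I would note that $A_0$ embeds into each of them: pick any element $b_i \in B_i$ and map the point of $A_0$ to $b_i$; since $A_0$ carries no relations, this is trivially an embedding (and if one allows the empty structure $\emptyset \in \mathscr D$, the empty map works and one need not even choose basepoints). Then apply the amalgamation property (the previous lemma) to the diagram $A_0 \hookrightarrow B_1$, $A_0 \hookrightarrow B_2$: this yields $D \in \mathscr D$ together with embeddings $g_i : B_i \to D$ agreeing on $A_0$. In particular $g_1, g_2$ are embeddings of $B_1, B_2$ into a common member $D$ of $\mathscr D$, which is exactly the joint embedding property.

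Alternatively — and this is essentially the same argument unpacked — one can give a direct construction mirroring Case~\romannum{1} of the amalgamation proof: given $B_1, B_2 \in \mathscr D$ with structure trees $\tau_1, \tau_2$ and roots $\rho_1, \rho_2$, build a new structure tree by introducing a fresh root $\rho$ beneath both $\rho_1$ and $\rho_2$, with $D(\rho)$ a star whose ramification point has two non-special branches, one carrying (a copy of) all of $D(\rho_1)$ via $g_{\rho_1\rho}$ and the other carrying all of $D(\rho_2)$ via $g_{\rho_2\rho}$, together with a special branch consisting of a single new leaf. One then checks, as in Lemma~\ref{in D}, that the resulting labelled semilinear order with its maps $f$ and $g$ is a genuine tree of $D$-sets, so its associated $\mathscr L$-structure $D$ lies in $\mathscr D$, and that $B_1, B_2$ embed as the two induced $\mathscr L$-substructures $D_{\nu_1}, D_{\nu_2}$ in the sense of Lemma~\ref{indtreeDset}.

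I do not expect any real obstacle here: JEP is a soft consequence of AP in the presence of a common substructure, and $\mathscr D$ plainly has one. The only point requiring a word of care is the verification that the one-point (or empty) structure really is in $\mathscr D$ and really does embed in every member — but this is immediate from Lemma~\ref{3set}(ii) together with the observation that no $\mathscr L$-relation has arity $\leq 2$. Consequently the proof is a two-line appeal to the amalgamation lemma, and I would write it as such.
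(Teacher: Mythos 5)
Your primary argument is correct, but it takes a genuinely different route from the paper's. You deduce JEP softly from the amalgamation property together with the observation that the one-point structure lies in $\mathscr D$ (by Lemma~\ref{3set}(ii)) and embeds into every non-empty member of $\mathscr D$ (no $\mathscr L$-relation can hold on a single element, since every relation requires its arguments to lie in distinct branches at some ramification point). Since the class $\mathcal E$ of embeddings used in the modified Fra\"iss\'e setup consists of all embeddings between members of $\mathscr D$, amalgamating $B_1$ and $B_2$ over a common point does yield JEP$'$. The paper instead gives a direct construction: a new root $D$-set containing \emph{two} ramification points $r,r'$ joined by an edge, with the $n$ elements of $A$ as leaves at $r$, the $m$ elements of $B$ as leaves at $r'$, the special branch at $r$ being the one containing $r'$ and vice versa; then $E_r\cong A$ and $E_{r'}\cong B$ in the sense of Lemma~\ref{indtreeDset}. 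The direct construction has the merit of not leaning on the considerably more delicate amalgamation lemma (whose one-point-extension case analysis is at its most degenerate over very small bases), and of keeping the two structures disjoint in the joint embedding; but your deduction is legitimate given that the amalgamation lemma has already been established at this point in the paper.

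Your ``alternative'' construction, however, is not a correct unpacking of the first argument, and as described it is not a tree of $D$-sets. If $D(\rho)$ is a star with a single ramification point $c$, then $f_\rho$ must be a bijection from ${\rm succ}(\rho)$ to ${\rm Ram}(D(\rho))=\{c\}$, so $\rho$ has exactly one successor --- you cannot hang both $\rho_1$ and $\rho_2$ above the same ramification point. Moreover $g_{\nu\rho}$ is required to be a bijection from the leaves of $D(\nu)$ to the non-special branches at $f_\rho(\nu)$, so a single non-special branch of a three-leaf star cannot ``carry all of $D(\rho_1)$''; and since the universe of the resulting $\mathscr L$-structure is the set of leaves of the root $D$-set, a genuine star with three leaves would produce a three-element structure rather than one containing copies of $B_1$ and $B_2$. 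This is exactly why the paper's construction puts two ramification points into the root $D$-set, one for each structure being embedded. If you want a direct proof, replace your star by that configuration; otherwise, keep only your first argument, which stands on its own.
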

\begin{proof}
Take two finite structures $A, B \in \mathscr{D}$ with $n, m$ points respectively. Consider their structure trees $\tau_A$ and $\tau_B$  with roots $\rho_A, \rho_B$ respectively. Build a new tree $\tau$ with root $\rho$ such that $D(\rho)$ contains two ramification points $r$ and $r'$ with $n+1$ branches at $r$, and  $m+1$ branches at $r'$, with special branches as shown in the Figure \ref{fig:JEP}. The resulting structure $E$ will have $E_r$ isomorphic to $A$ and $E_{r'}$ isomorphic to $B$. 
\tikzset{middlearrow/.style={
        decoration={markings,
            mark= at position 0.75 with {\arrow{#1}} ,
        },
        postaction={decorate}
    }
}
\begin{figure}[H]
\centering
\begin{tikzpicture}
\node at (2,0) [below]{$r$};
\node at (-2,0) [below]{$r'$};
\node at (3,1){};
\node at (3,0){};
\node at (3,-1){};
\node at (3,0){};
\node at (-3,1){};
\node at (-3,0){};
\node at (-3,-1){};
\node at (4,0)[right]{$n+1$};
\node at (-4,0)[left]{$m+1$};
\draw [middlearrow={stealth}](-2,0)--(2,0);
\draw(2,0)--(3,1);
\draw (2,0)--(3,-1);
\draw [middlearrow={stealth}](2,0)--(-2,0);
\draw (-2,0)--(-3,0);
\draw (-2,0)--(-3,1);
\draw (-2,0)--(-3,-1);
\draw  (3,0)--(2,0);
 \filldraw [black] (3.5,0.5) circle (0.5pt);
 \filldraw [black] (3.5,0) circle (0.5pt);
  \filldraw [black] (3.5,-0.5) circle (0.5pt);
  \filldraw [black] (-3.5,0.5) circle (0.5pt);
\filldraw [black] (-3.5,0) circle (0.5pt);
\filldraw [black] (-3.5,-0.5) circle (0.5pt);
\draw[decoration={brace,mirror,raise=8pt},decorate,thick]
  (3.5,-1.5) -- node[above=8pt] {}(3.5,1.5) ;
  \draw[decoration={brace,mirror,raise=8pt},decorate,thick]
  (-3.5,1.5) -- node[above=8pt] {}(-3.5,-1.5) ;
\end{tikzpicture}
\caption{ } \label{fig:JEP}
\end{figure}
\end{proof} 

The class $\mathscr D$ is not closed under  substructure (see Remark~\ref{semi homo def}), and we therefore  use a standard  modified version of Fra{\" {\i}ss\'{e}}'s Theorem. Below we follow the presentation in Evans \cite{evans1994examples}. The approach is also described in  \cite{hodges1997shorter}, with the resulting Fra\"iss\'e limit described as {\em weakly homogeneous}. 
\begin{mydef}{\label{defnice}}\em{
Let $\mathscr L^*$ be a finite relational language and let $\mathscr C$ be a class of finite $\mathscr L^*$-structures. Define a collection $\mathcal E$ of embeddings
 $f:A\rightarrow D$ where $A,D \in \mathscr C $ such that
\begin{enumerate}[(i)]
     \item any isomorphism is in $\mathcal E$;
    \item $\mathcal E$ is closed under composition;
    \item if $f: A \rightarrow D$ is in $\mathcal E$ and $B,  D\in \mathscr C$ with $B<D$ and $f(A)\subseteq B$, then the map obtained by restricting the range of $f$ to $B$ is also in $\mathcal E$.

\end{enumerate}
 Then we call this collection a {\em{class of $\mathscr C$-embeddings}} }   
\end{mydef}

Consider the following modification for the joint embedding property and the amalgamation property :\\
(JEP$ '$) If $A, B \in \mathscr C$, there exists $C\in \mathscr C$ and embeddings $f: A\rightarrow C$ and $g: B \rightarrow C$ such that $f, g \in \mathcal E$.\\
(AP$ '$) Suppose $A, D_1, D_2 \in \mathscr C$ and $f_i: A\rightarrow D_i$ are embeddings in $\mathcal E$. Then there exists $D \in \mathscr C$ and embeddings $g_i: D_i \rightarrow D$ in $\mathcal E$ such that $g_1f_1=g_2f_2$.\\

Let $\mathcal E$ be a class of $\mathscr C$-embeddings. For an $\mathscr L^*$-structure $M$, and a finite substructure $A \in \mathscr C$, we say that $A$ is $\mathcal E$-{\em{embedded}} in $M$ if whenever $B\in \mathscr C$ is a finite substructure of $M$ which contains $A$, the inclusion map from $A$ to $B$ is in $\mathcal E$.

Then we use the following version of Fra{\" {\i}ss\'{e}}'s Theorem (Theorem 2.10 of \cite{evans1994examples}):
\begin{thm}{\label{general}}
Suppose that $\mathcal C$ is a collection of finite $\mathcal L^*$-structures in which the number of isomorphism types of any finite size is finite. Suppose $\mathcal E$ is a class of $\mathcal C$-embeddings which satisfies JEP$'$ and AP$'$. Then there exists a countable $\mathcal L^*$-structure $M$ with the following properties:
\begin{enumerate}[(i)]
     \item the class of $\mathcal E$-embedded substructures of $M$ is equal to $\mathcal C$;
     \item $M$ is a union of a chain of finite $\mathcal E$-embedded substructures;
     \item if $A\leq M$ and $\alpha: A\rightarrow B$ is in $\mathcal E$ then there exists $C\leq M$ containing $A$ and an isomorphism $\beta: B \rightarrow C$ lying in $\mathcal E$ such that $\beta \alpha (a) = a$ for all $a \in A$.
\end{enumerate}
\end{thm}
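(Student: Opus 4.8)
The plan is to obtain $M$ as the union of a countable increasing chain $M_0\leq M_1\leq\cdots$ of members of $\mathcal C$ in which every inclusion $M_n\hookrightarrow M_{n+1}$ lies in $\mathcal E$. Once that is arranged, closure of $\mathcal E$ under composition together with the range-restriction clause (iii) of Definition~\ref{defnice} forces $M_n\hookrightarrow M_m$ to lie in $\mathcal E$ for all $n\leq m$, hence forces every $\mathcal C$-substructure sitting $\mathcal E$-ly inside some $M_n$ to be $\mathcal E$-embedded in $M:=\bigcup_n M_n$; conversely any $\mathcal E$-embedded substructure of $M$ is finite, so lies in some $M_n$, and belongs to $\mathcal C$ by the very meaning of ``$\mathcal E$-embedded''. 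Thus (ii) is immediate, and (i) will hold provided the construction also makes every isomorphism type of $\mathcal C$ appear. Property (iii) will be secured one instance at a time.

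First I would fix the bookkeeping. Since $\mathcal L^*$ is finite relational and $\mathcal C$ has only finitely many isomorphism types of each finite cardinality, there are only countably many isomorphism types of triples $(A,B,\alpha)$ with $A,B\in\mathcal C$ and $\alpha:A\to B$ an embedding in $\mathcal E$, and only countably many isomorphism types of members of $\mathcal C$. I would interleave into a single $\omega$-sequence of tasks both (a) requests ``make a given $A\in\mathcal C$ $\mathcal E$-embed into $M$'' and (b) requests ``given a specified $\mathcal E$-embedded copy of some $A$ inside the current finite structure and a specified $\alpha:A\to B$ in $\mathcal E$, realise a copy of $B$ over that copy'', arranging by a standard dovetailing that every such request — including those involving $\mathcal E$-embedded copies that only appear at later stages — is eventually served.

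Then I would run the recursion. Take $M_0\in\mathcal C$ arbitrary. Given $M_n$: for a task of kind (a) for $A$, apply (JEP$'$) to $M_n$ and $A$ to get $C\in\mathcal C$ with $M_n\to C$ and $A\to C$ in $\mathcal E$, and set $M_{n+1}:=C$; for a task of kind (b), with $A\leq M_n$ $\mathcal E$-embedded in $M_n$ and $\alpha:A\to B$ in $\mathcal E$, apply (AP$'$) to the span $B\xleftarrow{\alpha}A\hookrightarrow M_n$ (the inclusion lying in $\mathcal E$ because $A$ is $\mathcal E$-embedded in $M_n$) to get $D\in\mathcal C$ with $g_1:B\to D$ and $g_2:M_n\to D$ in $\mathcal E$ and $g_1\alpha=g_2\iota$, and set $M_{n+1}:=D$, identifying $M_n$ with $g_2(M_n)$. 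In both cases the fresh inclusion $M_n\hookrightarrow M_{n+1}$ is in $\mathcal E$. In case (b), restricting the range of $g_1$ to $C:=g_1(B)<M_{n+1}$ yields, via Definition~\ref{defnice}(iii) (using $C\cong B\in\mathcal C$), an isomorphism $\beta:B\to C$ in $\mathcal E$ with $\beta\alpha(a)=g_1\alpha(a)=g_2(a)=a$ for all $a\in A$; as $C\leq M_{n+1}\leq M$ this discharges that instance of (iii) permanently. Taking $M:=\bigcup_n M_n$ then gives the structure claimed: (i) and (ii) as noted, and (iii) since each $\mathcal E$-embedded $A\leq M$ with $\alpha:A\to B$ in $\mathcal E$ lies in some $M_n$ and its task is served later. (For (i)'s nontrivial direction, an image $g(A)\hookrightarrow C$ is in $\mathcal E$ because it is the composite of the isomorphism $g^{-1}:g(A)\to A$ with $g\in\mathcal E$.)

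I do not expect a genuine obstacle here: the entire substantive content is the verification that all chain inclusions remain in $\mathcal E$ — which is precisely what the three clauses defining a class of $\mathcal C$-embeddings, together with (JEP$'$) and (AP$'$), are designed to deliver — plus the dovetailing, which must cope with the fact that new $\mathcal E$-embedded substructures are created at every stage. Given the countability supplied by the finite-type hypothesis, this dovetailing is routine; the one point to watch is to index the kind-(b) requests by the actual copy of $A$ (not merely its isomorphism type), so that a request is not consumed by being served over the wrong copy.
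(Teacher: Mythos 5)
The paper does not prove this statement --- it is quoted as Theorem 2.10 of Evans \cite{evans1994examples} --- so there is no in-paper proof to compare against; your argument is the standard chain-plus-dovetailing proof of Fra\"iss\'e's theorem in the $\mathcal E$-embedding setting, and it is correct. The two points that genuinely need care --- that $\mathcal E$-embeddedness in some $M_n$ propagates to all later $M_m$ and to $M$ via closure under composition together with Definition~\ref{defnice}(iii), and that kind-(b) tasks must be indexed by actual $\mathcal E$-embedded copies rather than isomorphism types --- are both identified and handled; the only unstated hypothesis you lean on is that $\mathcal C$ is closed under isomorphism (so that $g_1(B)\in\mathcal C$ when you restrict the range of $g_1$), which is implicit throughout the paper's use of $\mathscr D$.
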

Let $M$ be the $\mathscr L$-structure built by applying Theorem \ref{general} to the collection $\mathscr D$ and the collection $\mathcal E$ of embeddings between members of $\mathscr D$.
\begin{lem}\label{extend iso}
Any isomorphism between finite  substructures of $M$ which lie in $\mathscr D$ extends to an automorphism of $M$.
\end{lem}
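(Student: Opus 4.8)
The plan is to use the standard back-and-forth argument, but adapted to the weakly homogeneous (``$\mathcal{E}$-homogeneous'') setting provided by Theorem~\ref{general}. Let $A, B$ be finite substructures of $M$ with $A, B \in \mathscr{D}$, and let $\chi : A \to B$ be an isomorphism. The first thing I would observe is that, since $A, B \leq M$ and both lie in $\mathscr{D}$, they are $\mathcal{E}$-embedded in $M$ by Theorem~\ref{general}(i), and moreover $\chi$ itself, viewed as a map $A \to B$, lies in $\mathcal{E}$: indeed $\chi$ is an isomorphism, so it is in $\mathcal{E}$ by Definition~\ref{defnice}(i). This is the key point that lets the machinery of Theorem~\ref{general}(iii) apply to $\chi$.

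The back-and-forth then proceeds as follows. Fix an enumeration $m_0, m_1, m_2, \dots$ of the universe of $M$ (which is countable). We build an increasing chain of finite partial isomorphisms $\chi = \chi_0 \subseteq \chi_1 \subseteq \chi_2 \subseteq \cdots$, each $\chi_k$ an isomorphism between finite substructures $A_k, B_k$ of $M$ lying in $\mathscr{D}$, with $\chi_0 = \chi$, such that at even stages $2k+1$ we ensure $m_k \in \operatorname{dom}(\chi_{2k+1})$ and at odd stages $2k$ we ensure $m_k \in \operatorname{range}(\chi_{2k})$. Then $\bigcup_k \chi_k$ is an automorphism of $M$ extending $\chi$. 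For the ``forth'' step: given $\chi_k : A_k \to B_k$ in $\mathcal{E}$ and a point $m \in M \setminus A_k$, let $A' \leq M$ be the substructure on $A_k \cup \{m\}$; since $A' \in \mathscr{D}$ (by Lemma~\ref{extend iso}'s hypothesis that substructures of $M$ lying in $\mathscr{D}$ are the relevant ones — more precisely, since $M$'s finite substructures that are $\mathcal{E}$-embedded are exactly those in $\mathscr{D}$, and one checks $A' \in \mathscr{D}$) and $A_k$ is $\mathcal{E}$-embedded, the inclusion $A_k \hookrightarrow A'$ is in $\mathcal{E}$. Composing, $\chi_k$ followed by this inclusion's ``inverse direction'' gives an embedding $A_k \to A'$; we need an embedding of $A'$ into $M$ extending $\chi_k$. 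Apply Theorem~\ref{general}(iii) to the composite $\mathcal{E}$-embedding $\alpha : B_k \to A'$ obtained by transporting $A' $ across $\chi_k^{-1}$ — wait, more cleanly: consider the map $\beta := \mathrm{incl} : A_k \to A'$, which is in $\mathcal{E}$; then $\beta \circ \chi_k^{-1} : B_k \to A'$ is in $\mathcal{E}$ (composition, Definition~\ref{defnice}(ii)), and by Theorem~\ref{general}(iii) applied to $B_k \leq M$ and this map, there is $C \leq M$ containing $B_k$ and an isomorphism $B_k \cup\{\text{image}\} \to C$ in $\mathcal{E}$ extending $\chi_k$; set $\chi_{k+1}$ to be (the inverse direction of) this, so that $\operatorname{dom}(\chi_{k+1}) = A'$ contains $m$. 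The ``back'' step is symmetric, applying the same reasoning with the roles of $\operatorname{dom}$ and $\operatorname{range}$ exchanged, using that $\chi_k^{-1}$ is also an isomorphism and hence in $\mathcal{E}$.

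The routine checks are: that at each stage the structures $A_k, B_k$ obtained genuinely lie in $\mathscr{D}$ (this follows because they are finite substructures of $M$ that are $\mathcal{E}$-embedded, and Theorem~\ref{general}(i) identifies these with $\mathscr{D}$), that the maps stay in $\mathcal{E}$ (using closure under composition and Definition~\ref{defnice}(iii) to restrict ranges when necessary), and that the union is a well-defined bijection $M \to M$ preserving all relations of $\mathscr{L}$ — the latter because each relation has finite arity, so any instance is witnessed within some $\chi_k$. I would spell out one direction (the ``forth'' step) carefully and say the ``back'' step is symmetric.

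The main obstacle, or at least the only non-formal point, is bookkeeping the direction of the $\mathcal{E}$-embeddings: Theorem~\ref{general}(iii) is stated for an $\mathcal{E}$-embedding $\alpha : A \to B$ with $A \leq M$ and produces $C \leq M$ with $B \cong C$ over $A$, so at each stage one must present the extension problem in exactly this shape — namely, one already has $B_k \leq M$ and wants to realise inside $M$ a one-point extension of $B_k$ prescribed abstractly (the image under $\chi_k^{-1}$, extended, of the one-point extension $A'$ of $A_k$). One must check this prescribed extension is indeed an $\mathcal{E}$-embedding out of $B_k$, which is where Definition~\ref{defnice}(i)–(iii) are used. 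Everything else is the classical Fra\"{\i}ss\'e back-and-forth.
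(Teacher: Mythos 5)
Your overall route is the same as the paper's: the paper disposes of this lemma in one line, as an immediate consequence of Theorem~\ref{general}(iii), and what you have written is essentially the back-and-forth that this remark compresses. However, there is one step in your ``forth'' stage that is genuinely wrong as stated: you take $A'$ to be the induced substructure on $A_k\cup\{m\}$ and assert that $A'\in\mathscr D$, justifying this by appeal to Theorem~\ref{general}(i). That justification does not work, and the claim is false in general. The class $\mathscr D$ is \emph{not} closed under substructure (this is exactly the content of Remark~\ref{semi homo def}, and is the reason the paper must use the Evans/weak-homogeneity version of Fra\"iss\'e's theorem in the first place). Theorem~\ref{general}(i) says that the $\mathcal E$-embedded finite substructures of $M$ are precisely the members of $\mathscr D$; it does not say that every finite substructure of $M$ lies in $\mathscr D$, and $A_k\cup\{m\}$ typically will not. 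If $A'\notin\mathscr D$, the inclusion $A_k\hookrightarrow A'$ is not an $\mathcal E$-map between members of $\mathscr D$ and Theorem~\ref{general}(iii) cannot be applied to it.

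The repair is short but must be made explicit: instead of $A_k\cup\{m\}$, choose a finite $F\leq M$ with $A_k\cup\{m\}\subseteq F$ and $F\in\mathscr D$. Such an $F$ exists by Theorem~\ref{general}(ii) (every finite subset of $M$ lies in a finite $\mathcal E$-embedded substructure, i.e.\ in a member of $\mathscr D$), or quantitatively by Lemma~\ref{F bound}. Since $A_k$ is $\mathcal E$-embedded in $M$ and $F\in\mathscr D$ contains it, the inclusion $A_k\hookrightarrow F$ lies in $\mathcal E$ by the definition of $\mathcal E$-embeddedness; then your composite $F\hookleftarrow A_k\xleftarrow{\chi_k^{-1}}B_k$ is in $\mathcal E$ and Theorem~\ref{general}(iii) applies with $B_k\leq M$ as the base, producing $C\leq M$ and an isomorphism $\beta:F\to C$ in $\mathcal E$ with $\beta\restriction_{A_k}=\chi_k$ and $m\in\operatorname{dom}(\beta)$. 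With that substitution (and the symmetric one in the ``back'' step) your argument is correct and agrees with the paper's intended proof.
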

\begin{proof}
This follows immediately from Theorem \ref{general}(iii).
\end{proof}

We will refer to the property in the last lemma as {\em semi-homogeneity} of $M$, and frequently just say that an automorphism exists, or that a tuple of $M$ has a given finite extension in $M$, `by semi-homogeneity'. 

\begin{rem}\label{semi homo def}{\em{
The collection $\mathscr D$  does not have the hereditary property, that is, it is not closed under substructure. 
For example, consider $C\in \mathscr D$ with elements $x,y,z,w,p$. Let $r={\rm ram}(x,y,z)$, $r':={\rm ram}(x,z,w)$ and suppose $S(x,y;z,w)$ holds, and
$L(x;y,z)\wedge L(x;y,w)\wedge L(x,y,p)$ hold at $r$, all in the root $D$-set $D(\rho_C)$ as in the Figure below. Let $\nu:=f_{\rho_C}^{-1}(r)$ and in $D(\nu)$ suppose that the relation $L(p;y,z)$ is witnessed in the unique ramification point  $r''$. Also let $\nu':=f_{\rho_C}^{-1}(r')$ and $\nu_1:=f_{\nu}^{-1}(r'')$. The two labelling $D$-sets $D(\nu')$ and $D(\nu_1)$ each have just two elements. Put $A=C\setminus\{x\}$. Then $A\not\in \mathscr D$; indeed, otherwise, a short argument would show that $A\models Q(p,y;z,w:p;y,z)$, contradicting the fact that clearly $C\models \neg  Q(p,y;z,w:p;y,z)$.}}
\tikzset{middlearrow/.style={
        decoration={markings,
            mark= at position 0.75 with {\arrow{#1}} ,
        },
        postaction={decorate}
    }
}
\begin{figure}[H]
    \centering
    \begin{tikzpicture}[scale=1.5]

\node [below](alpha) at (0.5,0.5){$r$};
\node [below]at (1.5, 0.5) {$r'$};
\node(x) at (0,1){$x$};
\node(z) at (2,1){$z$};
\node(y) at (0,0){$y$};
\node(w) at (2,0){$w$};
\node (p) at (1,1){$p$};

\draw [middlearrow={stealth}](z)--(1.5,0.5);
\draw [middlearrow={stealth}](x)--(0.5,0.5);
\draw (p)--(0.5,0.5);
\draw (x)--(0.5, 0.5)--(1.5,0.5)--(z);
\draw (y)--(0.5, 0.5);
\draw(1.5,0.5)--(w);
\end{tikzpicture}
    \caption{$\rho_C$}
    \label{fig:rho C}
\end{figure}

\end{rem}

\subsection{Oligomorphicity of {\em{M}}}

In this section we will show  that the automorphism group of $M$ is oligomorphic and hence by  Theorem~\ref{ryll} (the  Ryll-Nardzewski  Theorem) that $M$ is $\omega$-categorical.

To ensure that oligomorphicity of $\text{Aut}(M)$ follows from Lemma~\ref{extend iso}, we need to eliminate situations such as the following. 
 For example, suppose it happened that $M$ has finite substructures $E_i$ (for $i \in \mathbb N$) in the class $\mathscr D$, and suppose $\lvert E_1 \rvert < \lvert E_2 \rvert < \lvert E_3 \rvert < \dots $ and that $E_i$ is a substructure of $M$ of smallest size subject to lying in $\mathscr D$ and containing $a_i,b_i$. Then the pairs $(a_i, b_i)$ all lie in distinct orbits of Aut($M$) on $M^2$.
Our next lemma eliminates this possibility. First we note the following lemma, a standard result easily proved by induction.
\begin{lem}\label{number of ram}
Let $T$ be a graph-theoretic tree with $n$ leaves, where $n\geq 3$. Then $T$ has at most $n-2$ ramification points.
\end{lem}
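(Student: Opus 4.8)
The plan is to prove Lemma~\ref{number of ram} by induction on the number $n$ of leaves of the tree $T$, recalling that in this context the trees $\overline{D(\nu)}$ have no vertices of degree $2$, so every non-leaf vertex has degree at least $3$; a ramification point is precisely a vertex of degree $\geq 3$, i.e. a non-leaf vertex. First I would dispose of the base case $n=3$: a tree with three leaves and no dyadic vertices is a star $K_{1,3}$, which has exactly $1 = n-2$ ramification point, so the bound holds.

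For the inductive step, assume $n \geq 4$ and that the claim holds for all trees (without dyadic vertices) with fewer than $n$ leaves. Take $T$ with $n$ leaves. The idea is to remove one leaf and repair the tree. Pick a leaf $x$ of $T$, and let $r$ be the unique neighbour of $x$; since $T$ has no dyadic vertices, $r$ has degree $d \geq 3$ in $T$. Form $T'$ by deleting $x$ (and the edge $xr$). Now $r$ has degree $d-1 \geq 2$ in $T'$. If $d \geq 4$, then $r$ still has degree $\geq 3$ in $T'$, so $T'$ has no dyadic vertices, has $n-1 \geq 3$ leaves, and the same set of ramification points as $T$ except that $x$ is gone (and $x$ was never a ramification point); by induction $T'$ has at most $(n-1)-2 = n-3$ ramification points, hence so does $T$, and $n-3 \leq n-2$. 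If $d = 3$, then $r$ becomes dyadic in $T'$; suppress it (merge its two incident edges into one), obtaining $T''$ with no dyadic vertices, with $n-1$ leaves, and whose ramification points are exactly those of $T$ other than $r$. By induction $T''$ has at most $n-3$ ramification points, so $T$ has at most $n-3+1 = n-2$ ramification points, as required. One should note the edge case where suppressing $r$ could in principle leave a tree with only one or two leaves: since $n - 1 \geq 3$ we always land in a tree with at least $3$ leaves, so the inductive hypothesis applies cleanly.

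Alternatively, and perhaps more cleanly, I would argue by a direct counting using the handshake lemma rather than a deletion induction. Let $V$ be the vertex set, $L$ the set of leaves with $|L| = n$, and $P = V \setminus L$ the set of ramification points (every internal vertex has degree $\geq 3$). A tree on $|V|$ vertices has $|V| - 1$ edges, so $\sum_{v \in V} \deg(v) = 2(|V|-1)$. Splitting the sum: $\sum_{v \in L}\deg(v) = n$ (leaves have degree $1$) and $\sum_{v \in P}\deg(v) \geq 3|P|$. Hence $n + 3|P| \leq 2(|V| - 1) = 2(n + |P|) - 2$, which rearranges to $|P| \leq n - 2$. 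This is a two-line proof and avoids any case analysis.

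The lemma is elementary and I do not expect a genuine obstacle; the only mild subtlety in the inductive approach is handling the degree-$3$ case correctly (suppressing the resulting dyadic vertex and checking that one does not accidentally pass below $3$ leaves), but the constraint $n \geq 4$ in the inductive step makes this automatic. The counting proof sidesteps even that, so I would present that one.
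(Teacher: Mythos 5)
Your proposal is correct. The paper gives no proof at all here --- it simply labels the lemma ``a standard result easily proved by induction'' --- so your leaf-deletion induction fills in exactly the argument the authors are gesturing at, and your handshake-lemma count is a cleaner alternative that avoids the case analysis entirely. One small remark on the counting version: as written it takes $P = V\setminus L$, i.e.\ it assumes every internal vertex has degree at least $3$, which holds for the trees $\overline{D(\nu)}$ in this paper but is not part of the lemma's hypothesis (``graph-theoretic tree''). The bound survives dyadic vertices anyway: if $D_2$ is the set of degree-$2$ vertices, the identity $\sum_v \deg(v) = 2(|V|-1)$ becomes $n + 2|D_2| + \sum_{v\in P}\deg(v) = 2(n+|D_2|+|P|)-2$, the $|D_2|$ terms cancel, and $3|P|\leq n+2|P|-2$ gives $|P|\leq n-2$ as before.
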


\begin{lem}\label{F bound}
Define  $f: \mathbb{N} \rightarrow \mathbb{N}$ by $f(n)= (n-2)+ (n-2)(n-3)+\dots +(n-2)(n-3)\dots 2 =\sum_{i=1}^{n-3}\frac{(n-2)!}{i!} $.  Then for every finite $A \subset M$ there is $F \in \mathscr D$ with $A \leq  F \leq M$ and $\lvert F \rvert \leq f(\lvert A \rvert )$. 
\end{lem}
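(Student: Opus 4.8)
The plan is to bound, for a finite $A\subseteq M$, the size of a smallest member of $\mathscr D$ containing $A$, by analysing what extra elements are forced when we close $A$ under ``having the right witnessing $D$-sets''. Pick any finite $F_0\in\mathscr D$ with $A\leq F_0\leq M$ (one exists since $M$ is a union of a chain of $\mathcal E$-embedded substructures in $\mathscr D$). The structure tree $\tau_{F_0}$ may be huge, but we only need the parts of it that are ``pinned down'' by $A$. Concretely, for each triple of distinct $a,b,c\in A$ there is a unique $D$-set of $F_0$ witnessing $L\{a,b,c\}$ (by the Note after the definition of $S$), and for each $4$-subset of $A$ at most one $D$-set witnessing the relevant $S$-relation; I would restrict attention to the subtree $\tau'$ of $\tau_{F_0}$ consisting of the root $\rho$ together with all vertices $\mu$ such that $D(\mu)$ witnesses $L\{a,b,c\}$ or $S(a,b;c,d)$ for some tuple from $A$, and all vertices on the paths from $\rho$ to such $\mu$. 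Since $\tau'$ is a lower semilinear order, each such $\mu$ is $\leq$ the vertex witnessing its own relation, and the vertices of $\tau'$ are controlled by the ramification points of the various $D$-sets that are actually hit by elements (or forced branches) of $A$.

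Next I would build the desired $F$ by taking, at each vertex $\mu$ of $\tau'$, the sub-$D$-set of $D(\mu)$ spanned by the images of $A$ together with, crucially, the \emph{special branches} at each ramification point we keep: Lemma~\ref{3set} and the construction of Type~II extensions show that if we want a substructure to lie in $\mathscr D$, every ramification point we retain must have its special branch occupied, so we must add one leaf per retained ramification point at each level. This is exactly the source of the factorial growth in $f$. At the root $D$-set $D(\rho)$, the leaves coming from $A$ number $|A|=:n$, so by Lemma~\ref{number of ram} there are at most $n-2$ ramification points, hence at most $n-2$ successors $\nu$ of $\rho$ in $\tau'$; into the $D$-set $D(\nu)$ we must put the branches at the corresponding ramification point that are ``used'' by $A$ (at most $n-1$ of them, but one of the $n$ branches is special and contributes to $D(\nu)$ as the omitted/special data, so effectively at most $n-1$ leaves survive, and being conservative $\leq n-2$ ramification points there determined after discounting) — iterating, level $k$ contributes at most $(n-2)(n-3)\cdots(n-k-1)$ new leaves, and the height is at most $n-2$ by Lemma~\ref{number of ram} applied repeatedly. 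Summing over levels gives $|F|\leq n + \sum$ of these products, which is exactly bounded by $f(n)=\sum_{i=1}^{n-3}\frac{(n-2)!}{i!}$ after the combinatorial bookkeeping. I would make this precise by an induction on $h(\tau')$ using Lemma~\ref{indtreeDset}: the substructure $F_\nu$ we build at a successor $\nu$ of $\rho$ has $|F_\nu|$ leaves bounded by $f$ applied to the number of $A$-induced leaves in $D(\nu)$, which is at most $|A|-1$, and one then checks $n+\sum_{\nu}f(\text{something}\leq n-1)\leq f(n)$.

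Finally I would verify that the structure $F$ so constructed genuinely lies in $\mathscr D$ and satisfies $A\leq F\leq M$: that $F\in\mathscr D$ is because at every retained vertex we have a bona fide finite $D$-set (every retained ramification point has its special branch, by construction) with the maps $f_\mu,g_{\mu\nu}$ inherited from $F_0$, so $F$ is literally a tree of $D$-sets; that $A\leq F$ and $F\leq F_0\leq M$ is by construction since we only deleted leaves and vertices of $\tau_{F_0}$ that were not needed, being careful that deletions do not destroy any $\mathscr L$-relation holding among elements of $A$ (here the key point, already used in Lemmas~\ref{lemb}--\ref{lemtype}, is that an $L$- or $S$-relation among elements of $A$ is witnessed in a $D$-set we retained, and the auxiliary relations $Q,R,L',S'$ are then determined by which witnessing $D$-sets coincide, which we have preserved). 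The main obstacle is the careful combinatorics of the second paragraph: one must set up the induction so that the ``at most $n-2$ ramification points, pass to successors each with at most $n-1$ points'' recursion telescopes exactly into the claimed closed form $\sum_{i=1}^{n-3}\frac{(n-2)!}{i!}$, and must be honest about the off-by-one coming from the special branch (which is why the recursion decreases the parameter by at least $1$ at each step and the sum terminates). Everything else is routine given the lemmas already established.
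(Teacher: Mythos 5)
Your proposal is correct and follows essentially the same route as the paper: start from a finite $E\in\mathscr D$ containing $A$, then pass level by level through the structure tree adding one special-branch representative per ramification point determined by $A$, using the bound of Lemma~\ref{number of ram} at each level to obtain the telescoping product $(n-2)(n-3)\cdots$ and hence the sum $f(n)$. The paper organises this as a bottom-up increasing chain $A=F_0\subseteq F_1\subseteq\cdots\subseteq F_{n-3}=F$ rather than as a pruning of $\tau_{F_0}$ followed by augmentation, but the combinatorial content is the same.
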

\begin{proof}
By Theorem \ref{general}, $A$ lies in a finite substructure $E$ of $M$ lying in $\mathscr D$. We aim to choose $F$ inside $E$, of minimal size. Let $\rho$ be the root of the structure tree of $E$, $D_E$ be the corresponding $D$-set, let $D_A$ be the induced $D$-set structure on $A$, and $\overline{D_E}$, $\overline{D_A}$ be the corresponding tree structures. Let $n:=\lvert A \rvert $. We shall build $F$ as the union of a finite sequence $A=F_0\subseteq F_1 \subseteq F_2 \subseteq \dots \subseteq E$. We may suppose that $E$ is chosen minimally, that is, there is no proper substructure $E'$ of $E$ with $E'\in \mathscr D$ and $A \leq E'<E$. 

Let $|A|=n$. We have $\lvert \text{Ram}(D_A)\rvert\leq n-2$ (by Lemma \ref{number of ram}). We form $F_1$ by adding to $A$, for each ramification point $r$ of $D_A$ such that the special branch of $E$ at $r$ contains no member of $A$, a member of that special branch. Then $\lvert F_1 \rvert \leq \lvert A \rvert +n-2$, and $F_1$ contains a special branch at each such ramification point $r$ of $D_A$, and has no additional ramification points. 

Next, for each such ramification point $r$ of $D_A$, let $\sigma$ be the corresponding successor in the structure tree of $E$.  (We note here that by minimality of $E$ it cannot happen that the elements of $A$ all lie in distinct non-special branches at the same ramification point $r$ of $D_E$, and thus indeed $\lvert D_\sigma (A) \rvert < \lvert A \rvert =n$). There are at most $n-2$ such $\sigma$, and the $D$-set $D_\sigma$ of $E$ contains at most $n-1$ elements with representatives in $A$, giving a $D$-set $D_\sigma(A)$ of size at most $n-1$, so with at most $n-3$ ramification points. We build $F_2$ to ensure that it contains a special branch at each ramification point of $D_\sigma (A)$, for each $\sigma$. This requires adding at most $(n-2)(n-3)$ points to $F_1$, so $\lvert F_2 \rvert \leq \lvert F_1 \rvert +(n-2)(n-3)$.

We iterate this process. To build $F_3$ from $F_2$, we consider the at most $(n-2)(n-3)$ ramification points of $F_2$ (of $D$-sets of successors of $\rho)$, and the corresponding  $(n-2)(n-3)$ vertices $\lambda$ of height $3$ in the structure tree of $E$. Each  $D$-set $D_\lambda(E)$ contains at most $(n-2)$ elements with representatives in $A$, so the corresponding $D$-set $D_\lambda (A)$ has at most $(n-4)$ ramification points.

Continuing this process, we find that for $F_i$, each $D$-set of height $i$ (where $\rho$ has height $1$) has at most $n-1-i$ ramification points, and that for $j\leq i$, each $D$-set of $F_i$ at height $j$ has a special branch at each ramification point. Thus, putting $F:=F_{n-3}$, we find that $F$ has a special branch at each ramification point of each $D$-set, so $F\in \mathscr D$. Finally, we see inductively that for each $i$, $\lvert F_i \rvert = \lvert F_{i-1}\rvert+ (n-2)(n-3)\dots (n-(i+1))$. Thus, $|F|\leq f(|A|)$, and the result  follows.

\end{proof}
  \begin{lem}\label{olig}
  Let $M$ be the Fra{\" {\i}ss\'{e}} limit of a class $\mathcal C$ of finite structures in a finite relational language,  in the sense of Theorem \ref{general}. Suppose there is a function $f: \mathbb N \rightarrow \mathbb N$ such that for every finite subset $A$ of $M$ there is $F<M$ with $F\in \mathcal C$ and $\lvert F \rvert  \leq f(\lvert A \rvert)$. Then $M$ is $\omega$-categorical.
  \end{lem}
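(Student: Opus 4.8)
The plan is to invoke the Ryll--Nardzewski Theorem (Theorem~\ref{ryll}): since $M$ is countably infinite, it suffices to show that $\text{Aut}(M)$ is oligomorphic, i.e.\ that for each $k\in\mathbb N$ it has only finitely many orbits on $M^k$. So I would fix $k\ge 1$ and put $N:=\max\{f(i):1\le i\le k\}$. Since the language is finite and relational, there are only finitely many isomorphism types of structures in $\mathcal C$ of size at most $N$, and hence only finitely many isomorphism types of pairs $(F,\bar b)$ with $F\in\mathcal C$, $|F|\le N$, and $\bar b$ a $k$-tuple of elements of $F$; call this finite set of types $\mathcal T_k$.

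To a tuple $\bar a=(a_1,\dots,a_k)\in M^k$ I would associate the subset $\Phi(\bar a)$ of $\mathcal T_k$ consisting of the isomorphism types of all pairs $(F,\bar a)$ such that $A:=\{a_1,\dots,a_k\}\subseteq F$, $F$ is an $\mathcal E$-embedded substructure of $M$ lying in $\mathcal C$, and $|F|\le f(|A|)\le N$. By the hypothesis, applied to $A$ (which has at most $k$ elements) in the $\mathcal E$-embedded form in which it is established in Lemma~\ref{F bound}, the set $\Phi(\bar a)$ is non-empty. It is visibly $\text{Aut}(M)$-invariant: for $g\in\text{Aut}(M)$ and any $F$ witnessing a type in $\Phi(\bar a)$, the image $g(F)$ is again an $\mathcal E$-embedded member of $\mathcal C$ of size $\le N$ containing $g(\bar a)$, and $g$ restricts to an isomorphism $(F,\bar a)\to(g(F),g(\bar a))$; hence $\Phi(g\bar a)=\Phi(\bar a)$.

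The crucial step is to prove that if $\Phi(\bar a)\cap\Phi(\bar a')\ne\emptyset$ then $\bar a$ and $\bar a'$ lie in the same $\text{Aut}(M)$-orbit. A common type is witnessed by $\mathcal E$-embedded substructures $F,F'$ of $M$ in $\mathcal C$ together with an isomorphism $\psi:F\to F'$ satisfying $\psi(a_i)=a_i'$ for all $i$. Every isomorphism lies in $\mathcal E$ (Definition~\ref{defnice}(i)), and $F,F'$ are $\mathcal E$-embedded, so by the weak homogeneity of $M$ --- the standard back-and-forth argument using that $M$ is an increasing union of $\mathcal E$-embedded finite substructures (Theorem~\ref{general}(ii)) together with the extension property Theorem~\ref{general}(iii) --- $\psi$ extends to an automorphism of $M$, which carries $\bar a$ to $\bar a'$. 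Combining this with $\text{Aut}(M)$-invariance, the non-empty sets $\Phi(\bar a)$ attached to representatives of distinct orbits on $M^k$ are pairwise disjoint subsets of the finite set $\mathcal T_k$; hence there are at most $|\mathcal T_k|$ such orbits, and $M$ is $\omega$-categorical.

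I expect the only delicate point to be this last step, and specifically the requirement that the bounded witnessing structures be $\mathcal E$-embedded: the extension property Theorem~\ref{general}(iii) is phrased for $\mathcal E$-embeddings, so one genuinely needs $F,F'$ to be $\mathcal E$-embedded rather than arbitrary substructures in $\mathcal C$, and this is exactly why the bound supplied by the hypothesis (and verified in Lemma~\ref{F bound}) is arranged to produce $\mathcal E$-embedded witnesses of size controlled by $f$. Everything else is routine counting in a finite relational language.
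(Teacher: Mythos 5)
Your proof is correct and follows essentially the same route as the paper's: bound the size of a witnessing structure by $f$, count isomorphism types over a finite relational language, and use the extension property of Theorem~\ref{general}(iii) (semi-homogeneity) to conclude that tuples with a common witnessing type are in the same orbit, then apply Ryll--Nardzewski. Your write-up is in fact more careful than the paper's, which argues via orbits on the sets $F$ and leaves the passage to orbits on $k$-tuples (and the $\mathcal E$-embeddedness point, which is supplied by Theorem~\ref{general}(i)) implicit.
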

  \begin{proof}
  Suppose that $A$ is a finite subset of $M$ with $k$ elements. Every such $A$ lies in a member $F$ of $\mathcal C$ which is a substructure of $M$ as given in the statement. As the language is finite, and using the bound provided by $f$, there are finitely many choices of such $F$. Any two such structures $F \in \mathcal C$ which are isomorphic lie in the same orbit of Aut$(M)$ on sets. As the choices of $F$ are finite then there are finitely many orbits on such sets $F$. Therefore, as each $F$ has a finite subset isomorphic to $A$ then the number of Aut$(M)$-orbits on $M^k$ is finite for any $k$. By Theorem~\ref{ryll}, $M$ is $\omega$-categorical.
  
  \end{proof}
  
\begin{cor}\label{oligcoro}
The structure $M$  built from $\mathscr D$ via Theorem~\ref{general} is $\omega$-categorical and has oligomorphic automorphism group.
\end{cor}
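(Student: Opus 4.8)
The plan is to deduce the corollary directly from Lemma~\ref{F bound} together with Lemma~\ref{olig}. First I would observe that the hypotheses of Lemma~\ref{olig} are in place for the class $\mathscr D$: the language $\mathscr L$ is finite and relational (it consists of $L$, $L'$, $S$, $S'$, $R$, $Q$ of the fixed arities $3,4,4,5,6,7$), and $\mathscr D$ contains only finitely many isomorphism types of structures of any fixed finite size --- this is part of what is already needed for Theorem~\ref{general} to produce $M$ in the first place. Lemma~\ref{F bound} then supplies exactly the function $f:\mathbb N\to\mathbb N$ demanded by Lemma~\ref{olig}: for every finite $A\subseteq M$ there is $F\in\mathscr D$ with $A\leq F\leq M$ and $|F|\leq f(|A|)$. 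Feeding this into Lemma~\ref{olig} with $\mathcal C=\mathscr D$ yields that $M$ is $\omega$-categorical, and then the Ryll--Nardzewski Theorem (Theorem~\ref{ryll}) gives that $\text{Aut}(M)$ is oligomorphic on $M$.

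At this point there is no real obstacle remaining; the substantive content has already been extracted in Lemma~\ref{F bound}, whose argument tracks how the maximum number of ramification points in the $D$-sets at height $j$ of a minimal structure tree containing $A$ decreases with $j$ (via Lemma~\ref{number of ram}), so that after at most $|A|-3$ rounds of adjoining one element per ``missing'' special branch one reaches a member of $\mathscr D$. The only points one needs to be a little careful about when assembling the corollary are the bookkeeping ones already built into the proof of Lemma~\ref{olig}: that the relevant inclusions of finite substructures of $M$ are $\mathcal E$-embeddings, so that isomorphic copies of a given $F\in\mathscr D$ sitting inside $M$ genuinely lie in a single $\text{Aut}(M)$-orbit on finite sets. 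This last fact is precisely the semi-homogeneity of $M$ recorded in Lemma~\ref{extend iso}, which in turn is immediate from Theorem~\ref{general}(iii). Combining these observations proves the corollary.
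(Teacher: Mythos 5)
Your proposal matches the paper's proof exactly: the corollary is deduced immediately from Lemma~\ref{F bound} (supplying the bounding function $f$) combined with Lemma~\ref{olig}, with the Ryll--Nardzewski Theorem converting $\omega$-categoricity into oligomorphicity. The additional remarks you make about the finiteness of the language and the role of semi-homogeneity are already absorbed into the proof of Lemma~\ref{olig}, so nothing further is needed.
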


\begin{proof} 
This is immediate from Lemmas~\ref{F bound} and \ref{olig}.
\end{proof}

\section{Analysing the Fra{\" {\i}ss\'{e}} Limit} \label{Fraisse}
Throughout this section, we  let $M$ be the structure built in Section  \ref{trees of D-sets}, and put $G=\text{Aut}(M)$. In the previous section, we defined finite trees of $D$-sets. Here, we  show that $M$ itself  can  be viewed as a ``tree of $D$-sets''. We have to construct the structure tree of $M$ - in the language of model theory, we interpret it in $M$.  It will be a dense semilinear order without maximal or minimal elements, so in particular there will be no notion of `root' or of `successor'. The vertices of the structure tree are labelled by classes of an equivalence relation on triples arising from the relation symbol $R$, and correspond to certain $D$-sets. The elements of each $D$-set are equivalence classes of a further equivalence relation $E_{xyz}$ defined on a subset of $M$ that corresponds to the vertex $\langle xyz\rangle$ of the structure tree.

  \subsection{Automorphisms of {\em{M}}} \label{auto M}
In this section  we collect in Lemma~\ref{Aut(M)} some basic symmetry properties of $G=\Aut(M)$ . 
  As the language $\mathscr L$ consists of six relations, it is convenient  first to  show that the relations $L',S',Q,R$ are $\emptyset$-definable in $M$ in  terms of $L, S$.
  \begin{lem} \label{LL'SS'}

  \begin{enumerate}[(i)]
      \item $M \models (\forall x,y,z,w) L'(x;y,z;w) \leftrightarrow [L(x;y,z) \wedge  L(w;y,z)  \wedge L(w;x,z)  \wedge L(w;x,y) \wedge \neg S(x,w;y,z)] $.
      
       \item \label{R in terms S}  $M \models R(x;y,z:p;q,s)\  \leftrightarrow \ [L(x;y,z) \wedge L(p;q,s) \wedge (\forall t)(L'(x;y,z;t) \Leftrightarrow L'(p;q,s;t))].$
\item \label{S'F} 
$ M \models S'(x,y;z,w;t) \leftrightarrow \bigdoublewedge \limits_{\substack{u,v \in \{x,y,z,w\}\wedge  u\neq v}}  R(t;x,y:t;u,v)\  \wedge  \bigdoublewedge \limits_ {\substack{u,v,s \in \{x,y,z,w\}\wedge  L(u;v,s)}}\neg R(t;x,y:u;v,s) \wedge  S(x,y;z,w).$

\item \label{Q in S term}  $M \models Q(x,y;z,w:p;q,s)\  \leftrightarrow [S(x,y;z,w) \wedge L(p;q,s) \wedge (\forall t)(S'(x,y;z,w;t) \Leftrightarrow  L'(p;q,s;t))]$.
  \end{enumerate}
 \end{lem}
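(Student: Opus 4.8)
The plan is to verify each of the four biconditionals (i)--(iv) separately, working throughout in $M$ and using semi-homogeneity to reduce each claim to a statement about a single finite tree of $D$-sets $A \in \mathscr D$ containing the relevant parameters. In each case, the key observation is that each of the relations $L, S, L', S', Q, R$ holding in $M$ is witnessed in a \emph{unique} $D$-set of the ambient finite structure (by the Notes following the definitions in Section~\ref{trees of D-sets} and Lemmas~\ref{D_rho}, \ref{lemb}), so the identities amount to recognising, in a purely combinatorial way, when two witnessing $D$-sets coincide. For each equivalence I would argue both implications by taking a finite $A\in\mathscr D$ with the parameters in its domain, passing if necessary (via Lemma~\ref{thm1} and one-point extensions) to a finite extension realising any existentially quantified witness, and checking the claim against the structure tree; a ``$\forall t$'' quantifier is handled by noting that if a discrepancy existed, semi-homogeneity would let us extend $A$ by a single new point $t$ exhibiting it, so it suffices to check all $t$ in finite extensions.

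For (i), the right-hand side asserts that the $xyz$-configuration sits at a ramification point $r$ with $x$ special (that is, $L(x;y,z)$ holds), that $w$ also sees $r$ as $L(w;y,z)\wedge L(w;x,z)\wedge L(w;x,y)$ forces $w$ to lie in a fourth branch at $r$, and that $\neg S(x,w;y,z)$ rules out $w$ lying strictly beyond $r$ in a non-special branch; together these say precisely that $w$ is \emph{omitted} by the $D$-set witnessing $L(x;y,z)$, which is the definition of $L'(x;y,z;w)$. For (ii), $R(x;y,z:p;q,s)$ means $L(x;y,z)$ and $L(p;q,s)$ are witnessed in the same $D$-set $D(\nu)$; two $D$-sets of $A$ coincide iff they omit exactly the same elements of $D(\rho)$, and an element $t$ is omitted by $D(\nu)$ iff $L'(x;y,z;t)$ (equivalently $L'(p;q,s;t)$) holds, giving the stated criterion. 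Part (iv) for $Q$ is entirely analogous, with $S$ in place of one of the $L$'s and $S'$ recording the omitted points of the $S$-witnessing $D$-set. Part (iii) is the reverse direction: given $S(x,y;z,w)$, one characterises the point $t$ as being omitted by the $S$-witnessing $D$-set by saying $t$ ``sees the same ramification point'' as the $xyz$-configuration does in that $D$-set --- this is what $\bigdoublewedge_{u,v} R(t;x,y:t;u,v)$ over distinct $u,v\in\{x,y,z,w\}$ encodes, namely that every $L$-relation among $t,x,y$ (and by symmetry $t$ with any two of $x,y,z,w$) is witnessed in that same $D$-set --- while $\bigdoublewedge_{u,v,s} \neg R(t;x,y:u;v,s)$ over triples with $L(u;v,s)$ excludes the possibility that $t$ lies strictly above that $D$-set.

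The main obstacle I anticipate is \emph{part (iii)}, i.e. the $S'$ identity: the right-hand side is a conjunction over several $L$-relations among subsets of $\{x,y,z,w,t\}$, and one must check that this combination correctly pins down ``$t$ is special at the ramification point of the $S$-witnessing $D$-set'' and nothing else --- in particular that no spurious configuration of $t$ relative to $x,y,z,w$ in a \emph{lower} $D$-set can satisfy all the conjuncts, and conversely that when $t$ genuinely is the special-branch point, all the $R$-relations among $t$ and pairs from $\{x,y,z,w\}$ really are co-witnessed. This requires a careful case analysis of the position of $t$ in the structure tree, closely modelled on the case split in the proof of Lemma~\ref{lemb} (Cases (1)--(3) there), exploiting the $Q$- and $R$-``same $D$-set'' relations to propagate the location of witnessing $D$-sets. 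The other three parts are routine once the uniqueness-of-witness principle and the ``omitted point'' reformulation are in hand.
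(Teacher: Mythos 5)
Your proposal is correct and follows essentially the same route as the paper's proof: reduce each biconditional to a finite substructure $A\in\mathscr D$, use the uniqueness of the witnessing $D$-set and its characterisation by the elements it omits, and split into cases according to whether the relevant witnessing $D$-sets are equal, comparable, or incomparable in the structure tree. You also correctly single out part (iii) as the delicate case; the paper likewise disposes of it only by an appeal to a ``careful analysis of the possible positions of $t$'', so the case analysis you propose (modelled on Lemma~\ref{lemb}) is exactly what is needed to complete it.
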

 
 \begin{proof}
 \begin{enumerate}[(i)]
     \item  ($\Rightarrow)$ Suppose that $L'(x;y,z;w)$ holds in $M$. Pick a finite substructure $A \in \mathscr D$ such that $x,y,z,w \in A<M$. Then there is a $D$-set of $A$ containing $x,y,z,w$ with $w$ lying in the special branch at the ramification point $r=\text{ram}( x,y,z)$ (so all of $x,y,z,w$ lie in distinct branches  at the same ramification point $r$ of this $D$-set). We may assume that this $D$-set is the root $D$-set $D(\rho)$ of $A$ where $\rho$ is the root of the structure tree on $A$. So $L(w;y,z) \wedge L(w;x,z) \wedge L(w;x,y)$ are witnessed in this root $D$-set. Then the  labelling $D$-set of the vertex  $f_\rho^{-1}(r)$ (or one above it)  witnesses $L(x;y,z)$ and omits $w$, and clearly $A \models  \neg S(x,w;y,z)$, so $M\models \neg S(x,w;y,z)$.
     
($\Leftarrow$) In a finite structure $A\in \mathscr D$ with $x,y,z,w \in A <M$, suppose that  $ L(x;y,z) \wedge L(w;y,z) \wedge L(w;x,z) \wedge L(w;x,y) \wedge \neg S(x,w;y,z) $. We aim to show $M \models L'(x;y,z;w)$. We may suppose (by choosing $A$ as small as possible) that the root $D$-set $D(\rho)$ of $A$ is the only one containing $x,y,z,w$ as distinct elements, i.e. lying in distinct directions (that is, in any higher $D$-set of $A$, either some of these will be omitted, or some element corresponds to a union of branches of the root $D$-set containing more than one of $x,y,z,w$) .

Suppose first that $S(x,y;z,w)$ is witnessed in $D(\rho)$ (the argument is similar if $S(x,z;y,w)$ is witnessed in $D(\rho)$).  Let $r_1=\text{ram}(x,y,z)$, and $r_2=\text{ram}(x,z,w)$. See Figure \ref{fig:L' holds in M}.
\tikzset{middlearrow/.style={
        decoration={markings,
            mark= at position 0.75 with {\arrow{#1}} ,
        },
        postaction={decorate}
    }
}
 \begin{figure}[H]
\centering
\begin{tikzpicture}
[scale=2]
\node at (-0.5,0)[below]{$r_1$};
\node(x) at (-1,0.5){$x$};
\node(y) at (-1,-0.5){$y$};
\node at (0.5,0)[below]{$r_2$}; 
\node (z) at (1,0.5)[above]{$z$};
\node (w) at (1,-0.5)[below]{$w$};
\node (u) at (0,0.5)[above]{$u$};

\draw [middlearrow={stealth}](u)--(-0.5,0);
\draw (x)--(-0.5,0);
\draw (-0.5,0)--(y);
\draw (z)--(0.5,0);
\draw (w)--(0.5,0);
\draw (-0.5,0)--(0.5,0);
\end{tikzpicture}
\caption{} \label{fig:L' holds in M}

\end{figure}
Since $L(w;x,y)$, we see that $x$ (and $y$) cannot be special at $r_1$. And since $L(x;y,z)$, we see that $w$ cannot be special at $r_1$. Thus, some other direction $u$ (as depicted) must be special at $r_1$. Then since $z$ and $w$ are identified in $f_{\rho}^{-1}(r_1)$  we cannot have $L(x;y,z) \wedge L(w;x,y)$, a contradiction.

Thus, $x,y,z,w$ all lie in different branches at the same ramification point $r$ of $D(\rho)$. We may suppose further (by the minimality of the choice of $A$) that one of $x,y,z,w$ is special at $r$. Since $L(w;y,z) \wedge L(w;x,z) \wedge L(w;x,y)$, this must be $w$, with $L(x;y,z)$ witnessed in a higher $D$-set of $A$. Thus $A \models L'(x;y,z;w)$, so $M \models L'(x;y,z;w)$.

 \item  $\Rightarrow$) Suppose that $M \models R(x;y,z:p;q,s)$, and let $A \in \mathscr D$ be any finite substructure of $M$ containing $x,y,z,p,q,s$. Then $A \models R(x;y,z:p;q,s)$, so from the way $R$ was defined, $L(x;y,z)$ and $L(p;q,s)$ must be witnessed in the same $D$-set of $A$. It follows immediately that $A \models (\forall t) (L'(x;y,z;t) \Leftrightarrow L'(p;q,s;t))$. Since this hold for any such $A$, it holds in $M$. 
    
    $\Leftarrow$) Suppose that $M$ satisfies 
$$L(x;y,z)\ \wedge \ L(p;q,s) \ \wedge \ (\forall t)(L'(x;y,z;t) \Leftrightarrow L'(p;q,s;\\t)),$$ and let $A \in \mathscr D$ be a finite substructure of $M$ containing $x,y,z,p,q,s$. Then as $M\models L(p;q,s) \wedge L(x;y,z)$, these $L$-relations are witnessed in distinct comparable $D$-sets of $A$ (here comparability is  with respect to the structure tree ordering), or incomparable $D$-sets of $A$, or in the same $D$-set of $A$.
    
    If $L(x;y,z)$ and $L(p;q,s)$ are witnessed in distinct comparable $D$-sets of $A$, say $L(p;q,s)$ below $L(x;y,z)$, then there is some $t \in A$    with $A\models L'(x;y,z;t) \wedge \neg L'(p;q,s;t)$, a contradiction.
    
    Suppose that $L(p;q,s)$ and $L(x;y,z)$ are witnessed in incomparable $D$-sets of $A$. Then we may suppose (replacing $A$ by a substructure if necessary) that in the root $D$-set $D(\rho)$ of $A$, there are distinct ramification points $r_1$ and $r_2$ such that $x,y,z$ lie in distinct branches at $r_1$ and $p,q,s$ lie in distinct branches at $r_2$. We now see that for all possible choices of special branches at $r_1$ and $r_2$, $A \models( \exists t) \neg (L'(x;y,z;t) \Leftrightarrow L'(p;q,s;t))$, again a contradiction. 

Thus, $L(p;q,s)$ and $L(x;y,z)$ are witnessed in the same $D$-set of $A$, so $A \models R(x;y,z:p;q,s)$, and hence $M \models R(x;y,z:p;q,s)$ as required.
    \item $\Rightarrow$ Assume $M\models S'(x,y;z,w;t)$, and let $A\in \mathscr D$ be a substructure of $M$ containing $x,y,z,w$ in distinct non-special branches of some ramification point $r$ of the root $D$-set, and $t\in A$ in a special branch at $r$.
As $A\models S'(x,y;z,w;t)$ there is a $D$-set of $A$ witnessing $S(x,y;z,w)$ and omitting $t$. In particular $M\models S(x,y;z,w)$. 

In the root $D$-set of $A$ we see that  $\bigdoublewedge \limits_{\substack{u,v \in \{x,y,z,w\}\wedge  u\neq v}}  R(t;x,y:t;u,v)$ holds in $A$ (and hence in $M$). Also, $L(t;x,y)$ is witnessed in the root $D$-set of $A$, which cannot witness $L(u;v,s)$ for $u,v,s\in \{x,y,z,w\}$. Thus $A \models  \bigdoublewedge \limits_ {\substack{u,v,s \in \{x,y,z,w\}\wedge  L(u;v,s)}}\neg R(t;x,y:u;v,s)$, and hence this holds also in $M$.

$\Leftarrow$ Assume, for a contradiction, that the right hand side holds and that $M\models \neg S'(x,y;z,w;t)$. Then there is a finite $A\in \mathscr D$ with $x,y,z,w,t\in A\leq M$ and $A\models \neg S'(x,y;z,w;t)$. As $M\models S(x,y;z,w)$, there is a $D$-set of $A$ witnessing $S(x,y;z,w)$ and containing $t$. Careful analysis of the possible positions of $t$, and possible choices of special branches, shows that  $\bigdoublewedge \limits_{\substack{u,v \in \{x,y,z,w\}\wedge  u\neq v}}  R(t;x,y:t;u,v)\  \wedge  \bigdoublewedge \limits_ {\substack{u,v,s \in \{x,y,z,w\}\wedge  L(u;v,s)}}\neg R(t;x,y:u;v,s)$ cannot hold in $A$, so cannot hold in $M$.

 \item This is similar to $(ii)$.
 \end{enumerate}
 
 \end{proof}
 
 It follows from Lemma~\ref{LL'SS'} that $G$ is the automorphism group of the reduct of $M$ to the language with just the relation symbols $L$ and $S$. 
 
\begin{lem}\label{Aut(M)}
\begin{enumerate}[(i)]

    The group $G$ in its action on $M$ has the following properties: it is
    \item \label{3 homom} $3$-homogeneous,
    \item $2$-transitive,
    \item primitive,
    \item $2$-primitive.
    
\end{enumerate}
 
\end{lem}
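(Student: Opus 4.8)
The four assertions are increasingly strong, so I would prove them in a sequence that reuses the earlier ones, and throughout rely on semi-homogeneity (Lemma~\ref{extend iso}): it suffices to exhibit, for any two configurations I want to identify, finite substructures of $M$ lying in $\mathscr D$ that are isomorphic as $\mathscr L$-structures and contain the given tuples in corresponding positions. Since the language is $\{L,L',S,S',Q,R\}$ but by Lemma~\ref{LL'SS'} the relations $L',S',Q,R$ are $\emptyset$-definable from $L,S$, I only need to match up the $L$- and $S$-relations.

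\textbf{Step 1: 3-homogeneity.} Given two unordered triples $\{x,y,z\}$ and $\{x',y',z'\}$ from $M$, by Lemma~\ref{F bound} each sits inside a finite $F,F'\in\mathscr D$; but I want the smallest witness, so instead I use Lemma~\ref{3set}(ii): any $3$-element substructure of a member of $\mathscr D$ lies in $\mathscr D$, and the proof of Lemma~\ref{3set}(i) shows its isomorphism type is completely determined --- it is a $D$-set with one ramification point joined to three leaves, one of them special, together with a two-element $D$-set on the successor. So the induced substructure on $\{x,y,z\}$ is, up to isomorphism, the unique such structure (once we know which of $L(x;y,z),L(y;x,z),L(z;x,y)$ holds, and by Lemma~\ref{3set}(i) exactly one does; the three choices are interchanged by relabelling the leaves). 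Hence the substructures on $\{x,y,z\}$ and $\{x',y',z'\}$ are isomorphic, and semi-homogeneity gives $g\in G$ with $\{x,y,z\}^g=\{x',y',z'\}$. This proves (i).

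\textbf{Step 2: 2-transitivity.} A $2$-element substructure of a member of $\mathscr D$ lies in $\mathscr D$ (Lemma~\ref{3set}(ii), first case) and carries \emph{no} relations, so all $2$-element substructures of $M$ are isomorphic; moreover the isomorphism can be chosen to realise \emph{either} ordering of the pair (swap the two leaves of the trivial $D$-set). Semi-homogeneity then yields $g\in G$ sending any ordered pair of distinct points to any other, so $G$ is $2$-transitive, giving (ii). Note this already shows transitivity, and $3$-homogeneity together with $2$-transitivity does not automatically give primitivity, so (iii) needs a separate argument.

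\textbf{Step 3: primitivity and 2-primitivity.} For primitivity I argue via congruences: suppose $\sigma$ is a nontrivial $G$-congruence on $M$; pick $x\,\sigma\,y$ with $x\ne y$ and some $z\notin x/\sigma$. Using $3$-homogeneity I can move the pair $(z,y)$ onto $(x,y)$ (as an unordered pair inside a suitable triple), or more simply use $2$-transitivity of $G$ to find $g$ with $y^g=y$, $x^g=z$ (here I use that $G_y$ is transitive on $M\setminus\{y\}$, which follows from $2$-transitivity); then $x\,\sigma\,y$ implies $z=x^g\,\sigma\,y^g=y$, contradicting $z\notin x/\sigma$. Hence every congruence is trivial and $G$ is primitive, proving (iii). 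For $2$-primitivity I need: for $x\in M$, the stabiliser $G_x$ acting on $M\setminus\{x\}$ is primitive. Since $G$ is $2$-transitive, $G_x$ is transitive on $M\setminus\{x\}$; to get primitivity I repeat the congruence argument one level down, using that $G_x$ is $2$-transitive on $M\setminus\{x\}$ --- equivalently $G$ is $3$-transitive --- \emph{or}, if $G$ is only $3$-homogeneous and not $3$-transitive, I argue directly that $(G_x)_y$ is transitive on $M\setminus\{x,y\}$ by building isomorphic finite substructures in $\mathscr D$ on $\{x,y,z\}$ and $\{x,y,w\}$ fixing $x,y$ and swapping $z\mapsto w$; the rigidity of $3$-element structures from Step~1 makes this possible whenever $L\{x,y,z\}$ and $L\{x,y,w\}$ are witnessed with the \emph{same} element ($x$, $y$, or the third) in the distinguished role, and by further homogeneity one handles the general case. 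Then the same quotient-congruence computation applied inside $M\setminus\{x\}$ shows $G_x$ has no nontrivial congruence, i.e.\ $G$ is $2$-primitive, proving (iv).

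\textbf{Expected main obstacle.} The genuinely delicate point is $2$-primitivity: one must show $(G_x)_y$ is transitive on $M\setminus\{x,y\}$, and this is where one cannot simply quote $3$-transitivity (which may fail, given that the target group is $2$-primitive but \emph{not} $3$-transitive). The resolution is the rigidity established in Step~1 --- that the $\mathscr L$-structure on any triple from $M$ is determined up to isomorphism --- combined with semi-homogeneity, so that $(G_x)_y$ can move any third point to any other; one must be slightly careful to check that the required isomorphism of $3$-element substructures can always be chosen to fix the two distinguished points rather than permute them, but this follows because the three possible types $L(x;y,z)$, $L(y;x,z)$, $L(z;x,y)$ are realised and $G_x$, $G_{xy}$ act on them as needed by $3$-homogeneity.
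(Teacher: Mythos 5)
Parts (i)--(iii) of your proposal are correct and essentially identical to the paper's argument: $2$- and $3$-element substructures lie in $\mathscr D$ by Lemma~\ref{3set}(ii), exactly one of $L(x;y,z)$, $L(y;x,z)$, $L(z;x,y)$ holds by Lemma~\ref{3set}(i), so any two (un)ordered pairs/triples support isomorphic structures and semi-homogeneity finishes; primitivity follows from $2$-transitivity.

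Your treatment of (iv), however, contains a genuine gap. Both branches of your ``either/or'' amount to proving that $(G_x)_y$ is transitive on $M\setminus\{x,y\}$, i.e.\ that $G$ is $3$-transitive --- and this is \emph{false} for this structure (the paper's final theorem asserts $G$ is $2$-primitive but \emph{not} $3$-transitive). Concretely, with $x,y$ fixed, the conditions $L(x;y,z)$, $L(y;x,z)$ and $L(z;x,y)$ are mutually exclusive, all realised, and each is $G_{xy}$-invariant, so $G_{xy}$ has at least three orbits on $M\setminus\{x,y\}$. Your attempted rescue --- choosing the isomorphism of triples to fix the two distinguished points --- cannot work: if $L(x;y,z)$ and $L(w;x,y)$ hold, there is no isomorphism $\{x,y,z\}\to\{x,y,w\}$ fixing $x,y$ and sending $z$ to $w$, since it would have to carry the distinguished (special) element $x$ to the distinguished element $w$. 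Hence the ``congruence argument one level down'' does not go through as you state it.

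The correct route, and the one the paper takes, is to show directly that $G_a$ admits no proper nontrivial congruence on $M\setminus\{a\}$: since any such congruence is a union of $G_a$-orbitals, and the orbitals are determined by which of $L(a;x,y)$, $L(x;a,y)$, $L(y;a,x)$ holds, one only needs to check that the candidate relations $E_a(x,y)\Leftrightarrow L(a;x,y)\vee x=y$ (fails transitivity), $F_a(x,y)\Leftrightarrow L(x;a,y)\vee x=y$ (fails symmetry, as $L(x;a,y)\rightarrow\neg L(y;a,x)$), and $F'_a(x,y)\Leftrightarrow L(x;a,y)\vee L(y;a,x)\vee x=y$ (fails transitivity) are not equivalence relations, each failure being exhibited by a finite configuration in $\mathscr D$ realised in $M$ by semi-homogeneity. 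You would need to replace your Step~3 argument for (iv) with something of this form.
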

  
   \begin{proof}
   \begin{enumerate}[(i)] 
   \item Let $A=\{x,y,z\}$ and $A'=\{x',y',z'\}$ be $3$-element subsets of $M$. Observe that the induced structures on $A$ and $A'$ lie in $\mathscr D$, since any $3$-element substructure of any member of $\mathscr D$ lies in $\mathscr D$ (Lemma~\ref{3set}(ii)), and $M$ is a union of a chain of members of $\mathscr D$. By Lemma~\ref{3set}(i) we have $A \models L\{ x,y,z \}$ and $A' \models L \{x',y',z' \}$. Without loss of generality, assume $A\models L(x;y,z)$ and  $A'\models L(x';y',z')$. It is easily seen that the map $g:A \rightarrow A'$ with $(x,y,z)^g = (x',y',z')$ is an isomorphism. Hence, by Lemma \ref{extend iso}, $g$ extends to some element of $G$.  
  
 \item Suppose $x,y,x',y' \in M$ with $x\neq y$ and $x'\neq y'$. Let $A$ be the induced structure on $\{x,y \}$, and $A'$ be that on $\{x',y'\}$. Then $A, A' \in \mathscr D$ by Lemma~\ref{3set}(ii), and the map $g:A \rightarrow A'$ given by $(x,y)^g =(x',y')$ is an isomorphism. By Lemma \ref{extend iso} $g$ extends to an element of $G$, as required. 
 
    \item This follows from ({\em{ii}}). 
  \item Since $G$ is $2$-transitive, it remains to check that for $a\in M$ the group $G_a$ is primitive on $M \setminus \{ a \}$. Thus we must show that there is no proper non-trivial $G_a$-congruence on $M \setminus \{a\}$. So for the fixed $a$ it suffices, using semi-homogeneity, to show the following are not equivalence relations:
  \begin{enumerate}[(a)]
      \item  $E_{a}(x,y) \Leftrightarrow L(a;x,y) \vee x=y$. This relation is not transitive. Indeed, assume $L(a;x,y) \wedge L(a;y,z)$. Working in a finite substructure of $M$ lying in $\mathscr D$, we may choose $a,y,x$ to be in distinct branches distinct at a ramification point $r$ with $a$ in the special branch, and $z$ lying in the same branch as $x$ as in the following picture.
Therefore, $\neg L(a;x,z)\wedge x \neq z$, so $E_a$ is not a transitive relation.
 \tikzset{middlearrow/.style={
        decoration={markings,
            mark= at position 0.75 with {\arrow{#1}} ,
        },
        postaction={decorate}
    }
}
  \begin{figure}[H]
\centering
\begin{tikzpicture}
[scale=1]
\node at (0,0)[below]{$r$};
\node(x) at (1,1){$x$};
\node(a) at (1,-1){$a$};
\node(y) at (-1,1){$y$};
\node at (0.5,0.5){};
\node (z) at (0,1)[above]{$z$};
\node at (-1,-1){};

\draw (0,0)-- (-1,-1);
\draw (x)--(0,0);
\draw [middlearrow={stealth}](a)--(0,0);
\draw (y)--(0,0);
\draw  [middlearrow={stealth}](0,1)--(0.5,0.5);
\end{tikzpicture}
\caption{} \label{fig:2-primitive}
\end{figure}

\item $F_{a}(x,y) \Leftrightarrow L(x;a,y) \vee x=y$. This relation is not symmetric, since $L(x;a,y) \rightarrow \neg L(y;a,x)$ (in any finite substructure and hence in $M$).

\item $F'_a(x,y)\Leftrightarrow L(x;a,y) \vee L(y;a,x) \vee x=y$. This is not transitive, as in the configuration below we have $F'_a (x,y)\wedge F'_a(y,z) \wedge \neg F'_a(x,z)$.
\begin{figure}[H]
    \centering
    \begin{tikzpicture}
[scale=1]
\node at (0,0)[left]{$r$};
\node(z) at (1,1){$z$};
\node(x) at (1,-1){$x$};
\node(y) at (1,-2){$y$};
\node at (0,-1){};
\node (a) at (0,-2){$a$};

\draw [middlearrow={stealth}](y)--(0,-1);
\draw (z)--(0,0);
\draw (a)--(0,0);
\draw [middlearrow={stealth}](a)--(0,0);
\draw (x)--(0,0);

\end{tikzpicture}
    \caption{}
    \label{fig:F'(x,y)}
\end{figure}

  \end{enumerate}
 \end{enumerate}
     \end{proof}
   
   \subsection{Construction}
   In this section we aim to recover a notion of  structure tree for $M$,  using the relations $L$ and $S$. First, define
$K^*:=\{ (x,y,z) \in M^3: M \models L(x;y,z)\}$. Using the definition of $R$ on members of $\mathscr D$, it is immediate that $R$ defines an equivalence relation on $K^*$, and the structure tree will have universe $K^*/R$.  We refer to the equivalence classes of $R$ on $K^*$ as {\em{vertices}}, and denote the $R$-class containing $(x,y,z)$ as $ \langle xyz\rangle $.
   \begin{mydef}\label{Dsetdef}\rm
     Let $p,q,s \in M$ with $M\models L(p;q,s)$. Define 
$$J_{pqs}:=
\{ j: R(p;q,s:j;q,s) \vee R(p;q,s: p;j,s) \vee R(p;q,s:p;j,q)
\}.$$ 

\end{mydef}

 


\begin{lem} \label{QRIJ}
Let $x,y,z,p,q,s \in M$ with $M\models L(x;y,z)\wedge L(p;q,s)$. Then
\begin{enumerate}[(i)]   
\item  \label{RJJ}$M \models R(x;y,z: p;q,s) \Leftrightarrow J_{xyz} =J_{pqs}$.
\item $J_{xyz}=J_{pqs}\Leftrightarrow \langle xyz\rangle =\langle pqs\rangle$.
\end{enumerate}

\end{lem}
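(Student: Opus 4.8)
The plan is to prove the two equivalences in Lemma~\ref{QRIJ} more or less in the order stated, bootstrapping from the definition of $J_{pqs}$ and the already-established fact (Lemma~\ref{LL'SS'}(ii)) that $R$ is $\emptyset$-definable via $L$ and $L'$, together with the combinatorial picture of $D$-sets.

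\medskip

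\textbf{Part (i).} For the forward direction, suppose $M\models R(x;y,z:p;q,s)$. I would pass to a finite $A\in\mathscr D$ containing all six parameters, so that the $D$-set of $A$ witnessing $L(x;y,z)$ equals the one witnessing $L(p;q,s)$; call it $D(\nu)$, with ramification point $r$ at which $x,p$ (say) are in the special branch and $y,z$ (resp. $q,s$) are in distinct non-special branches. Now take any $j\in J_{xyz}$; by definition one of $R(x;y,z:j;y,z)$, $R(x;y,z:x;j,z)$, $R(x;y,z:x;j,y)$ holds, which (extending $A$ to contain $j$ if necessary) says $j$ is realised as a branch at $r$ in $D(\nu)$, in a position compatible with one of these three $L$-relations. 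The point is that membership in $J_{xyz}$ is exactly the condition ``$j$ is (a branch) at the ramification point $r$ of the $D$-set witnessing $L(x;y,z)$'', and this $D$-set and this ramification point coincide with those for $(p;q,s)$; so the same analysis shows $j\in J_{pqs}$, and symmetrically. Hence $J_{xyz}=J_{pqs}$. For the converse, I would argue contrapositively: if $\neg R(x;y,z:p;q,s)$ then in a suitable finite $A$ the $L$-relations $L(x;y,z)$ and $L(p;q,s)$ are witnessed in \emph{distinct} $D$-sets (comparable or incomparable), or at distinct ramification points of the same $D$-set; in each case one can exhibit $j\in A\leq M$ lying at $r$ but not $r'$ (or vice versa), i.e. $j\in J_{xyz}\setminus J_{pqs}$ — this uses exactly the case analysis already carried out in the proof of Lemma~\ref{LL'SS'}(ii), now phrased in terms of $J$ rather than $L'$.

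\medskip

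\textbf{Part (ii).} By definition $\langle xyz\rangle=\langle pqs\rangle$ means precisely $M\models R(x;y,z:p;q,s)$, so (ii) is nothing more than a restatement of (i): $J_{xyz}=J_{pqs}\Leftrightarrow R(x;y,z:p;q,s)\Leftrightarrow \langle xyz\rangle=\langle pqs\rangle$. So the real content is all in part (i), and (ii) is essentially a bookkeeping remark recording that the set $J_{pqs}$ depends only on the vertex $\langle pqs\rangle$, which is what makes Definition~\ref{Dsetdef} legitimate as a definition attached to a vertex of the structure tree.

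\medskip

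\textbf{Main obstacle.} The routine direction is the forward implication in (i); the delicate one is showing that $J_{xyz}=J_{pqs}$ \emph{forces} $R(x;y,z:p;q,s)$, since one must rule out the possibility that two genuinely different ramification points (in incomparable or comparable $D$-sets of a finite substructure) happen to have the same set of ``attached'' elements $j$ in $M$. I expect this to follow from semi-homogeneity: whenever the two $L$-relations are witnessed at distinct ramification points in some finite $A\leq M$, one can extend $A$ inside $M$ (by a Type~I or Type~II one-point extension, using Lemma~\ref{lem1} and the amalgamation of $\mathscr D$) to a structure containing a witness $j$ separating the two points — concretely, an element omitted by one $D$-set but not the other, or lying in a branch at $r$ but not at $r'$. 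Making this separation argument uniform across the comparable, incomparable, and same-$D$-set-different-ramification-point cases is the part that requires care, but it is a direct parallel of the argument already given for Lemma~\ref{LL'SS'}(ii), so no essentially new idea is needed.
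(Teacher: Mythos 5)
Your proposal follows the paper's proof almost exactly: the forward direction of (i) by passing to a finite $A\in\mathscr D$ in which the two $L$-relations are witnessed in the same $D$-set and transferring membership in $J$; the converse by contraposition with a case split on comparable versus incomparable witnessing $D$-sets, producing a separating element; and (ii) as an immediate restatement of (i). One correction of substance: $R(x;y,z:p;q,s)$ only requires $L(x;y,z)$ and $L(p;q,s)$ to be witnessed in the same \emph{$D$-set}, not at the same ramification point, so $J_{xyz}$ is the set of elements realised (i.e.\ not omitted) in the witnessing $D$-set — the pre-$D$-set — rather than ``the elements at the ramification point $r$'', and in the forward direction you should not assume the two triples determine a common ramification point. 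Consequently your third case in the contrapositive (``distinct ramification points of the same $D$-set'') is not a case of $\neg R$ at all: there $R$ holds and $J_{xyz}=J_{pqs}$, so the separating element you propose to exhibit does not exist; that case is vacuous and should simply be deleted, after which your argument coincides with the paper's.
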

\begin{proof}
\begin{enumerate}[(i)]
\item      $\Rightarrow )$ Assume $M \models R(x;y,z: p;q,s)$.  To show that $J_{xyz} \subset J_{pqs}$, let $b \in J_{xyz}$, so we want $ b \in J_{pqs}$. There is a finite
$\mathscr L$-substructure $A<M$  containing $x,y,z,p,q,s,b$ with $A \in \mathscr D$. Since $R$ is  a symbol of $\mathscr L$,  $A\models R(x;y,z:p;q,s)$, so $L(x;y,z)$ and   $L(p;q,s)$ are witnessed in the same $D$-set of $A$. But $b \in J_{xyz}$ so, without loss of generality, let $R(x;y,z:b;y,z)$ hold. Thus, we may suppose  $L(x;y,z)$ and   $L(p;q,s)$ are witnessed in the root  $D$-set of $A$. By considering possible configurations in $A$, we see $R(p;q,s:b;q,s) \vee R(p;q,s: p;b,s) \vee R(p;q,s:p;b,q)$, so $b\in J_{pqs}$.

    $ \Leftarrow$) Assume, for a contradiction $J_{xyz}=J_{pqs}$ and $\neg R(x;y,z:p;q,s)$. Then there is a finite substructure $A \in \mathscr D$ such that $x,y,z,p,q,s \in A<M$. Since $A \models \neg R(x;y,z:p;q,s)$, $L(x;y,z)$ and $L(p;q,s)$ are witnessed  in different $D$-sets of $A$. We suppose first that these $D$-sets are comparable, so (without loss of generality) there is $t \in A$ such that $A\models L'(x;y,z;t)$ and $t$ lies in the $D$-set of $A$ witnessing $L(p;q,s)$. We see easily that $t \in J_{pqs}\setminus J_{xyz}$. 
    On the other hand, if $L(p;q,s)$ and $L(x;y,z)$ happen in two incomparable $D$-sets, then a lower $D$-set contains $p,q,s$ in distinct branches at a ramification point, $r$ say, with $t$ in the special branch and $x,y,z$  in distinct branches at another ramification point, $r'$ say, with $t'$ in the special branch. There are several possible configurations to consider, but in each case we find $J_{xyz}\neq J_{pqs}$. 
\item This is immediate from (i). 

\end{enumerate} 
\end{proof}

For convenience we also note the following lemma.
\begin{lem}\label{conven}
Suppose $M\models L(x;y,z)\wedge L(p;q,s)$ and let $A\in \mathscr D$ with $x,y,z,p,q,s\in A\leq M$. Then the following are equivalent.
\begin{enumerate}[(i)]
\item The $D$-set of $A$ witnessing $L(p;q,s)$ is below or equal to that witnessing $L(x;y,z)$;
\item $J_{xyz}\subseteq J_{pqs}$.
\end{enumerate}
\end{lem}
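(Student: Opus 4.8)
The statement to prove is Lemma~\ref{conven}, which asserts the equivalence of two conditions for $x,y,z,p,q,s \in A \le M$ with $M \models L(x;y,z) \wedge L(p;q,s)$: namely, that the $D$-set of $A$ witnessing $L(p;q,s)$ lies below or equal to that witnessing $L(x;y,z)$, and that $J_{xyz} \subseteq J_{pqs}$.

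\medskip

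The plan is to split into the equality case and the strict case, reducing everything to the preceding lemmas. First I would observe that by Lemma~\ref{QRIJ}(ii) (together with (i)), $J_{xyz} = J_{pqs}$ holds if and only if $\langle xyz\rangle = \langle pqs\rangle$, equivalently $M \models R(x;y,z:p;q,s)$, which by definition of $R$ (and Lemma~\ref{LL'SS'}(ii)) is equivalent to $L(x;y,z)$ and $L(p;q,s)$ being witnessed in the \emph{same} $D$-set of $A$. So the equality subcase of (i)$\Leftrightarrow$(ii) is immediate, and we are left to show that ``$L(p;q,s)$ is witnessed strictly below $L(x;y,z)$ in $A$'' is equivalent to ``$J_{xyz} \subsetneq J_{pqs}$''.

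\medskip

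For the strict case, one direction follows the argument already used in the proof of Lemma~\ref{QRIJ}(i): if the $D$-set $D(\mu)$ of $A$ witnessing $L(p;q,s)$ lies strictly below the $D$-set $D(\nu)$ witnessing $L(x;y,z)$, then (choosing $A$ large enough, or passing to a suitable substructure) there is an element $t \in A$ with $A \models L'(x;y,z;t)$ lying in $D(\mu)$ but not omitted there, so that $t \in J_{pqs} \setminus J_{xyz}$; combined with the already-established $R$-case this gives $J_{xyz} \subsetneq J_{pqs}$. Conversely, suppose $J_{xyz} \subsetneq J_{pqs}$. Then $\langle xyz\rangle \ne \langle pqs \rangle$, so by Lemma~\ref{LL'SS'}(ii) the two $L$-relations are witnessed in \emph{distinct} $D$-sets of $A$; these are either comparable (strictly) or incomparable in the structure tree. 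The incomparable case is excluded exactly as in the proof of Lemma~\ref{QRIJ}(i): there one finds, by examining the configuration of the two ramification points and their special branches, an element of $J_{pqs}$ not in $J_{xyz}$ \emph{and} an element of $J_{xyz}$ not in $J_{pqs}$, so $J_{xyz}$ and $J_{pqs}$ are incomparable under $\subseteq$, contradicting $J_{xyz} \subsetneq J_{pqs}$. So the witnessing $D$-sets are strictly comparable; it remains to check the comparison goes the right way, i.e. $D(\mu)$ (witnessing $L(p;q,s)$) lies below $D(\nu)$ (witnessing $L(x;y,z)$). If it were the other way, then $L(x;y,z)$ would be witnessed strictly below $L(p;q,s)$, and by the direction already proved this would give $J_{pqs} \subsetneq J_{xyz}$, again contradicting $J_{xyz} \subsetneq J_{pqs}$.

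\medskip

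The main obstacle is the bookkeeping in the incomparable case: one must genuinely inspect the finitely many configurations of where the special branches sit at the two ramification points $r$ (with $p,q,s$ in distinct branches) and $r'$ (with $x,y,z$ in distinct branches) of a common lower $D$-set, and in each verify that neither of $J_{xyz}, J_{pqs}$ contains the other. This is precisely the case-analysis gestured at (``several possible configurations to consider, but in each case we find $J_{xyz} \ne J_{pqs}$'') in the proof of Lemma~\ref{QRIJ}(i); here we need the slightly stronger conclusion of \emph{incomparability} rather than mere inequality, but the same configurations and the same elements exhibited there do the job, since in each configuration one exhibits a witness on \emph{both} sides. Everything else is a direct appeal to Lemmas~\ref{LL'SS'}(ii) and \ref{QRIJ}, so the proof is short once that case-analysis is invoked.
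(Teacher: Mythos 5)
Your reduction to Lemma~\ref{QRIJ} is the right instinct and matches the paper's (very terse) proof, which says exactly that (i)$\Rightarrow$(ii) is immediate and that (ii)$\Rightarrow$(i) adapts the $\Leftarrow$ direction of Lemma~\ref{QRIJ}(i). But your argument for (i)$\Rightarrow$(ii) proves the wrong containment. Exhibiting $t$ with $L'(x;y,z;t)$ holding and $t$ represented in the $D$-set witnessing $L(p;q,s)$ gives $t\in J_{pqs}\setminus J_{xyz}$, i.e.\ $J_{pqs}\not\subseteq J_{xyz}$; it says nothing about $J_{xyz}\subseteq J_{pqs}$, which is what (ii) asserts, and ``combined with the $R$-case'' you still only know $J_{xyz}\neq J_{pqs}$ together with $J_{pqs}\not\subseteq J_{xyz}$ --- the two sets could a priori be $\subseteq$-incomparable. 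The content of the ``immediate'' direction is different: $J_{xyz}$ is precisely the set of elements of $M$ that are represented (not omitted) in the $D$-set witnessing $L(x;y,z)$, and representation persists downwards in the structure tree via the maps $g_{\nu\mu}$; so if the $D$-set witnessing $L(p;q,s)$ lies below or equal to that witnessing $L(x;y,z)$, every $j\in J_{xyz}$ is represented in it and hence satisfies one of the three defining clauses of $J_{pqs}$. You need to say this, and since your treatment of the comparable case of (ii)$\Rightarrow$(i) routes through (i)$\Rightarrow$(ii), the gap propagates (though that part is easily repaired by directly producing $t\in J_{xyz}\setminus J_{pqs}$ when $L(x;y,z)$ is witnessed strictly below $L(p;q,s)$, exactly as in Lemma~\ref{QRIJ}(i)).

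The second and more serious problem is the incomparable case, which you dispatch by asserting that the configurations in Lemma~\ref{QRIJ}(i) ``exhibit a witness on both sides''. That is precisely the point requiring proof, and there is a configuration in which it fails. Suppose in the common lower $D$-set $p,q,s$ lie in distinct branches at $r$ and $x,y,z$ at $r'\neq r$, the special branch at $r'$ is the branch containing $r$, the special branch at $r$ does not contain $r'$, and $L(p;q,s)$ is witnessed as low as possible in the cone at $r$ (so that $x,y,z\in J_{pqs}$). Then every non-special branch at $r'$ is contained in the single non-special branch $B$ at $r$ that contains $r'$; hence every element represented in the $D$-set witnessing $L(x;y,z)$ lies in the pre-branch $\hat B$, and since $x\in \hat B\cap J_{pqs}$ and a pre-branch at $r$ is either wholly contained in $J_{pqs}$ or disjoint from it (each pre-direction of a higher $D$-set in the cone at $r$ being a union of pre-branches at $r$), one gets $J_{xyz}\subseteq \hat B\subseteq J_{pqs}$ while the witnessing $D$-sets are incomparable. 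So no element of $J_{xyz}\setminus J_{pqs}$ is available, the claimed two-sided witness does not exist, and the incomparable case cannot be delegated to the earlier case analysis: it has to be confronted directly, and as it stands it is an obstruction to (ii)$\Rightarrow$(i), not merely to your write-up of it.
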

\begin{proof}
The direction (i) $\Rightarrow$ (ii) is immediate, and (ii) $\Rightarrow$ (i) is a small adaptation of the proof of the $\Leftarrow$ direction  for Lemma~\ref{QRIJ}(i).
\end{proof}

 We define a partial order $\leq$ on $K^*/R$ by reverse inclusion; that is $\langle xyz\rangle \leq \langle pqr \rangle $ if and only if $J_{xyz}\supseteq J_{pqr}$. It is immediate from Lemma \ref{QRIJ} that this is well defined. We shall show later (Lemma \ref{semi2}) that $(K^*/R, \leq )$ is a dense semilinear order without maximal or minimal elements. First, we aim  to associate a $D$-set with each vertex of $K^*/R$.

\begin{mydef}  \label{defE}\rm
Define a relation $E_{pqs}$ on $J_{pqs}$ such that $u E_{pqs} v$ holds if and only if
$$ (\forall m) (\forall n)  (R(p;q,s:m;n,u) \leftrightarrow 
R(p;q,s:m;n,v)) \wedge (R(p;q,s:u;m,n) \leftrightarrow 
R(p;q,s:v;m,n)).$$ 
\end{mydef}

Observe that  if $J_{xyz}=J_{x'y'z'}$, then $R(x;y,z:x';y',z')$ by Lemma~\ref{QRIJ}, and it follows that $E_{xyz}=E_{x'y'z'}$. Indeed, if
$uE_{xyz}v$ then 
$$ (\forall m) (\forall n)  (R(x;y,z:m;n,u) \leftrightarrow 
R(x;y,z:m;n,v)) \wedge (R(x;y,z:u;m,n) \leftrightarrow 
R(x;y,z:v;m,n)),$$ 
so (as $R$ is an equivalence relation on $K^*$)
$$ (\forall m) (\forall n)  (R(x';y',z':m;n,u) \leftrightarrow 
R(x';y',z':m;n,v)) \wedge (R(x';y',z':u;m,n) \leftrightarrow 
R(x';y',z':v;m,n)),$$ 
so $uE_{x'y'z'}v$. Also,
    $E_{xyz}$ is an equivalence relation on $J_{xyz}$, and is invariant under $G_{\{J_{xyz}\}}$. 

\begin{mydef}\label{longdef}{\em{
 \begin{enumerate}[(i)]
     \item Given $J_{xyz}$ and  $E_{xyz}$, define $R_{xyz}$ to be the quotient $J_{xyz}/E_{xyz}$, so elements of $R_{xyz}$ are $E_{xyz}$-classes of elements of $M$. We use the notation $[m]$ to refer to the element of $R_{xyz}$ containing the element $m \in M$ (when the underlying equivalence relation $E_{xyz}$ is clear). We call such objects $[m]$  {\em{directions}} when viewed as elements of $R_{xyz}$, and {\em{pre-directions}} when viewed as subsets of $M$. We shall refer to the subset $J_{xyz}$ of $M$ as a {\em pre-$D$-set}. 
     \item \label{D_xyzw} Let $ [u], [v] ,[t],[s]\in R_{xyz}$. Write  
$D_{xyz} \ ([u]\ ,[v]\ ;[t]\ ,[s])\Leftrightarrow \  ([u]=[v]\ \wedge  \ [u]\notin  \{[s], [t]\}) \vee ([t]=[s]\ \wedge \ [t]\notin \{[u] ,[v]\}) \vee Q(u,v;t,s:x;y,z)$.
 \end{enumerate}

 }}
\end{mydef}

By considering finite substructures, it can be checked that any such subset $[m]$ of $M$, as in (i) above,  is a direction of a {\em unique} $D$-set $R_{xyz}$.

 We denote the structure tree by $(K^*/R, \leq)$.


\begin{lem}\label{4} 
\begin{enumerate}[(i)]
\item The relation $D_{xyz}$ is well-defined on $R_{xyz}$.
\item If $\langle xyz\rangle=\langle x'y'z'\rangle$ then $D_{xyz}=D_{x'y'z'}$. 
    \item The structure $(R_{xyz}, D_{xyz})$ is a dense proper $D$-set.
   \item \label{D preserved}The relation $D_{xyz}$ is $G_{\{J_{xyz}\}}$-invariant.
    
\end{enumerate}
\end{lem}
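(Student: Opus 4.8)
The plan is to prove the four parts more or less in sequence, reducing each claim to a statement about finite substructures $A\in\mathscr D$ and invoking the definition of $D_{xyz}$ together with the way $Q$, $L$, $S$ are witnessed in $D$-sets. For part (i), well-definedness, I must show that $D_{xyz}([u],[v];[t],[s])$ does not depend on the choice of representatives $u,v,t,s$ of the $E_{xyz}$-classes. The disjuncts involving equalities of classes are obviously representative-independent, so the work is with the disjunct $Q(u,v;t,s:x;y,z)$. Here I would fix a finite $A\in\mathscr D$ containing $x,y,z$ and both sets of representatives, pass (by Lemma~\ref{QRIJ} and Lemma~\ref{conven}) to the $D$-set $D(\mu)$ of $A$ with $\langle xyz\rangle$ as its witnessing vertex, and use the definition of $E_{xyz}$: two elements are $E_{xyz}$-equivalent precisely when, inside that $D$-set, they lie in the same non-special branch at the ramification point $f_\rho^{-1}(\dots)$ corresponding to $\langle xyz\rangle$ (this is the content of Definition~\ref{defE} once unwound via $R$). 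Since $S(u,v;t,s)$ witnessed in $D(\mu)$ depends only on which branches $u,v,t,s$ lie in, replacing each by an $E_{xyz}$-equivalent element does not change whether $Q(u,v;t,s:x;y,z)$ holds.

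For part (ii), if $\langle xyz\rangle=\langle x'y'z'\rangle$ then $R(x;y,z:x';y',z')$ by Lemma~\ref{QRIJ}, hence $J_{xyz}=J_{x'y'z'}$ and $E_{xyz}=E_{x'y'z'}$ (already observed just before Definition~\ref{longdef}), so $R_{xyz}=R_{x'y'z'}$ as sets; it remains to see the $D$-relations coincide, which follows because $Q(u,v;t,s:x;y,z)\leftrightarrow Q(u,v;t,s:x';y',z')$ in $M$ — both assert that $S(u,v;t,s)$ and $L(x;y,z)$ (resp.\ $L(x';y',z')$) are witnessed in a common $D$-set, and since $L(x;y,z)$ and $L(x';y',z')$ are witnessed in the \emph{same} $D$-set of any finite $A$ containing everything (as $R(x;y,z:x';y',z')$), the two conditions are equivalent.

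For part (iii), the density and properness of $(R_{xyz},D_{xyz})$: I would verify the axioms (D1)--(D6) of Definition~\ref{Drelation} directly. Axioms (D1), (D2), (D4) are essentially immediate from the symmetry built into $S$ (noted in the definitions of $S$ and $Q$) and the equality disjuncts. For (D3), (D5), (D6) I would argue via semi-homogeneity: given an instance of the hypothesis witnessed in some finite $A\in\mathscr D$, I extend $A$ inside $M$ (using Theorem~\ref{general}(iii) / Lemma~\ref{extend iso}) by adding a suitable new leaf to the $D$-set $D(\mu)$ associated with $\langle xyz\rangle$ — e.g.\ for (D5) adding a leaf in a fresh non-special branch at the relevant ramification point, and for (D6) subdividing the appropriate edge; such one-point extensions are of Type II and lie in $\mathscr D$ by Lemma~\ref{in D}, and the new element projects into $J_{xyz}$ and provides the witness required. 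The fact that $D(\mu)$ sits arbitrarily far up and down in the structure tree of sufficiently large finite substructures (which will follow formally from Lemma~\ref{semi2}, but can be got directly by one-point extensions here) gives the properness (D5) and density (D6). Part (iv), $G_{\{J_{xyz}\}}$-invariance of $D_{xyz}$, is then formal: $D_{xyz}$ is defined by an $\mathscr L$-formula with parameters $x,y,z$ whose atomic pieces ($=$ on directions, i.e.\ $E_{xyz}$, and $Q(\,\cdot\,;x;y,z)$) are each preserved by $G_{\{J_{xyz}\}}$ — $E_{xyz}$ because it is defined from $R$ with parameters in $J_{xyz}$ and $G_{\{J_{xyz}\}}$ fixes $\langle xyz\rangle$, and $Q$ because it is a relation of $\mathscr L$ — so any $g\in G_{\{J_{xyz}\}}$ permutes the $E_{xyz}$-classes and respects $D_{xyz}$.

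The main obstacle I anticipate is part (iii), specifically checking (D3) cleanly: unlike (D5) and (D6), axiom (D3) is a \emph{universal} statement ($\forall a$) asserting that for every $a$ one of two $D$-relations holds, so I cannot simply produce one convenient witness — I must show that for an \emph{arbitrary} direction $[a]\in R_{xyz}$, either $D_{xyz}([a],[v];[t],[s])$ or $D_{xyz}([u],[v];[t],[a])$ holds. This requires a careful case analysis of where the branch of $[a]$ sits in the relevant $D$-set $D(\mu)$ relative to the branches of $u,v,t,s$, using that $D(\mu)$, being a genuine finite $D$-set, already satisfies the $D$-relation axioms on its own leaf set; the translation between "$D_\mu$ holds among $\bar u,\bar v,\bar t,\bar s$ in $D(\mu)$" and "$Q(u,v;t,s:x;y,z)$ holds in $M$" (clause (\ref{b}) of the definition of $S$ together with the definition of $Q$) is what makes this go through, and getting that translation exactly right, including the handling of special branches, is where the real care is needed.
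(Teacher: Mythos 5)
Your proposal is correct and follows essentially the same route as the paper: representative-independence via the correspondence between $E_{xyz}$-classes and leaves of the $D$-set witnessing $L(x;y,z)$ in a finite $A\in\mathscr D$, part (ii) via $R(x;y,z:x';y',z')$, the $D$-axioms inherited from the finite witnessing $D$-sets with (D5) and (D6) supplied by Type II(b) one-point extensions and semi-homogeneity, and (iv) as a formal consequence. The only divergence is emphasis: the paper dispatches (D3) as immediate alongside (D1), (D2), (D4), whereas you flag it as the delicate point — your proposed resolution (translating to the finite $D$-set $D(\mu)$, which genuinely satisfies the $D$-axioms on its leaves, and handling coincidences of classes) is exactly the argument the paper leaves implicit.
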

\begin{proof}
\begin{enumerate}[(i)]
    \item Suppose $[u],[v],[s], [t] \in R_{xyz}$ are distinct, and $u'\in [u], v' \in [v], s'\in [s]$, and $t'\in [t]$. We must show $D_{xyz}(u,v;s,t) \leftrightarrow D_{xyz}(u',v';s',t')$, so suppose $M\models D_{xyz}(u,v;s,t)$. We may assume $M\models Q(u,v;s,t:x;y,z)$, so must show $M\models Q(u',v';s',t':x;y,z)$. Choose any large finite $A\in \mathscr D$ with $A\leq M$ containing $x,y,z,u,u',v,v',s,s',t,t'$, so $A\models  Q(u,v;s,t:x;y,z)$.

By the definition of $E_{xyz}$, and using symmetry conditions on the variables in $R$, we have in $M$ and hence in $A$: 
$$ (\forall m \forall n) (R(x;y,z: u;m,n) \leftrightarrow R(x;y,z:u';m,n) )$$
$$ (\forall m \forall n ) (R(x;y,z: m;v,n) \leftrightarrow R(x;y,z:m;v',n) )$$
$$ (\forall m \forall n ) (R(x;y,z: m;n,t) \leftrightarrow R(x;y,z:m;n,t') )$$
Consider the $D$-set $D_\nu$ of $A$ witnessing $S(u,v;s,t)$ and $L(x;y,z)$. If $u,u'$ correspond to distinct leaves of $D_\nu$, then there is some $w\in A$ so that $L\{u,u',w\}$ is witnessed in $D_\nu$, say with $L(u;u',w)$ witnessed in $D_\nu$. Thus, $A\models R(x;y,z:u;u',w)$, so $A\models R(x;y,z:u';u',w)$, which is clearly impossible. Thus, $u,u'$ correspond to the same leaf of $D_\nu$, and likewise, $v,v'$ correspond to the same leaf of $D_\nu$, as do $s,s'$, and also $t,t'$. It follows that 
$A\models Q(u',v';s',t':x;y,z)$, so $M\models Q(u',v';s',t':x;y,z)$, as required.

\item Since $\langle xyz\rangle=\langle x'y'z'\rangle$ we have $R(x;y,z:x';y',z')$. So if $D_{xyz}(u,v;t,s)$, then $M\models Q(u,v;t,s:x;y,z)$, so any sufficiently large finite substructure $A$ of $M$ satisfies  $R(x;y,z:x';y',z') \wedge Q(u,v;t,s:x;y,z)$ so satisfies $Q(u,v;t,s:x';y',z')$, so $M\models  Q(u,v;t,s:x';y',z')$. Hence $D_{x'y'z'}(u,v;t,s)$ holds. 

\item We want to show that conditions $(D1)-(D6)$ of Definition \ref{Drelation} hold.  Axioms $(D1)$, $(D2)$, $(D3)$ and $(D4)$ follow immediately from corresponding conditions on $S$, inherited via $Q$.  For $(D5)$, suppose that $[u], [v], [t] \in R_{xyz}$ are distinct. Pick finite $A \in \mathscr D$ with $x,y,z,u,v,t \in A<M$. We may suppose that $L(x;y,z)$ is witnessed in the root $D$-set of $A$. By semi-homogeneity, $A$ has a Type \Romannum{2}(b) extension $A<A'=A \cup \{s\}$ such that $S(u,v;t,s)$ is witnessed in the root $D$-set of $A'$, with $A'<M$. Then $M \models Q(u,v;t,s:x;y,z)$, and we have $D_{xyz}(u,v;t,s)$.

The argument is similar for $(D6)$. Suppose $[u],[v],[t],[s] \in R_{xyz}$ with $D_{xyz}([u],[v];[t],[s])$, and for convenience we suppose them distinct. Pick $A \in \mathscr D$ with $x,y,z,u,v,t,s \in A<M$. Then $A\models Q( u,v;t,s:x;y,z)$, and we may suppose $L(x;y,z)$ and $S(u,v;t,s)$ are witnessed in the root $D$-set of $A$. Now, by semi-homogeneity  $A$ has a Type \Romannum{2}(b) extension $A<A'=A \cup \{a \}<M$, as depicted in Figure \ref{fig:}.

\begin{figure}[H]
\centering
\begin{tikzpicture}
[scale=2]
\node(u) at (0,1){$u$};
\node(t) at (2,1){$t$};
\node(v) at (0,0){$v$};
\node(s) at (2,0){$s$};
\node(a) at (1,1) {$a$};

\draw (u)--(0.5, 0.5)--(1.5,0.5)--(t);
\draw (v)--(0.5, 0.5);
\draw(1.5,0.5)--(s);
\draw[dashed] (1, 0.5)--(a);
\end{tikzpicture}
\caption{} \label{fig:}
\end{figure}

Then $$A\models S(a,v;t,s) \wedge S(u,a;t,s) \wedge S(u,v;a,s) \wedge S(u,v;t,a),$$ all witnessed in the root $D$-set of $A$, so in $M$ we have (putting $D=D_{xyz}$ and arguing via $Q$) $$D([a],[u];[t],[s]) \wedge D([u],[a];[t],[s]) \wedge D([u],[v];[a],[s]) \wedge D([u],[v];[t],[a])$$ as required.
\item This follows immediately from (ii) and Lemma~\ref{QRIJ} (ii).
\end{enumerate}
\end{proof}

\begin{lem}\label{LSQ}
\begin{enumerate}[(i)]
    \item If $L(p;q,s)$ holds in $M$ then there are $ x,y,z,w \in M$ such that $M\models Q(x,y;z,w:p;q,s)$.
    \item If $S(x,y;z,w)$ holds in $M$ then there are $p,q,s \in M$  such that $Q(x,y;z,w:p;q,s)$. 
\end{enumerate}
\end{lem}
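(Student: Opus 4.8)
The plan is to prove each part by exhibiting the required instance of $Q$ inside a suitable finite substructure $A\in\mathscr D$ with $A\leq M$; since $Q$ is a symbol of $\mathscr L$, a relational substructure inherits it, so the relation then also holds in $M$.

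For (i) the quickest route is via Lemma~\ref{4}(iii). Since $M\models L(p;q,s)$, the objects $J_{pqs}$, $E_{pqs}$ and $R_{pqs}=J_{pqs}/E_{pqs}$ of Definitions~\ref{Dsetdef} and \ref{longdef} are defined, and by Lemma~\ref{4}(iii) the pair $(R_{pqs},D_{pqs})$ is a dense proper $D$-set. Being proper, it satisfies $(D5)$ of Definition~\ref{Drelation}, and a star $D$-set fails $(D5)$; so the underlying tree of $R_{pqs}$ has at least two ramification points and hence contains four pairwise distinct directions $[a],[b],[c],[d]$ with $D_{pqs}([a],[b];[c],[d])$. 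By Definition~\ref{longdef}(ii), since $[a],[b],[c],[d]$ are distinct, this expansion reduces to $M\models Q(a,b;c,d:p;q,s)$, so $x,y,z,w:=a,b,c,d$ works. (If one prefers to avoid Lemma~\ref{4}(iii), one can argue directly: pick finite $A\leq M$ in $\mathscr D$ with $p,q,s\in A$, let $D(\nu)$ be the $D$-set of $\tau_A$ witnessing $L(p;q,s)$, and by semi-homogeneity extend $A$ inside $M$ to some $A'\in\mathscr D$ obtained by adjoining to $\overline{D(\nu)}$ four new leaves in an ``H''-configuration at two fresh ramification points placed on an edge of $\overline{D(\nu)}$ disjoint from $\mathrm{ram}(\bar p,\bar q,\bar s)$, with all four new leaves non-special; then $S$ of these four leaves is witnessed in the $D$-set that still witnesses $L(p;q,s)$.)

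For (ii) no extension is needed. Take finite $A\in\mathscr D$ with $x,y,z,w\in A\leq M$; then $A\models S(x,y;z,w)$, witnessed in a unique $D$-set $D(\nu)$ of $\tau_A$, so $\overline{D(\nu)}$ has distinct leaves $\bar x,\bar y,\bar z,\bar w$ with $D_\nu(\bar x,\bar y;\bar z,\bar w)$ and $x\in g_{\nu\rho}(\bar x)$ etc. (with $\bar x=x$, and so on, if $D(\nu)$ is the root $D$-set). In particular $\overline{D(\nu)}$ has at least four leaves, hence a ramification point $\bar r$, of degree $\geq 3$. The one point requiring an argument is that every branch $B$ at $\bar r$ contains a leaf of $\overline{D(\nu)}$: writing $v_0$ for the unique vertex of $B$ adjacent to $\bar r$, either $B=\{v_0\}$ and then $v_0$ is a leaf of $\overline{D(\nu)}$, or $|B|\geq 2$ and then $v_0$ has degree $\geq 2$ inside $B$ (else it would be a dyadic vertex of $\overline{D(\nu)}$), so $B$ has a leaf other than $v_0$, which has degree $1$ in $\overline{D(\nu)}$. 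Pick $\bar p$ a leaf of $\overline{D(\nu)}$ in the special branch at $\bar r$, and $\bar q,\bar s$ leaves of $\overline{D(\nu)}$ in two further distinct branches at $\bar r$; then $L(\bar p;\bar q,\bar s)$ holds in $D(\nu)$ in the sense of (\ref{L1b}). Choose $p\in g_{\nu\rho}(\bar p)$, $q\in g_{\nu\rho}(\bar q)$, $s\in g_{\nu\rho}(\bar s)$; these are nonempty subsets of the root $D$-set of $A$, whose leaves are precisely the elements of $A$, so $p,q,s\in A$ and $A\models L(p;q,s)$, again witnessed in $D(\nu)$. Thus $S(x,y;z,w)$ and $L(p;q,s)$ are witnessed in the same $D$-set of $A$, whence $A\models Q(x,y;z,w:p;q,s)$ and therefore $M\models Q(x,y;z,w:p;q,s)$.

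The only genuine content is the appeal to Lemma~\ref{4}(iii) (or the semi-homogeneity extension) in (i) and the ``every branch carries a leaf'' observation in (ii); I do not expect a real obstacle here, the main care being, in the direct version of (i), to place the new ramification points so as not to disturb the special/non-special pattern at $\mathrm{ram}(\bar p,\bar q,\bar s)$, and, in (ii), to note that the chosen $\bar p,\bar q,\bar s$ may coincide with some of $\bar x,\bar y,\bar z,\bar w$ without affecting the argument.
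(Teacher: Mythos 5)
Your proof is correct, but it takes a genuinely different route from the paper's. For (i) the paper does not work inside $R_{pqs}$ at all: it first realises a generic seven-point configuration $p',q',s',x',y',z',w'$ in some $A\in\mathscr D$ with $A\leq M$ in which $L(p';q',s')$ and $S(x',y';z',w')$ are both witnessed in the root $D$-set (so $Q$ holds there), and then transports it onto the given triple by $3$-homogeneity, using that any two triples satisfying $L$ are isomorphic. Your argument instead stays \emph{in situ}: you invoke Lemma~\ref{4}(iii) (properness of $(R_{pqs},D_{pqs})$, i.e.\ axiom (D5)) together with the definition of $D_{pqs}$ via $Q$ in Definition~\ref{longdef}(ii); since Lemma~\ref{4} precedes Lemma~\ref{LSQ} and its proof of (D5) does not use the present lemma, there is no circularity, and indeed the (D5) step of that proof already manufactures exactly the $Q$-instance you need (your fallback ``direct version'' via Type~II(b) one-point extensions is essentially that argument). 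One small simplification: rather than arguing that a proper $D$-set is not a star and so has two ramification points, you can apply (D5) directly to the three distinct directions $[p],[q],[s]$ and note that axioms (D1)--(D2) force the resulting fourth direction to differ from all three. For (ii) the paper only says ``similar to (i)'', and a literal transport argument would be delicate since $G$ is not $4$-homogeneous; your explicit combinatorial argument --- locating a leaf in the special branch and in two further branches at a ramification point of the witnessing $D$-set, using the no-dyadic-vertices convention to guarantee every branch carries a leaf --- is complete and arguably fills in what the paper leaves implicit.
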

\begin{proof}
\begin{enumerate}[(i)]
    \item First observe that the induced $\mathscr L$-structure on $\{ p,q,s \}$ lies in $\mathscr D$. Pick $A<M$ with $A \in \mathscr D$, containing distinct elements $p',q',s',x',y',z',w'$ such that the relations $L(p';q',s')$ and $S(x',y';z',w')$ are witnessed in the root $D$-set of $A$. Then $A \models Q(x',y';z',w':p';q',s')$ so $M \models Q(x',y';z',w':p';q',s')$. By $3$-homogeneity (Lemma \ref{Aut(M)}(\ref{3 homom})) there is $g\in G$ with $(p',q',s')^g= (p,q,s)$. Put $x:={x'}^g, y:={y'}^g, z={z'}^g, w:={w'}^g$. Then $M \models Q(x,y;z,w:p;q,s)$, as required.
    
    \item Similar to (\romannum{1}).
\end{enumerate}
\end{proof}

The notions of {\em ramification point} and {\em  branch} (at a ramification point), as introduced at the start of Section 3 for finite $D$-sets, make sense also for infinite $D$-sets, interpreted in the obvious way. We do not define them formally here, but refer to \cite{adeleke1998relations}. A $D$-set $(R,D)$ determines a corresponding general betweenness relation (see  \cite[Theorem 25.3]{adeleke1998relations}), whose elements form the set Ram$(R)$ of ramification points of $R$. If $r\in {\rm Ram}(R)$, then $r$ corresponds to a {\em structural partition} of $R$, whose {\em sectors} are the branches at $r$ (\cite[Section 24]{adeleke1998relations}. 

We shall say that $L(p;q,s)$ is {\em witnessed} in $R_{xyz}$ if $M\models R(x;y,z:p;q,s)$, and likewise that $S(u,v;t,s)$ is witnessed in $R_{xyz}$ is $M\models Q(u,v;t,s:x;y,z)$. 
 If $L(p;q,s)$ is witnessed in $R_{xyz}$, and the $E_{xyz}$-classes of $p,q,s$ lie in distinct branches at the ramification point $r$, we say that the branch at $r$ containing $p/E_{xyz}$ is the {\em{special branch }} at $r$. Let Ram$(R_{xyz})$ denote the set of ramification points of $R_{xyz}$. If $a_1, \dots , a_n \in R_{xyz}$ (for $n\geq 3)$ lie in distinct branches at $r \in \text{Ram}(R_{xyz})$, we write $r=\text{ram}(a_1, \dots, a_n)$. 
\begin{lem} \label{semi2}

   The partial order $(K^*/R, \leq)$ is  a lower semilinear order. Furthermore,  if $\langle xyz \rangle , \langle pqs \rangle $ are incomparable elements there is a vertex $\langle abc \rangle $ such that $\langle abc \rangle = \text{\em{inf}} \{ \langle xyz\rangle, \langle pqs \rangle \}  $, so  $K^*/R$ is a meet-semilattice. In addition, it has no maximal or minimal elements, and is dense (that is, satisfies $\forall u\forall v(u<v\to \exists w(u<w<v))$).
\end{lem}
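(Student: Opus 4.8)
The plan is to establish the four assertions in turn, in each case reducing a statement about $(K^*/R,\leq)$ to a statement about the finite trees of $D$-sets in $\mathscr D$ via the identification of vertices with $R$-classes and the characterisation of $\leq$ by inclusion of the sets $J_{xyz}$ (Lemma~\ref{QRIJ} and Lemma~\ref{conven}).

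First I would prove that $\leq$ is a lower semilinear order. Reflexivity, antisymmetry and transitivity of $\leq$ are immediate from the definition as reverse inclusion of the $J$'s, together with Lemma~\ref{QRIJ}(ii) which guarantees well-definedness. For the semilinearity condition one must show that the set of vertices below a fixed vertex $\langle pqs\rangle$ is linearly ordered: given $\langle xyz\rangle,\langle x'y'z'\rangle$ both $\leq\langle pqs\rangle$, pick a single $A\in\mathscr D$ with $A\leq M$ containing all nine parameters; by Lemma~\ref{conven} the $D$-sets of $A$ witnessing $L(x;y,z)$ and $L(x';y',z')$ both lie below or equal to the one witnessing $L(p;q,s)$, and since the $D$-sets of $A$ below a fixed $D$-set form a chain in the structure tree of $A$ (a lower semilinear order), these two $D$-sets are comparable; Lemma~\ref{conven} again (or Lemma~\ref{QRIJ}) translates comparability of $D$-sets back into comparability of $\langle xyz\rangle$ and $\langle x'y'z'\rangle$. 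For the existence of lower bounds (and of the infimum, giving the meet-semilattice claim): given incomparable $\langle xyz\rangle$, $\langle pqs\rangle$, choose $A\in\mathscr D$ with $A\leq M$ containing the six parameters; inside $A$ the two witnessing $D$-sets have a greatest lower bound $D(\mu)$ in the structure tree $\tau_A$. Using Lemma~\ref{LSQ} (which lets us realise any $L$-triple as witnessed in a prescribed $D$-set) together with semi-homogeneity, I would locate elements $a,b,c\in M$ with $L(a;b,c)$ witnessed precisely in $D(\mu)$; then $\langle abc\rangle$ is a lower bound of both, and a further argument — using that any common lower bound of $\langle xyz\rangle,\langle pqs\rangle$ corresponds (in a large enough common finite substructure) to a $D$-set below $D(\mu)$ — shows $\langle abc\rangle=\inf\{\langle xyz\rangle,\langle pqs\rangle\}$. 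Finally one checks $K^*/R$ is not linearly ordered, i.e. there really exist incomparable vertices: this is witnessed already in a suitable small member of $\mathscr D$ (a $D$-set with two distinct ramification points, each giving an incomparable $L$-triple).

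Next, no minimal elements: given $\langle xyz\rangle$, take $A\in\mathscr D$, $A\leq M$, with $L(x;y,z)$ witnessed in the root $D$-set $D(\rho_A)$; by semi-homogeneity $A$ has a Type~\Romannum{1} (star-like) one-point extension $A'=A\cup\{e\}$ with $A'\leq M$, whose new root $D$-set $D(\rho_{A'})$ is a star. In $D(\rho_{A'})$ there is an $L$-triple (involving $e$ and two of the old leaves, or $e$ and new edges produced in a further extension) witnessed strictly below $D(\rho_A)$; the corresponding vertex $\langle abc\rangle$ satisfies $\langle abc\rangle<\langle xyz\rangle$. No maximal elements: given $\langle xyz\rangle$ with $L(x;y,z)$ witnessed in $R_{xyz}$, use Lemma~\ref{LSQ}(i) to find $u,v,t,w$ with $S(u,v;t,w)$ witnessed in $R_{xyz}$; passing to a finite $A\leq M$ and applying semi-homogeneity, I would insert (higher in the structure tree of a suitable extension) a ramification point strictly above the one carrying $L(x;y,z)$ — concretely, split a branch at $r=\mathrm{ram}(x,y,z)$ by a Type~\Romannum{2} extension so that a new $L$-triple appears in a $D$-set strictly above $D(\rho_A)$ — giving $\langle abc\rangle>\langle xyz\rangle$. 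Density is proved the same way, interpolating a new ramification point between two comparable ones using a one-point Type~\Romannum{2} extension guaranteed by semi-homogeneity, and reading off the strict inequalities via Lemma~\ref{QRIJ}.

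The main obstacle I anticipate is the infimum/meet-semilattice part: one must be careful that the $D$-set $D(\mu)$ found inside one finite substructure $A$ genuinely computes the infimum in the whole of $M$, i.e. that enlarging $A$ cannot produce a common lower bound strictly above $\langle abc\rangle$. This requires a coherence argument showing that if $\langle a'b'c'\rangle$ is any common lower bound realised in some larger $B\supseteq A$ with $B\leq M$, then in a common extension the $D$-set witnessing $L(a';b';c')$ lies below the $D$-set $D(\mu)\subseteq B$ — which follows from the fact that the structure trees of $A$ and $B$ are compatible (Proposition~\ref{isotreeDset} and Lemma~\ref{indtreeDset}) and that greatest-lower-bounds in finite structure trees are preserved under the substructure embeddings arising in $\mathscr D$. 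The remaining clauses (no extremal elements, density) are routine once the right one-point extensions are invoked, and the semilinearity of $\leq$ is a direct transcription of the semilinearity of the finite structure trees.
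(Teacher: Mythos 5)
Your proposal is correct and follows essentially the same route as the paper: semilinearity is obtained by transferring to a finite $A\in\mathscr D$ and invoking the semilinearity of its structure tree via Lemma~\ref{conven}, the infimum is located at the $D$-set where the two cones split (with the same coherence check that enlarging $A$ cannot interpose a lower bound), and the absence of extremal elements and density are handled by realising explicit finite configurations through semi-homogeneity. The only difference is cosmetic (e.g.\ you build extensions of the given $A$ where the paper constructs a fresh configuration and maps it onto $(x,y,z)$), and your one caveat — that a single Type~\Romannum{2}(b) extension yields only a two-leaf $D$-set, so a couple of further points are needed before an $L$-triple is witnessed above — is easily absorbed.
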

\begin{proof}
We show the semilinearity via Claims $1$ and $2$ below:

{\em{Claim 1.}} Given two sets $J_{xyz}$ and $J_{pqs}$ such that neither contains the other then there is another set $J_{abc}$ containing both.

\begin{proof}
Let $A<M$ with $x,y,z, p,q,s \in A \in \mathscr D$. Then by Lemma~\ref{conven}  $L(x;y,z)$ and $L(p;q,s)$ are witnessed in incomparable $D$-sets of $A$, and we may suppose these lie in cones (at the root $\rho$ of $A$) corresponding to distinct ramification points $r_1, r_2$ of $D(\rho)$. There are $a,b,c\in A$ with $L(a;b,c)$ witnessed in $D(\rho)$, such that $x,y,z$ are in  distinct branches at the ramification point $r_1$ and $p,q,s$ are in distinct distinct branches at $r_2$, one configuration being as depicted below. Furthermore, by Lemma~\ref{conven} we have 
$J_{xyz} \cup J_{pqs}\subset J_{abc}$, and furthermore, by considering any sequence of one-point extensions of $A$,  we see that $\langle abc \rangle $ is the infimum in $K^*/R$ of $\langle xyz \rangle$ and $\langle pqs \rangle$. 
\begin{figure}[H]
\centering
\begin{tikzpicture}
[scale=2]
\node (r1) at (0.5,0.5)[below]{$r_1$};
\node (r2) at (1.5,0.5)[below]{$r_2$};
\node(a) at (0,1){$a$};
\node(c) at (2,1){$c$};
\node(b) at (0,0){$b$};
\node (x) at (0,0.5){$x$};
\node (y) at (1,1){$y$};
\node (z) at (0.5,0){$z$};
\node (p) at (2,0.5){$p$};
\node (q) at (1.25, 1){$q$};
\node (s) at (1.25,0){$s$};
 
\draw (s)--(1.5,0.5);
\draw (q)--(1.5,0.5);
\draw (p)--(1.5,0.5);
\draw (z)--(0.5,0.5);
\draw (x)--(0.5,0.5);
\draw (y)--(0.5,0.5);
\draw (a)--(0.5, 0.5)--(1.5,0.5)--(c);
\draw (b)--(0.5, 0.5);

\end{tikzpicture}
\caption{$D_\rho$} \label{fig:Claim 1}
\end{figure}


\end{proof}

{\em{Claim 2.}} Assume $\langle xyz \rangle $ and $\langle pqs \rangle $ are incomparable. There is no $J_{lmn}$ contained in both $J_{pqs}$ and $J_{xyz}$.

\begin{proof}
Assume there are $l,m,n\in M$ with $J_{lmn} \subseteq J_{xyz} \cap J_{pqs}$. By assumption, there are $a,b \in M$ with $a \in J_{xyz}\setminus J_{pqs}$ and $b \in J_{pqs}\setminus J_{xyz}$. Let $A<M$ be finite with $x,y,z,p,q,s,l,m,n\in A \in \mathscr D$. Let $L(x;y,z)$ and $L(p;q,s)$ be witnessed in $A$ by $D$-sets $D_{\nu_1}$ and $D_{\nu_2}$ respectively, and $L(l;m,n)$ by $D_\mu$. Then $\nu_1$, $\nu_2$ are incomparable (due to the existence of $a, b$, and using Lemma~\ref{conven}) but $\mu \geq \nu_1$ and $\mu \geq \nu_2$ (as $J_{lmn}\subseteq J_{xyz}\cap J_{pqs}$). This is impossible, as the structure tree of $A$ is semilinearly ordered.
\end{proof}

{\em{Claim 3.}} The semilinear  order $(K^* /R, \leq)$ has no greatest or least element, and  is dense.

\begin{proof}
Let $\langle xyz \rangle \in K^*/R$. Let $B$ be a minimal substructure of $M$ in $\mathscr D$ containing $x,y,z$, so $B=\{x,y,z\}$. Choose a structure $A \in \mathscr D$ containing $x',y',z',p,q,s$ as depicted (in the root $D$-set),
\tikzset{middlearrow/.style={
        decoration={markings,
            mark= at position 0.75 with {\arrow{#1}} ,
        },
        postaction={decorate}
    }
}
\begin{figure}[H]
    \centering
    \begin{tikzpicture}
    \node (r)at  (-2,0)[left] {$r$};
    \node (r1) at (0,0){};
    \node (r2) at (2,0){};
    \node (x') at (-3,1)[above]{$x'$};
    \node (y') at (-1.75,1) {$y'$};
    \node (z') at (-3,-1) {$z'$};
    \node (s) at (-1,1) {$s$};
    \node (p)at (3,1){$p$};
    \node (q) at (3,-1){$q$};
    
    \draw (-3,1)--(-2,0);
    \draw (y')--(-2,0);
    \draw (z')--(-2,0);
    \draw (s)--(0,0);
    \draw [middlearrow={stealth}] (0,0)--(-2,0);
    \draw [middlearrow={stealth}] (-2,0)--(0,0);
    \draw [middlearrow={stealth}] (0,0)--(2,0);
    \draw (p)--(2,0);
    \draw (q)--(2,0);
    \end{tikzpicture}
    \caption{$D_{\rho_A}$}
    \label{fig:I_xyzw I_pqst}
\end{figure}
\noindent so that the map $(x',y',z')\longmapsto (x,y,z)$ is an $(L,S)$-isomorphism, $L(x';y',z')$ is witnessed in the successor $D$-set $D_\mu$ of the root $D_\rho$ of $A$ corresponding to the ramification point $r$, and $p,q,s$ are as shown in $D_\rho$. We may suppose also $A_r \cong B$, via an isomorphism $\phi$ inducing $(x',y',z') \longmapsto (x,y,z)$.

We may suppose $A\leq M$. Then by semi-homogeneity, $\phi$ extends to some $g \in G$. Clearly $\langle spq\rangle < \langle x'y'z'\rangle$, and it follows that $\langle s^gp^gq^g\rangle < \langle xyz \rangle $, as required. A similar argument shows that $K^*/R$ has no greatest element under $\leq$.

The argument for density is a similar application of semi-homogeneity. Assume $\langle xyz \rangle < \langle pqs\rangle$. We may find a finite substructure $A$ of $M$ containing $x,y,z,p,q,s$, such that $L(x;y,z)$ is witnessed in the root $D$-set $D_\rho$ of $A$, which has a ramification point $r$ at which $p,q,s$ lie in distinct non-special branches. Using semihomogeneity, we may suppose that there are $l,m,n\in A$ such that $p,q,s,l,m,n$ all lie in distinct non-special branches at $r$, that $L(l;m,n)$ is witnessed in the successor $D_\mu$ corresponding to $r$, and that $p,q,s$ lie in distinct non-special branches at a ramification point of $D_\mu$. It follows by Lemma~\ref{conven} that $\langle xyz \rangle < \langle lmn \rangle < \langle pqs\rangle $, as required.

\end{proof}
\end{proof}

Our next task is to identify analogues for $M$ of the maps $f_\mu$ and $g_{\mu \nu}$ for members of $\mathscr D$.
The map $f_{\langle xyz \rangle}$ determines a bijection between the set of cones of $(K^*/R,\leq)$ at $\langle xyz \rangle$ and Ram$(R_{xyz})$, and this is the context of the following lemma.
\begin{lem}\label{A}
Suppose $x,y,z,p,q,s \in M$ and $\langle xyz\rangle <\langle pqs \rangle$. Then
 
\begin{enumerate}[(i)]
    \item \label{E refines} $E_{xyz}\big | _{J_{pqs}}$ refines  $E_{pqs}$.
    \item \label{u v E} Let $p,q,s$ lie in distinct branches at the ramification point $r$ of $R_{xyz}$, and let $u,v \in J_{pqs}$ be $E_{pqs}$-inequivalent. Then $u,v$ lie in distinct branches at $r$. 
\end{enumerate}
\end{lem}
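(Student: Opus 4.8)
The plan is to push everything down to finite substructures of $M$ and argue combinatorially about the maps $g_{\mu\nu}$. Fix $x,y,z,p,q,s$ with $\langle xyz\rangle<\langle pqs\rangle$; whenever an argument needs only finitely many witnesses, work inside a finite $A\in\mathscr D$ with $A\leq M$ containing those witnesses. By Lemma~\ref{conven}, since $J_{xyz}$ strictly contains $J_{pqs}$, the $D$-set $D_{\nu_1}$ of $A$ witnessing $L(x;y,z)$ lies strictly below the $D$-set $D_{\nu_2}$ witnessing $L(p;q,s)$; fix a chain of successors $\nu_1=\mu_0<\mu_1<\dots<\mu_k=\nu_2$ in the structure tree of $A$. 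Two elementary facts about finite trees of $D$-sets will do the work: (a) omission propagates upward, so if $m\in J_{pqs}$ then $m$ is omitted by no $D_{\mu_i}$, and hence for each $i<k$ the leaf of $D_{\mu_i}$ representing $m$ lies in a non-special branch at $f_{\mu_i}(\mu_{i+1})$; and (b) each $g_{\mu_{i+1}\mu_i}$ carries distinct leaves of $D_{\mu_{i+1}}$ to disjoint non-special branches of $D_{\mu_i}$, so a leaf of $D_{\mu_i}$ lying in a non-special branch at $f_{\mu_i}(\mu_{i+1})$ determines a unique leaf of $D_{\mu_{i+1}}$. I also use that $u\,E_{xyz}\,v$ holds iff $u,v$ are represented by the same leaf of $D_{\nu_1}$ in every such finite $A$, and similarly for $E_{pqs}$ and $D_{\nu_2}$, because the $E$-classes are exactly the directions of the relevant $D$-set, so any failure of equivalence is visible in some finite $A$.

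For part (i), let $u,v\in J_{pqs}$ with $u\,E_{xyz}\,v$ and let $A$ be as above. Then $u,v$ are represented by the same leaf $\ell_0$ of $D_{\nu_1}=D_{\mu_0}$. Arguing by induction on $i$: if $u,v$ are represented by a common leaf $\ell_i$ of $D_{\mu_i}$, then by (a) $\ell_i$ lies in a non-special branch at $f_{\mu_i}(\mu_{i+1})$, and by (b) this branch determines a unique leaf $\ell_{i+1}$ of $D_{\mu_{i+1}}$, which then represents both $u$ and $v$. Taking $i=k$, $u$ and $v$ are represented by the same leaf of $D_{\nu_2}$; since $A$ was arbitrary, $u\,E_{pqs}\,v$. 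Hence $E_{xyz}\big|_{J_{pqs}}$ refines $E_{pqs}$.

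For part (ii), observe first that, because $L(p;q,s)$ is witnessed strictly above $R_{xyz}$, in $D_{\nu_1}$ the elements $p,q,s$ lie in three disjoint non-special branches (the $g_{\mu_1\nu_1}$-images of the leaves of $D_{\mu_1}$ representing them), so the ramification point $r$ of $R_{xyz}$ at which $p,q,s$ lie in distinct branches is exactly $f_{\nu_1}(\mu_1)$, and by (a) every element of $J_{pqs}$ lies in a non-special branch at $r$. Now suppose $u,v\in J_{pqs}$ lie in the same branch $B$ at $r$; I claim $u\,E_{pqs}\,v$, which is the contrapositive of (ii). Indeed $B$ is non-special, so by (b) it determines a unique leaf of $D_{\mu_1}$ that represents both $u$ and $v$; running the induction of part (i) up the remaining chain $\mu_1<\dots<\mu_k=\nu_2$ shows $u$ and $v$ are represented by the same leaf of $D_{\nu_2}$, i.e.\ $u\,E_{pqs}\,v$. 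Since $u,v$ both lie in non-special branches at $r$, being not in the same branch means being in distinct branches, which is precisely (ii).

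The only real care needed --- and I expect it to be bookkeeping rather than a genuine obstacle --- is the translation between the relations $E_{xyz},E_{pqs}$ on $M$ and the finite assertion ``represented by the same leaf of $D_{\nu_1}$ (resp.\ $D_{\nu_2}$)'': one must confirm that $u\,E_{xyz}\,v$ forces agreement in \emph{every} finite approximation while a failure of $u\,E_{pqs}\,v$ is detected in \emph{some} finite approximation, so that ranging over all such $A$ is legitimate. Granting that, both parts are just the two directions (leaves coalesce going up; leaves stay distinct going down) of the leafwise injectivity of the $g$-maps, applied along the successor chain from $\nu_1$ to $\nu_2$.
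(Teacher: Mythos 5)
Your proposal is correct and follows essentially the same route as the paper: pass to a finite substructure $A\in\mathscr D$ containing all relevant witnesses, use that $u\,E_{xyz}\,v$ (resp. $u\,E_{pqs}\,v$) is equivalent to $u,v$ being represented by the same leaf of the $D$-set of $A$ witnessing $L(x;y,z)$ (resp. $L(p;q,s)$), and propagate ``same leaf'' up the structure tree via the $g$-maps, proving (ii) by the contrapositive. You merely make explicit, as an induction along the chain of successors from $\nu_1$ to $\nu_2$, the step the paper asserts in one line (``thus $u,v$ also lie in the same leaf of the $D$-set witnessing $L(p;q,s)$''), which is a harmless elaboration.
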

\begin{proof}
\begin{enumerate}[(i)]
    \item Let $u,v \in J_{pqs}$ with $uE_{xyz}v$. Pick $A\in \mathscr D$ with $x,y,z,p,q,s,u,v\in A \leq M$. Then $A$ satisfies
$$(\forall m) (\forall n)  (R(x;y,z:m;n,u) \leftrightarrow 
R(x;y,z:m;n,v)) \wedge (R(x;y,z:u;m,n) \leftrightarrow 
R(x;y,z:v;m,n)).$$
Thus, $u,v$ lie in the same leaf of the $D$-set of $A$ witnessing $L(x;y,z)$. Thus, $u,v$ also lie in the same leaf of the $D$-set of $A$ witnessing $L(p;q,s)$, so $A$ satisfies
 $$ (\forall m) (\forall n)  (R(p;q,s:m;n,u) \leftrightarrow 
R(p;q,s:m;n,v)) \wedge (R(p;q,s:u;m,n) \leftrightarrow 
R(p;q,s:v;m,n)).$$
It follows that $u E_{pqs} v$ holds in $M$.
    \item We prove the contrapositive, so suppose we have in $R_{xyz}$ a diagram such as the following.
    \begin{figure}[H]
        \centering
        \begin{tikzpicture}
        \node at (0,0)[above]{$r$};
        \node (p) at (-1,1){$p$};
        \node (q) at (-1,-1){$q$};
        \node (s) at (1,-1) {$s$};
        \node (u) at (3,1) {$u$};
        \node (v) at (3,-1){$v$};
        
        \draw (0,0)--(2,0);
        \draw (p)--(0,0);
        \draw (q)--(0,0);
        \draw (s)--(0,0);
        \draw (u)--(2,0);
        \draw (v)--(2,0);
         \end{tikzpicture}
        \caption{}
        \label{fig:uEv}
    \end{figure}
Using $Q$ and $R$, we see that if $A\in \mathscr D$ with $x,y,z,p,q,s,u,v\in A\leq M$, then the $D$-set of $A$ witnessing $L(x;y,z)$ has the same picture. Thus, the $D$-set of $A$ witnessing $L(p;q,s)$ has $u,v$ in the same leaf, and it follows via the definition of $E_{pqs}$  that $uE_{pqs}v$ holds.
\end{enumerate}
\end{proof}

Now suppose $\langle xyz \rangle < \langle pqs \rangle$. Then by the last lemma $p,q,s$ are inequivalent modulo $E_{xyz}$, so there is a ramification point $r$ of $R_{xyz}$ such that the $E_{xyz}$-classes of $p,q,s$ lie in distinct branches at $r$. Put $f_{\langle xyz \rangle }(\langle pqs \rangle )=r$.

\begin{lem}
\begin{enumerate}[(i)]
    \item In the above notation, the value of $f_{\langle xyz\rangle }(\langle pqs \rangle )$ depends only on the cone at $\langle xyz \rangle$ containing $\langle pqs \rangle$. 
    \item \label{1-1 P classes} $f_{\langle xyz \rangle }$ determines a bijection between the set of cones at $\langle xyz \rangle $ and the set of ramification points of $R_{xyz}$.
   
\end{enumerate}
 \end{lem}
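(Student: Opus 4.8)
The plan is to prove (i) first, so that $f_{\langle xyz\rangle}$ is a well-defined map from the set of cones at $\langle xyz\rangle$ to $\mathrm{Ram}(R_{xyz})$, and then to verify that this map is injective and surjective. For (i), recall that a cone at $\langle xyz\rangle$ is a class of the equivalence relation $E_{\langle xyz\rangle}$ on the vertices above $\langle xyz\rangle$, so if $\langle pqs\rangle$ and $\langle p'q's'\rangle$ lie in the same cone then, directly from the definition of $E_{\langle xyz\rangle}$, there is a vertex $\langle abc\rangle$ with $\langle xyz\rangle<\langle abc\rangle\leq\langle pqs\rangle$ and $\langle xyz\rangle<\langle abc\rangle\leq\langle p'q's'\rangle$. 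It therefore suffices to show that $f_{\langle xyz\rangle}(\langle abc\rangle)=f_{\langle xyz\rangle}(\langle pqs\rangle)$ whenever $\langle xyz\rangle<\langle abc\rangle\leq\langle pqs\rangle$. Put $r:=f_{\langle xyz\rangle}(\langle abc\rangle)$, so the $E_{xyz}$-classes of $a,b,c$ lie in distinct branches at $r$ in $R_{xyz}$. Since $\langle abc\rangle\leq\langle pqs\rangle$ we have $J_{pqs}\subseteq J_{abc}$, so $p,q,s\in J_{abc}$; and (whether $\langle abc\rangle=\langle pqs\rangle$, in which case $L(p;q,s)$ is witnessed in $R_{abc}$, or $\langle abc\rangle<\langle pqs\rangle$, in which case $p,q,s$ lie in distinct branches at $f_{abc}(pqs)$) the elements $p,q,s$ are pairwise $E_{abc}$-inequivalent. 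Applying Lemma~\ref{A}(ii) to the pair $\langle xyz\rangle<\langle abc\rangle$, each pair from $\{p,q,s\}$ lies in $J_{abc}$ and is $E_{abc}$-inequivalent, hence lies in distinct branches at $r$ in $R_{xyz}$; thus $[p],[q],[s]$ (mod $E_{xyz}$) pairwise lie in distinct branches at $r$, so $r=\mathrm{ram}([p],[q],[s])=f_{\langle xyz\rangle}(\langle pqs\rangle)$.

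For injectivity, suppose $\Gamma_1\neq\Gamma_2$ are cones at $\langle xyz\rangle$; pick $\langle abc\rangle\in\Gamma_1$ and $\langle pqs\rangle\in\Gamma_2$. As they have no common lower bound strictly above $\langle xyz\rangle$, they are incomparable, and since $K^*/R$ is a meet-semilattice (Lemma~\ref{semi2}) the meet $\langle abc\rangle\wedge\langle pqs\rangle$, being $\leq$ both yet $\geq$ the common lower bound $\langle xyz\rangle$, must equal $\langle xyz\rangle$. Choose a finite $A\leq M$ with $A\in\mathscr D$ containing $x,y,z,a,b,c,p,q,s$. By Lemma~\ref{conven}, in the structure tree $\tau_A$ the vertex $\nu$ whose $D$-set witnesses $L(x;y,z)$ lies strictly below the vertices $\mu_1,\mu_2$ whose $D$-sets witness $L(a;b,c)$ and $L(p;q,s)$, and $\mu_1\wedge\mu_2=\nu$ in $\tau_A$ (this corresponds, as in Claim~1 of the proof of Lemma~\ref{semi2}, to $\langle abc\rangle\wedge\langle pqs\rangle=\langle xyz\rangle$). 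Hence the $\nu$-to-$\mu_1$ and $\nu$-to-$\mu_2$ paths in $\tau_A$ begin with distinct successors $\nu_1\neq\nu_2$ of $\nu$, so $f_\nu(\nu_1)\neq f_\nu(\nu_2)$ because $f_\nu$ is a bijection onto $\mathrm{Ram}(D(\nu))$. Finally, since $L(a;b,c)$ is witnessed at $\mu_1\geq\nu_1$, the maps $g_{\mu_1\nu}$ place $a,b,c$ in three distinct branches at $f_\nu(\nu_1)$ in $D(\nu)$, so $f_{\langle xyz\rangle}(\langle abc\rangle)$ is the ramification point of $R_{xyz}$ corresponding to $f_\nu(\nu_1)$; symmetrically $f_{\langle xyz\rangle}(\langle pqs\rangle)$ corresponds to $f_\nu(\nu_2)$, and these differ.

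For surjectivity, let $r\in\mathrm{Ram}(R_{xyz})$ and choose $E_{xyz}$-inequivalent $p,q,s\in J_{xyz}$ with $\mathrm{ram}([p],[q],[s])=r$. Pick a finite $A\leq M$ containing $x,y,z,p,q,s$; replacing $A$ by $A_\nu$ (Lemma~\ref{indtreeDset}(i)) where $D(\nu)$ witnesses $L(x;y,z)$, we may assume $L(x;y,z)$ is witnessed in the root $D$-set $D_\rho$ of $A$, so $r$ appears as $\mathrm{ram}(p,q,s)$ in $D_\rho$. By semi-homogeneity, extend $A$ to some $A'\leq M$ containing further elements $a,b,c$ in three distinct non-special branches at $r$ in $D_\rho$ with $L(a;b,c)$ not witnessed in $D_\rho$: first adjoin new leaves non-special at $r$ by Type~\Romannum{2}(a) extensions, and then, inside the successor $D$-set at $r$, arrange by a further extension that these leaves are separated at a ramification point with one of them special. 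Then $L(a;b,c)$ is witnessed strictly above $D_\rho$, so $J_{abc}\subsetneq J_{xyz}$, i.e. $\langle xyz\rangle<\langle abc\rangle$, while $a,b,c$ still lie in distinct branches at $r$ in $R_{xyz}$; hence $f_{\langle xyz\rangle}(\langle abc\rangle)=r$.

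I expect the surjectivity step to be the main obstacle: one must exhibit, by a controlled sequence of one-point extensions staying inside $M$, a vertex strictly above $\langle xyz\rangle$ whose image is the prescribed ramification point, and in particular ensure that the chosen leaves $a,b,c$ are \emph{not} special at $r$ — otherwise $L(a;b,c)$ would already be witnessed in $R_{xyz}$ and $\langle abc\rangle$ would collapse back to $\langle xyz\rangle$. The translation in the injectivity step of the meet condition $\langle abc\rangle\wedge\langle pqs\rangle=\langle xyz\rangle$ into the finite structure tree, and then reading off $f_\nu$ there, is routine given Lemma~\ref{conven} and Claim~1 of the proof of Lemma~\ref{semi2}, but requires care with the identification of $\mathrm{Ram}(D(\nu))$ with a subset of $\mathrm{Ram}(R_{xyz})$.
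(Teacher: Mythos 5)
Your proof is correct, and parts (i) and the surjectivity half of (ii) follow essentially the paper's own route: (i) is the paper's Claims 1--3 (reduce to a comparable pair via the definition of a cone, then push $p,q,s$ down to distinct branches at $r$ using Lemma~\ref{A}), and surjectivity is the paper's choice of a triple in distinct non-special branches at $r$, which you make more explicit by actually building $a,b,c$ via Type~II extensions and semi-homogeneity. The one place you genuinely diverge is injectivity. The paper proves the contrapositive: if the two triples meet at the same ramification point of the $D$-set witnessing $L(x;y,z)$ in some finite $A$, then semi-homogeneity produces $u,v,w$ with $\langle xyz\rangle<\langle uvw\rangle$ below both, so the two vertices lie in the same cone. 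You instead argue the direct implication, computing that distinct cones force the meet in $K^*/R$ to be $\langle xyz\rangle$ and then transferring this to the finite structure tree to get $\mu_1\wedge\mu_2=\nu$, whence distinct successors and distinct ramification points. This works, but be aware that the transfer step is where all the content sits: if $\mu_1\wedge\mu_2>\nu$ in $\tau_A$ then (pushing down via $g_{\lambda\nu}$) both triples would land in distinct branches at the \emph{same} ramification point of $D(\nu)$, which is precisely the hypothesis of the paper's contrapositive argument. So your appeal to Claim~1 of the proof of Lemma~\ref{semi2} is carrying the same semi-homogeneity argument the paper runs directly — in particular you are relying on the ``furthermore'' clause there asserting that the triple witnessed at the meet vertex realises the infimum in $K^*/R$. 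Your version buys a cleaner logical flow from the already-established meet-semilattice structure; the paper's is more self-contained at this point. Either is acceptable, but if you keep your version you should spell out the transfer of meets rather than leaving it at ``this corresponds''.
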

\begin{proof}
\begin{enumerate}[(i)]
    \item
    {\em{Claim 1.}} If $a,b \in J_{pqs}$ are inequivalent modulo $E_{pqs}$, then they lie in distinct branches at $r$.
    
    \begin{proof}
    This is immediate from Lemma \ref{A}(\ref{u v E}).
    \end{proof}
    
    {\em{Claim 2.}} If $\langle xyz\rangle  <  \langle pqs\rangle  < \langle p'q's'\rangle $ then $f_{\langle xyz\rangle}(\langle pqs \rangle)=f_{\langle xyz\rangle}(\langle p'q's' \rangle)$.
    
    \begin{proof}
    In this situation, $p',q',s'$ are inequivalent modulo $E_{p'q's'}$ and hence modulo $E_{pqs}$ (Lemma \ref{A}(\ref{E refines})) so lie in distinct branches at $r$ (by Claim 1).
    \end{proof}
    
   {\em{ Claim 3.}} If $ \langle pqs \rangle $ and $\langle p'q's' \rangle $ are incomparable but in the same cone at $\langle xyz \rangle$, then $f_{\langle xyz\rangle}(\langle pqs \rangle)=f_{\langle xyz\rangle}(\langle p'q's' \rangle)$.  
    
    \begin{proof}
    Pick $p'',q'',s'' $ with $\langle xyz\rangle <$  $\langle p''q''s'' \rangle  < \langle pqs \rangle$, and $\langle xyz \rangle < \langle p''q''s'' \rangle < \langle p'q's' \rangle$. By Claim 2, $f_{\langle xyz \rangle } (\langle pqs \rangle) =f_{\langle xyz \rangle } (\langle p''q''s'' \rangle )=f_{\langle xyz \rangle } (\langle p'q's' \rangle )$. 
    \end{proof}
    
    Part (i) follows.
    
    \item To see that $f_{\langle xyz \rangle }$ is surjective, let $r \in \text{Ram}(R_{xyz})$ and choose $p,q,s \in M$ such that modulo $E_{xyz}$ they lie in distinct non-special branches at $r$. Then $\langle xyz \rangle < \langle pqs \rangle$, by considering finite substructures of $M$. It follows that $f_{\langle xyz \rangle } (\langle pqs \rangle )=r$.
    
    For injectivity, suppose $\langle pqs \rangle, \langle p'q's'\rangle\in K^*/R$. Suppose that there is a finite $A \in \mathscr D$ with $x,y,z,p,q,s,p',q',s' \in A <M$ such that $p,q,s$ and $p',q',s'$ meet at the same ramification point in the $D$-set of $A$  in which $L(x;y,z)$ holds. Then this holds in any $A'\in \mathscr D$ with $A<A'<M$   (e.g.  consider a sequence of one-point extensions between $A$ and $A'$). It follows by semi-homogeneity that there are $u,v,w \in M$ with $\langle xyz \rangle < \langle uvw \rangle$ and $\langle uvw \rangle < \langle pqs \rangle $ and $\langle uvw \rangle < \langle p'q's'\rangle$, so $\langle pqs \rangle$ and $\langle p'q's'\rangle$ lie in the same cone of $K^*/R$ at $\langle xyz \rangle$.
\end{enumerate}
\end{proof}

\begin{lem}\label{union of branches}
Let $\langle xyz \rangle < \langle pqs \rangle$, and let $[m]$ be a pre-direction of $R_{pqs}$ and  $r:=f_{\langle xyz \rangle}(\langle pqs \rangle) \in {\rm Ram} (R_{xyz})$  Then  there is a unique set $t$ of branches of $R_{xyz}$ at $r$ such that $ [m] = \cup \cup t$.  
\end{lem}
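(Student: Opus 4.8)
The plan is to realise $[m]$ as a union of $E_{xyz}$-classes — equivalently, of pre-directions of $R_{xyz}$ — and then to check that the resulting family of classes is a union of branches of $R_{xyz}$ at $r$. Since $\langle xyz\rangle<\langle pqs\rangle$ means $J_{xyz}\supsetneq J_{pqs}$ by the definition of $\leq$ on $K^*/R$, we have $[m]\subseteq J_{pqs}\subseteq J_{xyz}$, and by Lemma~\ref{A}(\ref{E refines}) $E_{xyz}\big|_{J_{pqs}}$ refines $E_{pqs}$; hence the single $E_{pqs}$-class $[m]$ is a union of $E_{xyz}$-classes. Let $S$ be the set of $E_{xyz}$-classes contained in $[m]$, so each member of $S$ is a direction of $R_{xyz}$ and $[m]=\bigcup S$. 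No member of $S$ lies in the special branch at $r$: if $a\in[m]$ had $a/E_{xyz}$ special at $r$, then in any finite $A\in\mathscr D$ with $x,y,z,p,q,s,a\in A\leq M$ the $D$-set of $A$ witnessing $L(p;q,s)$ would omit $a$, contradicting $a\in J_{pqs}$.

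Next I claim $S$ is a union of branches at $r$; it suffices to show that if $[a]\in S$ (classes modulo $E_{xyz}$) and $[b]$ is a direction of $R_{xyz}$ in the same branch at $r$ as $[a]$, then $[b]\in S$ (we may assume $[a]\neq[b]$). As $[p],[q],[s]$ lie in three distinct branches at $r$, at least two of them, say after relabelling $[p]$ and $[q]$, lie in branches different from that of $[a]$, so the $D$-set axioms give $D_{xyz}([a],[b];[p],[q])$, i.e.\ $M\models Q(a,b;p,q:x;y,z)$ by Definition~\ref{longdef}(\ref{D_xyzw}). Pick a finite $A\in\mathscr D$ with $x,y,z,p,q,s,a,b\in A\leq M$; let $D_\mu$ be the unique $D$-set of $A$ witnessing $L(x;y,z)$ and $D_\nu$ the one witnessing $L(p;q,s)$. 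Then $\mu<\nu$: these cannot be the same $D$-set of $A$ (else $\langle xyz\rangle=\langle pqs\rangle$), and comparability with the stated direction follows from Lemma~\ref{conven} together with $J_{xyz}\supsetneq J_{pqs}$. From $Q(a,b;p,q:x;y,z)$, $S(a,b;p,q)$ is witnessed in $D_\mu$; since $a\in J_{pqs}$ the image $\tilde a$ of $a$ in $D_\mu$ is non-special at $r_\mu:=\mathrm{ram}(\tilde p,\tilde q,\tilde s)$ (the image of $r$ in $D_\mu$), and it follows that the images $\tilde a,\tilde b$ of $a,b$ in $D_\mu$ lie in one and the same non-special branch $\beta$ at $r_\mu$. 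By the definition of $f_{\langle xyz\rangle}$, the successor of $\mu$ on the path to $\nu$ corresponds to $r_\mu$, so $g_{\nu\mu}$ sends each leaf of $D_\nu$ to a union of branches of $D_\mu$ at $r_\mu$. As $a\in J_{pqs}$, $a$ has an image $\ell$ in $D_\nu$ with $\beta\subseteq g_{\nu\mu}(\ell)$; then $\tilde b\in\beta\subseteq g_{\nu\mu}(\ell)$, so $b$ too has image $\ell$ in $D_\nu$, whence $b\in J_{pqs}$ and $b\,E_{pqs}\,a$, i.e.\ $[b]\in S$. This proves the claim, so each branch at $r$ is contained in $S$ or disjoint from it; letting $t$ be the set of branches at $r$ meeting $S$ we get $S=\bigcup t$, hence $[m]=\bigcup\bigcup t$.

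For uniqueness, suppose $[m]=\bigcup\bigcup t=\bigcup\bigcup t'$ for sets $t,t'$ of branches at $r$; since distinct branches at $r$ are disjoint nonempty sets of directions and distinct directions are disjoint nonempty subsets of $M$, for $\beta\in t$ we may pick a direction $\delta\in\beta$ and a point $e\in\delta$, and then $e\in\bigcup\bigcup t'$ gives $e\in\bigcup\delta'$ for some $\delta'$ lying in some $\beta'\in t'$, forcing first $\delta=\delta'$ and then $\beta=\beta'\in t'$; by symmetry $t=t'$. The main obstacle is the middle paragraph: transferring the statement ``$[a]$ and $[b]$ lie in a common branch at $r$'' faithfully between the infinite $D$-set $R_{xyz}$ and its finite avatar $D_\mu$ inside $A$, and then verifying that two directions lying in one branch at $r$, one of which meets $J_{pqs}$, are automatically $E_{pqs}$-equivalent. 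The two facts that make this go through are that an element omitted by the $D$-set witnessing $L(p;q,s)$ cannot belong to $J_{pqs}$, and that $\tilde a$ and $\tilde b$ are already identified one step up the structure tree of $A$ (at the successor of $\mu$ corresponding to $r_\mu$), hence also in $D_\nu$.
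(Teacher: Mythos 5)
Your proof is correct, and it takes a noticeably different route from the paper's. The paper's own argument is a short top-down one: fix a finite $A\in\mathscr D$ containing $x,y,z,p,q,s,m$, observe that inside $A$ the direction $[m]_A$ is, by the very definition of the maps $g_{\nu\mu}$ for finite trees of $D$-sets, a union of branches at the avatar of $r$, and then assert that this presentation is coherent as $A$ grows, so that the statement passes to $M$ because branches of $R_{xyz}$ at $r$ arise as unions of branches of the finite approximations. You instead argue directly in $M$: you decompose $[m]$ into $E_{xyz}$-classes via Lemma~\ref{A}(\ref{E refines}), and then prove that this family of directions is closed under ``lying in the same branch at $r$'' by encoding that condition through $D_{xyz}$ and $Q$ (after relabelling $p,q,s$ so that two of them avoid the branch of $[a]$ --- this relabelling is exactly what keeps $D_{xyz}([a],[b];[p],[q])$ non-degenerate, so do not drop it), descending to a finite structure only for the single verification that two elements landing in one non-special branch at $r_\mu$ must share an image in $D_\nu$. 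What your route buys is that the ``passes to the limit'' step, which the paper leaves implicit, is replaced by an explicit closure argument, and your observation that no class in $S$ meets the special branch at $r$ is a useful extra detail, since the map $g_{\langle pqs\rangle\langle xyz\rangle}$ defined immediately after the lemma is required to take values in sets of non-special branches. The cost is length; the uniqueness step (branches partition directions, directions partition $J_{xyz}$) is the same in both treatments.
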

\begin{proof}
Consider a finite structure $A \in \mathscr D$ with $x,y,z,p,q,s,m\in A\leq M$.  Consider the vertex $\langle xyz\rangle <\langle pqs \rangle$, and $r:=f_{\langle xyz \rangle }(\langle pqs \rangle)$
(we abuse notation here by interpreting this in $A$ rather than $M$). Write $[m]_A$ for the pre-direction of $m$ in the finite $D$-set labelling the structure tree vertex  $\langle pqs\rangle$ in $A$.
Let $t$ be the set of branches $\{t_1, \dots, t_n \}$ of $A$  at $r$ that corresponds in $A$ to the direction $[m]_A$. Each pre-branch of $t_i, \ i \in \{1, \dots, n\}$ consists of a collection of  pre-directions, say

  $$t_1 =\{u^{(1)}_1,u^{(1)}_2, \dots, u^{(1)}_{m_1}\}$$ 
 $$ t_2 =\{u^{(2)}_1,u^{(2)}_2, \dots, u^{(2)}_{m_2}\}$$
   $$\vdots $$
  $$t_n =\{u^{(n)}_1,u^{(n)}_2, \dots, u^{(n)}_{m_n}\} 
$$
so that $$\bigcup t=\bigcup_{i=1}^{n} t_{i}= t_1 \cup \dots \cup t_n =\{u^{(1)}_1,u^{(1)}_2, \dots, u^{(1)}_{m_1}, \dots, u^{(n)}_1,u^{(n)}_2, \dots, u^{(n)}_{m_n}\}$$
hence $[m]_A=\bigcup \bigcup \{t_1, \dots, t_n \}$. Since this holds for any $A'\in \mathscr D$ with $A<A'<M$, and branches of $M$ at $r$ arise as unions of branches of the corresponding finite structures,  the result follows.
\end{proof}

Define $g_{  \langle pqs \rangle \langle xyz \rangle}([m])=t$ where $[m]=\cup \cup t$ as above. So $g_{ \langle pqs \rangle \langle xyz \rangle }$ is a  map from the directions of $R_{pqs}$ to the power set of the set of non-special branches at $r$. From the definition we see that if  $[m]\neq [m']$ then $g_{ \langle pqs \rangle \langle xyz \rangle}([m]) \cap g_{ \langle pqs \rangle  \langle xyz \rangle}([m']) = \emptyset$.
\begin{lem}\label{gxyz}
The map $g_{\langle xyz \rangle \langle pqs \rangle }$  depends only on  $\langle xyz \rangle, \langle pqs \rangle$, not on the choice of $x,y,z,p,q,s$.
 \end{lem}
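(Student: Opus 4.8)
The idea is that the map $g_{\langle pqs\rangle\langle xyz\rangle}$ is built purely out of objects that we have already shown depend only on the two vertices $\langle xyz\rangle<\langle pqs\rangle$: the $D$-sets $R_{xyz}$ and $R_{pqs}$ (with their ramification points and branches), the pre-directions of $R_{pqs}$, and the ramification point $r=f_{\langle xyz\rangle}(\langle pqs\rangle)$ of $R_{xyz}$. Once all of these are certified representative-independent, Lemma~\ref{union of branches} — which characterises $g_{\langle pqs\rangle\langle xyz\rangle}([m])$ as the \emph{unique} set $t$ of branches of $R_{xyz}$ at $r$ with $[m]=\bigcup\bigcup t$ — leaves no freedom, so the map is forced. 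Accordingly I would fix two choices $x,y,z,p,q,s$ and $x',y',z',p',q',s'$ of elements of $M$ with $\langle xyz\rangle=\langle x'y'z'\rangle$, $\langle pqs\rangle=\langle p'q's'\rangle$ and $\langle xyz\rangle<\langle pqs\rangle$, and show the resulting two maps agree.

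The first step is to unwind the equalities of vertices. From $\langle xyz\rangle=\langle x'y'z'\rangle$ we get $R(x;y,z:x';y',z')$, hence $J_{xyz}=J_{x'y'z'}$ by Lemma~\ref{QRIJ}, hence $E_{xyz}=E_{x'y'z'}$ by the observation following Definition~\ref{defE}, hence $R_{xyz}=R_{x'y'z'}$ as sets, and finally $D_{xyz}=D_{x'y'z'}$ by Lemma~\ref{4}(ii); so $(R_{xyz},D_{xyz})$ and $(R_{x'y'z'},D_{x'y'z'})$ are literally the same $D$-set, with the same ramification points and the same branches at each. The identical chain applied to $\langle pqs\rangle=\langle p'q's'\rangle$ gives $(R_{pqs},D_{pqs})=(R_{p'q's'},D_{p'q's'})$; in particular the domains of the two maps, namely the directions of $R_{pqs}$ (equivalently the pre-directions of $R_{pqs}$, as subsets of $M$), coincide. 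The second step is the ramification point: by the lemma preceding this one, $r:=f_{\langle xyz\rangle}(\langle pqs\rangle)$ depends only on the vertex $\langle xyz\rangle$ and on the cone at $\langle xyz\rangle$ containing $\langle pqs\rangle$, and since the vertices involved are unchanged this cone is the same, so $r$ is the same ramification point of $R_{xyz}=R_{x'y'z'}$; thus the codomains (the non-special branches of $R_{xyz}$ at $r$) also coincide. Finally, for any pre-direction $[m]$ of $R_{pqs}$, Lemma~\ref{union of branches} selects the same $t$ in both cases, so $g_{\langle pqs\rangle\langle xyz\rangle}([m])=t=g_{\langle p'q's'\rangle\langle x'y'z'\rangle}([m])$, and since $[m]$ was arbitrary the maps are equal.

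I do not expect a genuine obstacle: the argument is entirely bookkeeping, tracing that every object constructed en route to $g$ is a function of the vertices alone. The one point deserving a little care is the ramification-point step — one must note that ``the cone at $\langle xyz\rangle$ containing $\langle pqs\rangle$'' makes sense purely in the poset $(K^*/R,\leq)$ together with the vertex $\langle pqs\rangle$, and that the value of $f_{\langle xyz\rangle}$ at that cone lands in $R_{xyz}$, which is the same set for both representations; both facts are precisely what the preceding lemma supplies.
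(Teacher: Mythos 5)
Your proof is correct and takes essentially the same route as the paper's (which is only a one-sentence sketch): both arguments come down to the observation that every object entering the definition of $g_{\langle pqs\rangle\langle xyz\rangle}$ --- the $D$-set $R_{xyz}$ with its $D$-relation, the pre-directions of $R_{pqs}$, the ramification point $r=f_{\langle xyz\rangle}(\langle pqs\rangle)$, and the branches at $r$ --- is determined by the vertices alone, so the uniqueness clause of Lemma~\ref{union of branches} forces the map. Your write-up is more explicit than the paper's, which just appeals to finite substructures and the maps $g_{\mu\nu}$ there, but the underlying idea is the same.
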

 \begin{proof}
 The point essentially is that in any finite structure $A \in \mathscr D$ with  $x,y,z,p,q,s,m \in A<M$, the set $t$ of branches depends just on the direction of $m$ in the $D$-set witnessing $L(p;q,s)$, and on the map $g_{\mu \nu}$ where $\mu$ codes in $A$ the $D$-set witnessing $L(p;q,s)$, and $\nu$ the $D$-set witnessing $L(x;y,z)$.    
     
  \end{proof}

\begin{prop}\label{aboutG}
\begin{enumerate}[(i)]
   
   \item  The group $G_{\langle xyz \rangle }$ is transitive on  ${\rm Ram}(R_{xyz})$.
   \item  \begin{enumerate}[(a)]
   \item The stabiliser $G_{\langle xyz \rangle }=G_{\{J_{xyz}\}}$ induces a transitive group on the subset $J_{xyz}$ of $M$.
   \item The group $G$ is transitive on the semilinear order $(K^*/R,\leq)$.
  
   \end{enumerate}
   \item The group $G_{\{J_{xyz}\}}$ induces a $2$-transitive group on the set of directions of $R_{xyz}$, i.e. is transitive on the set of pairs of distinct directions.
 \item The group $G$ is transitive on the set  $\mathscr X$, where $\mathscr X= \bigcup R_{xyz} $, the union of all the sets of directions in the structure $M$.
\item  The group $G_{\{J_{xyz}\}}$ is transitive on the set of non-special branches of $R_{xyz}$, and for each $r\in {\rm Ram}(R_{xyz})$ and branch $U$ at $r$, the group $G_{\{J_{xyz}\},U}$ induces a transitive group on $U$.
   \item The equivalence relation $E_{xyz}$ is the unique maximal $G_{ \{J_{xyz} \}}$-congruence on $J_{xyz}$.
   
   \end{enumerate}
   
      \end{prop}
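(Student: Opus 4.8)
The plan is to prove all six parts uniformly: reduce each claim to the existence of a suitable automorphism of a finite substructure $A\in\mathscr D$ with $A\leq M$, and lift it to $G$ by semi-homogeneity (Lemma~\ref{extend iso}, i.e. Theorem~\ref{general}(iii)). I shall use repeatedly the bijection $f_{\langle xyz\rangle}$ between the cones at $\langle xyz\rangle$ in $K^*/R$ and ${\rm Ram}(R_{xyz})$; the fact that $(R_{xyz},D_{xyz})$ is a dense proper $D$-set (Lemma~\ref{4}); and Lemma~\ref{QRIJ}(ii), which recovers $\langle xyz\rangle$ from $J_{xyz}$ and hence gives $G_{\langle xyz\rangle}=G_{\{J_{xyz}\}}$, since any $g\in G$ carries $J_{xyz}$ to $J_{x^gy^gz^g}$.

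Part (ii)(b) is immediate: if $M\models L(x;y,z)\wedge L(p;q,s)$ then $\{x,y,z\}$ and $\{p,q,s\}$ are three-element substructures lying in $\mathscr D$ (Lemma~\ref{3set}), and on a three-element set the only $\mathscr L$-relation present is the recorded value of $L$; hence $(x,y,z)\mapsto(p,q,s)$ is an isomorphism, extending by semi-homogeneity to $g\in G$ with $\langle xyz\rangle^g=\langle pqs\rangle$. For (i) I would take $r_1,r_2\in{\rm Ram}(R_{xyz})$, choose $p_i,q_i,s_i$ representing distinct non-special branches at $r_i$ so that $\langle xyz\rangle<\langle p_iq_is_i\rangle$ and $f_{\langle xyz\rangle}(\langle p_iq_is_i\rangle)=r_i$, and then find $A\in\mathscr D$ with $A\leq M$, containing $x,y,z$ and the $p_i,q_i,s_i$, in which $L(x;y,z)$ is witnessed in the root $D$-set $D(\rho)$, with $r_1,r_2\in{\rm Ram}(D(\rho))$, and such that $\overline{D(\rho)}$ carries a graph automorphism respecting special branches and commuting with the $f$- and $g$-maps (hence an $\mathscr L$-automorphism of $A$) fixing $x,y,z$ and interchanging $r_1$ and $r_2$; lifting it gives $g\in G_{\langle xyz\rangle}$ with $r_1^g=r_2$. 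Producing such an $A$ requires first choosing a minimal substructure containing the listed points and then enlarging it, using the joint embedding and amalgamation properties of $\mathscr D$, so that the two subtrees hanging below $r_1$ and $r_2$ become isomorphic and are attached symmetrically to the part of $\overline{D(\rho)}$ carrying $x,y,z$; this is the crux of the argument. Part (ii)(a) then follows: given $u,v\in J_{xyz}$, use (iii) below to reduce to the case $uE_{xyz}v$, choose $A\leq M$ in which $L(x;y,z)$ is witnessed in a $D$-set for which $u,v$ lie in the same leaf, enlarge symmetrically, and lift a finite automorphism fixing $x,y,z$ and that leaf and sending $u$ to $v$.

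Part (iii) (two-transitivity of $G_{\{J_{xyz}\}}$ on the directions of $R_{xyz}$) is proved by the same scheme: take representatives of two ordered pairs of distinct directions, pass to a finite $A\leq M$ in which $L(x;y,z)$ is witnessed in $D(\rho)$ and all four directions occur as leaves of $\overline{D(\rho)}$, enlarge $A$ symmetrically, and lift the evident finite automorphism. Part (iv) is then immediate, combining transitivity of $G$ on $K^*/R$ (that is, (ii)(b), hence on the family of $D$-sets $R_{xyz}$) with transitivity of $G_{\{J_{xyz}\}}$ on the directions of $R_{xyz}$ from (iii). For (v), part (i) lets us fix a ramification point $r$ of $R_{xyz}$; the same finite-plus-symmetrisation argument, carried out inside the substructure of $M$ coding the branches at $r$, yields transitivity of $G_{\{J_{xyz}\},r}$ on the non-special branches at $r$ and, for a branch $U$ at $r$, transitivity of $G_{\{J_{xyz}\},U}$ on $U$ (as $U$ itself carries a dense $D$-set-like structure, the latter is essentially the statement one level down).

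Finally (vi). The relation $E_{xyz}$ is a $G_{\{J_{xyz}\}}$-congruence (already noted), has at least two classes since $R_{xyz}$ has at least two elements, and has non-singleton classes: given $m\in J_{xyz}$, choose (by semi-homogeneity) $A\leq M$ with $x,y,z,m\in A$ in which $L(x;y,z)$ is witnessed above the root and $m$ lies in a non-special branch at the corresponding ramification point, and adjoin a one-point extension $A\cup\{m'\}\leq M$ with $m'$ in the same branch, so $m'\neq m$ and $mE_{xyz}m'$. Thus $E_{xyz}$ is a non-trivial congruence. It is maximal because, by (iii), $G_{\{J_{xyz}\}}$ acts primitively (indeed two-transitively) on $J_{xyz}/E_{xyz}=R_{xyz}$, so nothing lies strictly between $E_{xyz}$ and the universal relation. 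For uniqueness, let $\sigma$ be a non-trivial $G_{\{J_{xyz}\}}$-congruence with $\sigma\not\subseteq E_{xyz}$; then $\sigma\vee E_{xyz}$ properly contains $E_{xyz}$, so equals the universal relation by the same primitivity, and one deduces that $\sigma$ itself is universal, a contradiction: since $\sigma$ links two elements lying in distinct pre-directions and $G_{\{J_{xyz}\}}$ is transitive on each pre-direction (an argument like those for (ii)(a) and (v)) as well as two-transitive on directions, no $\sigma$-class can sit inside one pre-direction, which together with $\sigma\vee E_{xyz}$ universal forces $\sigma$ to collapse $J_{xyz}$. Hence every non-trivial congruence is contained in $E_{xyz}$. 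The main obstacle throughout is the symmetrising enlargement of finite substructures, which rests on the amalgamation and joint embedding properties but must be arranged so that both the local subtrees below the relevant ramification points and their attachment to the rest of the $D$-set are symmetric; the secondary delicate point is the last implication in (vi).
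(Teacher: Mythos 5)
Your part (ii)(b) and the deduction of (iv) from (ii)(b) and (iii) match the paper, and the overall skeleton of (vi) (non-triviality, maximality from $2$-transitivity on the quotient, uniqueness by showing any congruence not contained in $E_{xyz}$ must contain it and hence be universal) is also essentially the paper's argument. But the central mechanism you propose for (i), (ii)(a), (iii) and (v) — build a finite $A\in\mathscr D$ admitting an automorphism that \emph{fixes $x,y,z$ pointwise} and realises the desired permutation, after a ``symmetrising enlargement'' — contains a genuine error in (i). An automorphism of a graph-theoretic tree fixing the three leaves $x,y,z$ fixes pointwise the tripod they span, and therefore fixes ${\rm ram}(x,y,z)$ and can only permute ramification points hanging off the \emph{same} point of that tripod. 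So no such automorphism can interchange, say, $r_1={\rm ram}(x,y,z)$ with any other $r_2\in{\rm Ram}(R_{xyz})$, or two ramification points attached at different points of the tripod. No amount of symmetrisation of the subtrees below $r_1$ and $r_2$ repairs this; the requirement of fixing $x,y,z$ pointwise is simply too strong, and your proof of (i) fails.

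The paper's route avoids this entirely and you should adopt it: for (i), pick a triple $p,q,s$ in distinct branches at $r_2$ with $L(p;q,s)$ witnessed in $R_{xyz}$, so that $R(x;y,z:p;q,s)$ holds; $3$-homogeneity (Lemma~\ref{Aut(M)}(i)) gives $g\in G$ with $(x,y,z)^g=(p,q,s)$, and Lemma~\ref{QRIJ} forces $J_{xyz}=J_{pqs}$, hence $g\in G_{\langle xyz\rangle}$ and $r_1^g=r_2$ by Lemma~\ref{4}(iv). The same move (map the triple $(x,y,z)$ to another $L$-triple witnessed in the same $D$-set, rather than fixing it) handles (ii)(a), (iii) and (v) in a few lines each, with a short case analysis in (ii)(a) on the position of $u$ relative to ${\rm ram}(x,y,z)$. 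Note also that semi-homogeneity extends isomorphisms between two \emph{different} finite substructures of $M$, so there is never a need to manufacture a single finite structure carrying the required symmetry; your ``symmetrising enlargement'', besides being unjustified (you would have to verify that amalgamation can always be arranged compatibly with all higher $D$-sets), is the wrong tool here. Finally, in (vi) the step ``$G_{\{J_{xyz}\}}$ is transitive on each pre-direction, hence no $\sigma$-class sits inside one pre-direction'' does not follow as stated; what is needed (and what the paper uses) is an automorphism fixing a point $y$ of one class while moving $x$ within its $E_{xyz}$-class, obtained from $(x,y,z)\mapsto(x',y,z)$ by semi-homogeneity.
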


\begin{proof} 
\begin{enumerate}[(i)]
    \item  Assume $r , r'$ are two ramification points of $R_{xyz}$and $x,y,z$ and $p,q,s$ are  triples lying in distinct branches at $r,r'$  respectively with $L(p;q,s)$ witnessed in $R_{xyz}$. We want to find some $g \in G_{\langle xyz \rangle }$ such that $r ^g = r'$. Now $R(x;y,z: p;q,s)$ holds. By $3$ -homogeneity there is $g \in G$ such that $(x,y,z)^g =(p,q,s)$. By Lemma \ref{QRIJ}, $J_{xyz}= J_{pqs}$ so $\langle xyz \rangle = \langle pqs \rangle$, so $g \in G_{\langle xyz \rangle }$, and $g$ preserves the $D$-relation on $R_{xyz}$ (Lemma~\ref{4}(iv)), so $r^g=r'$.
\item 
\begin{enumerate}[(a)]
    \item Let $u\in J_{xyz}$, so $R(x;y,z:u;y,z)\vee R(x;y,z:x;u,z)\vee R(x;y,z:x;y,u)$ holds. Let $r:={\rm ram}(x,y,z)$.   To show the transitivity we want to find $g \in G_{\langle xyz\rangle }$ such that $u^g =x$. There are $3$ cases to consider.

{\em{Case 1.}} If $u$ is in the same branch  as $x$ at $r$ in  $R_{xyz}$, then $L(x;y,z)$ and  $L(u;y,z)$ are witnessed in $R_{xyz}$, and hence $\langle xyz\rangle \ =\ \langle uyz\rangle$. By semi-homogeneity of $G$ there is $g \in G$ such that $(u,y,z)^g=(x,y,z)$. As $J_{xyz}=J_{uyz}$ then the $D$-set is fixed and hence, by Lemma \ref{QRIJ}(ii),  $\langle xyz \rangle $ is fixed so $g \in G_{\langle xyz \rangle}$.

{\em{Case 2.}} If $x,y,z,u$ are in distinct branches at a ramification point $r$, then $L(x;y,z) \wedge L(x;u,y) \wedge L(x;u,z)$ hold and by semi-homogeneity there exists $w$ in the same branch as $u$ at $r$ such that $L(u;w,z)$ is witnessed at $r':={\rm ram}(u,w,z)$  in the same $D$-set $R_{xyz}$,  i.e. $R(x;y,z: u;w,z)$ holds. Therefore, $L(x;y,z) \wedge L(u;w,z)$ holds, so  there is $g \in G$ such that $(u,w,z)^g = \ (x,y,z)$. As $R_{uwz}=R_{xyz}$, $g$ fixes the $D$-set and hence fixes $\langle xyz \rangle$, so $g \in G_{\langle xyz \rangle}$.

{\em{Case 3.}} Suppose that $u$ is in the same branch as $z$ (the argument is the same if it is in the same branch as $y$). If $u $ is  special at $\text{ram}(u,y,z)$, then there is some $g \in G$ fixing $y,z$ and taking $u$ to $x$, and as $R_{xyz}=R_{uyz}$, the $D$-set is fixed by $g$, hence $\langle xyz \rangle$ is fixed, so $g \in G_{\langle xyz\rangle}$. Otherwise,  by semi-homogeneity, there is some $w$ such that $L(u;w,y)$ is witnessed in $R_{xyz}$. Again, there is $g \in G$ with $(u,w,y)^g=(x,y,z)$, as required.
\item  Let $\langle xyz \rangle $, $\langle pqs \rangle \in (K^*/R, \leq)$. Then $M \models L(x;y,z) \wedge L(p;q,s)$ so by semi-homogeneity there is $g \in G$ with $(p,q,s)^g=(x,y,z)$. Then $\langle pqs \rangle^g =\langle xyz \rangle$.

\end{enumerate}

\item Let $[ p] \neq [q]$ be distinct directions of $R_{xyz}$ with $[ p]=p/E_{xyz}, [q]=q/E_{xyz}$ and put $[ x]= x/E_{xyz}, [y]=y/E_{xyz}$. It suffices to show there is $g \in G_{\{J_{xyz}\}}$ with $([x], [y])^g=([p], [q])$. Choose $s \in M$ such that $R(x;y,z: p;q,s)$ holds - this exists by semi-homogeneity. Using $3$-homogeneity (Lemma \ref{Aut(M)}(\ref{3 homom})) there is $g \in G$ with $(x,y,z)^g=(p,q,s)$. Since $R(x;y,z:p;q,s)$ holds, $g$ fixes $J_{xyz}$ setwise, so $g$ preserves $E_{xyz}$ so fixes $R_{xyz}$ setwise, and clearly $([ x], [y])^g= ([ p], [ q])$.

\item This follows from(ii)$(b)$ and (iii). 

\item Let $U$ be the branch containing $y$ at $r:={\rm ram}(x,y,z)$ in $R_{xyz}$, and let $L(p;q,s)$ be witnessed in $R_{xyz}$. Put $r':={\rm ram}(p,q,s)$, and let $V$ be the branch at $r'$ containing $q$. It sufices to show that  there is $g\in G_{\{J_{xyz}\}}$ with $V^g=U$. But this is immediate -- we may choose any $g$ with $(p,q,s)^g=(x,y,z)$, as exists by semi-homogeneity.

For the second assertion, with $x,y,z,r,U$ as above, let $w\in U$. Then $L(x;z,w)$ is witnessed in $R_{xyz}$, and by semi-homogeneity there is  $g\in G$ with $(x,y,z)^g=(x,w,z)$. Clearly $g$ fixes $U$ setwise.

\item

Maximality of $E_{xyz}$ follows immediately from $2$-transitivity of $G_{\{J_{xyz}\}}$ on $R_{xyz}=J_{xyz}/E_{xyz}$, and this was proved in (\romannum{3}).

It remains to prove that $E_{xyz}$ is the {\em{unique}} maximal $G_{\{J_{xyz}\}}$-congruence. To see this, suppose $E^*$ is a $G_{\{J_{xyz}\}}$-congruence on $J_{xyz}$ and $E^* \not \subset E_{xyz}$. Without loss of generality, we may suppose $x E^* y$. Let $x' \in J_{xyz}$ with $xE_{xyz}x'$. Then $L(x;y,z) \wedge L(x';y,z)$. It follows by semi-homogeneity that there is a $g \in G$ with $(x,y,z)^g=(x',y,z)$. Then $J_{xyz}^g=J_{xyz}$, and as $y^g=y$, $g $ fixes $E^*(y)$ setwise, so as $x E^* y$ we have $x'E^* y$. Thus $x/E_{xyz}\subset E^*(y)$. Hence $E_{xyz}\subset E^*$ and it follows that $E^*$ is universal, as required.

\end{enumerate}
\end{proof}

\section{Proof of the main theorem}\label{Jordan}
In this section, we prove that $G={\rm Aut}(M)$ is a Jordan group preserving a limit of $D$-relations. The main work is in Section~\ref{findjordan}, where we  first show that every pre-direction is a Jordan set, and then use Lemma~\ref{connected} to identify other Jordan sets. We then show that $G$ does not preserve on $M$ any structure of types (a)-(c) in Theorem~\ref{am-class}.  Finally,
 in  Section \ref{proof of main Th} we prove that $G$ satisfies the requirements of Definition \ref{limits} to obtain our main result, Theorem~\ref{mainthm}.

\subsection{Finding a Jordan set} \label{findjordan}

Recall first that $T=(K^*/R,\leq)$ is a lower semilinear order and meet semilattice. We refer to it as   the {\em structure tree}  of $M$.

\begin{mydef}{\em{ 
     A subset $\hat U$ of $M$ is said to be a {\em{pre-branch}} if there are $x,y,z \in M$ with $L(x;y,z)$ and a branch $ U$ in $R_{xyz}$ such that $\hat U= \{ u\in M: [u] \in  U\} = \bigcup \{[u]: [u]\in  U\}$; that is,  $\hat U$ is the union of all $E_{xyz}$-classes in the single  branch $U$ at some ramification point $r$ of the $D$-set  $(R_{xyz}, D_{xyz})$. In this siruation, we say $\hat U$ is a pre-branch at the ramification point $r$. 
         }}
\end{mydef}
Given a $D$-set $R_{xyz}$ we put $\hat R _{xyz}= \bigcup R_{xyz}$, the union of the pre-directions (see Definition~\ref{longdef}) corresponding to directions of $R_{xyz}$; so $\hat R_{xyz} \subset M$, and in the notation of Section 4,  $\hat R_{xyz}=J_{xyz}$ .

Fix a direction $[n]$ of $M$ (so $n \in M$). Let $\langle pqn \rangle$ be the unique vertex of the structure tree of $M$, whose $D$-set $R_{pqn}$ has $[n]$ as a direction (the uniqueness is noted after Definition~\ref{longdef}); we may suppose $p,q\not\in [n]$.  Note that $L(p;q,n)$ is witnessed in this $D$-set. Define $I:=\{i \in T: i<\langle pqn \rangle\}$, and for each $i \in I$ let $R_i$ be the $D$-set indexed by $i$. Let $D_i$ denote the corresponding $D$-relation $D_{xyz}$, where $R_i= R_{xyz}$. Then $I$ carries a total order $<$ induced from $T$, where $i<j\Leftrightarrow \hat R_j\subset \hat R_i$.
   For each $i \in I$, let $r_i=f_i(\langle pqn \rangle)$, the ramification point of $R_i$ corresponding to the cone (of the structure tree) at $i$ containing $\langle pqn \rangle$.
By Lemma~\ref{gxyz} there is a set $S_i$ of branches at $r_i$ such that $g_{\langle pqn \rangle i}([n])=\bigcup \bigcup S_i$.
  
Since our goal is to show that $[n]$ is a  Jordan set for $G$, we consider the induced structure on $[n]$, viewed as a predirection, i.e. as a subset  of $M$. First, for each $i \in I$, there is an equivalence relation $F_i$ on $[n]$ defined by $$ d_1 F_i d_2 \Leftrightarrow d_1, \ d_2 \  \text{lie in the same pre-branch of}\ \hat R_i\ \text{at}\ r_i.$$
  Also for each $i \in I$, let $E_i$ be the equivalence relation $E_{xyz}$ (restricted to $[n]$), where $R_i =R_{xyz}$.

 \begin{lem}\label{FE}
 Let $i,j \in I$ with $i<j $. Then $E_{i}\subset F_i \subset E_j \subset F_j$.
 \end{lem}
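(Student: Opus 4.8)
The plan is to unwind the four relations in terms of finite substructures of $M$, reducing everything to a single finite tree of $D$-sets $A$ containing witnesses for the relevant $L$-relations. Fix $i<j$ in $I$, say $R_i=R_{xyz}$ and $R_j=R_{pqs}$ with $\langle xyz\rangle<\langle pqs\rangle<\langle pqn\rangle$, and work inside a finite $A\in\mathscr D$ with $x,y,z,p,q,s,n$ and the relevant elements of $[n]$ all in $A\le M$. In $A$ the $D$-set witnessing $L(p;q,s)$ sits strictly above the one witnessing $L(x;y,z)$ in the structure tree of $A$, and the element $n$ (hence all of $[n]$) lies in a non-special branch at each $r_i=f_i(\langle pqn\rangle)$. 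I will translate each containment into the corresponding statement about branches and leaves in this finite picture, where it becomes a routine observation about semilinear orders and trees.

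First I would show $E_i\subseteq F_i$: if $d_1 E_i d_2$ then $d_1,d_2$ lie in the same leaf of the $D$-set $R_i$ (that is, the same $E_{xyz}$-class), hence certainly in the same branch at $r_i$, so $d_1 F_i d_2$. The inclusion is proper because $[n]$, viewed inside $R_i$, is a union of several non-special branches at $r_i$ (it is $\bigcup\bigcup S_i$), and any branch at a ramification point of a $D$-set with more than one leaf contains more than one $E_i$-class — this uses density of $R_i$ (Lemma~\ref{4}(iii), axiom (D6)) to produce distinct directions inside a single branch, which then lie in the same $F_i$-class but distinct $E_i$-classes. Next, $F_i\subseteq E_j$: if $d_1,d_2$ lie in the same pre-branch of $\hat R_i$ at $r_i$, then by Lemma~\ref{union of branches} and the fact that $g_{\langle pqs\rangle\, i}$ sends each pre-direction of $R_j$ to a union of branches at $r_i$ with disjoint images for distinct pre-directions (the displayed disjointness after Lemma~\ref{union of branches}), $d_1$ and $d_2$ must lie in the same pre-direction of $R_j$, i.e. $d_1 E_j d_2$. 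Properness: since $\langle pqs\rangle<\langle pqn\rangle$, there are distinct ramification points of $R_j$ through which $[n]$ ``passes'', so $[n]\subseteq\hat R_j$ meets at least two branches of $R_i$ at $r_i$ that map into the same $E_j$-class only if... — more carefully, I would pick two pre-branches of $\hat R_i$ at $r_i$ whose images under $g_{\langle pqs\rangle\, i}^{-1}$ lie in one $E_j$-class (possible because $[n]$ spans several such branches but a single $E_j$-class of $R_j$ is a single leaf), giving $F_i\subsetneq E_j$. Finally $E_j\subseteq F_j$ is the same argument as $E_i\subseteq F_i$ applied to $j$ in place of $i$, and is proper for the same reason.

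The main obstacle will be the properness of the middle inclusion $F_i\subsetneq E_j$: the other three are essentially ``same leaf $\Rightarrow$ same branch, and branches are bigger than leaves by density'', but for $F_i\subsetneq E_j$ one must genuinely use that $j<\langle pqn\rangle$ — i.e. that $R_j$ is itself strictly below the vertex carrying $[n]$ — so that the single $E_j$-class containing (the image of) $[n]$ pulls back under $g_{\langle pqs\rangle\, i}$ to a union of \emph{several} branches of $R_i$ at $r_i$, not just one. I would make this precise by choosing $l,m,n'\in M$ with $\langle xyz\rangle<\langle lmn'\rangle<\langle pqs\rangle$ (density of the structure tree, Lemma~\ref{semi2}) and tracking how $[n]$ decomposes at the corresponding ramification points, exactly as in the density argument of Claim~3 in the proof of Lemma~\ref{semi2}. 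All verifications are finite-combinatorial once the picture in $A$ is fixed, so no further subtlety is expected.
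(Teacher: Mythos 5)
Your argument for the three containments is correct and is essentially the paper's proof: $E_i\subseteq F_i$ and $E_j\subseteq F_j$ because lying in the same pre-direction refines lying in the same pre-branch, and $F_i\subseteq E_j$ because, by Lemma~\ref{union of branches}, each pre-direction of $R_j$ is a disjoint-from-the-others union of pre-branches of $\hat R_i$ at $r_i$. Note, however, that the paper's proof establishes only the non-strict inclusions, and its later uses (Lemma~\ref{intersection} and clause (v) in the proof of Theorem~\ref{main Th}) need nothing more, so the properness arguments you flag as the main obstacle are not required; if you did want strictness of the middle inclusion, a cleaner route than the one you sketch is to interpolate $i<k<j$ using density of the structure tree and combine $F_i\subseteq E_k$, $E_k\subsetneq F_k$ (your density-of-$D$-sets argument) and $F_k\subseteq E_j$.
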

 \begin{proof}
 Take a particular pre-branch at $r_i$ in $\hat{R_i}$ lying in $[n]$, say $\hat U_i$. By the definition of the relation $F_i$ the pre-branch $\hat U_i$ is an $F_i$-class. It is clear that the relation of being in the same pre-direction of $\hat R_i$ refines $F_i$, so $E_i \subset F_i$.  Similarly we have $ E_j \subset F_j$. To show that $F_i \subset E_j$, we see by Lemma~\ref{union of branches} that if $[m]$ is a pre-direction for some $R_j$ where $j \in I$ with $j>i$, then $[m]$ is a union of pre-branches of $\hat R_i$ at $r_i$.
 \end{proof}
 
\begin{lem}\label{intersection}
Given an $F_i$-class $\hat U_i$, the intersection of the $E_j$-classes containing $\hat U_i$ (for $j>i$) is just $\hat U_i$.
\end{lem}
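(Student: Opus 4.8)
The plan is to prove the equivalent statement that $\bigcap_{j>i}C_j=\hat U_i$, where $C_j$ denotes the $E_j$-class containing $\hat U_i$; this is well defined because $\hat U_i$ is an $F_i$-class and $F_i\subseteq E_j$ for $j>i$ by Lemma~\ref{FE}. The inclusion $\hat U_i\subseteq\bigcap_{j>i}C_j$ is immediate from $\hat U_i\subseteq C_j$. For the converse, fix $u\in\hat U_i$, so that each $C_j$ is the $E_j$-class of $u$ and $C_j\subseteq[n]$. It then suffices to show: if $d\in[n]$ and $d\notin\hat U_i$, then there is $j\in I$ with $j>i$ and $\neg(d\,E_j\,u)$; the contrapositive, applied to our $d$, forces $d\in\hat U_i$. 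Now $d\notin\hat U_i$ means that $d$ and $u$ lie in distinct pre-branches of $\hat R_i$ at $r_i$, and since both pre-branches are contained in $[n]$ they are two of the pre-branches comprising $[n]=\bigcup\bigcup S_i$, i.e.\ correspond to distinct (non-special) branches of $R_i$ at $r_i$ belonging to $S_i$; in particular $\neg(d\,E_i\,u)$. We must propagate this separation to a vertex strictly above $i$.

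To do this I would pass to a finite substructure and invoke semi-homogeneity (Lemma~\ref{extend iso}). Choose a finite $A\in\mathscr D$ with $A\le M$ containing $d,u,p,q,n$ together with a triple $x,y,z$ for which $R_i=R_{xyz}$, large enough that $L(x;y,z)$ and $L(p;q,n)$ are witnessed in $A$ at vertices $\nu_i<\nu_{pqn}$ of the structure tree of $A$, and large enough that in the $D$-set $D_{\nu_i}$ of $A$ the elements $d$ and $u$ already lie in distinct non-special branches at the (finite approximation of the) ramification point $r_i$ --- this holds once $A$ is large, precisely because a finite $D$-set cannot place $d$ and $u$ in a common branch at $r_i$ when $R_i$ separates them. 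By a sequence of one-point extensions of Types I and II and semi-homogeneity, keeping $A\le M$ and $d,u,p,q,n$ fixed, enlarge $A$ so that the path from $\nu_i$ to $\nu_{pqn}$ in the structure tree contains a vertex $\mu$ with $\nu_i<\mu<\nu_{pqn}$, and so that in the finite $D$-set $D_\mu$ the leaves containing $d$ and $u$ are distinct and there is a third leaf $e$ of $D_\mu$ with $L\{d,u,e\}$ witnessed in $D_\mu$. Picking $a,b,c\in A$ with $L(a;b,c)$ witnessed at $\mu$, so that $\mu$ codes the vertex $j:=\langle abc\rangle$ of $K^*/R$, we get (after choosing the special position among $d,u,e$) $A\models R(a;b,c:e;d,u)$ or the corresponding $R$-statement, hence $M$ satisfies it, hence $L\{d,u,e\}$ is witnessed in $R_{abc}$; thus $d$ and $u$ lie in distinct branches at ${\rm ram}(d,u,e)$ of $R_{abc}$, so $\neg(d\,E_j\,u)$. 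Finally $\nu_i<\mu<\nu_{pqn}$ gives $i<j<\langle pqn\rangle$, so $j\in I$ and $j>i$, as required.

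The delicate step is the middle one: arranging, by legitimate one-point extensions performed inside $M$, that the structure tree of the finite substructure has a genuine intermediate vertex $\mu$ on the $\nu_i$--$\nu_{pqn}$ path at which $d$ and $u$ are separated by a witnessed $L$-relation. One must check that the Type II(b) extension creating $\mu$ (subdividing an edge on that path) is compatible with the other constraints, and that $D_\mu$ can be enlarged to contain a leaf $e$ realising $L\{d,u,e\}$ without disturbing the positions of $d,u,n$ already fixed; this is where the structure of one-point extensions and the amalgamation property of $\mathscr D$ are used. Once the finite configuration is in place, the passage from $A$ to $M$ is routine because $R$ is a symbol of $\mathscr L$, and the identification $j=\langle abc\rangle\in I$ is immediate.
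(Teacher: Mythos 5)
Your overall strategy---reduce to showing that if $d,u\in[n]$ lie in distinct pre-branches at $r_i$ then some $D$-set coded strictly between $i$ and $\langle pqn\rangle$ separates them, and detect the separation inside a finite substructure via the relation $R$---is sound and close in spirit to the paper's. (The paper instead builds an abstract finite configuration already containing explicit witnesses $w',s',t'$ for the intermediate $D$-set, realises it inside $M$, and uses semi-homogeneity to transport the abstract copy of $(a,u,v,p,q,n)$ onto the real one; the images of $w',s',t'$ then code the required $j$.) However, the step you yourself flag as delicate is not established by your sketch, and the mechanism you propose for it does not work: a Type II(b) extension subdivides an edge of a \emph{$D$-set}, creating a new, incomparable successor of the relevant structure-tree vertex whose labelling $D$-set has only two elements; it does not insert a vertex into the structure-tree path from $\nu_i$ to $\nu_{pqn}$, so it cannot manufacture the intermediate vertex $\mu$ you need. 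As written, the existence of $\mu$ with $d,u$ separated there by an $L$-relation witnessed in $D(\mu)$ is asserted, not proved.

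The good news is that no extension is needed: the vertex is already present in any finite $A\in\mathscr D$ with $d,u,p,q,n,x,y,z\in A\leq M$. Let $r={\rm ram}(p,q,n)$ in the $D$-set of $A$ witnessing $L(x;y,z)$ and let $\mu_1=f_{\nu_i}^{-1}(r)$ be the corresponding successor. Your observation that $d,u$ lie in distinct non-special branches at $r$ is correct (and holds for \emph{every} such $A$, since the relevant instances of $\neg Q$ and $\neg R$ pass to substructures), so $d,u$ correspond to distinct leaves of $D(\mu_1)$; on the other hand $d\,E_{pqn}\,u$ holds in $M$, which forces $d,u$ into the \emph{same} leaf of the $D$-set of $A$ witnessing $L(p;q,n)$ (otherwise a third leaf would produce an $R$-relation distinguishing them relative to $(p;q,n)$). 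Hence $\nu_i<\mu_1<\nu_{pqn}$ strictly. Moreover $D(\mu_1)$ has at least three leaves (those of $p,q,n$), so taking any third leaf $e$, the special branch at ${\rm ram}(d,u,e)$ contains some $f\in A$, and whichever of $L(d;u,e)$, $L(u;d,e)$, $L(e;d,u)$, $L(f;d,u)$ is witnessed in $D(\mu_1)$ yields, via $R(a;b,c:\cdot\,;\cdot,\cdot)$ for a triple $a,b,c$ with $L(a;b,c)$ witnessed in $D(\mu_1)$, that $d$ and $u$ lie in distinct directions of $R_{\langle abc\rangle}$. Note in particular that you cannot insist that $L\{d,u,e\}$ itself be witnessed in $D(\mu)$: the special branch at ${\rm ram}(d,u,e)$ may be a fourth branch, and then it is $L(f;d,u)$ that does the work. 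With these repairs your argument goes through, and the final transfer $i<\langle abc\rangle<\langle pqn\rangle$ does indeed follow from Lemma~\ref{conven}.
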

\begin{proof}
We want to show that $F_i =\bigcap\limits_{j>i} E_j$. It is clear from Lemma~\ref{FE} that $F_i \subseteq \bigcap\limits_{j>i} E_j $. Conversely, suppose $u,v \in [n]$ with $\neg u F_i v$. We want to find $j\in I$ with $j>i$ such that $\neg u E_j v$. Let $a \in M$ lie in the special branch of $R_i$  at $r_i$ (so $a\not\in [n]$). Consider a finite structure $A\in \mathscr D$ containing elements $a',u',v',w',s',t',p',q',n'$ in distinct branches at a ramification point $r$ at the root $D$-set (with $a'$ special), such that there is a $\mathscr L$-isomorphism $(a',u',v',p',q',n')\to (a,u,v,p,q,n)$ . We choose $A$ so that also  in a higher $D$-set (below that witnessing $L(p';q',n')$) we have $L(w';s',t')$, with $u',v',w',s',t'$ again in distinct branches at a ramification point.  We may suppose $A\leq M$. By semi-homogeneity there is $g \in G$ with $(a',u',v',p',q',n')^g=(a,u,v,p,q,n)$. The relation $L({w'}^g; s'^g,t'^g)$ will be witnessed in a $D$-set $R_j$ with $j>i$, and we have $\neg u E_j v$. (The role of $p,q,p',q'$ here is to ensure $j\in I$, that is, $j<\langle pqn\rangle$.)

\end{proof}

\begin{lem}\label{largest}
Let $u,v_1,\dots v_m$ be distinct elements of $[n]$. Then there is a greatest $i\in I$ such that $u$ is $E_i$-inequivalent to each of $v_1, \dots , v_m$ and for such $i$ the element $u$ will be $F_i$-equivalent to at least one  $v_j$ with $j \in \{ 1, \dots, m \}$.
\end{lem}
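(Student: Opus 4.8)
The plan is to exploit the nested chain of equivalence relations established in Lemmas~\ref{FE} and \ref{intersection}. First I would observe that since $u,v_1,\dots,v_m$ are finitely many distinct elements of $[n]$ lying in a common finite substructure $A\in\mathscr D$ with $A\leq M$, the element $u$ is $E_i$-inequivalent to all the $v_j$ for all sufficiently small $i\in I$: indeed, taking $j$ large enough (i.e. $\langle xyz\rangle$ high enough in the structure tree), the relevant $D$-set $R_{xyz}$ becomes coarse enough that $u,v_1,\dots,v_m$ all collapse; and conversely, by Lemma~\ref{intersection} (applied with the $F_i$-class of $u$), $u$ becomes $E_j$-equivalent to at least one $v_k$ once $j$ is large enough. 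So the set $I_0:=\{i\in I: u\text{ is }E_i\text{-inequivalent to each }v_j\}$ is a proper, nonempty, downward-closed subset of the total order $I$.

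Next I would argue that $I_0$ has a greatest element. Since everything lives in a finite structure $A$, the distinctions $u\not E_i v_j$ are witnessed in finitely many $D$-sets of $A$; concretely, working inside $A$, the condition ``$u$ is $E_i$-inequivalent to each of $v_1,\dots,v_m$'' corresponds to $u,v_1,\dots,v_m$ lying in distinct leaves of the $D$-set $R_i$ of $A$ indexed by $i$. As $i$ increases through the relevant part of $I$ inside $A$, the leaves containing $u,v_1,\dots,v_m$ merge in a monotone fashion (by Lemma~\ref{FE} and Lemma~\ref{union of branches}, passing from $R_i$ to $R_j$ with $i<j$ identifies pre-branches of $\hat R_i$), and there is a \emph{last} vertex $i$ of $I$ within $A$ at which all of $u,v_1,\dots,v_m$ still lie in distinct leaves. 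By semi-homogeneity this vertex of $A$ is the genuine supremum $i_0$ of $I_0$ in $I$, and $i_0\in I_0$; any $i'>i_0$ in $I$ has $u$ $E_{i'}$-equivalent to some $v_k$. This gives the greatest $i\in I$ as required.

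Finally I would verify the $F_i$-equivalence claim at $i=i_0$. For any successor-type $j$ of $i_0$ in $I$ (or more precisely, any $j>i_0$ in $I$ lying just above $i_0$ inside a suitable finite $A$), minimality of the failure of $E_j$-inequivalence forces $u$ to become $E_j$-equivalent to at least one $v_k$. But $F_{i_0}\subseteq E_j$ by Lemma~\ref{FE}, and more relevantly, by Lemma~\ref{intersection} we have $F_{i_0}=\bigcap_{j>i_0}E_j$ restricted to $[n]$; since $u$ is $E_j$-equivalent to some $v_k$ for every $j>i_0$ and there are only finitely many $v_k$, by a pigeonhole argument along a cofinal decreasing-index chain there is a fixed $k$ with $u\,E_j\,v_k$ for all $j>i_0$, hence $u\,F_{i_0}\,v_k$. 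Alternatively, and more directly: inside the finite structure $A$, at the vertex $i_0$ the elements $u,v_1,\dots,v_m$ lie in distinct leaves of $R_{i_0}$, but at the next vertex up two of these leaves merge; the pre-branch of $\hat R_{i_0}$ at $r_{i_0}$ that contains $u$ must then also contain some $v_k$ (this is exactly what $F_{i_0}$-equivalence records), since otherwise $u$ would remain leaf-separated from all $v_j$ at that next vertex too, contradicting maximality of $i_0$.

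The main obstacle I anticipate is the passage between the finite and the infinite: ``greatest such $i$'' is a statement about the infinite structure tree $T=(K^*/R,\leq)$, whereas all the combinatorics (merging of leaves, monotonicity of the $E_i$) is naturally controlled only inside a finite $A\in\mathscr D$ with $u,v_1,\dots,v_m,p,q,n\in A\leq M$. Making rigorous that the supremum computed in $A$ coincides with the supremum in $T$, and that it is actually attained, will require careful use of semi-homogeneity together with Lemma~\ref{conven} and the monotonicity statements of Lemmas~\ref{A} and \ref{union of branches}; once that interface is set up cleanly, the $F_{i_0}$-equivalence conclusion is essentially immediate from the definition of $F_{i_0}$ and the maximality of $i_0$.
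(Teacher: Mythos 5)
Your overall strategy---locating the transition vertex inside a finite $A\in\mathscr D$ containing $p,q,n,u,v_1,\dots,v_m$ and then transferring to $I$ via Lemmas~\ref{FE} and \ref{intersection}---is the same as the paper's, but there is a concrete error in how you translate the defining condition of $I_0$ into $A$. You render ``$u$ is $E_i$-inequivalent to each of $v_1,\dots,v_m$'' as ``$u,v_1,\dots,v_m$ lie in distinct leaves of $R_i$'', and accordingly take $i_0$ to be the last vertex of $A$'s chain at which all $m+1$ elements lie in distinct leaves. These conditions are not equivalent: the lemma places no constraint on the $v_j$ relative to one another, and two of the $v_j$ may merge into a single leaf strictly before any of them merges with $u$. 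In that case your $i_0$ lies strictly below the greatest element of $I_0$, and at your $i_0$ the element $u$ need not be $F_{i_0}$-equivalent to any $v_j$ (all of $u,v_1,\dots,v_m$ may still lie in pairwise distinct branches at $r_{i_0}$). The ``contradiction with maximality'' closing your third paragraph then evaporates: $u$ remaining leaf-separated from every $v_j$ at the next vertex up is entirely consistent with your definition of $i_0$, since what forced the step up was a merger among the $v_j$ alone. The fix is to take $i_0$ to be the predecessor, in $A$'s chain towards the vertex witnessing $L(p;q,n)$, of the least vertex at which $u$ shares a leaf with \emph{some} $v_j$; this is what the paper's proof does.

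On the finite-versus-infinite issue you flag as the main obstacle: it is resolved not by semi-homogeneity but by a structural fact you never invoke, namely that for an immediate successor $\nu$ of $\nu^-$ in $A$'s structure tree the map $g_{\nu\nu^-}$ is a bijection from the leaves of $D(\nu)$ onto the individual non-special branches at $f_{\nu^-}(\nu)=r_{\nu^-}$. Hence if $u$ and $v_k$ first share a leaf at $\nu$, they already lie in one and the same branch at $r_{\nu^-}$, i.e.\ $u\,F_{\nu^-}\,v_k$ holds, and this transfers to $M$ since branches and directions are determined by the $L,S$-relations on finitely many parameters. Lemma~\ref{FE} then gives $F_{\nu^-}\subseteq E_j$ for \emph{every} $j>\nu^-$ in $I$, not merely for the finitely many $j$ visible in $A$, so $u\,E_j\,v_k$ for all $j>\nu^-$; since $u$ is $E_{\nu^-}$-inequivalent to each $v_l$, the vertex $\nu^-$ is the greatest element of $I_0$ and already carries the required $F$-equivalence. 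This also repairs the logical order of your argument, which currently asserts maximality first (unjustified) and then derives the $F$-equivalence from it, and it supplies the missing justification for the properness of $I_0$: you attribute that to Lemma~\ref{intersection}, but that lemma only yields $u\,E_j\,v_k$ for large $j$ once some $F_i$-equivalence between $u$ and a $v_k$ is already known.
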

\begin{proof}
Find $i_0\in I$ with $i_0< \langle pqn \rangle$ containing elements $p,q,u,v_1, \dots, v_m$ all lying in distinct branches at the ramification point $r_{i_0}$ of the $D$-set $R_{i_0}$. Consider finite $A\leq M$ with $A \in \mathscr D$ and $p,q,u,v_1, \dots, v_m$ lying in distinct branches at a ramification point of the root $D$-set. 

By considering the structure of $A$, we see that there is $i$ with $i_0<i<\langle pqn \rangle$ such that at $r_i$, $u$ is in the same branch as at least one of the $v_i$, but in a distinct direction to each. (Working in $A$, consider the $D$-sets in the structure tree between the root and the $D$-set $R_{pqn}$, and the corresponding ramification points; there will be a least $D$-set such that $u$ lies in the same branch as some $v_j$ at the relevant ramification point.)  

\end{proof}
\begin{mydef}\label{C and D}{\em{
     For each $i \in I$, define a relation $C_i$ on $\bigcup S_i$ (so on the set of directions of $R_i $ lying in the branches of $S_i$) as follows: if $[x], [y],[z]\in \bigcup S_i$, then $C_i ([z];[x],[y]) \leftrightarrow D_i([x],[y];[z],[w])$ for any direction $[w]$ of ${R_i}$ lying outside $\bigcup S_i$. }}
 \end{mydef}

It is easily seen that for each $i\in I$,  $C_i$ induces a $C$-relation on each $F_i$-class of $[n]$ (considered as a branch at $r_i$, i.e. modulo $E_i$). Furthermore, $C_i$ is invariant under $G_{M\setminus (\bigcup \bigcup S_i)}$, or under the subgroup of $G$ stabilising both $[n]\subset M$ and $\hat R_i$ setwise. 
\begin{lem}\label{g is aut}
Let $g$ be a permutation of $M$ which is the identity on $M\setminus [n]$, and for each $i \in I$ preserves the equivalence relation $E_i$, the relations $L$ and $S$ on $[n]$, and the $C$-relation induced by $C_i$ on each $F_i$-class of $[n]$ (modulo $E_i$). Then $g \in G$.  
\end{lem}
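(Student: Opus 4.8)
The plan is to show that $g$ extends to an automorphism of $M$ by using the semi-homogeneity of $M$ (Lemma~\ref{extend iso}), verifying that $g$ respects all the $\mathscr L$-relations on every finite substructure. Since $G$ is the automorphism group of the reduct to $\{L,S\}$ (by Lemma~\ref{LL'SS'}), it suffices to check that $g$ preserves $L$ and $S$. The key point is that $g$ fixes $M\setminus[n]$ pointwise, so the only tuples we need to worry about are those containing at least one element of $[n]$.

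First I would set up the reduction: take any finite $A\in\mathscr D$ with $A\leq M$, and show $g|_A$ is an $(L,S)$-isomorphism onto $g(A)\leq M$. Since $[n]$ is a single pre-direction, all of $[n]$ sits inside a single branch at $r_i$ for every $i\in I$, and more importantly, any ramification point of any $D$-set $R_{abc}$ that has two or more distinct elements of $[n]$ in distinct branches must lie strictly above $\langle pqn\rangle$ in the structure tree $T$ — that is, it lies in some $R_j$ "inside" $[n]$. Concretely, given a finite $A$, the structure tree $\tau_A$ has a vertex corresponding to (the restriction of) $\langle pqn\rangle$, and all elements of $[n]\cap A$ become identified in $D$-sets at or below that vertex; the $D$-sets where elements of $[n]$ genuinely separate are exactly those labelling vertices above it, and these are governed precisely by the relations $E_i$ (which tell us how $[n]$-elements distribute into branches at each $r_i$) together with the internal $L,S$-structure on $[n]$ and the $C$-relations $C_i$ on the $F_i$-classes.

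The main work is a case analysis on a triple or quadruple of elements from $A$, according to how many lie in $[n]$. If none lies in $[n]$, then $g$ acts trivially and there is nothing to check. If the relation $L(a;b,c)$ or $S(a,b;c,d)$ is witnessed in a $D$-set of $A$ that lies at or below the vertex coding $\langle pqn\rangle$ — equivalently, a $D$-set in which all the relevant $[n]$-elements collapse to a single leaf — then the relation only depends on which pre-direction/pre-branch of the lower $D$-sets each element lies in, and since $g$ preserves the $E_i$ (hence the $F_i$, by Lemma~\ref{FE}) and acts trivially outside $[n]$, the relation is preserved. The remaining, essential case is when the witnessing $D$-set is one of the $R_j$ with $j\in I$ lying strictly above $\langle pqn\rangle$ in the sense that the $[n]$-elements separate there: but such a $D$-set, restricted to the directions arising from $[n]$, is governed by $C_j$ on the relevant $F_j$-class together with the induced $L,S$ structure on $[n]$, all of which $g$ preserves by hypothesis. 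One must also check that $g$ does not \emph{create} or \emph{destroy} a ramification-point configuration — i.e. that $\langle pqn\rangle$-position and the $r_i$ are respected — but this follows because $g$ fixes the elements $p,q$ (which lie outside $[n]$) and preserves each $E_i$.

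I expect the main obstacle to be the bookkeeping in the case analysis: precisely identifying, for an arbitrary finite $A\in\mathscr D$ containing several elements of $[n]$ and several outside, which $D$-set of $A$ witnesses a given instance of $L$ or $S$, and confirming that the data preserved by $g$ (the $E_i$, the $L,S$-structure on $[n]$, and the $C_i$ on $F_i$-classes) is exactly enough to reconstruct that $D$-set up to the relevant isomorphism. The subtlety is that $C_i$ is only defined on $\bigcup S_i$ and only captures a $C$-relation on each $F_i$-class modulo $E_i$, so one must be careful that the $L,S$-relations internal to $[n]$ — which by definition of a pre-direction all get witnessed in $D$-sets strictly above $\langle pqn\rangle$ — together with these $C_i$-data pin down everything; here one invokes the standard correspondence (as in Section~2 and \cite{adeleke1998relations}) between a $C$-relation on a branch and the part of the ambient $D$-relation visible from that branch. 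Once the case analysis is complete, semi-homogeneity (Lemma~\ref{extend iso}) upgrades the statement "$g|_A$ is an $(L,S)$-isomorphism for every finite $A$" to "$g\in\Aut(M)=G$".
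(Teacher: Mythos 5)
Your overall strategy is the paper's: by Lemma~\ref{LL'SS'} it suffices to show that the permutation $g$ preserves $L$ and $S$, and one does this by a case analysis on how many arguments of the relation lie in $[n]$ and on where the witnessing $D$-set sits relative to $\langle pqn\rangle$, using preservation of the $E_i$ (hence, via Lemma~\ref{intersection}, of the $F_i$), of the $C_i$ on $F_i$-classes, and of $L$ and $S$ on $[n]$. One framing point: the closing appeal to semi-homogeneity is unnecessary --- $g$ is already given as a permutation of $M$, so once it preserves $L$ and $S$ on every finite tuple it is an automorphism of the $(L,S)$-reduct and hence lies in $G$; there is no extension step.

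The genuine problem is that you have the orientation of the structure tree inverted, and the inverted claim is load-bearing for your case division. You assert that the elements of $[n]$ ``become identified in $D$-sets at or below'' $\langle pqn\rangle$, that they ``genuinely separate'' in $D$-sets above it, and that the internal $L,S$-relations of $[n]$ are witnessed ``strictly above $\langle pqn\rangle$''. This is backwards. In the paper's ordering ($i<j \Leftrightarrow \hat R_j \subset \hat R_i$, with $I=\{i: i<\langle pqn\rangle\}$), passing to a higher vertex merges leaves: in $R_{\langle pqn\rangle}$ and in every $D$-set above it, all of $[n]$ is contained in a single direction, so no instance of $L$ or $S$ separating elements of $[n]$ can be witnessed there. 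The elements of $[n]$ separate precisely in the $D$-sets $R_i$ with $i\in I$ (where $[n]=\bigcup\bigcup S_i$ is a union of branches at $r_i$) and in $D$-sets labelling vertices incomparable with $\langle pqn\rangle$ whose infimum with $\langle pqn\rangle$ lies in $I$; the internal $(L,S)$-structure of $[n]$ is witnessed in these, not above. Your phrase ``$R_j$ with $j\in I$ lying strictly above $\langle pqn\rangle$'' is self-contradictory given the definition of $I$. Beyond this, the case analysis --- which is the actual content of the proof --- is only gestured at: in particular the sub-case $x,y\in[n]$, $z\notin[n]$ with $xF_iy$, where one must introduce an auxiliary $t\in[n]$ with $tF_ix\wedge tF_iy$ and chain $L(x;y,z)\leftrightarrow L(x;y,t)\leftrightarrow L(x^g;y^g,t^g)\leftrightarrow L(x^g;y^g,z)$ using $C_i$-invariance, and the sub-case of a witnessing vertex incomparable with $\langle pqn\rangle$, are exactly where the hypotheses are used and are not carried out.
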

\begin{proof}
By Lemma \ref{intersection} and the assumption that $g $ preserves the relations $E_i$, it follows that $g$ preserves each $F_i{\big|}_{[n]} $ and hence each
$F_i$. By Lemma~\ref{LL'SS'}, it is enough to show that $g$ preserves $L$ and $S$ on $M$.
Below, in Part A we show that $g$ preserves $L$, and in Part B that it preserves $S$.

{\em \bf{Part A.}} To prove that $g\in G$ preserves $L$, we consider four  cases:

{\em{Case $\Romannum{1}$}}. If $x,y,z \notin [n]$, then as $g$ is the identity on $M\setminus [n]$ we have $L(x;y,z) \leftrightarrow L(x^g;y^g,z^g)$, and likewise for the other orderings of $\{x,y,z\}$.

{\em{Case $\Romannum{2}$.}} Let $x \in [n]$, $y,z \in M \setminus [n]$. 
Let $R$ be the $D$-set in which $L\{ x,y,z \}$ is witnessed with $x,y,z$ lying in distinct branches at the ramification point $r$ of $R$.
We need to show that the map $xyz\mapsto x^gyz$ preserves $L$.  
We will consider the possible cases based on where the $D$-set $R$ witnessing $L\{x,y,z\}$ could be.

{\em Sub-case 1.} Assume that the $D$-set $R$ is  $R_{\langle pqn \rangle}$, and let $L\{x,y,z\}$ hold, witnessed in $R$. Now $x, x^g$ lie in the same element of $R$, and $y,z$ lie in two other distinct elements of $R$, fixed by $g$. It is therefore immediate that the map $xyz\mapsto x^gyz$ preserves $L$.

 {\em{Sub-case 2.}} Assume that the $D$-set $R=R_i$ is lower than $R_{\langle pqn \rangle}$ (so $i\in I$). Now $L(x;y,z)$ cannot be witnessed  in this $D$-set at $r_i$, because $x\in  [n]$ so cannot lie in the special branch at $r_i$. However, possibly $L(y;x,z)$ is witnessed at $r_i$ (likewise $L(z;x,y)$) and then since $x^g \in \bigcup\bigcup S_i$  (as $x^g \in [n]$),  the relation $L(y;x^g,z)$ holds
.

  Suppose that as in  Figure \ref{sub2 L(x;y,z)} $L(x;y,z)$ holds in $R_i$ with $x$  special at another ramification point $r_i'$ not within $\bigcup S_i$. Then again because $x^g \in \bigcup\bigcup S_i$,  and since $x$ and $x^g$ lie in the same branch at $r_i'$ we get $L(x^g;y,z)$. Similarly if $L(y;x,z)$ holds at $r_i'$ (likewise for $L(z;x,y)$), then $x$ and $x^g$ will be in the same branch at $r_i'$ and it is readily seen that $L(y;x^g,z)$ holds. It cannot happen that $L\{x,y,z\}$ is witnessed at $r_i'$ within $\bigcup\bigcup S_i$, since $y,z$ are not in $\bigcup\bigcup S_i$ so would lie in the same branch at such $r_i'$.
 \tikzset{middlearrow/.style={
        decoration={markings,
            mark= at position 0.75 with {\arrow{#1}} ,
        },
        postaction={decorate}
    }
}
      
      \begin{figure}[H]
      \centering
 \begin{tikzpicture}[scale=1]
\node (ri) [below] at (0,0){$r_i$};
\node (x) at (1,1) {$x$};
\node (xg) at (1,-1){$x^g$};
\node at (-0.75, 1) {};
\node (z) at (-2,1) {$z$};
\node (y)[below] at (-2,-1){$y$};
\node (ri') [below] at (-1,0){$r_i'$};

\draw (x)--(0,0);
\draw (xg)--(0,0);
\draw  (-0.75, 1)--(0,0);
\draw (z)--(-1,0);
\draw (-2,-1)--(-1,0);
\draw [middlearrow={stealth}](0,0)--(-1,0);
\end{tikzpicture}

      \caption{}
      \label{sub2 L(x;y,z)}
  \end{figure}

{\em{Sub-case 3.}} Assume that the $D$-set $R$ is higher than $R_{\langle pqn \rangle}$. Now the direction containing $x$ in $R$ contains the whole of $[n]$, so is fixed by $g$, as are $y$ and $z$. It follows that $x,y,z$ and $x^g,y,z$ satisfy the same $L$-relation. 

{\em{Sub-case 4.}} Suppose the $D$-set $R$ corresponds to the vertex $k$ of the structure tree with $k$ incomparable with $\langle pqn \rangle$. Let $i=\text{inf}\{\langle pqn \rangle, k\}$, so $i \in I$. We may suppose that the cone of $k$ at $i$ (in the structure tree) corresponds to the ramification point $r'$ of $R_i$; then $r'\neq r_i$. Since $L\{x,y,z\}$ is witnessed in $R$, $x,y,z$ lie in distinct non-special branches at $r'$. Hence, as $y,z \notin \bigcup \bigcup S_i$, it follows that $r'$ cannot be  a ramification point of $\bigcup S_i$, and we have, for example, the picture below in $R_i$.
\begin{figure}[H]
    \centering
    \begin{tikzpicture}[scale=1]
\node (ri) [below] at (0,0){$r_i$};
\node (x) at (1,1) {$x$};
\node (xg) at (1,-1){$x^g$};
\node at (-0.75, 1) {};
\node (z) at (-2,1) {$z$};
\node (y)[below] at (-2,-1){$y$};
\node (ri') [below] at (-1,0){$r'$};

\draw (x)--(0,0);
\draw (xg)--(0,0);
\draw (z)--(-1,0);
\draw (-2,-1)--(-1,0);
\draw (0,0)--(-1,0);
\end{tikzpicture}
    \caption{}
    \label{fig:sub-case 3}
\end{figure}
Now, $x,x^g$ lie in the same branch at $r'$ so in the same pre-direction of $R$, so the same $L$-relation holds among $x,y,z$ and $x^g,y,z$.

{\em{Case $\Romannum{3}$.}} Let $x,y \in [n]$ and $z \in M\setminus [n]$.

{\em {Sub-case 1.}} Suppose that the $D$-set $R$ is $R_{\langle pqn \rangle}$. The relation $L\{x,y,z \}$ is not witnessed here because $x$ and $y$ are in the same direction in $R_{\langle pqn \rangle}$.

{\em{Sub-case 2.}} Suppose that the $D$-set $R$ is lower than $R_{\langle pqn \rangle}$, say $R=R_i$ for  some $i\in I$. If $\neg x F_i y$, then the relation $L(x;y,z)$ or $L(y;x,z)$ cannot be witnessed at $r_i$ because neither $x$ nor $y$ can be special at $r_i$. If $L(z;x,y)$ holds at $r_i$ then $L(z;x^g,y^g)$ is witnessed in $R_i$ (for $x^g , \ y^g$ are in distinct branches at $r_i$ because $F_i$ is preserved on $[n]$). 

If $xF_i y$ and $L(x;y,z)$ is witnessed at $r$ (see Figure \ref{fig:Fig1} below) then we want to see that $L(x^g;y^g,z)$ holds (and likewise if $L(y;x,z)$ holds). There is $t\in [n]$ such that $tF_i x \wedge tF_i y$  and $L(x;y,t)$ holds. Since $g$ preserves $L$  on elements of $[n]$,  we get $L(x^g;y^g,t^g)$, and then $L(x;y,z) \leftrightarrow L(x;y,t)$ and $L(x^g;y^g,z^g) \leftrightarrow L(x^g;y^g,t^g)$ as $g$ preserves $C_i$. Also, $L(x;y,t) \leftrightarrow L(x^g;y^g,t^g)$ as $x,y,t \in [n]$, so $L(x;y,z) \leftrightarrow L(x;y,t) \leftrightarrow L(x^g;y^g,t^g) \leftrightarrow L(x^g;y^g,z^g)$.
\tikzset{middlearrow/.style={
        decoration={markings,
            mark= at position 0.75 with {\arrow{#1}} ,
        },
        postaction={decorate}
    }
}
\begin{figure}[H]
    \centering
    \begin{tikzpicture}[scale=1.5]
\node [below] at (0,0){$r_i$};
\node(v) at (1,1){};
\node(w') at (1,-1){};
\node(w) at (-1,1){$w$};
\node(z) at (-1,-1){$z$};
\node(sg) at (2,0){};
\node(x) at (0.5,1){$x$};
\node(y) at (1,0.5){$y$};
\node(alpha')[below] at (1,0){};
\node(r) [below] at (0.5,0.5){$r$};
\node [below] at (0.25,0.25){};
\node (xg) at (0,1){$x^g$};
\node (yg) at (1,0.25) {$y^g$};
\node at (0.125,0.125){};
\node (t) at (-0.25,1){$t$};

\draw [middlearrow={stealth}](0.25,0.25)--(0,0);
\draw  (v)--(0,0);
\draw (w)--(0,0);
\draw (w')--(0,0);
\draw (z)--(0,0);
\draw (sg)--(0,0);
\draw [middlearrow={stealth}](x)--(0.5,0.5);
\draw [dashed] (0.5,0.5)--(y);
\draw [middlearrow={stealth}] (xg)--(0.25,0.25);
\draw (0.25,0.25)--(yg);
\draw (0.125,0.125)--(t);
\end{tikzpicture}
    \caption{}
    \label{fig:Fig1}
\end{figure}

{\em{Sub-case 3.}} Suppose that the $D$-set $R$ is higher than $R_{\langle pqn \rangle}$. Then $L\{ x,y,z \}$ cannot be witnessed in $R$ because $x,y$ are in the same direction of $R$.

{\em{Sub-case 4.}} Assume that $R$ is the $D$-set of the vertex $k$ incomparable with $\langle pqn \rangle$, and put $i=\text{inf}\{k,\langle pqn \rangle\}$. Then the cone of $k$ at $i$ corresponds to a ramification point $r'$ of $R_i$ distinct from $r_i$, and as $x,y,z$ lie in distinct branches of $R_i$ at $r'$, we must have that $r'$ is a ramification point of $S_i$, as in the diagram,
\begin{figure}[H]
    \centering
    \begin{tikzpicture}[scale=1]
    \node at (0,0)[below]{$r_i$};
    \node at (0.5,0) {};
    \node at (1,0)[below]{$r'$};
    \node (x) at (2,0) [right]{$x$};
    \node (y) at (2,1) {$y$};
    \node (t) at (1,1.5) {$t$};
    \node (z) at (-1.5,0.5) {$z$};
    \node at (2.5,1.25)[right]{$S_i$};
   \draw[decoration={brace,mirror,raise=8pt},decorate,thick]
  (3,0) -- node[right=8pt] {} (1.75,2);
    \draw (z)--(0,0);
    \draw (t)--(0.5,0);
    \draw (y)-- (1,0);
    \draw (x)--(0,0);
    \end{tikzpicture}
    \caption{}
    \label{fig:incomp}
\end{figure}

Choose $t$ as depicted, in the same branch as $z$ at $r'$ and the same branch as $x$ at $r_i$. As $g$ preserves $L$ on $[n]$ and the $C$-relation on $F_i$-classes (considered modulo $E_i$), we have $$L(x;y,z)\Leftrightarrow L(x;y,t) \Leftrightarrow L(x^g;y^g,t^g) \Leftrightarrow L(x^g;y^g,z),$$
and likewise for other permutations of $\{x,y,z\}$.

{\em{Case $\Romannum{4}$.}} If $x,y,z \in [n]$, then $L(x;y,z) \leftrightarrow L(x^g;y^g,z^g)$ follows immediately by the hypothesis that $g$ preserves $L$ on $[n]$.

{\em\bf{Part B.}} To prove that $g$ preserves $S$, we argue as in Part A. 

\end{proof}

\begin{lem}\label{Jordan set}
Each pre-direction $[n]$ is a Jordan set of $G$.
\end{lem}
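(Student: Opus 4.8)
The plan is to verify the two clauses in the definition of a Jordan set: that $|[n]|>1$, and that the pointwise stabiliser $G_{(M\setminus[n])}$ is transitive on $[n]$. The first is easy: fixing any $i\in I$ we have $[n]=\bigcup\bigcup S_i$ (and $S_i\neq\emptyset$ since $n\in[n]$), a union of pre-branches of $\hat R_i$ at the ramification point $r_i$, and since $(R_i,D_i)$ is a dense $D$-set each such pre-branch is infinite, so $[n]$ is infinite. For transitivity, fix $u,v\in[n]$. By Lemma~\ref{g is aut} it is enough to produce a bijection $g$ of $[n]$ with $u^g=v$ which preserves $L|_{[n]}$ and $S|_{[n]}$, preserves each $E_i$ $(i\in I)$, and preserves, for each $i\in I$, the ternary relation $\tilde C_i$ obtained by declaring $\tilde C_i(c;a,b)$ to hold when $a,b,c$ lie in one $F_i$-class and $C_i([c];[a],[b])$ holds modulo $E_i$; preservation of each $F_i$ is then automatic from Lemma~\ref{intersection}. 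Writing $\mathcal N$ for the structure on the countable set $[n]$ in the language with symbols $L,S$, one binary symbol for each $E_i$ and one ternary symbol for each $\tilde C_i$, this amounts to finding an automorphism of $\mathcal N$ sending $u$ to $v$.

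I would build such an automorphism by a back-and-forth over an enumeration of $[n]$, starting from the map $\{(u,v)\}$, which is vacuously a partial $\mathcal N$-isomorphism. The back-and-forth reduces to the extension property: if $h$ is a finite partial $\mathcal N$-isomorphism and $a\in[n]$, then there is $b\in[n]$ with $h\cup\{(a,b)\}$ a partial $\mathcal N$-isomorphism, and symmetrically with domain and range interchanged. Since $B:=\operatorname{dom}(h)\cup\operatorname{ran}(h)\cup\{a\}$ is finite and the $E_i$ increase with $i$ along the densely linearly ordered index set $I$, Lemma~\ref{largest} shows that the configuration of $B$ inside $R_i$ changes at only finitely many levels $i$, so only finitely many of the $E_i$, $F_i$, $C_i$, $S_i$ are relevant to $B$.

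To obtain the extension property I would use semi-homogeneity of $M$. Choose a finite $A\in\mathscr D$ with $A\leq M$ containing $p,q,n$, the set $B$, a triple representing the vertex $i$ for each of the finitely many relevant $i$, a point of the special branch of $R_i$ at $r_i$ for each such $i$, and enough further elements of $A$ to witness the exact position of every element of $B$ inside each relevant $R_i$. One checks that the $\mathcal N$-type of $\operatorname{dom}(h)\cup\{a\}$ together with these witnesses determines the $\mathscr L$-isomorphism type of the resulting finite substructure of $M$; hence there is a partial $\mathscr L$-isomorphism of $M$ which is the identity on $p,q,n$ and on the witnesses, agrees with $h$ on $\operatorname{dom}(h)$, and sends $a$ to some $b\in A$. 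By Lemma~\ref{extend iso} it extends to $\theta\in G$. As $\theta$ fixes $p,q,n$ and the chosen triples, it fixes $\langle pqn\rangle$ and each relevant vertex $i$, so it preserves $[n]$ setwise and preserves every relevant $E_i$, $S_i$ and $\tilde C_i$; being in $G$ it also preserves $L$ and $S$. Restricting $\theta$ then shows that $h\cup\{(a,a^\theta)\}$ is a partial $\mathcal N$-isomorphism with $a^\theta\in[n]$; iterating the back-and-forth yields $g$, and Lemma~\ref{g is aut} places $g$ (extended by the identity off $[n]$) in $G_{(M\setminus[n])}$.

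The hardest part will be the bookkeeping in the extension step: one must include in $A$ enough witnesses — in particular for the special branches at the relevant $r_i$ and for the sets $S_i$ of branches carrying $[n]$ — that the abstract $\mathcal N$-type of $\operatorname{dom}(h)\cup\{a\}$ over those witnesses genuinely pins down the $\mathscr L$-type of the finite substructure, and then verify that the prescribed finite partial map really is an $\mathscr L$-isomorphism, which requires running through the possible relative positions of the elements of $B$ level by level, much as in the four-case analysis in the proof of Lemma~\ref{g is aut}.
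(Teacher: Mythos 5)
Your overall strategy differs from the paper's: you try to realise transitivity of $G_{(M\setminus[n])}$ on $[n]$ by a back-and-forth on the auxiliary structure $\mathcal N=([n],L,S,(E_i),(\tilde C_i))$, feeding the resulting bijection into Lemma~\ref{g is aut}, whereas the paper exhibits an explicit transitive subgroup: it identifies $[n]$ with the set $\Omega$ of finite-support sequences through the branches $V_i$ via a carefully constructed bijection $\chi$, takes $K$ to be the generalised wreath product generated by the coordinate permutations $x(i,g,h)$ (transitive on $\Omega$ by Hall's lemma), and checks via ``tail-independence'' that these generators satisfy the hypotheses of Lemma~\ref{g is aut}. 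The paper's route never needs an extension property; yours does, and that is exactly where the gap lies.

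The gap is in the extension step. First, you require your partial $\mathscr L$-isomorphism to be the identity on $n$ itself; but $n\in[n]$, and the $\mathcal N$-structure records nothing about how elements of $\operatorname{dom}(h)$ sit relative to the particular element $n$. Already for $h=\{(u,v)\}$ (vacuously a partial $\mathcal N$-isomorphism) there is in general no $\theta\in G$ fixing $n$ with $u^\theta=v$: the levels $i$ at which $u$, respectively $v$, separates from $n$, and whether this happens within a common branch at $r_i$, need not agree, so $(p,q,n,u)$ and $(p,q,n,v)$ can have different $\mathscr L$-types and the construction stalls at the very first step. (This is repairable --- represent $\langle pqn\rangle$ and the relevant vertices by triples chosen outside $[n]$, and note that fixing $\langle pqn\rangle$ while mapping one element of $[n]$ into $[n]$ forces $[n]$ to be stabilised setwise --- but it must be repaired.) Second, and more seriously, the sentence ``one checks that the $\mathcal N$-type \dots determines the $\mathscr L$-isomorphism type of the resulting finite substructure'' is precisely the hard content of the lemma and is left unproved. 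Establishing it requires (a) rerunning, in finitary form, essentially the entire case analysis of the proof of Lemma~\ref{g is aut} --- relating $L$- and $S$-relations between elements of $[n]$ and external witnesses to the $C_i$-data via auxiliary points $t\in[n]$, and treating separately the $D$-sets on the chain $I$, those at or above $\langle pqn\rangle$, and those at incomparable vertices --- and (b) verifying that the two witnessed configurations close up to isomorphic structures lying in $\mathscr D$, a genuine issue since $\mathscr D$ is not closed under substructure and Lemma~\ref{extend iso} applies only to isomorphisms between finite substructures that lie in $\mathscr D$. As written, the proposal replaces the one genuinely hard verification by an equally hard one and then asserts it.
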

\begin{proof}
To show this, we want to define a group $K\leq G$ which is transitive on $[n]$ and fixes the complement $M\setminus [n]$. We  construct $K$ as an iterated wreath product of groups of automorphisms of $C$-relations. The argument is similar to the proof of Proposition 5.6 in \cite{bhattmacph2006jordan}, but there is an imprecision there: the map $\chi$ below is not defined precisely in \cite{bhattmacph2006jordan}, leading to problems with the proof of Claim 8. The approach given here works in \cite{bhattmacph2006jordan} too. 

Write $[n]=\{u_i:i \in \omega\}\subset M$. For each $u \in [n]$ and $i \in I$, put $[u]_i:=\{x \in M: x F_i u \}$ (the pre-branch of $M$ at $r_i$ containing $u$). 
For each $i \in I $ and $u \in [n]$, define $A^i(u):=[u]_i/E_i$ (the branch at $r_i$ containing $u$). In particular,
 for each $i \in I$, let $V_i: = [u_0]_i/E_i $, so $V_i=A^i(u_0)$. Let $e_i:=u_0/E_i \in V_i$, where $u_0/E_i$ denotes the $E_i$-class of $u_0$. Define $$\Omega:=\{f:I \rightarrow \bigcup_{i\in I} V_{i} : f(i)\in V_i \  \text{for all }\  i,\ \text{supp}(f) \ \text{finite} \}$$
where supp$(f)=\{i \in I:f(i)\neq e_i\}$.

We aim to find a system of maps $\phi^i_V:V \rightarrow V_i$, where $i \in I$ and $V$ ranges through branches $A^i(u)$ for $u\in [n]$.
Given such maps, define $\chi:[n]\rightarrow \Omega$ by $\chi (u)(i)=\phi^i_{A^i(u)}(u/E_i)$ for all $u \in [n]$ and $i \in I$. We need to define the maps $\phi^i_V$ so that $\chi$ is a bijection. The definition of  $\chi$ is inductive, done in parallel with the definition of the $\phi^i_V$. As the base case,
define $\chi (u_0)$ so that $\chi (u_0)(i)=e_i$ for all $i \in I$. 

Suppose that $\chi(u_0), \dots , \chi (u_{k-1})$ have been defined. We may suppose that each map $\phi^i_{A^i(u_l)}$ has been defined, for all $l<k$, and all $i \in I$. 

Let $i(k)$ be the largest $i \in I$ such that $u_k$ is $E_i$-inequivalent to $u_l$ for each $l<k$. This exists, by Lemma~\ref{largest}, and by that lemma there is some $l<k$ such that $u_l F_{i(k)}u_k$, so $u_lF_i u_k$, that is $A^i(u_l)=A^i(u_k)$,  for all $i\geq i(k)$. Now by assumption $\phi^i_{A^i(u_l)}$ has been defined for all $i \in I$, so $\phi^i_{A^i(u_k)}$ has been defined for all $i\geq i(k)$, but not for $i<i(k)$. For $i <i(k)$, choose $g_i \in G$ such that $(A^i(u_k))^{g_i}=V_i$ and $(u_k/E_i)^{g_i}=e_i$ (this exists, since by Proposition~\ref{aboutG}(v) $G$ is transitive on the set of non-special  branches and induces a transitive group on each branch). Then put $\phi^i_{A^i(u_k)}(u/E_i)=(u/E_i)^{g_i}$, for all $i<i(k)$ and $u$ with  $uF_i u_k$.
Observe that the maps $\phi^i_{A^i(u_l)}$ are now defined for all $l\leq k$ and all $i \in I$.

{\em{Claim 1.}} With the maps $\phi^i_{A^i(u)}$ so defined, we have $\chi(u_k) \in \Omega$ for each $k \in \omega$.

\begin{proof}
This is by induction on $k$. It is immediate that $\chi(u_0) \in \Omega$, so assume it holds for all $l<k$. By construction, as $\phi^i_{A^i(u_k)}$ is a bijection $[u_k]_i/E_i \rightarrow V_i$, we have $\chi (u_k)(i)\in V_i$. We must show supp$(\chi(u_k))$ is finite. There is $l<k$ such that for $i\geq i(k)$, $\chi(u_k)(i)=\chi(u_l)(i)$, so  supp($\chi(u_k) )\cap \{j\in I: j\geq i(k)\}= \text{supp}(\chi(u_l)) \cap \{ j \in I:j\geq i(k)\} $, so by induction is finite. By construction, $\chi(u_k)(i)=e_i$ for all $i<i(k)$, and the claim follows. 
\end{proof}

{\em{Claim 2.}} $\chi:[n] \rightarrow \Omega$ is a bijection.

\begin{proof}
We first show that $\chi$ is injective. So suppose $l<k$. We must show $\chi (u_l)\neq \chi(u_k)$. Pick $i$ such that $u_k F_i u_l$ and $\neg u_k E_i\  u_l$. Then $[u_k]_i=[u_l]_i$, but $[u_k]_i/E_i \neq [u_l]_i/E_i$, so as $A^i(u_k)=A^i(u_l)$, $\chi (u_k)(i)= \phi^i_{A^i(u_k)}(u_k/E_i)\neq \phi^i_{A^i(u_l)}(u_l/E_i)=\chi(u_l)(i)$.

To see surjectivity, suppose for a contradiction that $\chi $ is not surjective, and let $f \in \Omega \setminus \text{Range}(\chi)$ have minimal support, with supp$(f)=\{i_1, \dots, i_t\}$ where $i_1< \dots <i_t$. Define $f' \in \Omega$ where $f'(i_1)=e_{i_1}$, and $f'(j)=f(j)$ for all $j\neq i_1$.

By minimality of supp$(f)$, there is $u \in [n]$ with $\chi(u)=f'$. Let $v=f(i_1)\in V_{i_1}$, and let $k$ be least such that $u_k$ lies in the $E_{i_1}$-class $(\phi^{i_1}_{A^{i_1}(u)})^{-1}(v)$.

To obtain a contradiction and thereby to prove surjectivity, it suffices to prove 

{\em{Sub-claim 1.}} $\chi (u_k)=f$.

\begin{proof}
Certainly $\chi (u_k)(i_1)=\phi^{i_1}_{A^{i_1}(u_k)}(u_k/E_{i_1})=v=f(i_1)$. For $j>i_1$,  $\chi (u_k)(j)=\phi^{j}_{A^{j}(u_k)}(u_k/E_{j})=\phi^{j}_{A^{j}(u)}(u/E_{j})=f(j)$.  Also, $i(k) \geq i_1$, for otherwise there is  $l<k$ such that $u_lE_{i_1} u_k$,  contradicting minimality of $k$. Hence $\chi(u_k)(j)=\phi^j_{A^j(u_k)}(u_k/E_j)=(u_k/E_j)^{g_j}=e_j=f(j)$ for all $j<i_1$, so indeed $\chi(u_k)(j)=f(j)$ for all $j$.
\end{proof}

\end{proof}

For each $i\in I$, Let $H_i$ be the group induced by $G_{\{V_i\}}$ on $V_i$. For each triple $(i,g,h)$, where $i \in I$, $g:(i,\infty) \rightarrow \bigcup\limits_{j>i}V_{j}$ with $g(j) \in V_j$ for all $j$, and $h \in H_i$, define the function $x(i,g,h):\Omega \rightarrow \Omega$ as follows:  
 
 \[  
 f^{x(i,g,h)}(j)=
 \begin{cases}
     f(i)^h  & \text{if} \ j=i \ \text{and} \ f{\big |}_{(i,\infty)}=g, \\
    f(j)  & \text{otherwise} \\
     \end{cases}
\]
 
 
 Now define $K$, the {\em{generalized wreath product}}, to be the subgroup of $\text{Sym}(\Omega)$ generated by permutations $x(i,g,h)$ where $i,g,h$ are as above. By \cite{hall1962wreath}, Lemma 1, the group $K$ is transitive on $\Omega$. Thus, $K$ has an induced transitive action on $[n]$, given by $u^x= \chi^{-1}((\chi (u))^x)$ for all $x \in K$ and $u \in U$. (Note that we  use Cameron's notation for the permutation group $K$, as was  also  used in \cite{bhattmacph2006jordan}.)
 We extend this action to the whole of $M$ by putting $v^x=v$ for all $v \notin [n]$.\\
 
 {\em{Claim 3.}} In this action, $K$ is a subgroup of Aut$(M)$.
 
 \begin{proof}
 It suffices to show that elements $x(i,g,h)$ as above are automorphisms of $M$, and for this we use Lemma \ref{g is aut}. First, observe 
 
 {\em{Sub-claim 2.}} For $u,v \in [n]$, and $i \in I$, $uE_i v \Leftrightarrow \chi(u)(j)=\chi(v)(j)$ for all $j\geq i$.
 
 \begin{proof}
 If $uE_i v$ then $A^j(u)=A^j(v)$ for all $j\geq i$, so $\chi (u)(j)= \phi^j_{A^j (u)}(u/E_j)=\phi^j_{A^j (v)}(v/E_j) = \chi(v)(j)$ for all $j\geq i$. Conversely, if $\neg u E_i v$, then by Lemma~\ref{largest} there is $j \geq i$ such that $uF_j v$ and $\neg uE_j v$. Then $A^j(u)= A^j(v)$, so $\chi (u)(j)= \phi^j_{A^j (u)}(u/E_j)\neq \phi^j_{A^j (v)}(v/E_j) = \chi(v)(j)$, as required.
 \end{proof}
 
 Since $x(i',g,h)$ acts as a permutation in the single coordinate $i'$, in its action on $\Omega$, it is clear that for $u,v \in [n]$ and $i \in I$, we have $\chi(u)(j)=\chi(v)(j)$ for all $j\geq i$ if and only if $\chi(u)^{x(i',g,h)}(j)=\chi(v)^{x(i',g,h)}(j)$ for all $j\geq i$. Thus, $uE_i v$ if and only if $u^{x(i',g,h)} E_i v^{x(i',g,h)}$, so the maps $x(i',g,h)$ preserve all the equivalence relations $E_i$. 
Thus, by Lemma~\ref{intersection}, the maps  $x(i',g,h)$ also preserve all the $F_i$.

 For $u,v,w \in [n]$, put
 $$\sigma (u,v,w)=\text{Max}\{i:u/E_i, v/E_i, w/E_i\  \text{are all distinct} \}.$$
 $$\mu (u,v,w)=\text{Max}\{i:u/E_i, v/E_i, w/E_i\  \text{are not  all equal} \}.$$
 Then $\mu (u,v,w) \geq \sigma(u,v,w) $, and $\mu(u,v,w)=\sigma (u,v,w)$ if and only if there is $i$ (namely $\sigma(u,v,w)$) such that $u,v,w$ are $F_i$-equivalent but not $E_i$-equivalent.
 
 Suppose $\mu(u,v,w)=\sigma (u,v,w)=i$. Let $C_i$ be as in Definition \ref{C and D} with the invariance properties noted there, and note that $C_i$ induces a $C$-relation on $V_i$. Then since the map $\phi^i_{A^i (u)}$ is induced by an element of $G$, we have $$C_i(u/E_i;v/E_i,w/E_i)\leftrightarrow C_i(\phi^i_{A^i (u)}(u/E_i); \phi^i_{A^i (v)}(v/E_i), \phi^i_{A^i (w)}(w/E_i)).$$ 
 It follows that under the assumption $\mu(u,v,w)= \sigma(u,v,w)=i$, the fact that $C_i(u/E_i;v/E_i,w/E_i)$ holds depends just on $\chi(u)(i), \chi(v)(i), \chi(w)(i)$. Similarly, the fact that $L(u;v,w)$ holds depends just on $\chi(u)(i), \chi (v)(i), \chi(w)(i)$. And if $u,v,w,z$ are all $F_i$-equivalent but $E_i$-inequivalent, the fact that $S(u,v;w,z)$ holds depends just on $\chi(u)(i)$, $\chi (v)(i)$, $\chi(w)(i)$ and $ \chi(z)(i)$. We call this phenomenon {\em{tail-independence}}.
 
 {\em{Sub-claim 3.}} The group $K$ preserves the $C$-relation induced by $C_i$ on the branches at $r_i$.
 
 \begin{proof}
 Suppose $u,v,w$ lie in the same $F_i$-class but distinct $E_i$-classes, so $\mu(u,v,w)= \sigma(u,v,w)=i$, and assume $C_i(u;v,w)$ holds in this branch. Let $x=x(i',g,h) \in K$. If $i'>i$, then $\chi(u)(i)=\chi(u^x)(i)$, $\chi(v)(i)=\chi(v^x)(i)$ and $\chi(w)(i)=\chi(w^x)(i)$, so $C_i(u^x;v^x, w^x)$ by tail-independence. If $i=i'$, then $C_i(u^x;v^x,w^x)$ since the action of $x$ in the $i^{\text{th}}$ coordinate is induced by an element of $G^{V_i}$ which preserves the $C$-relation on $V_i$. If $i'<i$ then $C_i(u^x;v^x,w^x)$ holds by tail-independence.
 \end{proof}
 
 {\em{Sub-claim 4.}} The group $K$ preserves the $L$-relation and $S$-relation on the branches at $r_i$. That is, if $\mu(u,v,w)=\sigma(u,v,w)=i$, then for $x \in K$ we have $L(u;v,w)\Leftrightarrow L(u^x;v^x,w^x)$, and similarly for $S$.
 
 \begin{proof}
 This is similar to Sub-claim 3.
 \end{proof}
 
 {\em{Sub-claim 5.}} The group $K$ preserves $L$ on $[n]$. 
 
 \begin{proof}
 Let $u,v, w \in [n]$ be distinct with $L(u;v,w)$. By Sub-claim 4, we may suppose $i=\sigma(u,v,w)< \mu(u,v,w)$. Thus, two of $u,v,w$ are $F_i$-equivalent and the other $F_i$-inequivalent to these. We suppose $uF_iv$ and $\neg uF_i w$ (the other cases are similar). Pick $z \in A^i(u)$ with $C_i(z;u,v)$, as shown in Figure \ref{fig:K pres L}.
 \begin{figure}[H]
     \centering
     \begin{tikzpicture}[scale=1]
     \node at (0,0){};
     \node (w) at (2,0){$w$};
     \node (z) at (0.5,1){$z$};
     \node (u) at (1,1) {$u$};
     \node (v) at (2,1){$v$};
     
     \draw (w)--(0,0);
     \draw (v)--(0,0);
     \draw (u)--(1,0.5);
     \draw (z)--(0.5,0.25);
     \end{tikzpicture}
          \caption{}
     \label{fig:K pres L}
 \end{figure}
  Then for $x \in K$, $L(u;v,w) \Leftrightarrow L(u;v,z)\xLeftrightarrow{\text{by Sub-claim 4}}  L(u^x; v^x, z^x) \Leftrightarrow  L(u^x;v^x,w^x) $ (since $x$ preserves the relations $E_j, F_j$ and $C$).
 \end{proof}
 
 {\em{Sub-claim 6.}} The group $K$ preserves $S$ on $[n]$.
 
 \begin{proof}
 Let $u,v,w,z \in [n]$ be distinct. Let $i$ be greatest such that $u/E_i, v/E_i, w/E_i, z/E_i$ are distinct. Then at least two of $u,v,w,z$ are $F_i$-equivalent. If all are $F_i$-equivalent, then $K$ preserves any $S$-relation among these by Subclaim 4. If just three of $u,v,w,z$ are $F_i$-equivalent, then $K$ preserves any $S$-relation among them by the proof of Subclaim 5. If say $uF_i v$ and $\neg u F_i w \wedge \neg u F_i z$, then as $K$ preserves $F_i$, if $x \in K$ we have $u^xF_i v^x \wedge \neg u^x F_i w^x \wedge w^x F_i z^x$. We now see $S(u,v;w,z) \wedge S(u^x,v^x;w^x,z^x)$ as required.
 \end{proof}
 
 By the sub-claims, the conditions of Lemma \ref{g is aut} are satisfied, completing the proof of Claim 4.
 
 \end{proof}
 
 It follows that $[n]$ is a Jordan set for $G$.
 
\end{proof}

\begin{rem} \label{grouponbranch}\rm
In the above proof, in view of the way the group $K$ is built from the groups $H_i$, it follows that for each  $r_i$ and branch $V_i$ at $r_i$, the group $G_{(M\setminus [n]),\{V_i\}}$ induces the whole group induced by $G_{\{V_i\}}$ on $V_i$.
\end{rem}

\begin{prop}\label{prebranch is Jo}
Each pre-branch is a Jordan set for $G$ in its action on $M$.
\end{prop}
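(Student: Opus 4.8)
The plan is to deduce Proposition~\ref{prebranch is Jo} from Lemma~\ref{Jordan set} (every pre-direction is a Jordan set) together with the connectedness results of Lemma~\ref{typicalpair}. Fix a pre-branch $\hat U$, say at a ramification point $r$ of a $D$-set $R_{xyz}$, with associated branch $U$ of directions; write $v_0=\langle xyz\rangle$ for the corresponding vertex of the structure tree $T$. If $\hat U$ consists of a single pre-direction there is nothing to prove, so assume $U$ has at least two directions. Since $\hat U=\bigcup\{[m]:[m]\in U\}$ and distinct pre-directions are disjoint, a typical-pair argument cannot be applied to these pieces directly. Instead I shall, for a fixed direction $[m_0]\in U$ and each other direction $[m]\in U$, produce a \emph{typical pair} of pre-directions $Z_1\supseteq[m_0]$, $Z_2\supseteq[m]$ with $Z_1,Z_2\subseteq\hat U$; then $Z_1\cup Z_2$ is a Jordan set by Lemma~\ref{typicalpair}(ii), it lies in $\hat U$, and it contains $[m_0]\cup[m]$. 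As $[m]$ ranges over $U$, all these sets $Z_1\cup Z_2$ contain $[m_0]$, so they form a connected family of Jordan sets whose union is $\hat U$, and Lemma~\ref{typicalpair}(i) then shows that $\hat U$ is a Jordan set.

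The construction of $Z_1,Z_2$ is the heart of the matter. As usual one reads off from finite substructures that distinct directions of a fixed $R_v$ are disjoint and that directions of two \emph{comparable} vertices of $T$ are laminar (nested or disjoint); hence a typical pair of pre-directions can only arise from two \emph{incomparable} vertices whose directions reach towards each other. So fix a direction $q$ of $R_{v_0}$ lying outside $U$ (one exists, since $r$ has at least three branches) and put $\rho:={\rm ram}_{R_{v_0}}(m_0,m,q)$. Working in a finite substructure one checks that $\rho$ is a ramification point of $R_{v_0}$ lying strictly inside $U$, at which $[m_0]$, $[m]$ and the direction towards $r$ lie in three distinct branches. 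Let $\mathcal C$ be the cone of $T$ at $v_0$ with $f_{v_0}(\mathcal C)=\rho$. Using semi-homogeneity (Lemma~\ref{extend iso}) and the transitivity statements of Proposition~\ref{aboutG}, one realises in $M$ a configuration providing incomparable vertices $v_1,v_2\in\mathcal C$ with $v_0\le\inf(v_1,v_2)<v_1,v_2$, chosen so that $[m_0]$ lies in a direction $Z_1$ of $R_{v_1}$ whose $g_{v_1v_0}$-image consists of branches at $\rho$ all lying in $U$ and including the branch at $\rho$ pointing towards $v_2$, and symmetrically so that $[m]$ lies in a direction $Z_2$ of $R_{v_2}$ whose image consists of branches at $\rho$ lying in $U$ and including a branch pointing away from $v_1$. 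Then $Z_1$ and $Z_2$ are pre-directions, hence Jordan sets by Lemma~\ref{Jordan set}; each is a union of branches inside $U$, so $Z_1,Z_2\subseteq\hat U$; they intersect, since each contains directions lying in the branch at $\rho$ through which the other ``enters''; and neither contains the other, since each reaches a part of $R_{v_0}$ that the other omits. Thus $\{Z_1,Z_2\}$ is the required typical pair, with $[m_0]\subseteq Z_1$ and $[m]\subseteq Z_2$.

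I expect the main obstacle to be precisely the existence of the crossing configuration: one must write down an explicit finite tree of $D$-sets $A\in\mathscr D$ containing $x,y,z$, representatives of $m_0$, $m$ and $q$, and the auxiliary points realising the picture above, embed it suitably in $M$ via Theorem~\ref{general}, and then apply semi-homogeneity, checking throughout --- by the routine inspection of finite substructures used in Section~\ref{Fraisse} --- that the resulting $Z_1,Z_2$ genuinely are pre-directions of $M$ satisfying $[m_0]\subseteq Z_1\subseteq\hat U$, $[m]\subseteq Z_2\subseteq\hat U$, $Z_1\cap Z_2\ne\emptyset$ and $Z_1\not\subseteq Z_2\not\subseteq Z_1$. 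Here one uses that $U$ contains further ramification points and that $T$ has vertices strictly above $v_0$, both of which follow from density of the $D$-set $R_{v_0}$ (Lemma~\ref{4}(iii)). Granting this, the remaining points --- that a connected union of Jordan sets is a Jordan set, and that the family $\{Z_1\cup Z_2\}$ is connected with union $\hat U$ --- are immediate from Lemma~\ref{typicalpair}(i) and the choices made above.
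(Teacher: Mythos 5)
Your high-level reduction is sound and is in fact the same skeleton the paper uses: everything rests on Lemma~\ref{Jordan set} (pre-directions are Jordan) plus Lemma~\ref{typicalpair}. But the paper's decomposition of $\hat U$ is much simpler than yours: it picks $z$ outside $\hat U$ and a sequence $(r_i)$ of ramification points coinitial in $U$ (marching down towards $r$) \emph{at each of which $z$ lies in the special branch}, and takes $T_i$ to be a pre-direction of a higher $D$-set that is a union of pre-branches at $r_i$. Because the $r$-ward branch at $r_i$ is special, $T_i\subseteq\hat U$ automatically, the $T_i$ form an increasing chain with union $\hat U$, and Lemma~\ref{typicalpair} finishes. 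No typical pair of pre-directions is ever needed.

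The gap in your version is in the construction of the typical pair $Z_1,Z_2$, and it is not just a matter of writing out a finite configuration. First, two pre-directions can only form a typical pair when they belong to incomparable vertices $v_1,v_2$, and the ``crossing'' then occurs at the two \emph{distinct} ramification points $s_1=f_w(v_1)$, $s_2=f_w(v_2)$ of $R_w$, $w=\inf(v_1,v_2)$ --- not at the single point $\rho$ of $R_{v_0}$, since both $v_1,v_2$ lie in the one cone at $v_0$ labelled $\rho$; your phrase ``the branch at $\rho$ pointing towards $v_2$'' conflates these two levels. Second, and more seriously: for $Z_1\cap Z_2\neq\emptyset$ with neither containing the other, $Z_1$ must contain the branch at $s_1$ towards $s_2$ and $Z_2$ the branch at $s_2$ towards $s_1$, and these two branches together exhaust $R_w$; hence $Z_1\cup Z_2=\hat R_w$, the \emph{entire} pre-$D$-set of $w$. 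So the condition ``$Z_1,Z_2\subseteq\hat U$'' is really the condition $\hat R_w\subseteq\hat U$, which forces the branch of $R_{v_0}$ at $\rho$ containing $r$ (and hence $M\setminus\hat U$) to be the special branch at $\rho$, or to have been omitted before reaching $w$. Your $\rho={\rm ram}(m_0,m,q)$ carries no such guarantee --- the special branch at a ramification point is a fixed feature of $M$, not something you choose --- and your stated requirement that the $g$-images ``consist of branches at $\rho$ lying in $U$'' cannot hold for a genuine typical pair unless this special-branch condition is arranged first. The repair is exactly the device the paper uses (choose the ramification point so that an element outside $U$ is special there, using semi-homogeneity), at which point the nested-chain argument is available anyway and the typical-pair machinery becomes unnecessary.
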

\begin{proof}
Let $R$ be a $D$-set of $M$, and let $U$ be a branch of $R$ at a ramification point $r$, with corresponding pre-branch $\hat U\subset M$. Pick $z$ lying in a pre-branch at $r$ other than $\hat U$. We may choose a sequence $(r_i: i \in \mathbb N)$ of ramification points which is coinitial in $U$, that is, for each ramification point $r'$ in $U$ there is $i \in \mathbb N$ such that for all $j\geq i$, $r_j$ lies between $r$ and $r'$.
\tikzset{middlearrow/.style={
        decoration={markings,
            mark= at position 0.75 with {\arrow{#1}} ,
        },
        postaction={decorate}
    }
}
\begin{figure}[H]
    \centering
    \begin{tikzpicture}[scale=0.75]

\node [below] at (0,0){$r$};
\node(a) at (4,4){};
\node(u) at (1,2){};
\node(alpha')[below] at (1,0){};
\node(alpha) [below] at (1,1){$r_2$};
\node at (0.25,0.25){};
\node at (0.75, 0.75){};
\node (w) at (2,0.5){};
\node at (0.5,0.5){};
\node (b0) [below] at (3,3) {$r_0$};
\node at (4,3){};
\node at (2.5,4){};
\node at (2.5,2.5){};
\node at (2,3){};
\node  [below] at (2,2) {$r_1$};
\node at (4,2){};
\node at (1,3){};
\node at (2,1){};
\node [left] at (-2,0){$z$};
\node at (3.75,4.25)[above]{$\hat U$};
\draw[decoration={brace,mirror,raise=8pt},decorate,thick]
  (4.5,3.5) -- node[above=8pt] {}(2.5,4.5) ;

\draw (1,1)--(2,1);
\draw[middlearrow={stealth}] (0,0)--((1,1);
\draw (2,2)--(1,3);
\draw (2,2)--(4,2);
\draw [middlearrow={stealth}](0,0)--(2,2);
\draw [middlearrow={stealth}] (2,2)--(3,3);
\draw (3,3)--(2.5,4);
\draw  (a)--(0,0);
\draw (u)--(1,1);
\draw (0,0)--(0.5,0.5);
\draw  (0.5,0.5)--(1,1);
\draw (3,3)-- (4,3);
\draw (0,0)--(-2,0);

\end{tikzpicture}
    \caption{}
    \label{fig:union of branches}
\end{figure}
We may suppose in addition that $r_{i+1}$ lies between $r_i$ and $r$ for each $i$, and that $z$ lies in the special branch at $r_i$ for each $i$. For each $i$, there is a union $T_i$ of pre-branches at $r_i$ which is a pre-direction of a higher $D$-set. We may choose the $T_i$ so that  for each $i$, $r_i$ is a ramification point of one of the branches of $T_{i+1}$.

It follows that $T_i\subseteq T_{i+1}$ for each $i$ and that $ \bigcup\limits_{i\in \mathbb N}T_{i}= \hat U$. Since pre-directions are Jordan sets by Lemma \ref{Jordan set}, each $T_i$ is a Jordan set, so $\hat U$ is a Jordan set by Lemma \ref{typicalpair}.
\end{proof}

Recall from Definition~\ref{longdef} that given a $D$-set $R$ of $M$, the corresponding {\em pre-$D$-set} is the union of the predirections of $R$. 
\begin{lem}\label{pre branch Jordan}
Each pre-$D$-set $\hat{R_i}$  is a Jordan set for $G$. 
\end{lem}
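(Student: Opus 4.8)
The plan is to express each pre-$D$-set $\hat R_i$ as a union of a connected family of Jordan sets and then apply Lemma~\ref{typicalpair}(i). Recall $\hat R_i=J_{xyz}$ where $R_i=R_{xyz}$, and that by Lemma~\ref{semi2} the ramification points of $R_{xyz}$, equivalently the cones of $(K^*/R,\le)$ at $\langle xyz\rangle$, form a rich set, and by Proposition~\ref{aboutG}(i) $G_{\langle xyz\rangle}$ is transitive on $\mathrm{Ram}(R_{xyz})$. The key observation is that for a fixed ramification point $r\in \mathrm{Ram}(R_{xyz})$, the union of all the non-special branches at $r$, call it $\hat W_r$, is obtained by removing from $\hat R_{xyz}$ the single special pre-branch at $r$; and each individual pre-branch at $r$ is a Jordan set by Proposition~\ref{prebranch is Jo}. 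Since $R_{xyz}$ is a proper $D$-set (Lemma~\ref{4}(iii)), a ramification point $r$ has at least three branches, so the non-special branches at $r$ (there are at least two) form a typical pair — more precisely, the union of all non-special pre-branches at $r$ is a finite-step union of a connected family (any two non-special pre-branches at $r$ intersect trivially, but a third pre-branch higher up links them), so $\hat W_r$ is a Jordan set by Lemma~\ref{typicalpair}. Alternatively, and more cleanly, I would use the predirection Jordan sets directly: by Lemma~\ref{union of branches}, whenever $\langle xyz\rangle<\langle pqs\rangle$ the predirections of $R_{pqs}$ pull back to unions of branches at $r=f_{\langle xyz\rangle}(\langle pqs\rangle)$, and these predirections are Jordan sets by Lemma~\ref{Jordan set}; as $\langle pqs\rangle$ ranges over a cone at $\langle xyz\rangle$, these predirections exhaust the non-special branches at $r$.

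The cleanest route, which I would carry out in detail, is the following. First, fix $r\in\mathrm{Ram}(R_{xyz})$ and let $\langle pqs\rangle$ lie in the cone of $(K^*/R,\le)$ at $\langle xyz\rangle$ with $f_{\langle xyz\rangle}(\langle pqs\rangle)=r$; pick it low enough that $R_{pqs}$ has at least three directions. The predirections $[m]$ of $R_{pqs}$ are Jordan sets of $G$ (Lemma~\ref{Jordan set}), and each is a union $\bigcup\bigcup t$ of non-special branches at $r$ (Lemma~\ref{union of branches}). Taking the union over all predirections of $R_{pqs}$ that sit inside a fixed non-special branch of $R_{xyz}$ at $r$ recovers that whole pre-branch; iterating downwards along a chain of vertices in that cone, and using that predirections of lower $D$-sets refine those of higher ones (Lemma~\ref{A}(i)) so the family of all such predirection-Jordan-sets is connected, Lemma~\ref{typicalpair}(i) gives that each non-special pre-branch $\hat U$ of $R_{xyz}$ at $r$ is a Jordan set (this is also Proposition~\ref{prebranch is Jo}). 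Second, I form $\hat R_{xyz}$ as a union over all $r\in\mathrm{Ram}(R_{xyz})$ of the non-special pre-branches at $r$, together with the special pre-branches: every element of $\hat R_{xyz}$ lies in a non-special branch at some ramification point (since $R_{xyz}$ has no least element among its branches and is dense as a $D$-set, each direction lies in a non-special branch at infinitely many ramification points). Thus $\hat R_{xyz}$ is the union of the family of all non-special pre-branches of $R_{xyz}$, each a Jordan set.

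The remaining point, and the only place care is needed, is connectedness of this family in the sense of Definition~\ref{connected}(b). Two non-special pre-branches $\hat U$ at $r$ and $\hat U'$ at $r'$ with $r\ne r'$: if $r$ lies below $r'$ (in the betweenness sense on $\mathrm{Ram}(R_{xyz})$, i.e. $r$ is on the geodesic from $r'$ to the "direction" of $\hat U'$), then $\hat U'\subseteq \hat U$ or they are disjoint; in the disjoint case one uses a third pre-branch at the ramification point $\mathrm{ram}$ of the relevant directions, which meets both. Working this out, any two non-special pre-branches can be joined through at most two or three intermediaries, so the family is connected. Lemma~\ref{typicalpair}(i) then yields that $\hat R_{xyz}=\bigcup R_i$ is a Jordan set for $G$, as required. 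The main obstacle I anticipate is bookkeeping in this connectedness argument — verifying that no pre-branch is "isolated", which requires using both the density of the $D$-set (Lemma~\ref{4}(iii)) and the fact that $(K^*/R,\le)$ has no minimal elements (Lemma~\ref{semi2}); everything else is a routine application of Lemmas~\ref{Jordan set}, \ref{typicalpair} and \ref{union of branches}.
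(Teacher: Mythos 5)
Your proposal is correct in outline and rests on the same two ingredients as the paper --- Proposition~\ref{prebranch is Jo} (pre-branches are Jordan sets) and Lemma~\ref{typicalpair} --- but you take a much longer road through them. The paper's proof is two lines: pick two distinct ramification points $r_1,r_2$ of $R$, let $U_{r_1}$ be the branch at $r_1$ containing $r_2$ and $U_{r_2}$ the branch at $r_2$ containing $r_1$; these two pre-branches already form a \emph{typical pair} whose union is all of $\hat R$ (every direction not in $U_{r_1}$ is separated from $r_2$ by $r_1$, hence lies in $U_{r_2}$; their intersection is nonempty by density; and neither contains the other), so Lemma~\ref{typicalpair}(ii) finishes it in one application. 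You instead cover $\hat R$ by the infinite family of \emph{all non-special} pre-branches at all ramification points and invoke Lemma~\ref{typicalpair}(i), which forces you to verify (a) that every direction lies in a non-special branch at \emph{some} ramification point --- true, but the right reason is semi-homogeneity producing a configuration $L(v;u,w)$ so that $[u]$ is non-special at ${\rm ram}(u,v,w)$, not density per se --- and (b) connectedness of the family, where your linking pre-branch "at the ramification point of the relevant directions" must itself be arranged to be non-special at that point, another appeal to semi-homogeneity that you leave as bookkeeping. Both verifications go through, so there is no error, but the restriction to non-special branches (apparently motivated by routing the argument through pre-directions and Lemma~\ref{union of branches}) buys you nothing here, since Proposition~\ref{prebranch is Jo} applies to every pre-branch; dropping it, and then noticing that two overlapping branches suffice, collapses your connected-family argument to the paper's single typical pair.
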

\begin{proof}
Consider two distinct ramification points $r_1, r_2$ of $R$. Let $U_{r_1}$ be the branch at $r_1$ which includes  $r_2$,  and $U_{r_2}$ be the branch at $r_2$ containing $r_1$. We know by Proposition \ref{prebranch is Jo} that the corresponding pre-branches are Jordan sets and they form a typical pair, hence by Lemma \ref{typicalpair} their union is a Jordan set and is the whole pre-$D$-set.
\end{proof}
\begin{lem}\label{G_s-invariant C}
Let $s\in M$. Then there is a $G_s$-invariant $C$-relation on $M\setminus \{s \}$.
\end{lem}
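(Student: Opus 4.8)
The plan is to build the $C$-relation on $M\setminus\{s\}$ from the local $D$-relations on the $D$-sets $R_{xyz}$, exploiting the fact that the structure tree $T=(K^*/R,\leq)$, together with the coherent system of $D$-sets labelling its vertices and the transition maps $g_{\mu\nu}$, already looks like the data needed to build a $C$-relation (in the sense that a $C$-relation is the relation holding on the maximal chains of a semilinear order, and a $D$-set carries a $C$-relation on each branch). Fix $s\in M$. For any two further points $x,y\in M\setminus\{s\}$, by Lemma~\ref{3set} the triple $\{s,x,y\}$ lies in $\mathscr D$, and by Lemma~\ref{3set}(i) exactly one of $L\{s,x,y\}$ holds; so there is a well-defined vertex $\langle\,\cdot\,\rangle$ of the structure tree witnessing $L\{s,x,y\}$, equivalently a unique $D$-set $R$ of $M$ in which $s,x,y$ occupy three distinct directions at a ramification point. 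This gives, for each pair $x,y$, a ``meeting $D$-set'' and a ``meeting ramification point'' of $s$ with $\{x,y\}$; the $C$-relation will be read off from how these meeting points are nested as we vary the second and third arguments.

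Concretely, I would define, for distinct $x,y,z\in M\setminus\{s\}$, $C_s(x;y,z)$ to hold if, writing $r_{xy}$ for the ramification point of the $D$-set witnessing $L\{s,x,y\}$ (and similarly $r_{xz}$, $r_{yz}$), we have that $x$ lies ``closer to $s$'' than the split between $y$ and $z$ — made precise by saying the vertex $\langle s x y\rangle$ (or $\langle s x z\rangle$, which should coincide) is strictly below $\langle s y z\rangle$ in $T$, together with the condition extracted from $D_{\langle s y z\rangle}$ that $x$ lies in the branch at $r_{yz}$ in the direction of $s$. Equivalently and more uniformly, one can phrase this entirely inside a single finite substructure $A\in\mathscr D$ containing $s,x,y,z$: the relation $L'$ and the definition of a $D$-set on the leaves of a graph-theoretic tree give, on the leaves distinct from $s$, exactly a $C$-relation (this is the classical correspondence recalled in Section~2: a $C$-relation describes maximal chains of a semilinear order, and removing one leaf $s$ from a finite tree with a $D$-relation on its leaves yields a $C$-relation on the remaining leaves, rooted ``towards $s$''). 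So I would set $C_s(x;y,z)$ to hold in $M$ iff it holds in some (equivalently every) finite $A\in\mathscr D$ with $s,x,y,z\in A\leq M$, where inside $A$ the relation is the standard $C$-relation on the leaves $\overline{D(\rho_A)}\setminus\{s\}$ induced by deleting $s$ — but since $s$ may not be a leaf of the root $D$-set of $A$, one actually reads it off from the $D$-set of $A$ witnessing $L\{s,x,y,z\text{-configuration}\}$, i.e. from the relations $L$, $L'$, $S$, $S'$, $R$, $Q$ restricted to parameters in $\{s,x,y,z\}$. The first task is to check this is well-defined: independence of the choice of $A$ follows from Lemma~\ref{lemb} and the remark after it (the root $D$-relation and the $L$, $L'$ relations on a fixed finite set are unchanged under passing between comparable members of $\mathscr D$ once Type I extensions are accounted for), exactly as in the well-definedness arguments for $D_{xyz}$ and $E_{xyz}$ in Section~4.

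Having defined $C_s$, I would verify the $C$-relation axioms (as in \cite{adeleke1998relations}) by reducing each axiom, via a finite substructure $A\in\mathscr D$, to the corresponding property of the honest $C$-relation obtained by deleting the leaf $s$ from a finite graph-theoretic tree — this is a routine finite check, and the key point each time is that any finitely many witnessing points can be captured in a single $A\in\mathscr D$ by Theorem~\ref{general}, and that on that finite tree the claim is a standard fact about $C$-relations and rooted trees. Finally, $G_s$-invariance is immediate from the construction: $C_s$ is defined purely in terms of the $\mathscr L$-relations $L,S$ (and the $\emptyset$-definable $L',S',Q,R$ of Lemma~\ref{LL'SS'}) with all parameters in $\{s,x,y,z\}$, so any $g\in G_s$ carries a witnessing finite substructure to a witnessing finite substructure, preserving everything; hence $C_s(x;y,z)\Leftrightarrow C_s(x^g;y^g,z^g)$.

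The main obstacle I anticipate is the well-definedness and bookkeeping at the level of $D$-sets rather than single leaves: because $s$ is a direction of a \emph{unique} $D$-set of $M$ but an element of many different finite $D$-sets of various members of $A\in\mathscr D$ (sometimes in the root $D$-set, sometimes omitted from it and living in a higher $D$-set), one must phrase $C_s(x;y,z)$ in a way that is manifestly insensitive to which $D$-set of $A$ one works in, and then show the finitary axiom-checks are compatible across the different $D$-sets connected by the transition maps $g_{\mu\nu}$. Concretely this means doing the ``delete a leaf to get a $C$-relation'' argument not once but uniformly along a chain in the structure tree, much as Lemma~\ref{union of branches} propagates branch-decompositions upward; I expect the cleanest route is to fix, for each pair, the meeting vertex $\langle s x y\rangle$ and argue that all relevant configurations involving $s,x,y,z$ are controlled by the $D$-relation $D_{\langle s x y z\text{-meet}\rangle}$ on a single $D$-set of $M$, after which invariance and the axioms follow from Lemma~\ref{4}. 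Everything else (transitivity-type axioms, the existence clauses) is a direct translation of finite facts about rooted trees and should not cause difficulty.
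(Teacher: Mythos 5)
Your overall strategy --- define $C_s$ explicitly from the tree-of-$D$-sets data and verify the $C$-relation axioms inside finite members of $\mathscr D$ --- is genuinely different from the paper's, and as it stands it has a real gap: neither of the two definitions you offer is well-defined and correct. The ``delete the leaf $s$ from the root tree of a finite $A\in\mathscr D$ containing $s,x,y,z$'' recipe is not independent of $A$: over any finite $A\leq M$ there are $A'\leq M$ containing Type I extension points of $A$, and then the root $D$-relation of $A'$ restricted to $A$ is degenerate, so the relation read off the root tree of $A'$ on $\{x,y,z\}$ is trivial while the one read off $A$ need not be (Lemma~\ref{lemb} only protects you when no Type I extension intervenes, which in $M$ can never be arranged for all extensions simultaneously). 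Your alternative structure-tree formulation (``$\langle sxy\rangle$ strictly below $\langle syz\rangle$ plus a branch condition'') already gives the wrong answer in the basic configuration where $s,x,y,z$ lie in distinct directions of a single $D$-set with $S(s,x;y,z)$ witnessed there and $s$ special at both relevant ramification points: all three triples containing $s$ then have their $L$-relations witnessed in that same $D$-set, so your strict inequality of vertices fails, yet $y$ and $z$ branch from each other strictly further from $s$ than $x$ does, so any reasonable $C_s$ must record $C_s(x;y,z)$. The underlying problem is that the correct relation is governed not by one tree but by the special-branch data at all ramification points of all $D$-sets whose pre-$D$-set contains $s$; there is no single finite graph-theoretic tree from which deleting the leaf $s$ yields the answer.

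The paper avoids both the explicit formula and the axiom-check. It takes $\mathscr K$ to be the family of all pre-branches $\hat U$ with $s\notin\hat U$, ranging over all $D$-sets $R$ with $s\in\hat R$ and all ramification points $r$ of $R$ at which $s$ lies in the \emph{special} branch; this family is $G_s$-invariant, covers every pair and separates every pair of points of $M\setminus\{s\}$, and --- the only substantive verification --- contains no typical pair. Lemma 2.2.2 of \cite{adeleke1996classification} then produces the $C$-relation directly via $C_s(x;y,z)\Leftrightarrow(\exists U\in\mathscr K)(y,z\in U\wedge x\notin U)$. To rescue your approach you would in effect have to prove by hand that this relation satisfies the $C$-axioms, which is exactly the content of that cited lemma; identifying ``$s$ lies in the special branch at $r$'' as the selection rule for the family is the step your proposal does not reach.
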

\begin{proof}
Consider all the pre-$D$-sets that contain $s$ and the pre-branches $\hat U$ in these pre-$D$-sets that do not contain $s$, with the property that $s$ lies in the special branch at the ramification point at which $U$ is a branch.  Let $\mathscr K$ be this collection of pre-branches. The elements of this collection are all Jordan sets (by Proposition \ref{prebranch is Jo}). Now we check that $\mathscr K$ satisfies $(\romannum{1})$- $(\romannum{5})$ of  Lemma $2.2.2$ of \cite{adeleke1996classification}, applied to $G_s$ acting on $M \setminus \{s\}$. 

    (i) and (ii) are trivial; that is, each element of $\mathscr K$ has size greater than 1, and $\mathscr K$ is $G_s$-invariant. 

    (iii) $\mathscr K$ has no typical pair (Definition \ref{connected}(a) above). First, suppose that $\hat U, \hat V \in \mathscr K$ are pre-branches of the same $D$-set. Since $\hat U, \hat V$ both omit the element $s$ of this $D$-set, it is immediate that $\hat U, \hat V$ do not form a typical pair.
    
    Next, suppose $\hat U, \hat V\in \mathscr K$ are pre-branches of distinct but comparable $D$-sets $R$ and $R'$ respectively with $R'$ below $R$. We may suppose that $R$ lies in a cone of the structure tree corresponding to the ramification point $r$ of $R'$, and that $V$ is a branch at the ramification point $r'$ of $R'$. If $r=r'$, then $\hat U$ is a union of pre-branches at $r'$ omitting $s$, so contains $\hat V$ or is disjoint from $\hat V$. If $r$ lies in the  branch at $r'$ containing $s$, then again, $\hat U$ either contains $\hat V$ or $\hat U \cap \hat V=\emptyset$. If $r$ is a ramification point lying in $\hat V$, then $\hat U \subset \hat V$. And if $r$ lies in a branch at $r'$ other than $V$ or that containing $s$, then $\hat U \cap \hat V= \emptyset$.
    
    Finally, suppose that $\hat U$ and $\hat V$ are pre-branches of $D$-sets $R_1, R_2$ labelling incomparable vertices $\nu_1, \nu_2$ of the structure tree. Let $\mu :=\text{inf}\{\nu_1, \nu_2\}$, and $R$ be the $D$-set of $\mu$, and suppose $R_i$ corresponds to the ramification points $r_i$ of $R$, for $i=1,2$. Thus, $\hat U$ and $\hat V$ correspond to unions of pre-branches at $r_1$ and $r_2$ respectively of $R$, omitting $s$. Note that $s$ lies in both $R_1$ and $R_2$, and hence also in $R$.  If, say, $r_2$ is a ramification point of $ U$, then $r_1$ is not a ramification point of $V$, (otherwise $s \in \hat U \cup \hat V)$, and $\hat V \subset \hat U$; likewise with $r_1,r_2$ reversed. Alternatively, $r_2$ is not a ramification point of $\hat U$, and $r_1$ is not a ramification point  of $\hat V$, and in this case $\hat U \cap \hat V=\emptyset$.

    (iv) We must show that given distinct $u,v\in M\setminus \{s\}$ there is a member of $\mathscr K$ containing $u,v$.  Choose a $D$-set $R$ such that the pre-$D$-set $\hat R$ contains $u,v,s$ in distinct pre-directions. There is a ramification point $r$ at $R$ such that $s$ lies in the special pre-branch at $r$, and $u,v$ lie in the same other pre-branch $\hat U$ at $r$. Then $\hat U \in \mathscr K$ and contains $u,v$.

    (v) We  show that given distinct $u,v\in M\setminus \{s\}$ there is a member of $\mathscr K$ containing $u$ but not $v$. Choose a $D$-set $R$ such that $\hat R$ contains $u,v,s$ in distinct pre-directions, meeting at ramification point $r$. There is a ramification point $r'$ in the branch at $r$ containing $u$, such that the branch at $r'$ containing $s$ is special. Let $\hat U$ be the pre-branch at $r'$ containing $u$. Then $\hat U \in \mathscr K$ and contains $u$ but not $v$.
   
Now define a ternary relation $C_s$ such that for every $x,y,z \in M \setminus \{s \}$, the relation $C_s(x;y,z)$ holds if and only if $(\exists U \in \mathscr K) ( y,z \in U \wedge x \notin U)$. Then $C_s$ is a $G_s$-invariant $C$-relation by Lemma $2.2.2$ of \cite{adeleke1996classification}.  

\end{proof}

\begin{lem}\label{no sep}
There is no $G$-invariant separation relation on $M$.
\end{lem}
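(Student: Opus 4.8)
The plan is to show that a $G$-invariant separation relation on $M$ would be incompatible with the symmetry already established for $G$, in particular with $2$-transitivity together with the existence of the $G_s$-invariant $C$-relation from Lemma~\ref{G_s-invariant C}. Recall that a separation relation is the arity-$4$ relation induced by a circular order: fixing any point $s$, a circular order on $M$ induces a \emph{linear} order on $M\setminus\{s\}$, and a separation relation induces a linear betweenness relation on $M\setminus\{s\}$ which is invariant under $G_s$ if the separation relation is $G$-invariant. So the first step is: suppose for contradiction that $G$ preserves a separation relation $\mathrm{Sep}$ on $M$; fix $s\in M$ and pass to the induced $G_s$-invariant linear betweenness relation $B_s$ on $M\setminus\{s\}$, equivalently a pair of opposite $G_s$-invariant linear orders on $M\setminus\{s\}$ (the stabiliser $G_{s,t}$ of a further point would fix one of the two linear orders).

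The heart of the argument is then to derive a contradiction from the coexistence of this linear betweenness relation $B_s$ and the $C$-relation $C_s$ on $M\setminus\{s\}$, both $G_s$-invariant. The key point is that $G_s$ is \emph{primitive} on $M\setminus\{s\}$ by Lemma~\ref{Aut(M)}(iv) ($2$-primitivity of $G$), whereas a linear betweenness relation and a non-trivial $C$-relation on the same set typically interact to produce a proper $G_s$-invariant equivalence relation. Concretely, I would argue that the $C$-relation $C_s$ yields, for each $u\in M\setminus\{s\}$, a non-trivial family of "cones'' (the maximal sets separated from $u$), and that a $G_s$-invariant linear order must respect the tree structure of $C_s$: a linear order compatible with a $C$-relation forces each branch to be an interval, giving a non-trivial congruence. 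Since $G_s$ is primitive on $M\setminus\{s\}$, this is impossible. Alternatively, and perhaps more cleanly, I would use the classification-style observation that a primitive group preserving both a $C$-relation and a linear betweenness relation on the same infinite set cannot exist unless one of the relations is degenerate — but here $C_s$ is genuinely non-trivial (the pre-$D$-set structure has infinitely many branches at each ramification point, by density of the structure tree, Lemma~\ref{semi2}), so we reach a contradiction.

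An alternative, more self-contained route avoids invoking the $C$-relation and instead uses the $D$-relation structure directly. A $G$-invariant separation relation would restrict to a $G_{\{J_{xyz}\}}$-invariant separation (hence circular order) on the set $R_{xyz}$ of directions, compatibly with $G_{\{J_{xyz}\}}$ being $2$-transitive there (Proposition~\ref{aboutG}(iii)) and with the $G_{\{J_{xyz}\}}$-invariant $D$-relation $D_{xyz}$. But a $2$-transitive group on an infinite set preserving both a $D$-relation and a separation relation is impossible: the circular order would have to be compatible with the tree, forcing the branch partition at any ramification point to be an interval partition, contradicting $2$-transitivity on directions (which is inconsistent with any non-trivial invariant partition-into-intervals). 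Passing this obstruction up: if $G$ preserves a separation relation on $M$, then so does the reduct, and one transports the contradiction from the level of $M$ down to the level of some $R_{xyz}$ using the semi-homogeneity and the definability of the structure tree.

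The main obstacle I anticipate is making rigorous the claim that a $G_s$-invariant linear order (or circular order) on $M\setminus\{s\}$ must "respect'' the $C_s$- or $D$-structure in a way that yields a proper invariant congruence, since a priori the separation relation need not be correlated with the treelike structure at all. The clean way to handle this is to exploit the rich supply of Jordan sets: pre-branches are Jordan sets (Proposition~\ref{prebranch is Jo}), and the pointwise stabiliser of the complement of a pre-branch acts on that pre-branch inducing a large group (Remark~\ref{grouponbranch}); a separation relation restricted to a Jordan set, together with the transitivity of the complement-stabiliser on that Jordan set, severely constrains how the separation relation can cut across branches, and iterating across a coinitial chain of ramification points (as in Proposition~\ref{prebranch is Jo}) forces the separation relation to be trivial on $M$, the final contradiction. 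I expect the bulk of the work to be this compatibility analysis, carried out by choosing suitable small configurations inside finite substructures $A\in\mathscr D$ and appealing to semi-homogeneity, exactly in the style of Lemma~\ref{g is aut}.
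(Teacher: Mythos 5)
There is a genuine gap here. Both of your proposed routes rest on a central claim that is asserted but never proved, and which you yourself flag as ``the main obstacle'': namely that a $G_s$-invariant linear (or circular) order on $M\setminus\{s\}$ must be ``compatible'' with the $C$-relation $C_s$ (or, in the second route, that a $2$-transitive group cannot preserve both a $D$-relation and a separation relation on the same set). Nothing you write rules out a separation relation that cuts across the treelike structure arbitrarily, and the auxiliary facts you invoke do not close the gap on their own: primitivity of $G_s$ on $M\setminus\{s\}$ is \emph{not} in conflict with a $G_s$-invariant linear betweenness relation (${\rm Aut}(\mathbb Q,<)$ is primitive), so everything hinges on extracting a proper $G_s$-congruence from the interaction of $B_s$ with $C_s$ --- and that extraction is exactly the missing argument. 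The idea at the end (pre-branches are Jordan sets, and Jordan sets of a group preserving a linear or circular order are forced to be intervals or arcs, which is essentially Adeleke--Neumann territory) could probably be made to work, but as written it is a plan rather than a proof.

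The paper's actual argument is far shorter and avoids all of this: by semi-homogeneity one exhibits a single $g\in G$ that fixes three points $x,y,z$ and transposes two further points $u,v$ (choose $x,y$ in distinct branches at one ramification point, $u,v$ in distinct branches at another, $z$ special, and swap $u$ with $v$). The restriction of any separation relation to the five points $\{x,y,z,u,v\}$ is a circular order up to reversal, whose automorphism group is dihedral of order $10$; no element of that group has cycle type ``three fixed points plus one transposition''. Hence no $G$-invariant separation relation exists. If you want to salvage your approach, I would recommend either switching to this cycle-type obstruction, or else fully carrying out the ``Jordan sets must be arcs'' analysis --- but the latter is substantially more work for the same conclusion.
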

\begin{proof}
Choose a configuration in $M$ as depicted in Figure~\ref{fig:no sep}, in some $D$-set. 
\tikzset{middlearrow/.style={
        decoration={markings,
            mark= at position 0.75 with {\arrow{#1}} ,
        },
        postaction={decorate}
    }
}
\begin{figure}[H]
    \centering
\begin{tikzpicture}
[scale=1.5]
\node at (-0.5,0){};
\node(x) at (-1,0.5){$x$};
\node(y) at (-1,-0.5){$y$};
\node at (0.5,0){}; 
\node at (0,0){};
\node (u) at (1,0.5)[above]{$u$};
\node (v) at (1,-0.5)[below]{$v$};
\node (z) at (0,0.5){$z$};

\draw  [middlearrow={stealth}] (z)--(0,0);
\draw (x)--(-0.5,0);
\draw (-0.5,0)--(y);
\draw (u)--(0.5,0);
\draw (v)--(0.5,0);
\draw (-0.5,0)--(0.5,0);
\draw [middlearrow={stealth}] (0,0)--(-0.5,0);
\draw [middlearrow={stealth}] (0,0)--(0.5,0);
\end{tikzpicture}
    \caption{}
    \label{fig:no sep}
\end{figure}
By semi-homogeneity there is $g \in G$ inducing $(x)(y)(z)(uv)$. It is easily seen that a permutation of $M$ with such cycle structure cannot preserve a separation relation on $M$.

  \end{proof}
\begin{lem}\label{no steiner}
There is no $G$-invariant Steiner system on $M$.
\end{lem}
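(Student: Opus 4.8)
The plan is to rule out any $G$-invariant Steiner system on $M$ by exploiting the high degree of symmetry established in Lemma~\ref{Aut(M)}, in particular $3$-homogeneity, together with the structure-tree picture. Recall that an $n$-Steiner system on $M$ has blocks all of the same size, with every $n$ distinct points lying in a unique block, more than one block, and blocks of size greater than $n$. The overall strategy is to show that the existence of such a system forces a contradiction either with $3$-homogeneity (when $n\leq 3$) or with the fact that $G_s$ preserves a $C$-relation on $M\setminus\{s\}$ for each $s$, which constrains the orbits of $G_s$ too tightly to support a block structure with $n\geq 4$.

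First I would dispose of small $n$. If $n=2$, the (unique) block through two given points $x,y$ would be a proper nonempty $G$-invariant "line" structure; but then for a third point $z$, the relation "$z$ lies on the block through $x,y$'' would be a nontrivial $G$-invariant ternary relation that partitions triples, contradicting $3$-homogeneity of $G$ (Lemma~\ref{Aut(M)}(i)) unless every block is all of $M$ or the block structure is trivial — both excluded for a nontrivial Steiner system. The case $n=3$ is similar: if every $3$-subset lies in a unique block, then since blocks have size $>3$ there is some block $B$ and points $x,y,z\in B$, $w\notin B$; by $3$-homogeneity there is $g\in G$ with $\{x,y,z\}^g=\{x,y,w\}$, and then $B^g$ is the block on $\{x,y,w\}$; iterating, one shows the block structure cannot be $G$-invariant and nontrivial — more carefully, $3$-homogeneity forces the block on any triple to be determined by no extra data, so all blocks would coincide, contradicting the existence of more than one block.

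The main case, and the main obstacle, is $n\geq 4$. Here the cleanest route is to use Lemma~\ref{G_s-invariant C}: fix $s\in M$ and note $G_s$ preserves a $C$-relation $C_s$ on $M\setminus\{s\}$. A Steiner system invariant under $G$ is in particular invariant under $G_s$; restricting attention to blocks through $s$, the block structure on $s$-containing blocks would give a $G_s$-invariant combinatorial design on $M\setminus\{s\}$ compatible with $C_s$. One then argues, using that $G_s$ is transitive on $M\setminus\{s\}$ and using the density and branching structure of the $C$-relation (pre-branches, coinitial sequences of ramification points as in Proposition~\ref{prebranch is Jo}), that a block $B$ through $s$ together with a point outside $B$ can be moved by an element of $G_s$ fixing $B\setminus\{s\}$ pointwise but not fixing $B$ — exploiting that pre-branches are Jordan sets, so $G_{(M\setminus\hat U)}$ is transitive on any pre-branch $\hat U$, and choosing $\hat U$ to meet $B$ in a set of size $\geq 1$ but not contain all of $B$. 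This contradicts the uniqueness of blocks. The delicate point — the hard part — is arranging the Jordan set $\hat U$ so that $\hat U\cap B$ is nonempty and proper and that the induced permutation genuinely violates block-invariance; this requires choosing the witnessing $D$-set and ramification point with $B$ straddling a branch, which is possible precisely because blocks have size $>n\geq 4$ while the branching at any ramification point is "thin'' in the sense that no finite configuration pins down a block. I would carry this out by picking $A\in\mathscr D$ large enough to contain $n+1$ points of $B$ and one point outside, locating a ramification point $r$ of a $D$-set of $A$ such that some but not all of these points lie in a single branch at $r$, and invoking semi-homogeneity plus Proposition~\ref{prebranch is Jo} to produce the offending automorphism.
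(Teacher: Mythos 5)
Your main case ($n\geq 4$) does not actually reach a contradiction, and the mechanism you describe is internally inconsistent: an element of $G_s$ that fixes $B\setminus\{s\}$ pointwise also fixes $s$, hence fixes $B$ pointwise, so it cannot "fail to fix $B$" and no violation of block-invariance can come from it. What the Jordan property of a pre-branch $\hat U$ really gives is the opposite phenomenon: if $\hat U$ meets a block $\mathfrak B$ and omits at least $n$ points of $\mathfrak B$, then every $g\in G_{(M\setminus \hat U)}$ fixes those $n$ points, hence fixes $\mathfrak B$ setwise, and transitivity of $G_{(M\setminus\hat U)}$ on $\hat U$ then forces $\hat U\subseteq\mathfrak B$ --- the block swallows the entire (infinite) pre-branch. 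That alone is not a contradiction; a second step is needed. The paper's proof takes $n+1$ points $s_1,\dots,s_{n+1}$ of $\mathfrak B$ lying in distinct branches at a ramification point, obtains a pre-branch $V\ni s_{n+1}$ omitting $s_1,\dots,s_n$ with $V\subseteq\mathfrak B$, then forms a second block $\mathfrak B'$ through $s_1,\dots,s_{n-2},s_{n+1}$ and a point $s^*\notin\mathfrak B$; the same Jordan argument (applied to the $n$ points $s_1,\dots,s_{n-2},s^*,s^{**}$ of $\mathfrak B'$ outside $V$) forces $V\subseteq\mathfrak B'$ as well, and then $V\subseteq\mathfrak B\cap\mathfrak B'$ contradicts $|\mathfrak B\cap\mathfrak B'|=n-1$ for distinct blocks. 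The second block and the bound on the intersection of two distinct blocks are the missing ideas in your sketch; the $C$-relation of Lemma~\ref{G_s-invariant C} plays no role here and does not supply them.

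Your case split on $n$ is also unnecessary (the paper's argument is uniform in $n\geq 2$), and the $n=3$ subcase is wrong as stated: $3$-homogeneity shows only that all blocks lie in a single $G$-orbit on subsets of $M$, not that they coincide --- inversive planes with $3$-transitive automorphism groups are standard counterexamples to the inference from "the block on any triple carries no extra invariant data" to "all blocks coincide." The $n=2$ observation (collinearity is an invariant property of unordered triples, hence trivial by $3$-homogeneity) is correct but is subsumed by the general argument.
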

{\bf{Note}}: We use the idea of the proof of Lemma 6.5 in \cite{bhattmacph2006jordan}.

\begin{proof}
For a contradiction, suppose there is a $G$-invariant Steiner $n$-system on $M$. Let $s_1, \dots, s_{n+1}$ be distinct elements of  a block $\mathfrak B$ of the Steiner system. Since we may choose a $D$-set in which all $s_i$ lie in different branches at a ramification point, there is a pre-branch $V$ containing $s_{n+1}$ and omitting $s_1, \dots, s_n$. 
Let $t \in V$. Since $V$ is a Jordan set, there is $g \in G_{(M\setminus V)}$ with $s_{n+1}^g = t$. As $g$ fixes $s_1, \dots, s_n$, it fixes setwise the unique block $\mathfrak B$ containing $s_1, \dots, s_n$, so as $s_{n+1}\in \mathfrak B$, also $t \in \mathfrak B$; that is, $V \subseteq \mathfrak B$. 

Let $s^*$ be an element of $M\setminus \mathfrak B$ (hence not in $V$) and $\mathfrak B'$ be the block containing $s_1, \dots, s_{n-2}, s_{n+1}, s^*$. As $\lvert \mathfrak B '\rvert \geq n+1$, there is $s^{**} \in \mathfrak B '$ distinct from $s_1, \dots, s_{n-2},s_{n+1}, s^*$ with $s^{**} \notin \mathfrak B$, so as $V \subseteq \mathfrak B$ then $s^{**} \notin V$. But $s_1, \dots, s_{n-2}, s^*, s^{**} $ are all in $\mathfrak B '$ so determine $\mathfrak B'$. So as $s_{n+1} \in V \cap \mathfrak B'$, by the above  argument using the Jordan property of  $V$,  we obtain $V \subseteq \mathfrak B'$. So $V \subseteq \mathfrak B \cap \mathfrak B'$, a contradiction as  $V$ is infinite and $\mid \mathfrak B \cap \mathfrak B'\mid = n-1$. 
\end{proof}

\begin{lem}\label{No D}
There is no $G$-invariant $D$-relation on $M$.
\end{lem}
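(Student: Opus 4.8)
The plan is to argue by contradiction, so suppose $D$ is a $G$-invariant $D$-relation on $M$. First I would dispose of the degenerate case. The relation $D$ carries an associated $\emptyset$-definable equivalence relation (the congruence used by Adeleke and Neumann to pass to a proper $D$-set), and since $G$ is primitive (Lemma~\ref{Aut(M)}) this congruence is either equality or all of $M$; in the latter case $D$ reduces to the trivial relation $D(x,y;z,w)\leftrightarrow (x=y\wedge x\notin\{z,w\})\vee(z=w\wedge z\notin\{x,y\})$, which is preserved by the whole of $\mathrm{Sym}(M)$ and is not a structure of the kind at issue in Theorem~\ref{am-class}. Thus we may assume $D$ is a proper $D$-relation. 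Two elementary consequences of the axioms that I would record: (a) $D(x,y;z,x)$ never holds --- apply (D1) to deduce $D(x,z;y,x)$ and compare with (D2) --- so, together with (D4), the truth value of $D$ on any tuple with a repeated entry is completely determined; and (b) if $a,b,c$ are distinct then the element $t$ supplied by (D5) lies outside $\{a,b,c\}$, so every triple of distinct points of $M$ extends to a quadruple of distinct points on which exactly one of the three ``pairings'' $D(a,b;c,t)$, $D(a,c;b,t)$, $D(a,t;b,c)$ holds.

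By (a) the relation $D$ is determined by its restriction to quadruples of distinct elements, and this restriction, being $G$-invariant and hence $\emptyset$-definable, is a union of $G$-orbits on such quadruples that is closed under the symmetries appearing in (D1). I would then analyse the $G$-orbits on $4$-element \emph{subsets} of $M$, using $3$-homogeneity (Lemma~\ref{Aut(M)}), Lemma~\ref{3set}, and the control over small subsets provided by Lemma~\ref{F bound}. For each orbit one records the action of the setwise stabiliser of the subset on the three pairings of that subset; since by (D2) at most one pairing can belong to $D$, this constrains the possible values of $D$. The key point is the ``symmetric $H$'' quadruples: if $x,y,z,w$ are such that $S(x,y;z,w)$ is witnessed in some $D$-set in which the special branch at each of the two relevant ramification points is the internal one, then $\{x,y,z,w\}\in\mathscr D$ and, by semi-homogeneity, $G_{\{x,y,z,w\}}$ induces on it a dihedral group of order $8$ which fixes precisely the pairing $\{xy\mid zw\}$. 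Hence either $D(x,y;z,w)$ holds or $\{x,y,z,w\}$ is in ``$X$-position'' for $D$; and, using (D3), (D5) and step (b), the second alternative cannot hold uniformly. Thus $D(x,y;z,w)\leftrightarrow S(x,y;z,w)$ on such quadruples, and a parallel discussion of the remaining orbit types --- in particular those in which some point lies in a special branch, so that an $L$-relation holds --- pins down the value of $D$ on each.

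Finally I would produce a small configuration in $M$ --- built by semi-homogeneity, of the kind used in Remark~\ref{semi homo def} and in the proof of Lemma~\ref{lemb}, featuring an $L$-relation together with an omitted element --- on which the values forced in the previous step on the various $4$-element subsets violate the coherence axiom (D3). This contradiction establishes the lemma. The main obstacle is precisely this middle and final step: carrying out the orbit bookkeeping and exhibiting the configuration that clashes with (D3). This is where the asymmetry created by the special branches of $M$, encoded by the relation $L$ which has no analogue in a genuine $D$-set, is essential, and the argument runs parallel to the one ruling out an invariant betweenness relation in \cite{bhattmacph2006jordan}. Note that the same conclusion also shows that $G$ preserves no $C$-relation on $M$, since by the discussion in Section~\ref{Definitions} any $G$-invariant $C$-relation on $M$ would yield a $G$-invariant $D$-relation on $M$.
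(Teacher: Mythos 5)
Your proposal is a strategy outline rather than a proof, and the gap sits exactly where you yourself locate ``the main obstacle''. Two decisive steps are asserted but never carried out. First, the claim that the orbit bookkeeping ``pins down the value of $D$ on each'' orbit of quadruples is not justified: the dihedral-stabiliser argument at best forces $D$ on the fully symmetric quadruples (and even there only modulo the unproved assertion that the ``$X$-position'' alternative ``cannot hold uniformly'', which requires a genuine argument via (D3)/(D5) that you do not give); for the remaining orbit types, where $L$-relations and omitted elements break the symmetry, the setwise stabiliser need not fix a unique pairing, so the value of $D$ is not determined and one would have to track several consistent candidates through the final step. Second, the configuration violating (D3) is never exhibited -- it is only promised. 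Since the entire contradiction rests on producing that configuration, the proof as written does not establish the lemma. (Your preliminary reductions -- primitivity disposing of the degenerate congruence, the observation that $D$ is determined by its values on distinct quadruples, and the remark that the result also rules out an invariant $C$-relation -- are all fine, but they are the easy part.)

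For comparison, the paper's argument avoids orbit classification entirely. Assuming a $G$-invariant $D$-relation $D$, it fixes $x,y,z_0$ and $u_1$ with $D(u_1,z_0;x,y)$, chooses a $D$-set of $M$ realising these four points at a common ramification point together with an auxiliary point $v_1$, and picks $z_1$ outside that pre-$D$-set. Semi-homogeneity supplies two automorphisms $h_1,k_1$ fixing $z_0$ and $z_1$ whose supports, analysed against the four sectors of the hypothetical relation $D$ determined by $D(u_1,z_0;x,y)$, force $z_1$ into a prescribed pair of sectors; iterating once more produces $z_2$, and then an automorphism $g\in G_{z_1,z_2}$ transposing $x$ and $z_0$ (which exists because of the $L$-relations among $x,z_0,z_1,z_2$) visibly fails to preserve $D$. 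This support-based argument is short and entirely local; if you want to salvage your approach you would need to actually execute the case analysis over orbits on $4$-subsets and display the (D3)-violating configuration, which is likely to be considerably longer.
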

\begin{proof}
Suppose, for a contradiction, that there is a $G$-invariant $D$-relation $D$ 
defined on $M$. Fix $x,y, z_0\in M$. Find $u_1 \in M \setminus \{x,y,z_0\}$ with $D(u_1,z_0;x,y)$. Note that in the argument below, we should not confuse $D$ with the various $D$-sets in $M$ coded by the structure tree.  

Find a $D$-set $R_1$ of $M$ containing  $u_1, z_0, x,y$ in distinct branches at the same ramification point $r_1$, and pick $v_1 \in M$ lying in the pre-branch at $r_1$ containing $z_0$, with $L(z_0;v_1,x)$ witnessed in this $D$-set. See Figure \ref{fig:The $D$-set $R_1$}.
\tikzset{middlearrow/.style={
        decoration={markings,
            mark= at position 0.75 with {\arrow{#1}} ,
        },
        postaction={decorate}
    }
}
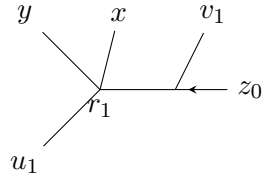
\begin{figure}[H]
    \centering
    \begin{tikzpicture}[scale=1]
    \node at (0,0)[below]{$r_1$};
    \node (y) at (-1,1){$y$};
    \node (x) at (0.25,1){$x$};
    \node (u1) at (-1,-1){$u_1$};
    \node at (1,0){};
    \node (z0) at (2,0) {$z_0$};
    \node (v1) at (1.5,1){$v_1$};
    
    \draw (y)--(0,0);
    \draw (x)--(0,0);
    \draw (u1)--(0,0);
    \draw (v1)--(1,0);
    \draw [middlearrow={stealth}](z0)--(1,0);
    \draw (0,0)--(1,0);
    \end{tikzpicture}
    \caption{The $D$-set $R_1$}
    \label{fig:The $D$-set $R_1$}
\end{figure}
Let $z_1 \in M \setminus \hat R_1$. Choose $h_1,k_1 \in G_{z_0, z_1}$ with $(x,v_1)^{h_1} =(v_1,x)$ and $(u_1, v_1)^{k_1}=(v_1,u_1)$ - these exist by semi-homogeneity.

In the $D$-relation on $M$, consider the regions $P,Q,R,S$ as depicted (here $x\in R, u_1\in S, z_0\in P$).
\tikzset{middlearrow/.style={
        decoration={markings,
            mark= at position 0.75 with {\arrow{#1}} ,
        },
        postaction={decorate}
    }
}
\begin{figure}[H]
    \centering
    \begin{tikzpicture}[scale=1]
    \node (r0) at (2,0)[right]{$z_0$};
    \node at (1,0){};
    \node at (0,0){};
    \node at (-1,0){};
    \node (x) at (-2,0)[left]{$x$};
    \node (R) at (-2,1)[above]{$R$};
     \node (Q) at (-0.75,1)[above]{$Q$};
     \node (P) at (0.25,1)[above]{$P$};
     \node (u1) at (-1.5,-1)[below]{$u_1$};
     \node at (-0.75,-0.5){};
     \node (S) at (0,-1)[right]{$S$};
     
     \draw [->](-0.75,-0.5)--(0,-1);
     \draw (2,0)--(-2,0);
     \draw [->](-1,0)--(-2,1);
     \draw [->] (0,0)--(-0.75,1);
     \draw (0,0)--(-1.5,-1);
     \draw [->](1,0)--(0.25,1);
     
    \end{tikzpicture}
    \caption{}
    \label{fig:PQRS}
\end{figure}

Let supp$\langle h_1,k_1\rangle$ denote the set of elements of $M$ moved by some element of the subgroup $\langle h_1, k_1 \rangle$ of $G$ generated by $h_1$ and $k_1$. If say $v_1 \in R$, then we see that $R\cup S\subseteq \text{supp}(k_1) \subseteq \text{supp}\langle h_1, k_1 \rangle.$  If $v_1 \in S$ then
$R\cup S\subseteq \text{supp}(h_1) \subseteq \text{supp}\langle h_1, k_1 \rangle.$ If $v_1 \in Q$ then 
$R\subseteq \text{supp}(h_1) \subseteq \text{supp}\langle h_1, k_1 \rangle$, and $S \subseteq \text{supp}(k_1) \subseteq \text{supp}\langle h_1, k_1 \rangle$. Finally, if $v_1 \in P$  then
$R, S\subseteq \text{supp}(h_1) \subseteq \text{supp}\langle h_1, k_1 \rangle.$ Thus, wherever $v_1$ lies,  $R\cup S \subseteq \text{supp}\langle h_1, k_1 \rangle,$ 
so as $h_1, k_1$ fix $z_1$, so $z_1 \notin R\cup S$. Thus, $z_1\in P \cup Q$. Since $D(u_1,z_0;x,y)$, $y \in R$, so we have the following picture.
\begin{figure}[H]
\centering
\begin{tikzpicture}[scale=1]
\node (z0) at (3,0){$z_0$};
    \node (y) at (-3,0){$y$};
    \node (z1) at (2,1){$z_1$};
    \node (x) at (-2,1){$x$};
    
    \draw (z0)--(y);
    \draw (z1)--(1,0);
    \draw (x)--(-1,0);
\end{tikzpicture}
\caption{} \label{fig:D(x,y;z_1,z_0}
\end{figure}

Now we iterate this argument with  $(z_0,x, z_1)$ in place of $(z_0,x,y)$. Pick $u_2\in M\setminus \{x,z_0,z_1\}$ with $D(u_2,z_0;x,z_1)$. Find a $D$-set $R_2$ of $M$ containing  $u_2,z_0,x,z_1$ in distinct branches at the same ramification point $r_2$, and pick $v_2\in M$ lying in the pre-branch at $r_2$ containing $z_0$, with $L(z_0;v_2,x)$ witnessed in this $D$-set. Let $z_2 \in M\setminus \hat R_2$. By semi-homogeneity there are $h_2,k_2 \in G_{z_0, z_2}$ with $(x,v_2)^{h_2} =(v_2,x)$ and $(u_2,v_2)^{k_2}=(v_2,u_2)$. Let $x, z_0,u_2,P',Q',R',S'$ replace $x,z_0,u_1,P,Q,R,S$ above. We see that $z_2 \in P'\cup Q'$, and thus the $D$-relation on $M$ satisfies the following picture.
\begin{figure}[H]
\centering
\begin{tikzpicture}[scale=1]
\node (z0) at (3,0){$z_0$};
    \node (x) at (-3,0){$x$};
    \node (z1) at (0,1){$z_2$};
    \node (y) at (-2,1){$y$};
    \node at (0,0){};
    \node (z2) at (-1,1){$z_1$};
    
    \draw (z2)--(0,0);
    \draw (z0)--(x);
    \draw (z1)--(1,0);
    \draw (y)--(-1,0);
\end{tikzpicture}
\caption{} \label{fig:D(x,y;z_1,z_2)}
\end{figure}
Observe that as $z_1\not\in \hat R_1$ and $z_2\not\in \hat R_2$,  we have $L(z_1;x,z_0) \wedge L(z_2;x,z_0)\wedge L(z_2;x,z_1) \wedge L(z_2;z_0,z_1)$. Thus, by semi-homogeneity, there is $g \in G_{z_1, z_2}$ inducing $(x,z_0)$. Such $g$ does not preserve the $D$-relation on $M$, a contradiction.
\end{proof}

\subsection{Proof of Main Theorem}\label{proof of main Th}
In this section, we show that $G=\text{Aut}(M,L,S)$ is an infinite primitive Jordan group preserving a limit of $D$-relations (Definition \ref{limits}).

We may view $M$ as an $\mathscr L$-structure, or as a structure in just the language with symbols $L$ and $S$, since, by Lemma \ref{LL'SS'}, the other $\mathscr L$-symbols are $\emptyset$-definable in terms of $L$ and $S$.

Let $\hat R$ be a pre-$D$-set with $D$-set $R$, let $H:=G_{(M\setminus \hat R)}$ and let $E$ be the equivalence relation on $\hat R$ corresponding to being in the same direction (the equivalence relation identified in Definition \ref{Dsetdef}(ii)). Let $D$ be the induced $D$-relation on $R=\hat R/E$. 
\begin{lem}\label{H_i}
In the above notation,
\begin{enumerate}[(i)]
    \item $H$ preserves $E$ and the relation $D$;
    \item $H$ is transitive on $\hat{R}$;
    \item $H$ is $2$-transitive but not $3$-transitive on $ R$; and
    \item $E$ is the unique maximal $H$-congruence on $\hat{R}$.
\end{enumerate}
\end{lem}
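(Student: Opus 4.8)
The four assertions are exactly the facts needed to verify clauses (i)--(ii) of Definition~\ref{limits}, and each follows from material already assembled in Section~\ref{Fraisse} together with one new observation about pointwise stabilisers of complements of pre-$D$-sets. Throughout I fix $x,y,z$ with $\hat R=\hat R_{xyz}=J_{xyz}$, so $R=R_{xyz}$, $E=E_{xyz}$, $D=D_{xyz}$, and $H=G_{(M\setminus\hat R)}\le G_{\{\hat R\}}=G_{\langle xyz\rangle}$ (the last equality by Lemma~\ref{QRIJ}(ii) and the paragraph after it). The first step is (i): since $E_{xyz}$ and $D_{xyz}$ are $G_{\{J_{xyz}\}}$-invariant by the observation following Definition~\ref{defE} and by Lemma~\ref{4}(iv), and $H\le G_{\{J_{xyz}\}}$, the relations $E$ and $D$ are automatically $H$-invariant. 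So (i) is immediate.

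The real content is (ii) and (iii), and the mechanism is the transitivity results of Proposition~\ref{aboutG} upgraded from $G_{\{J_{xyz}\}}$ to $H=G_{(M\setminus\hat R)}$ using the Jordan-set machinery. Here is where I would use Remark~\ref{grouponbranch} and Lemma~\ref{pre branch Jordan}: the latter says $\hat R_{xyz}$ is a Jordan set, so $G_{(M\setminus\hat R)}$ is transitive on $\hat R$ --- but transitivity on $\hat R$ as a set of \emph{points} of $M$ is precisely clause~(ii). Hence (ii) holds directly by Lemma~\ref{pre branch Jordan}. For (iii), I need that $H$ induces on $R=\hat R/E$ a group that is $2$-transitive but not $3$-transitive. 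The $2$-transitivity: given two pairs of distinct directions, Proposition~\ref{aboutG}(iii) produces $g\in G_{\{J_{xyz}\}}$ carrying one pair to the other; one then wants $g$ replaceable by an element of $H$. The clean route is to observe that $H$ itself acts $2$-transitively on $R$ because $\hat R$ is a Jordan set and, moreover, given any two predirections $[u],[v]$ one may, after moving $[u]$ to a fixed predirection, use transitivity of $H$ on $\hat R$ to adjust; alternatively, and more robustly, one invokes the construction in the proof of Lemma~\ref{Jordan set}: the generalised wreath product $K\le H$ built there acts transitively on each pre-direction and on the set of non-special branches, and combining $K$ with a single element of $G_{(M\setminus\hat R)}$ swapping two chosen directions (which exists by semi-homogeneity, since such an element can be taken to fix $M\setminus \hat R$ pointwise --- swapping two leaves of $R$ while fixing all the other leaves and all higher $D$-sets) yields $2$-transitivity of $H$ on $R$. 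For the failure of $3$-transitivity: $(R,D)$ is a proper dense $D$-set by Lemma~\ref{4}(iii), and $H$ preserves the quaternary relation $D$ on $R$; since $D$ genuinely distinguishes some ordered triples from others (e.g.\ whether a leaf is ``special'' at the ramification point of the other two --- formally, for distinct $[u],[v],[w]$ there is $[t]$ with $D([u],[v];[w],[t])$, and this $D$-pattern is not symmetric under all permutations of $[u],[v],[w]$), no group preserving $D$ can be $3$-transitive. This last point I would phrase as: a $3$-transitive group on $R$ would have to act on unordered triples transitively and on the full symmetric group of each triple, contradicting that $D$ singles out, among the three leaves of a triple, the one lying in the ``special'' branch at their ramification point when a fourth generic point is adjoined.

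Finally (iv): $E$ is the unique maximal $H$-congruence on $\hat R$. By Proposition~\ref{aboutG}(vi), $E_{xyz}$ is the unique maximal $G_{\{J_{xyz}\}}$-congruence on $J_{xyz}$. Since $H\le G_{\{J_{xyz}\}}$, every $H$-congruence is a fortiori not automatically a $G_{\{J_{xyz}\}}$-congruence, so I cannot quote (vi) verbatim; instead I re-run the argument of its proof with $H$ in place of $G_{\{J_{xyz}\}}$. The key input needed is: (a) $H$ is $2$-transitive on $R=\hat R/E$, which gives maximality of $E$ (no congruence strictly between $E$ and the universal one on $\hat R$, other than $E$ and the trivial extremes --- here I mean $E$ is coatomic in the lattice of $H$-congruences $\supseteq E$); and (b) for the uniqueness, given an $H$-congruence $E^*$ with $E^*\not\subseteq E$, say $x\,E^*\,y$, one finds for each $x'$ with $x\,E_{xyz}\,x'$ an element of $H$ fixing $M\setminus\hat R$ pointwise, hence fixing $z$ and $y$, and carrying $x$ to $x'$ --- such an element exists because it amounts to a permutation of $R$ fixing all directions except moving $x$'s $E$-class-representative, which is realised in $G$ by semi-homogeneity and which, since it fixes every point of $M\setminus\hat R$, lies in $H$. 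Then $x'\,E^*\,y$, so $x/E_{xyz}\subseteq E^*(y)$, whence $E_{xyz}\subseteq E^*$; combined with (a), $E^*$ is universal. This shows $E$ is the unique maximal $H$-congruence.

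\textbf{Main obstacle.} The only genuine subtlety is the transition from $G_{\{J_{xyz}\}}$ to $H=G_{(M\setminus\hat R)}$: Proposition~\ref{aboutG}(iii),(vi) are stated for the full setwise stabiliser, and I must check that the witnessing group elements can always be chosen to fix the complement of $\hat R$ \emph{pointwise}. This is where Lemma~\ref{Jordan set}, Remark~\ref{grouponbranch}, and the semi-homogeneity of $M$ do the work --- the generalised wreath product $K$ constructed in the proof of Lemma~\ref{Jordan set} already lies in $G_{(M\setminus[n])}$ and acts transitively on each predirection and on the branches, and a single extra ``leaf-transposition'' element (fixing $M\setminus\hat R$ pointwise, realised by semi-homogeneity) upgrades predirection-level transitivity to $2$-transitivity on $R$. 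Assembling these pieces carefully is the main step; the $D$-relation obstruction to $3$-transitivity and the congruence argument for (iv) are then routine adaptations of Lemma~\ref{4}(iii) and Proposition~\ref{aboutG}(vi) respectively.
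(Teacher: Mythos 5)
Your overall architecture matches the paper's: (i) from invariance under the setwise stabiliser, (ii) from Lemma~\ref{pre branch Jordan}, (iii) via transitivity plus a direction-stabiliser argument, (iv) via maximality from 2-transitivity plus a Jordan-set argument for uniqueness. However, the steps you yourself identify as ``the only genuine subtlety'' --- upgrading from the setwise stabiliser $G_{\{J_{xyz}\}}$ to the pointwise stabiliser $H=G_{(M\setminus\hat R)}$ --- are each justified by an invalid appeal to semi-homogeneity, and this is a real gap. Semi-homogeneity extends isomorphisms between \emph{finite} substructures to automorphisms of $M$; it gives no control whatsoever over the action outside those finite sets, so it cannot produce an element ``swapping two leaves of $R$ while fixing $M\setminus\hat R$ pointwise'' (your 2-transitivity step), nor an element ``fixing every point of $M\setminus\hat R$'' and fixing all directions but one (your uniqueness step in (iv)). The correct tool in both places is the Jordan-set machinery of Section~\ref{findjordan}: for 2-transitivity, fix $x_0/E$ and, given $E$-inequivalent $u,v$, choose a ramification point $r$ with a branch $U$ containing $u/E$ and $v/E$ but omitting $x_0/E$; by Proposition~\ref{prebranch is Jo} the pre-branch $\hat U$ is a Jordan set, so some $g\in G_{(M\setminus\hat U)}\le H_{x_0}$ carries $u$ to $v$. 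For (iv), one uses that the single pre-direction $v/E$ is a Jordan set (Lemma~\ref{Jordan set}), so there is $g\in G_{(M\setminus(v/E))}\le H$ fixing $u$ and carrying $v$ to any $v'$ with $v'\,E\,v$.

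The second gap is your argument against 3-transitivity. As formally stated it is wrong: by axiom (D5), for \emph{every} ordering of a distinct triple $[u],[v],[w]$ there is $[t]$ with $D([u],[v];[w],[t])$, so this property does not single out any element of the triple; the ``special branch'' is detected by $L$, not by $D$, and you have not shown that preservation of $D$ alone rules out 3-transitivity. The paper's argument is both simpler and correct: if $L(u;v,w)$ is witnessed in $R$, then since $H$ preserves $L$ and $L(u;v,w)\rightarrow\neg L(v;u,w)$, no element of $H$ can induce the permutation $(u/E\;\,v/E)(w/E)$ on the directions. You should replace your $D$-based obstruction with this $L$-based one.
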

\begin{proof}
\begin{enumerate}[(i)]
    \item  $H$ preserves $E$ as $H < G_{\{M \setminus \hat {R}\}}$, which preserves $E$ as noted after Definition~\ref{defE}. 
    Also, the assertion that $H$ preserves $D$ follows from Lemma \ref{4}(\ref{D preserved}). 
    
    \item This follows from \ref{pre branch Jordan}.
    \item Fix $x_0\in \hat{R}$, and let $x_0/E$ denote the $E$-class of $x_0$. We show that $H_{x_0 /E}$ is transitive on $ R \setminus \{x_0 /E\}$. Let $u,v$ be $E$-inequivalent elements of $ R \setminus \{x_0\}$. Choose a ramification point $r$ such that there is a branch $U$ at $r$ containing $u/E,v/E$ and omitting $x_0 /E$. It is known that $\hat{U}$ is a Jordan set (pre-branches are Jordan sets) so there is $g \in G_{(M\setminus \hat U)}<H$ with $u^g=v$ and hence $(u/E)^g=v/E$. However, $H$ is not $3$-transitive; for if $u,v,w \in  \hat{R}$ and meet at a ramification point $r$ with $L(u;v,w)$ then there is no element of $H$ inducing $(u/E,v/E)(w/E)$. 
    \item The invariance of $E$ follows from (i), and its maximality from (iii).  For the uniqueness, suppose $E^*$ is an $H$-congruence on $\hat R$ and there are $u,v \in \hat R$ with $\neg u E v$ and $uE^* v$. Since pre-directions are Jordan sets, for $v' \in \hat R$ if $v' E v$ there is $g \in H$ fixing $M \setminus ( v/E)$ pointwise with $v^g= v'$. As $u^g=u$, $g$ fixes $E^*(u)$, so $v E^* v'$, so $v/E \subset v/E^*$, so $E^*$ contains $E$ properly, hence is universal by maximality of $E$.
\end{enumerate}
\end{proof}

\begin{thm}\label{main Th}
$G$ preserves a limit of $D$-relations on $M$.
\end{thm}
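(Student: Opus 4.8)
The plan is to verify directly that the group $G=\Aut(M)$, together with the combinatorial data already assembled in Sections~\ref{Fraisse} and \ref{Jordan}, satisfies all eight clauses (i)--(viii) of Definition~\ref{limits}. The index set $J$ and the chains $(Y_i)$, $(H_i)$ are the natural candidates: fix a direction $[n]$ of $M$ with its vertex $\langle pqn\rangle$ of the structure tree, set $I=\{i\in T:i<\langle pqn\rangle\}$ with the induced total order, and for each $i\in I$ put $Y_i:=\hat R_i$ (the pre-$D$-set at $i$) and $H_i:=G_{(M\setminus \hat R_i)}$. Since $i<j$ in $I$ forces $\hat R_j\subset \hat R_i$, the inclusions $Y_i\supset Y_j$ are immediate, and $H_i>H_j$ follows because an automorphism fixing $M\setminus\hat R_i$ pointwise also fixes $M\setminus\hat R_j$ pointwise (as $\hat R_j\subset\hat R_i$), with the inclusion proper because $H_i$ moves points of $\hat R_i\setminus\hat R_j$. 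Note $(I,<)$ has no least element by Lemma~\ref{semi2} (the structure tree is dense with no minimal elements), so $J:=I$ works.

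With this setup, clauses (i)--(iv) are essentially Lemma~\ref{H_i}: by definition $H_i=G_{(M\setminus Y_i)}$, it is transitive on $Y_i$ (Lemma~\ref{H_i}(ii)), $\sigma_i:=E_i$ is a non-trivial $H_i$-congruence on $Y_i$ which is the unique maximal one (Lemma~\ref{H_i}(iv)), giving (i); the action $(H_i,Y_i/\sigma_i)=(H_i,R_i)$ is $2$-transitive but not $3$-transitive and preserves the $D$-relation $D_i$ (Lemma~\ref{H_i}(iii) and (i)), giving (ii); clause (iii) $\bigcup_i Y_i = M$ holds because every element of $M$ lies in some pre-$D$-set below $\langle pqn\rangle$ (again using density/no minimal elements of the structure tree, so given $m\in M$ one finds a suitable vertex $i<\langle pqn\rangle$ whose $D$-set has $m$ in one of its non-special branches); and for (iv), $\bigcup_i H_i$ is $2$-primitive by Lemma~\ref{Aut(M)}(iv) (indeed $G$ itself is, and one checks $\bigcup_i H_i$ already achieves $2$-primitivity on $M$, or notes that for the purposes of the definition one may take the union to be $G$ — this is the one spot to be slightly careful about) and not $3$-transitive because it cannot identify the three branch-directions at a ramification point, as in Lemma~\ref{H_i}(iii). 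Clause (v) $\sigma_j\supseteq\sigma_i|_{Y_j}$ for $i<j$ is exactly Lemma~\ref{A}(\ref{E refines}), that $E_{xyz}|_{J_{pqs}}$ refines $E_{pqs}$; clause (vi) $\bigcap_i\sigma_i=$ equality follows from Lemma~\ref{intersection} together with Lemma~\ref{largest} (distinct $u,v\in M$ eventually lie in different $E_i$-classes as $i$ increases), or directly from Lemma~\ref{4}(ii) and the fact that the $D$-sets separate points; and clause (viii) is precisely Lemma~\ref{G_s-invariant C}.

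The remaining clause (vii) is the one I expect to be the main obstacle, and it requires a genuine coherence argument: for every $g\in G$ there must be $i_0\in I$ such that for all $i<i_0$ there is $j\in I$ with $Y_i^g=Y_j$ and $g^{-1}H_ig=H_j$. The idea is that $g$ maps the pre-$D$-set $\hat R_i$ (sitting at structure-tree vertex $i<\langle pqn\rangle$) to another pre-$D$-set $\hat R_j$ at vertex $j=i^{\bar g}$, where $\bar g$ is the automorphism of the structure tree $(K^*/R,\le)$ induced by $g$ (Proposition~\ref{isotreeDset} in spirit, and the definability of the structure tree in $M$); then automatically $g^{-1}H_ig=g^{-1}G_{(M\setminus\hat R_i)}g=G_{(M\setminus\hat R_i^g)}=G_{(M\setminus\hat R_j)}=H_j$. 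The only thing to arrange is that $j=i^{\bar g}$ lies \emph{below $\langle pqn\rangle$}, i.e. $j\in I$; since $\langle pqn\rangle^{\bar g}$ is again a vertex of the structure tree and $\langle pqn\rangle^{\bar g}=\langle p^gq^gn^g\rangle$, the condition $i<\langle pqn\rangle$ together with $i<\langle pqn\rangle^{\bar{g}^{-1}}$ suffices — and one can take $i_0:=\inf\{\langle pqn\rangle,\langle pqn\rangle^{\bar{g}^{-1}}\}$ (or any vertex strictly below both, which exists and is itself below $\langle pqn\rangle$ since the structure tree is a meet-semilattice with no minimal elements, Lemma~\ref{semi2}). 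For all $i<i_0$ one has $i<\langle pqn\rangle$ and $i^{\bar g}<\langle pqn\rangle$, so $j=i^{\bar g}\in I$, as required. Assembling these observations clause by clause completes the proof that $G$ preserves a limit of $D$-relations, and hence — combined with Corollary~\ref{oligcoro} ($\omega$-categoricity), Lemma~\ref{Aut(M)} (primitivity), and Lemmas~\ref{no sep}, \ref{no steiner}, \ref{No D} (non-preservation of types (i)--(iii)) — Theorem~\ref{mainthm}.
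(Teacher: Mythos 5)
Your overall strategy coincides with the paper's: the same chain of pre-$D$-sets $\hat R_i$ and pointwise stabilisers $H_i=G_{(M\setminus \hat R_i)}$, with clauses (i), (ii), (v), (vi), (viii) discharged by Lemma~\ref{H_i}, Lemma~\ref{FE} and Lemma~\ref{G_s-invariant C}, and clause (vii) handled by pushing $g$ down to the induced automorphism of the structure tree and intersecting with a translate of the index chain. (The paper uses a maximal chain of the structure tree rather than the set of vertices below a fixed $\langle pqn\rangle$, but that difference is cosmetic.)

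However, there is a genuine gap at clause (iv), precisely the point you flag as ``the one spot to be slightly careful about.'' The definition requires that the \emph{specific} subgroup $H:=\bigcup_{i\in J}H_i$ be $2$-primitive on $M$; neither of your two suggested shortcuts works. Citing Lemma~\ref{Aut(M)}(iv) only gives $2$-primitivity of $G$ itself, which says nothing about the proper subgroup $H$ (every element of $H$ fixes $M\setminus\hat R_i$ pointwise for some $i$, so $H$ is far from all of $G$), and the definition leaves no freedom to ``take the union to be $G$'': the groups $H_i$ are determined as $G_{(X\setminus Y_i)}$ once the $Y_i$ are chosen. This is in fact the most substantial part of the paper's proof: one fixes $x_0$ and a nontrivial $H_{x_0}$-congruence $\rho$ on $M\setminus\{x_0\}$, shows via the $C$-relations on branches and the Jordan property of pre-branches that a non-universal block $\mathfrak B$ would have to meet some $\hat R_j$ in a union of more than one pre-branch at a ramification point, and then uses Remark~\ref{grouponbranch} (that $G_{(M\setminus[n])}$ induces the full group $G^{V}$ on a branch $V$) to produce $h\in H_{x_0}$ with $\mathfrak B^h\supsetneq\mathfrak B$, a contradiction. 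Without some argument of this kind your proof of clause (iv) is incomplete. A smaller slip: your justification of $H_i>H_j$ for $i<j$ runs the containment backwards --- it is elements of $H_j$ (fixing the \emph{larger} set $M\setminus\hat R_j$ pointwise) that automatically fix $M\setminus\hat R_i$ and hence lie in $H_i$, not the other way round; the conclusion you want is still correct.
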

\begin{proof}
Let $G=\text{Aut}(M)$. Then $G$ is an infinite Jordan group acting on $M$. Let $T$ be the structure tree of $M$ (so $T=(K^*/R,\leq)$, as identified in Lemma~\ref{semi2}).
 Let $J$ be a maximal chain from $T$. Then $J$ is  linearly ordered by $\leq$. Let $R_j$ be the $D$-set indexed by $j$, for $j \in J$. Then by the paragraph below Lemma \ref{QRIJ}, for $i,j \in J$ we have $i<j \Leftrightarrow \hat R_j \subset \hat R_i$. Thus $(\hat{R}_j:j\in J)$ is  a strictly increasing chain of subsets of $M$, where the ordering under inclusion is the reverse of that induced from the index set $J$. Let $\hat R_j$ be the pre-$D$-set corresponding to $R_j$, let $H_j:=G_{(M\setminus \hat R_j)}$, and let $E_j$ be the unique maximal $H_j$-congruence on $\hat R_j$ as in Lemma \ref{H_i}(iv). Then $\{H_j: j \in J\}$ is an increasing chain of subgroups of $G$, with the ordering under inclusion reversed from that of $J$. We must check the conditions (\romannum{1})-(\romannum{8}) in Definition \ref{limits}.

\begin{enumerate}[(i)]
    \item This follows from (\romannum{2}) and (\romannum{4}) in the Lemma \ref{H_i} above.
    \item This is (\romannum{1}) and (\romannum{3}) in  Lemma \ref{H_i} above. Note that since pre-branches and pre-directions are Jordan sets of $G$, branches are Jordan sets of each $(H_j,R_j)$, so the latter are Jordan groups.
    \item It is clear that  $\bigcup (\hat R_i: i \in J)=M$.
    \item Let $ H:=\bigcup\limits_{j\in J}H_{j}$. Then $H$ is a Jordan group on $M$, since each $R_j$ is a Jordan set for $H$.  The group $G$ is not $3$-transitive since it preserves the relation $L$ (and $L(u;v,w)\rightarrow \neg L(v;u,w)$), hence $H$ is not $3$-transitive.
    
    We now show that $H$ is $2$-primitive on $M$.
We first observe a point from Lemma \ref{Jordan set}. In the proof of that lemma (see also Remark~\ref{grouponbranch}), if $[n]$ is a pre-direction of the $D$-set labelled by the  vertex $j_n$, then for each $j<j_n$ there is a $D$-set $R_j$ and ramification point $r_j$ such that $[n]=\bigcup\bigcup S_j$ for some set  $ S_j$ of branches at $r_j$. It follows from that proof that for each branch $U \in S_j$ at $r_j$, the pointwise stabiliser of the complement of $[n]$ induces $G^U$ on $U$.

Now let $x_0 \in M$, and let $\rho$ be a nontrivial $H_{x_0}$-congruence on $M \setminus \{x_0\}$. We must show that $\rho$ is universal. Pick distinct $u,v\in M \setminus \{x_0\}$ with $u\neq v$. Choose $j \in J$ such that $x_0,u,v$ lie in distinct pre-directions of $R_j$.  Let $\mathfrak B$ be the $\rho$-class containing $u$. For a contradiction, we suppose that $\rho$ is not universal, so may suppose that $\mathfrak B$ does not contain each pre-direction of $R_j$ other than that of $x_0$. In particular by (iii) and Lemma~\ref{Jordan set}, it follows that $\mathfrak B$ is a proper subset of $\hat R_j$ omitting at least two pre-directions, including that of $x_0$.

Let $r$ be a ramification point of $R_j$ such that $u,v$ lie in the same pre-branch $\hat U$ at $r$, and $x_0$ in a different pre-branch. Let $C$ be the $C$-relation induced on the corresponding branch $U$ at $r$. Suppose there are distinct $u',v',w' \in \hat U$ such that $C(u'/E_j;v'/E_j,w'/E_j)$ and $u'\rho w'$. Let $V$ be the largest branch in $U$ containing $v',w'$ and omitting $u'$. Then the pre-branch $\hat V$ is a Jordan set, so there is $g \in G_{(M\setminus \hat V)}<H_{x_0}$ with $(u',w')^g=(u',v')$. Since $g$ fixes $u'$, it follows that $v' \rho w'$. Thus $\mathfrak B \cap \hat R_j$ is a pre-branch of $R_j$, the union of a nested sequence of pre-branches of $R_j$, or a union of more than one pre-branch at some fixed vertex. By choosing $j$ sufficiently low in the structure tree, we may assume that the last one holds, i.e. $\mathfrak B \cap \hat R_j$ is the union of more than one pre-branch at a ramification point $r_j$ of $R_j$.

Pick a ramification point $r^*$ of $R_j$ such that elements of $\mathfrak B$ and $x_0$ lie in distinct pre-branches at $r^*$ with the pre-branch containing elements of $\mathfrak B$ non-special, and that containing $x_0$ special. There is a pre-direction $[n]$ which is a union of pre-branches at $r^*$ including the pre-branch $\hat V$ at $r^*$ containing $\mathfrak B$, and excluding that containing $x_0$. Now by the observation above (i.e. Remark~\ref{grouponbranch}), since $G_{(M \setminus [n])}\leq H$, $H$ induces the full group $G^V$ on $V$. In particular, using semi-homogeneity there is a ramification point $r$ between $r^*$ and $r_j$ such that $H_{x_0}$ contains an element $h$ with $u^h=u$ and $r_j^h=r$. It follows that $\mathfrak B ^h\supset \mathfrak B$, contradicting that $\mathfrak B$ is a block of $H_{x_0}$.

\item $E_j\big |_{\hat{R_i}} \subseteq E_i$ if $i>j$, by Lemma \ref{FE}.
    \item We claim that $\bigcap (E_i: i \in J)$ is equality. Let $u,v \in M$ be distinct. By $2$-transitivity of $G$, there is a $D$-set $R$ such that $u,v$ lie in distinct directions of $R$. Choose $j \in J$ such that the corresponding $D$-set $R_j$ labels a vertex of the structure tree below that of $R$. Then $u,v$ lie in distinct directions of $R_j$, so $\neg u E_j v$. 
    \item Given $g \in G$, choose an initial  segment $I$ of $J$ which lies in the common part of $J$ and $J^g$. Let $i_0 \in I^{g^{-1}}\subseteq J^{g^{-1}}\cap J$. Then for any $i<i_0$ we have $i^g<i_0^g$ and so $i^g\in I$. Thus $i^g=j$ for some $j\in J$. Hence $g^{-1}H_i g= H_j$ and $R_i ^g = R_j$.
    \item This is by \ref{G_s-invariant C}. 
\end{enumerate}

\end{proof}

\begin{thm}
There is a ternary relation $L$ and a quaternary relation $S$ on a countably infinite set $M$, such that if $G:= \text {\em{Aut}}(M,L,S)$, then $G$ is oligomorphic, $3$-homogeneous, $2$-primitive but not $3$-transitive or $4$-homogeneous on $M$, and is a Jordan group preserving a limit of $D$-relations on $M$, and not preserving any of the structures of types $(i)-(iii)$ in Theorem \ref{am-class}.
\end{thm}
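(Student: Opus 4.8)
The plan is to assemble this final theorem essentially as a corollary of everything established so far, since each clause has already been proved. First I would take $M$ to be the Fra\"iss\'e limit constructed in Section~\ref{trees of D-sets} via Theorem~\ref{general} applied to $\mathscr D$ and the class $\mathcal E$ of $\mathscr D$-embeddings, and set $G:=\Aut(M)$; by the remark following Lemma~\ref{LL'SS'}, $G=\Aut(M,L,S)$ since $L',S',Q,R$ are $\emptyset$-definable from $L,S$, so it is harmless to present $M$ as an $(L,S)$-structure. I would then read off oligomorphicity (equivalently $\omega$-categoricity) from Corollary~\ref{oligcoro}, which rests on the counting bound of Lemma~\ref{F bound} fed into Lemma~\ref{olig}. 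The homogeneity/primitivity facts are exactly Lemma~\ref{Aut(M)}: $3$-homogeneity, $2$-transitivity, primitivity, and $2$-primitivity. That $G$ is a Jordan group is immediate once we exhibit a proper Jordan set: by Lemma~\ref{Jordan set} every pre-direction $[n]$ is a Jordan set, and it is proper because $G$ is not highly transitive (indeed not $3$-transitive — see below), so the Jordan set is not improper in the sense of Definition~\ref{connected}.

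Next I would record the two negative transitivity claims. The group $G$ is not $3$-transitive: it preserves the relation $L$, and $L(u;v,w)\rightarrow\neg L(v;u,w)$, so no element of $G$ can send an ordered triple realising $L(u;v,w)$ to one realising $L(v;u,w)$ while $3$-transitivity would demand it. It is not $4$-homogeneous: an analogous argument using $S$ (or $L$ together with $S$) shows that $G$ cannot be transitive on unordered $4$-subsets — for instance, given four elements lying in distinct branches at a single ramification point of some $D$-set, the induced $(L,S)$-structure on a generic such $4$-set differs from that on a $4$-set positioned so that $S$ holds in a configuration as in Figure~\ref{fig:S1(x,y;z,w)}; since $G$ preserves $L$ and $S$ it cannot identify these as unordered sets. (Here I would spell out one explicit pair of $4$-subsets with non-isomorphic $\{L,S\}$-structure, invoking Lemma~\ref{3set} and the description of members of $\mathscr D$.) Together with $3$-homogeneity and $2$-primitivity from Lemma~\ref{Aut(M)}, this pins down the transitivity profile asserted.

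Then the main content: $G$ preserves a limit of $D$-relations on $M$ — this is precisely Theorem~\ref{main Th}, whose proof verifies conditions (i)--(viii) of Definition~\ref{limits} using the structure tree $T=(K^*/R,\leq)$ of Lemma~\ref{semi2}, the properties of the subgroups $H_j=G_{(M\setminus\hat R_j)}$ from Lemma~\ref{H_i}, the refinement facts for the congruences $E_j$ from Lemma~\ref{FE}, and the $G_s$-invariant $C$-relation from Lemma~\ref{G_s-invariant C}. Finally I would rule out the structures of types (i)--(iii) of Theorem~\ref{am-class}: Lemma~\ref{no steiner} eliminates Steiner systems (type (i)); for type (ii), no linear order, circular order, linear betweenness relation or separation relation is $G$-invariant — a separation relation is excluded by Lemma~\ref{no sep}, and a $G$-invariant linear or circular order or linear betweenness relation would in particular yield a $G$-invariant separation relation (or be incompatible with $2$-transitivity plus the cycle types realised in $M$, e.g. the element inducing $(x)(y)(z)(uv)$ from the proof of Lemma~\ref{no sep}), so this case reduces to Lemma~\ref{no sep}; for type (iii), Lemma~\ref{No D} eliminates $D$-relations, and a $G$-invariant semilinear order, general betweenness relation, or $C$-relation would give rise to a $G$-invariant $D$-relation on $M$ (via the standard interpretations recalled in Section~\ref{Definitions} — a semilinear order yields a betweenness relation, a betweenness relation yields a $D$-relation on its directions, and a $C$-relation yields a $D$-relation directly), contradicting Lemma~\ref{No D}; alternatively one observes directly that a $G$-invariant semilinear order or $C$-relation is incompatible with $2$-transitivity of $G$.

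The step I expect to be the genuine obstacle — and the one I would write most carefully — is the reduction for type (iii): one must argue that $G$-invariance of a semilinear order, general betweenness relation, or $C$-relation on $M$ would force $G$-invariance of some $D$-relation on $M$ (or on a $G$-space canonically built from $M$ on which $G$ still acts, which is not quite what Definition~\ref{limits} or Lemma~\ref{No D} addresses). The cleanest route is probably not via these interpretations at all but to note that $G$ is $2$-transitive (Lemma~\ref{Aut(M)}(ii)) while $\Aut$ of a semilinear order or of a $C$-relation is never $2$-transitive, and that $G$ being $2$-primitive but not $3$-transitive with the specific congruence structure already forces the failure of a betweenness relation; I would state this reduction as a short lemma and prove it, rather than leaning on the literature. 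Everything else is bookkeeping over results already in hand.
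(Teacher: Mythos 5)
Your proposal is correct and follows essentially the same route as the paper: the theorem is assembled from Corollary~\ref{oligcoro}, Lemma~\ref{Aut(M)}, Lemma~\ref{Jordan set}, Theorem~\ref{main Th} and Lemmas~\ref{no sep}, \ref{no steiner}, \ref{No D}, with type (ii) reduced to the separation relation, $C$-relations excluded via the induced $D$-relation on the \emph{same} underlying set together with Lemma~\ref{No D}, and semilinear orders and general betweenness relations excluded by $2$-primitivity (respectively $2$-transitivity). One caution: your proposed ``cleanest route'' rests on the claim that $\Aut$ of a $C$-relation is never $2$-transitive, which is false (such groups can be $2$-transitive --- indeed Definition~\ref{limits}(viii) has $G_x$ preserving a $C$-relation on $M\setminus\{x\}$ while $G$ is $2$-primitive), so for $C$-relations you must keep the reduction to a $D$-relation on $M$ itself, exactly as the paper does.
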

\begin{proof}
This is by Lemma \ref{olig}, Lemma \ref{Aut(M)}, Lemma \ref{no sep}, Lemma \ref{no steiner}, Lemma \ref{No D} and Theorem \ref{main Th}. The group $G$ is not 4-homogeneous as some but not all quadruples satisfy $S$ under some ordering. Note that $G$ cannot preserve a linear or circular order or a linear betweenness relation since it does not preserve a separation relation, $G$ cannot preserve a $C$-relation since it does not preserve a $D$-relation, and cannot preserve a semilinear order or general betweenness relation since it is 2-primitive.
\end{proof}

\section{Further Questions}
We have a number of questions around the construction in this paper, its companion in \cite{bhattmacph2006jordan}, and the exact statement of Theorem~\ref{am-class} which was proved in \cite{adeleke1996classification}. We also have questions concerning the flexibility of our construction, 
and how it fits in the developing theory of homogeneous and $\omega$-categorical structures.

\begin{problem} \label{prob1}\rm
Axiomatise a concept of {\em $(L,S)$-structure}. The idea here is to identify a set, probably finite, of axioms for a ternary relation $L$ and quaternary relation $S$, from which can be derived the basic combinatorics of Sections 2 and 3 above. In particular, it should be possible from the axioms to interpret in any $(L,S)$-structure a semilinear order (the `structure tree'), a family of $D$-sets in bijection with the vertices of the semilinear order, a concept of special branch at a ramification point of a $D$-set, the maps $f_\nu$ associating cones at the vertex $\nu$ of the structure tree with ramification points of the associated $D$-set $D(\nu)$, and the corresponding maps $g_{\mu\nu}$. 
There is need for an analogous axiomatisation of the corresponding ternary relation (also denoted by $L$) in \cite{bhattmacph2006jordan} -- there is an initial discussion of this in the last section of that paper. This should also be done for limits of Steiner systems.
\end{problem}

\begin{problem} \label{prob2}\rm
Sharpen Theorem~\ref{am-class} above (the main result of   \cite{adeleke1996classification}), and its proof there, so that in Case (iv) the notion of limit of betweenness or $D$-relation (and possibly of Steiner system) is replaced by the concept identified in Problem~\ref{prob1}. At the very least, it should be possible to replace the total order $I$ in Definition~\ref{limits} by an {\em invariant} semilinear order, with a corresponding modification of the proof of Theorem~\ref{am-class}. 
\end{problem}

\begin{problem} \label{prob3} \rm
Clarify the connection between a limit of $D$-relations and a limit of general betweenness relations. For example, is the structure constructed in \cite{bhattmacph2006jordan} interpretable in the structure constructed in this paper (a question asked by Peter Cameron). Note that any $D$-relation interprets a general betweenness relation. 
\end{problem}

In his PhD thesis \cite{bradleywilliams}, David Bradley-Williams initiated a construction of a limit of betweenness relations based on a discrete rather than a dense semilinear order. It has not yet been shown that the associated automorphism group is a Jordan group.

\begin{problem} \label{prob4} \rm
Show that the constructions in this paper and in \cite{bhattmacph2006jordan} can be carried out with a wide class of semilinear orders as structure tree, yielding structures whose automorphism groups are Jordan groups. Show that Adeleke's constructions in \cite{adeleke2013irregular} can be incorporated into this framework. Can the betweenness relations and $D$-sets in these structures be replaced by other kinds of relational structures?
\end{problem}

Recall that a relational structure $M$ is {\em homogeneous} (in the sense of Fra\"iss\'e) if it is countably infinite and any isomorphism between finite substructures of $M$ extends to an automorphism of $M$. We say $M$ is {\em homogenisable} if there is a homogeneous structure $N$ on the same domain as $M$, such that the language of $N$ is finite relational, and ${\rm Aut}(M)={\rm Aut}(N)$ (as permutation groups). Recall also the model-theoretic notion of an {\em NIP} structure (or theory) -- see for example \cite{simon}.

\begin{problem} \label{prob5} \rm
Show that the structure $M$ constructed in this paper  is not homogeneous, is homogenisable, and is NIP.
\end{problem}

\begin{problem} \label{prob6} \rm
With $G={\rm Aut}(M)$ as in this paper, let $n_k(G)$ be the number of orbits of $G$ on the set of $k$-element subsets of $M$. Find the asymptotic growth rate of the sequence $(n_k(G))$.
\end{problem}
Regarding the last problem, we know by the main theorem of \cite{macpherson-orbits} that $n_k(G)$ is bounded below by an exponential function. There are very few known examples of oligomorphic primitive permutation groups for which the growth is bounded above exponentially. Most of these examples are associated with treelike structures. 

There is a well-known connection between valued fields and treelike structures. For example, given a field $F$ equipped with a non-trivial  valuation map $v:F \to \Gamma \cup \{\infty\}$ where $\Gamma$ is an ordered abelian group, there is a $C$-relation on  $F$, invariant under addition and multiplication by non-zero elements, give by $C(x;y,z) \Leftrightarrow (v(x-y)<v(y-z))$; see for example \cite{macpherson-steinhorn}. The well-known graph-theoretic tree on which ${\rm SL}_2({\mathbb Q}_p)$ acts, as described in Chapter II of Serre \cite{serre}, is associated with this. There is a $D$-relation on the projective line ${\rm PG}_1(F)$ defined by putting $D(x,y;z,w)$ if and only the cross ratio $[x,y;z,w]$ lies in $1+\mathcal{M}$, where $\mathcal{M}$ is the maximal ideal of the corresponding valuation ring -- see \cite[Theorem 30.4]{adeleke1998relations}. It is also well-known that the set of all valuations on a field is lower semilinearly ordered under reverse inclusion of the corresponding valuation rings. This suggests the following problem.

\begin{problem} \label{prob7} \rm 
Show that the structure $M$ in this paper, or more generally an $(L,S)$-structure as in Problem~\ref{prob1} `lives' on a field, in the sense that the structure tree can be identified with a set of valuation rings of the field. 
\end{problem}

\bibliographystyle{plain}
\bibliography{references}
\end{document}